\newtheorem{theorem}{Theorem}[section]
\newtheorem{proposition}[theorem]{Proposition}
\newtheorem{corollary}[theorem]{Corollary}
\newtheorem{lemma}[theorem]{Lemma}
\newtheorem{fact}[theorem]{Fact}
\theoremstyle{definition}
\newtheorem{definition}[theorem]{Definition}
\newtheorem{remark}[theorem]{Remark}
\newtheorem{remarks}[theorem]{Remarks}
\newtheorem*{rema*}{Remark}
\newtheorem*{remas*}{Remarks}
\newtheorem*{prop*}{Proposition}
\newtheorem*{theorem*}{Theorem}
\theoremstyle{problem}
\newcommand{\Aut}{\mathrm{Aut}}
\newcommand{\Ker}{\mathrm{Ker}}
\newcommand{\RR}{\mathbf{R}}
\newcommand{\CAT}{\mathrm{CAT}}
\newcommand{\cat}{$\mathrm{CAT}(0)$\xspace}
\newcommand{\Ch}{\mathrm{Ch}}
\newcommand{\Stab}{\mathrm{Stab}}
\newcommand{\bd}{\partial}
\newcommand{\Id}{\operatorname{Id}}
\newcommand{\Top}{\operatorname{\mathrm{Top}}}
\newcommand{\A}{\mathrm{\bold{A}}}
\newcommand{\efface}[1]{}
\def\og{\leavevmode\raise.3ex\hbox{$\scriptscriptstyle\langle\!\langle$~}}
\def\fg{\leavevmode\raise.3ex\hbox{~$\!\scriptscriptstyle\,\rangle\!\rangle$}}
\def\ie{{\it i.e.,\/}\ }
\def\eg{{\it e.g.,\/}\ }
\def\resp{{\it resp.,\/}\ }
\def\etc{{\it etc.\/}}
\def\lc{{\it l.c.\/}\ }
\def\truc{\unskip\kern 3pt\penalty 500
\hbox{\vrule\vbox to 5pt{\hrule width 4pt\vfill\hrule}\vrule}\kern
3pt}
\def\into{\hookrightarrow}
\def\vect{\overrightarrow}
\def\parni{\par\noindent}
\def\Z{{\mathbb Z}}
\def\R{{\mathbb R}}
\def\A{{\mathbb A}}
\def\M{{\mathbb M}}
\newcommand{\g}[1]{\mathfrak{#1}} %lettres gothiques
\def\qa{\alpha}     % lettres grecques
\def\qb{\beta}
\def\qd{\delta}
\def\qf{\varphi}
\def\qg{\gamma}
\def\qi{\iota}
 \def\ql{\lambda}
\def\qm{\mu}
\def\qp{\pi}
\def\qr{\rho}
\def\qs {\sigma}
\def\qx{\xi}
\def\QD{\Delta}
\def\QF{\Phi}
\def\QL{\Lambda}
\def\QO{\Omega}
\def\QS{\Sigma}
\def\sha{{\mathcal A}}   %lettres calligraphiques
\def\shd{{\mathcal D}}
\def\shf{{\mathcal F}}
\def\shk{{\mathcal K}}
\def\shm{{\mathcal M}}
\def\sht{{\mathcal T}}
\title{The cone topology on masures}
\author[1]{Corina Ciobotaru\thanks{corina.ciobotaru@unifr.ch, Partially supported by Swiss National Science Foundation Grant 153599}}
\author[2]{Bernhard M\"{u}hlherr\thanks{Bernhard.M.Muehlherr@math.uni-giessen.de}}
\author[3,4]{Guy Rousseau\thanks{Guy.Rousseau@univ-lorraine.fr}}
\author[5]{\\ with an appendix by Auguste H\'ebert\thanks{auguste.hebert@univ-st-etienne.fr, Partially supported by ANR grant ANR-15-CE40-0012}}
\affil[1]{Universit\'e de Fribourg, Section de Math\'ematiques, Chemin du Mus\'{e}e 23, 1700 Fribourg, Switzerland.}
\affil[2]{Universit\"{a}t Giessen, Mathematisches Institut, Arndtstrasse 2, 35392 Gie{\ss}en, Germany.}
\affil[3]{Universit\'e de Lorraine, Institut \'Elie Cartan de Lorraine, UMR 7502, Vand\oe uvre-l\`es-Nancy, F-54506, France.}
\affil[4]{CNRS, Institut \'Elie Cartan de Lorraine, UMR 7502, Vand\oe uvre-l\`es-Nancy, F-54506, France.}
\affil[5]{Universit\'e Lyon, UJM-Saint-\'Etienne CNRS, UMR 5208 CNRS, Saint-\'Etienne, F-42023,  France}
\date{June 11, 2018}
\begin{document}

\newcounter{qcounter}

\maketitle

\begin{abstract}

Masures are generalizations of Bruhat--Tits buildings and the main examples are associated with almost split Kac--Moody groups $G$ over non-Archimedean local fields. In this case, $G$ acts strongly transitively on its corresponding masure $\Delta$ as well as on the building at infinity of $\Delta$, which is the twin building associated with $G$.
The aim of this article is twofold: firstly, to introduce and study the cone topology on the twin building at infinity of a masure. It turns out that this topology has various favorable properties that are required in the literature as axioms for a topological twin building. Secondly, by making use of the cone topology, we study strongly transitive actions of a group $G$ on a masure $\Delta$.
Under some hypotheses, with respect to the masure and the group action of $G$, we prove that $G$ acts strongly transitively on $\Delta$ if and only if it acts strongly transitively on the twin building at infinity $\partial \Delta$. Along the way a criterion for strong transitivity is given and the existence and good dynamical properties of strongly regular hyperbolic automorphisms of the masure are proven.
\end{abstract}

\section{Introduction}

%Comments are in \green{green}, things that should be deleted in \red{red} and things that I would like to add in \blue{blue}.

Masures were introduced by Gaussent and Rousseau~\cite{GR08} and developed further by Rousseau~\cite{Rou11, Rou12, Rou13}, Charignon~\cite{Cha}, Gaussent--Rousseau~\cite{GR14} and Bardy-Panse--Gaussent--Rousseau in~\cite{BPGR14}. The theory of masures has been developed in order to make several techniques from  Bruhat--Tits theory also applicable in the context of Kac--Moody groups.
By Bruhat--Tits theory, with every semi-simple algebraic group $G$ over a non-Archimedean local field $\shk$ there is associated  a ``symmetric space'', called the affine building of $G$. More generally, with an almost split Kac--Moody group $G$ over such a field $\shk$ there is associated a masure whose construction is carried out by suitably modifying the Bruhat--Tits construction.  As for buildings, a masure is covered by apartments corresponding to the maximal split tori and every apartment is a finite dimensional real affine space endowed with a set of affine hyperplanes called walls. At a first glance, a masure looks very much like a  Bruhat--Tits building. However, in the case of masures, the set of all walls is not always a locally finite system of hyperplanes and  it is no longer true that any two points are contained in a common apartment. For this reason, the word ``building'' is replaced by the word ``masure''.  Moreover, if we look at infinity of a masure, we obtain a twin building, which is the analogue of the spherical building at infinity associated with a Bruhat--Tits building.
As an application, Bardy-Panse, Gaussent and Rousseau~\cite{GR14},~\cite{BPGR14}  use the construction of masures to define the spherical Hecke and Iwahori--Hecke algebras associated with an almost split Kac--Moody group over a non-Archimedean local field.
In~\cite{GR08} the masure is used to make the link (in the representation theory of Kac--Moody groups) between the  Littelmann's path model and the Mirkovic--Vilonen cycle model. 
{As described above, the theory of masures is a natural generalization
of Bruhat--Tits theory to the Kac--Moody situation. As Bruhat--Tits buildings can be interpreted as symmetric spaces over non-Archimedean
local fields it is appropriate to mention here as well that a theory of symmetric spaces for Kac--Moody groups over Archimedean local fields has been recently developed by Freyn, Hartnick, Horn and K{\"o}hl in \cite{FHHK}. }
%They obtain results similar to those known for masures and these spaces are called Kac--Moody symmetric spaces.

%\par \blue{The analogues of masures for some Kac--Moody groups over Archimedean local fields have been introduced recently (under the name of Kac--Moody symmetric spaces) by Freyn, Hartnick, Horn and K{\"o}hl \cite{FHHK}. They obtain results similar to those known for masures.}

Although there {are} several contributions to the subject, it is fair to say that
the theory of masures is not yet in its final form. This is underlined by the fact
that there are several basic questions that still need to be answered. Most of these questions
are about generalizations of known results from Bruhat--Tits theory and sometimes it is even
not clear how to state properly the analogue of such a result in the context of masures.

In~\cite{CR15} two of us provided a generalization  of a result
on {strongly transitive actions} that has been proved by the first author in a joint work with P.-E. Caprace in~\cite{CaCi} for Bruhat--Tits buildings. The result in~\cite{CaCi} involves the cone topology on the spherical building at infinity of a Bruhat--Tits building and therefore relies on the fact that the latter is a \cat-space. Unfortunately, it seems that there is no reasonable way to endow a masure with a \cat-metric.
However, it turns out
that there is nonetheless a canonical cone topology on the
set of chambers of the twin building at infinity of a masure. This observation was indeed
one of the crucial insights in~\cite{CR15} to generalize the result in~\cite{CaCi} to masures.

Apart from being useful in this particular context
we believe that the cone topology is an important ingredient for the theory of masures.
For this reason we provide several basic results regarding the cone topology
which show that it has several remarkable properties. As a consequence of our
{investigation}, it turns out that the twin building at infinity of a masure endowed with the cone topology is a ``weak topological twin building'' in the sense of Hartnick--K\"{o}hl--Mars~\cite{HKM}. In that paper, topological twin buildings
are constructed by means of the Kac--Peterson topology of a Kac--Moody group.
It turns out that masures provide examples of weak topological
twin buildings {and this} construction appears to be much more natural and direct than the one given in Hartnick--K\"{o}hl--Mars~\cite{HKM}.

As a result of what has been said so far, the goal of this article is twofold.
Firstly, we introduce the cone topology on the set of chambers of the twin
building at infinity of a masure and study its basic properties. Secondly, we investigate
the notion of strong transitivity for masures. Based on the notion of the
cone topology we are able to provide a generalization of a result of Caprace and Ciobotaru from~\cite{CaCi}.

\subsection{The cone topology on a masure}

\medskip
For any \cat space (\eg a locally finite affine building, a locally finite tree, \etc) the cone topology is a natural topology used  to study various properties of the visual boundary of the corresponding \cat space. Unfortunately, there does not seem to be any reasonable way to endow a masure with a \cat metric.
Therefore it is unlikely that we can use the full power of the \cat machinery in the  context
of masures. However, it turns out that there is still a canonical topology on the set of
chambers of the twin building at infinity of a masure which generalizes the cone topology
on the visual boundary of a Bruhat--Tits building. The precise definition of the cone topology
requires some preparation and it is given in Definition~\ref{def::cone_top_on_chambers}.
 It turns out that the cone topology has several remarkable properties and some of them are given in the following theorem.
 % \red{For further information and the proof we refer to Section~\ref{subsec::prop_cone_top} and Section~\ref{subsec::further_prop}.}
% \blue{BM: Guy proposes to delete this sentence and I agree because we refer later to Section 3 for the proof.}

\begin{theorem}
\label{theo:cone.top}
Let $\Delta$ be a masure and let $X$ be the set of chambers of the twin building at infinity of $\Delta$.
\begin{enumerate}
\item The cone topology on the set $X$ of all chambers at infinity of a masure is  Hausdorff.
% it does not depend on the chosen base point.
\item
Residues are closed with respect to the cone topology on $X$.
%If this residue is spherical, then it is  a spherical building at infinity of an affine building and its induced topology from the masure $\Delta$ coincides with the cone topology associated with that affine building.

\item
The set of chambers in $X$ opposite a given chamber in $X$ is open in $X$.

\item
When the masure is ``locally finite'' and ``locally complete'' (see Definitions~\ref{def::locally_finite} and~\ref{def::locally_complete} below), then any panel and, more generally, any spherical residue or Schubert variety in $X$ is compact.

%\item
%The cone topology on the set $Res_J(X)$ of all  residues of the same given type $J$ in $X$ is Hausdorff.
%it does not depend on the chosen base point. Actually this topology is the quotient topology of the cone topology on $X$ associated with the surjective map $X\to Res_J(X)$ sending a chamber to its $J-$residue. \red{Comment BM: Maybe we should also remove this statement, since it is not so important. I think it only makes sense if we explain $Res_J(X)$ and that we have a topology on it. I really think that this is too much for a reader who only want to have a first idea.}
\end{enumerate}
\end{theorem}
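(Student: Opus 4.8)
The plan is to fix a base point $x_0\in\Delta$ and to argue with respect to the neighbourhood basis of Definition~\ref{def::cone_top_on_chambers}: at a chamber $c\in X$ a basis is given by the sets $\Omega_n(c)=\{c'\in X:\ Q(x_0,c')\text{ and }Q(x_0,c)\text{ coincide on }\overline{B}(x_0,n)\}$, $n\in\mathbf{N}$, where $Q(x_0,c)$ is the sector based at $x_0$ of direction $c$ (defined since $x_0$ and $c$ lie in a common apartment) and $Q(x_0,F)$ the sector--facet based at $x_0$ of direction a facet $F$ at infinity. I will use two elementary facts: \emph{(a)} each sector or sector--facet is the increasing union of its truncations $Q(x_0,\cdot\,)\cap\overline{B}(x_0,n)$, so $c$ (resp.\ $F$) is recovered from those truncations; \emph{(b)} for $c\in X$, one has $c\geq F$ if and only if $Q(x_0,F)\subseteq Q(x_0,c)$, the sector--facet of $Q(x_0,c)$ of direction $F$ being, by uniqueness, exactly $Q(x_0,F)$ --- the standard description of incidence at infinity. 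Then (1) is immediate: if $c\neq c'$, by (a) the sectors differ on some $\overline{B}(x_0,N)$, so $\Omega_N(c)\cap\Omega_N(c')=\emptyset$. And (2): writing a residue as $R=\{c\in X:\ c\geq F\}$, if $c\notin R$ then by (b) $Q(x_0,F)\cap\overline{B}(x_0,N)\not\subseteq Q(x_0,c)\cap\overline{B}(x_0,N)$ for some $N$, which persists for every $c'\in\Omega_N(c)$; hence $\Omega_N(c)\subseteq X\setminus R$ and $R$ is closed.

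For (3), let $c_0\in X$ and $c\in\Opp(c_0)$. Using that the cone topology does not depend on the base point, I would take $x_0$ to lie on an apartment $A$ of $\Delta$ having $c$ and $c_0$ as opposite ends, so that $Q(x_0,c)$ and $Q(x_0,c_0)$ are opposite sectors of $A$ and the germs $\overline{c}(x_0),\overline{c_0}(x_0)$ are opposite chambers of the spherical building $\mathrm{Res}_\Delta(x_0)$. The set $U=\{c'\in X:\ \overline{c'}(x_0)\text{ is opposite }\overline{c_0}(x_0)\text{ in }\mathrm{Res}_\Delta(x_0)\}$ is open, because $\overline{c'}(x_0)$ is already determined by the restriction of $Q(x_0,c')$ to any ball of positive radius about $x_0$, so each $\{c':\ \overline{c'}(x_0)=d\}$ is open and $U$ is the union of those with $d$ opposite $\overline{c_0}(x_0)$. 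Moreover $U\subseteq\Opp(c_0)$: if $\overline{c'}(x_0)$ is opposite $\overline{c_0}(x_0)$, then $Q(x_0,c')$ and $Q(x_0,c_0)$ have opposite germs at $x_0$, hence lie in a common apartment $A'$ of $\Delta$, in which $c'$ and $c_0$ are opposite ends. Since $c\in U\subseteq\Opp(c_0)$, this proves openness. I expect this part to be the main obstacle: it rests on base--point independence of the cone topology and, above all, on the masure analogue of the statement that two sectors with opposite germs at a common point lie in a common apartment --- equivalently, on a description of opposition at infinity that can be read off inside a single residue $\mathrm{Res}_\Delta(x_0)$ --- which is precisely where the non-convexity phenomena peculiar to masures must be handled with care.

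For (4), assume $\Delta$ is locally finite (Definition~\ref{def::locally_finite}) and locally complete (Definition~\ref{def::locally_complete}). I would first treat a panel $\pi$ of $\partial\Delta$: put $s_n(c)=Q(x_0,c)\cap\overline{B}(x_0,n)$ and $Y_n=\{s_n(c):c\in\pi\}$, which is finite since only finitely many walls of $\Delta$ meet $\overline{B}(x_0,n)$; by (a), $c\mapsto(s_n(c))_{n}$ embeds $\pi$ into the inverse limit $\varprojlim_n Y_n$ of the finite discrete sets $Y_n$, with the cylinder sets corresponding to the traces $\Omega_n(c)\cap\pi$, so the embedding is a homeomorphism onto its image. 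It is onto: given a compatible family $(s_n)$, local completeness glues the $s_n$ into an honest sector of $\Delta$, of some direction $c\in X$; for any $c_n\in\pi$ with $s_n(c_n)=s_n$ one then has $c_n\to c$, whence $c\in\pi$ by (2). Thus $\pi\cong\varprojlim_n Y_n$ is compact. A general spherical residue is a Schubert variety $\Sigma=\{c\in X:\ \delta(c_1,c)\leq w\}$ with $w$ the longest element of the associated finite parabolic, and for any Schubert variety I would fix a reduced word $w=s_1\cdots s_k$ and observe that $\Sigma$ is the image, under the continuous last--coordinate projection, of the set of $(d_0,\dots,d_k)$ with $d_0=c_1$ and $d_{i-1},d_i$ in a common $s_i$--panel; this set is closed in the product of the $k$ compact panels involved (the $s_i$--adjacency relations being closed in $X\times X$, by a truncation argument as for (2)), hence compact, and therefore so is $\Sigma$. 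The delicate ingredient in (4) is again ``locally complete'', which is exactly what makes the inverse limit attained; without it a panel may fail to be compact, just as the projective line over a non-complete valued field does.
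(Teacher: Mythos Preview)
Your proposal imports several facts that hold for affine buildings but fail for masures, and these are precisely the places where the paper has to work hardest.

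First, throughout you use balls $\overline{B}(x_0,n)$ and speak of the residue $\mathrm{Res}_\Delta(x_0)$ as a spherical building. A masure carries no canonical metric (this is emphasised in the introduction), and the local structure at a point is not a spherical building when $W^v$ is infinite. The paper's cone topology is defined via the barycentre rays $[x_0,\xi_c)$ (Definition~\ref{def::standard_open_neigh_cone_top}), not via truncations of full sectors on balls; your basis $\Omega_n(c)$ is not the paper's basis, and showing the two agree would itself require the enclosure arguments of Lemma~\ref{lem:3.3} and~\cite[5.4]{Rou11}.

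Second, your argument for (2) only treats \emph{spherical} residues: you write $R=\{c:c\geq F\}$ and use the well-defined sector-facet $Q(x_0,F)$, but for non-spherical $J$ there is no ideal face $F$ of type $J$ and no unique $Q(x_0,F)$ (see Section~\ref{subsubsec::building_infinity}.2). The paper instead derives (2) from (3): once $E_1^*(c)$ is open, closedness of \emph{all} residues follows by a twin-building argument (Corollary~\ref{rem_TTB4+}, citing~\cite[Lemma~3.4]{HKM}). Your route cannot reach the non-spherical case.

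Third, and most seriously, your proof of (4) asserts that $Y_n$ is finite ``since only finitely many walls of $\Delta$ meet $\overline{B}(x_0,n)$''. This is false in a masure with infinite $W^v$: the wall system is \emph{not} locally finite (again, see the introduction). ``Locally finite'' in Definition~\ref{def::locally_finite} means only semi-discrete plus finite thickness; it says nothing about the number of wall-directions. The paper's proof of panel compactness (Proposition~\ref{TTB4+}) avoids this entirely by passing to the panel tree $T(\sigma,\sigma')$ of Corollary~\ref{lem:TreeWalls}: semi-discreteness makes $T$ a discrete tree, finite thickness makes it locally finite, and local completeness makes its apartment system complete, so its set of ends is compact. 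The identification of topologies then goes through Lemma~\ref{essential} and Remark~\ref{rem::essential}. Your inverse-limit idea is morally the same once restricted to the tree $T$, but as written the finiteness step is unjustified and the ``gluing by local completeness'' of a compatible family of truncated sectors into an actual sector of $\Delta$ is not what Definition~\ref{def::locally_complete} provides directly.

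For (3) your core idea is correct and matches the paper: choose $x_0$ in an apartment containing $c$ and $c_0$, and use that two sectors with opposite germs at $x_0$ lie in a common apartment. This is Lemma~\ref{3.6} (Parreau's condition (CO)), and is indeed the key non-trivial input; the paper's proof of Proposition~\ref{lem::E1_open_set} is exactly this. Your Schubert-variety argument via galleries is also the paper's approach (Corollary~\ref{Schub.compact}, the Bott--Samelson desingularisation), though it requires continuity of the projection maps (Corollary~\ref{TTB1_2}), which you have not established.
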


%We have to mention that  many of these topological properties are inspired from Kramer~\cite{K02} and Hartnick--K\"{o}hl--Mars~\cite{HKM}.

\par We would like to point out that the statement of Theorem \ref{theo:cone.top} is meant
to give the reader a rough idea about our results on the cone topology. They will be proved
among other results given in Sections~\ref{subsec::prop_cone_top} and~\ref{subsec::further_prop} to which we refer for the details. As already indicated above,
these results provide a link to previous contributions of Kramer~\cite{K02} and
Hartnick--K\"ohl--Mars~\cite{HKM} on topological twin buildings. Especially in the latter
reference the authors discuss several axioms that one could require for a topological twin building.
It turns out that the cone topology behaves very nicely in the sense that it satisfies most
of them.
Our results about the basic properties of the  cone topology have been recently complemented
by a contribution of Auguste H\'ebert that is included in this paper as an appendix.

\par Apart from the basic facts on the cone topology recorded in Theorem~\ref{theo:cone.top} we shall also present in this introduction a slightly more specific result. This result relies on the notion of a panel tree in a masure, which also plays a crucial role on our results on strongly transitive actions. The concept of a panel tree is well known from the theory of affine buildings and it turns out that it generalizes without any problems to masures. More precisely, let $\Delta$ be a masure and let $X$ be its  building at infinity. Then one can associate with any pair $(R,R')$ of opposite spherical residues of $X$ an affine building $\QD(R,R')$ which is a ``convex'' subset of $\Delta$; moreover, there is a canonical identification of $R$ with the  spherical building at infinity of $\QD(R,R')$. As we already mentioned, there is the ``usual'' cone topology on the boundary $\partial \QD(R,R')$ of $\QD(R,R')$ which induces a topology $\tau_{R'}$ on $R$, that a priori depends on the opposite residue $R'$.

\begin{proposition}
\label{prop:cone_top_residue}
Let $\Delta$ be a masure, let $X$ be its  building at infinity, let $R$ be a spherical residue of $X$
and let $\tau_X$ be the cone topology on $X$. Then $\tau_X \vert_R = \tau_{R'}$ for each residue $R'$ that is opposite
 $R$ in $X$.
\end{proposition}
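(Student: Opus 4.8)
The plan is to show that the two topologies $\tau_X\vert_R$ and $\tau_{R'}$ on $R$ coincide by proving the inclusions $\tau_{R'}\subseteq\tau_X\vert_R$ and $\tau_X\vert_R\subseteq\tau_{R'}$ separately, in each case by comparing neighbourhood bases at a fixed chamber $c\in R$. Let $F$ denote the facet at infinity of cotype $J$ shared by all chambers of the $J$-residue $R$, so that $F\subseteq d$ for every $d\in R$. First I would record the relevant bases. By Definition~\ref{def::cone_top_on_chambers}, a basis at $c$ for $\tau_X$ is given by the sets
\[
\Omega(x_1,\dots,x_m;r)=\bigl\{\,d\in X : Q(x_i,d)\cap\overline{B}(x_i,r)=Q(x_i,c)\cap\overline{B}(x_i,r)\text{ for }i=1,\dots,m\,\bigr\},
\]
with $x_1,\dots,x_m\in\Delta$, $r>0$, where $Q(x,\cdot)$ denotes the sector of $\Delta$ based at $x$. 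Since $\QD(R,R')$ is an affine building, hence a \cat space on whose boundary the cone topology is independent of the base point, under the identification $\partial\QD(R,R')=R$ a basis at $c$ for $\tau_{R'}$ is given, for any fixed $o\in\QD(R,R')$, by the sets $\Omega'(o;\rho)=\{\,d\in R : Q'(o,d)\cap\overline{B}(o,\rho)=Q'(o,c)\cap\overline{B}(o,\rho)\,\}$, $\rho>0$, where $Q'$ denotes sectors computed inside $\QD(R,R')$.

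For the inclusion $\tau_{R'}\subseteq\tau_X\vert_R$ I would fix $o\in\QD(R,R')\subseteq\Delta$ and argue that, by the meaning of the identification $\partial\QD(R,R')=R$, the sector $Q'(o,d)$ points towards $d$, so $Q'(o,d)\subseteq Q(o,d)$; moreover, $\QD(R,R')$ being convex in $\Delta$, $Q'(o,d)$ is a face of $Q(o,d)$ (the one of ``type $J$''), singled out by a rule that depends only on the type function and not on $d$. In particular $Q'(o,d)\cap\overline{B}(o,\rho)$ is determined by $Q(o,d)\cap\overline{B}(o,\rho)$, whence $\Omega(o;\rho)\cap R\subseteq\Omega'(o;\rho)$. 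As $\Omega(o;\rho)\cap R$ is a $\tau_X\vert_R$-neighbourhood of $c$, this shows that every basic $\tau_{R'}$-neighbourhood of $c$ lies in $\tau_X\vert_R$, i.e.\ $\tau_{R'}\subseteq\tau_X\vert_R$.

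For the reverse inclusion, after intersecting over the finitely many base points of a given basic $\tau_X$-neighbourhood and replacing $r$ by the largest radius, it suffices to prove that for all $x\in\Delta$ and $r>0$ there is $\rho>0$ with $\Omega'(o;\rho)\subseteq\Omega(x;r)\cap R$. Here I would use that $F\subseteq d$ for all $d\in R$, so $Q(x,d)$ contains the sector-face $Q(x,F)$, which is independent of $d$; thus for $d,d'\in R$ the sectors $Q(x,d)$ and $Q(x,d')$ agree along $Q(x,F)$, and the only remaining datum distinguishing them is the chamber of the link of $F$ induced by $d$ --- which under $\partial\QD(R,R')=R$ is exactly a chamber at infinity of $\QD(R,R')$. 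The key claim is then that the finite piece $Q(x,d)\cap\overline{B}(x,r)$ is governed by a finite piece $Q'(o,d)\cap\overline{B}(o,\rho)$ for some $\rho=\rho(x,r)$ independent of $d\in R$; granting this, $Q'(o,d)\cap\overline{B}(o,\rho)=Q'(o,c)\cap\overline{B}(o,\rho)$ forces $Q(x,d)\cap\overline{B}(x,r)=Q(x,c)\cap\overline{B}(x,r)$, which is the desired inclusion, and the proposition follows.

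The hard part will be precisely this last claim: it requires comparing the local structure of $\Delta$ along the sector-face $Q(x,F)$ --- for an \emph{arbitrary} $x$, possibly far from $\QD(R,R')$ --- with the structure of $\QD(R,R')$ itself. I would attack it via the description of $\QD(R,R')$ in terms of the apartments of $\Delta$ carrying a pair of opposite sector-faces of directions $F$ and $F'$, together with a retraction of $\Delta$ centred at a sector-germ towards $F$: such a retraction folds $Q(x,d)$ onto sector data inside a fixed apartment, so $Q(x,d)\cap\overline{B}(x,r)$ depends only on finitely much combinatorial data of $d$, and this data is visible from $o$ inside $\QD(R,R')$ up to a radius $\rho$ bounded in terms of $x$, $r$ and the distance from $x$ to where $Q(x,F)$ enters $\QD(R,R')$. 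The auxiliary facts used along the way --- base-point independence of the cone topology on the \cat space $\QD(R,R')$, convexity of $\QD(R,R')$ in $\Delta$, well-definedness of the sectors $Q(x,F)$ and $Q(x,d)$, and the reduction to a single base point in $\QD(R,R')$ --- are standard or already established earlier in the paper.
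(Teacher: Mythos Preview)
Your proposal has a real gap and also diverges from the paper's route in a way that makes the hard direction unnecessarily difficult.

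First, a technical point: the neighbourhood basis you write down, $\Omega(x_1,\dots,x_m;r)$ defined via $Q(x_i,d)\cap\overline B(x_i,r)$, is not the one used in the paper. The cone topology here (Definition~\ref{def::cone_top_on_chambers}) is generated by sets $U_{x,r,c}=\{c':[x,r]\subset[x,\xi_{c'})\cap[x,\xi_c)\}$ built from \emph{rays to barycenters}, not from sectors and metric balls. A masure carries no global metric, so $\overline B(x,r)$ is not defined; at best you can speak of distance from $x$ inside a single apartment. One can show your sector-based description yields the same topology (this is essentially the content of Lemma~\ref{lem:3.3} and \cite[Prop.~5.4]{Rou11}), but this needs to be said and proved, not assumed.

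More seriously, your ``hard direction'' is only a sketch, and the sketch does not close. You allow the base point $x$ to be an \emph{arbitrary} point of $\Delta$, possibly far from $\QD(R,R')$, and then try to argue via a retraction that $Q(x,d)\cap\overline B(x,r)$ is controlled by data in $\QD(R,R')$ up to some radius $\rho(x,r)$. But retractions of masures centred at sector-germs do not in general behave well enough to give you this uniformly over $d\in R$: the image of $[x,\xi_d)$ under such a retraction is only a Hecke path (Proposition~\ref{prop::hecke_path}), not a ray, and extracting a uniform $\rho$ from that combinatorics is exactly the sort of argument you have not supplied.

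The clean way out, and the one the paper takes, is to invoke base-point independence of the cone topology on $\Delta$ itself (Proposition~\ref{prop::independence_base_point}), not just on the \cat space $\QD(R,R')$. Once you are free to choose the base point $x$ \emph{inside} $\QD(R,R')$, your hard direction collapses: $\QD(R,R')$ is a sub-masure of $\Delta$ (Remark~\ref{rema:Iphi}), so the restriction of the cone topology to $\Ch(\partial\QD(R,R'))$ is literally the cone topology of the sub-masure (Remark~\ref{rem:3.9}.2). The paper then finishes by passing from $\QD(R,R')$ with its full apartment structure to the restricted structure $A^\phi$ (still Remark~\ref{rem:3.9}.2), and finally to the essential quotient $\QD(\phi)$ via Lemma~\ref{essential}, which is where $\tau_{R'}$ actually lives (see Remark~\ref{rem::essential}). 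Your argument, once repaired by moving the base point into $\QD(R,R')$, would essentially become this one.
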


For more details about Proposition~\ref{prop:cone_top_residue} see Section~\ref{1.5} and Remark~\ref{rem::essential}.

\subsection{A topological criterion for strong transitivity}
\label{sec::1.2}

In the second part of the paper we shall provide a criterion which ensures that a group
action on a masure is strongly transitive. This criterion involves the cone topology on
the twin building at infinity. Its proof relies on a modification of the strategy followed
by Caprace and Ciobotaru in \cite{CaCi}.

\par The notion of a strongly transitive action on a discrete building is crucial for group theoretic applications of buildings since it provides the link to groups with BN-pairs.
 If the building is no longer discrete (\eg $\R-$buildings), the standard definition of a strongly transitive action has to be suitably modified in order to adapt the basic machinery (see Section \ref{def:1.2} below).
 Thus it is not surprising that there is also a natural definition of a strongly transitive action on a masure. It is given in Rousseau \cite[4.10]{Rou13} (see also Gaussent-Rousseau \cite[1.5]{GR14}) and it is recalled in Definition \ref{4.2} below. As this definition is somehow involved we prefer to omit it in this introduction.
The starting point of our investigation is provided by the following:

\begin{fact}(see Proposition \ref{5.5})
\label{fact}
Let $\QD$ be a thick affine building (or a thick masure) and let $X$ be its (twin)-building at infinity.
Suppose that a group $G$ acts strongly transitively on $\QD$, then the action of $G$ induced on $X$ is strongly transitive as well.
 \end{fact}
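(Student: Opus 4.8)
The plan is to unwind the definitions of strong transitivity on $\QD$ and on $X=\bd\QD$ and then transfer transitivity ``level by level''. Recall that a strongly transitive action on $\QD$ means that $G$ acts transitively on pairs $(A,C)$ where $A$ is an apartment of $\QD$ and $C$ is a (local) chamber, or more precisely on the pairs consisting of an apartment together with a chamber (germ of a splayed chimney, or a local chamber, depending on the chosen formulation in Definition~\ref{4.2}); equivalently, the stabilizer of an apartment acts transitively on its chambers. On the side of the twin building $X$ at infinity, strong transitivity means that $G$ is transitive on pairs $(\Sigma_\infty, c)$ where $\Sigma_\infty$ is a twin apartment of $X$ and $c$ is a chamber of $\Sigma_\infty$ — or equivalently, since $X$ is a twin building, that $G$ acts transitively on the set of pairs of opposite chambers of $X$ and the stabilizer of such a pair acts transitively on the apartments of $X$ containing it.

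First I would establish the two basic correspondences between objects in $\QD$ and objects in $X$. The apartments of $\QD$ are in $G$-equivariant bijection with the twin apartments of $X$: each apartment $A$ of $\QD$ has a well-defined pair $(\bd_+ A, \bd_- A)$ of opposite ``halves'' of a twin apartment, and conversely a twin apartment of $X$ determines the apartment of $\QD$ it bounds. This is part of the standard structure theory of masures (the building at infinity of a masure is a twin building, and apartments correspond to twin apartments). Likewise, sector-germs (or chamber-germs at infinity) in a fixed apartment $A$ correspond bijectively to the chambers of the spherical building $\bd_\pm A$, again $G$-equivariantly, because the action of $G$ on $\QD$ is type-preserving and sends sectors to sectors. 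The key point is that all of these bijections are canonical, hence automatically $G$-equivariant.

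Then the argument is essentially formal: given two pairs $(\Sigma_\infty, c)$ and $(\Sigma_\infty', c')$ at infinity, pull them back to pairs $(A, \mathfrak{C})$ and $(A', \mathfrak{C}')$ in $\QD$, where $\mathfrak{C}, \mathfrak{C}'$ are the sector-germs corresponding to $c, c'$; apply strong transitivity of $G$ on $\QD$ to find $g\in G$ with $g A = A'$ and $g\mathfrak{C} = \mathfrak{C}'$; then by equivariance of the boundary construction, $g\Sigma_\infty = \Sigma_\infty'$ and $gc = c'$. One should be a little careful about whether Definition~\ref{4.2} phrases strong transitivity in terms of local chambers in $\QD$ rather than sector-germs; if so, one additional observation is needed, namely that the stabilizer in $G$ of an apartment $A$ already acts transitively on sector-germs of $A$ — this follows because it acts transitively on the chambers of $A$ (strong transitivity applied within $A$), and every sector-germ of $A$ contains, after retraction, a local chamber, while the Weyl group acts transitively on the sector-germs of a single vectorial direction type. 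Chasing this through gives transitivity on the pairs $(\Sigma_\infty,c)$, which is exactly strong transitivity on $X$.

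The main obstacle is not a deep one but a bookkeeping one: matching up the precise formulation of strong transitivity for masures from \cite{Rou13} (which involves germs of splayed chimneys and the subtlety that not every pair of points of $\QD$ lies in a common apartment) with the clean ``pairs (apartment, chamber at infinity)'' picture, and checking that the passage to the building at infinity is genuinely well-defined and equivariant on these objects — in particular that a sector-germ in $\QD$ really does determine a single chamber of $X$ and that opposite sector-germs go to opposite chambers, so that twin apartments are produced as claimed. Once these compatibilities are in place, the transfer of transitivity is immediate, and the same proof works verbatim for a thick affine building (where $X$ is the ordinary spherical building at infinity and ``twin apartment'' is just ``apartment'') and for a thick masure.
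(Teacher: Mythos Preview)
There is a genuine gap in the proposal, and it stems from a misreading of Definition~\ref{4.2}. You treat strong transitivity on $\QD$ as ``transitivity on pairs $(A,C)$ with $A$ an apartment and $C$ a local chamber (or sector-germ)''. But Definition~\ref{4.2} says something different: every Weyl--isomorphism between apartments appearing in (MA2) and (MA4) can be realized by an element of $G$. By Proposition~\ref{lem:ST:criterion} this is equivalent to each sector-germ being $G$--friendly, i.e.\ the \emph{pointwise stabilizer} of a sector-germ is transitive on the apartments containing it. Neither formulation hands you any non-trivial element of $\Stab_G(A)$: taking $A=A'$ in (MA2) or (MA4) yields only the identity. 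So the sentence ``it acts transitively on the chambers of $A$ (strong transitivity applied within $A$)'' is exactly the step that requires proof, not a consequence of the definition.

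The paper's argument supplies precisely this missing step, and uses thickness in an essential way. Given a wall $h$ of $A$, thickness produces a third half-apartment $H_3$ with $H_1\cap H_3=H_2\cap H_3=h$; applying (MA4) three times (with the isomorphisms now chosen in $G$ by strong transitivity) one manufactures an element of $\Stab_G(A)$ that restricts to the reflection $r_h$. Hence $W^a$ lies in the image of $\Stab_G(A)\to\Aut(A)$, and since $W^v$ is a quotient of $W^a$, $\Stab_G(A)$ is transitive on $\Ch(\partial A_+)$. Your formal ``pull back / push forward'' step is fine once this is known, but without the reflection construction you have no mechanism to move one sector-germ of $A$ to another while staying inside $\Stab_G(A)$.
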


\par The converse is not true in general: There are examples of group actions on affine buildings or masures which are not strongly transitive on $\QD$ but which induce a strongly transitive action on $X$.
The following remark provides a basic example of such a situation.

\begin{remark}\label{rem::DTE_not_ST}  Let $T$ be a regular tree and let $\qx$ be an end of $T$.
Define $\sha$ to be the set of all apartments of $T$ not containing $\qx$ at infinity and let $G$ be the group of automorphisms of $T$ fixing $\qx$.
 Then $G$ is $2-$transitive (equivalently, strongly transitive) on the set $E(\sha)$
of ends of $(T,\sha)$.
  But $G$ is not strongly transitive on  $(T,\sha)$: any $g \in G$ stabilizing an
apartment $A\in \sha$ fixes the projection of $\qx$ on $A$.
  \end{remark}

Suppose that $G$ is a group acting on a masure such that its action on the twin building at infinity is strongly transitive.
  In view of the example above it is natural to ask for additional conditions which ensure that $G$ acts strongly transitively on the masure.
  For affine buildings one such condition is the assumption that the apartment system of $\QD$ is complete.
  In the locally finite case this is due to Caprace and Ciobotaru, as it follows from
Theorem 1.1 in \cite{CaCi}.
   It was proved later in full generality by Kramer and Schillewaert
in \cite{KS}.

 \par Although completeness of the apartment system is a most natural assumption, there are important situations where the apartment system is not complete.
 Moreover, in the case of masures it is even not clear whether there is a sensible definition of a complete apartment system.
 Thus, it is natural to ask for other conditions which ensure that a strongly transitive action at infinity yields a strongly transitive action on the affine building or masure.
 In this paper we provide such a condition for masures.
 It involves the cone topology and the notion of a strongly regular hyperbolic element in the automorphisms group of a masure.
  It relies on the observation, that the strategy used by Caprace and Ciobotaru in
the proof of Theorem 1.1 in \cite{CaCi} can be modified in such a way that completeness is not needed and - more importantly in our context - that it also works for masures.
  Before stating our generalization of the main result of \cite{CaCi} it is convenient to provide some information about strongly regular hyperbolic elements in the automorphism  group of a masure and to formulate the analogue of the topological condition in \cite{CaCi} for masures.

  \medskip
\parni{\bf Strongly regular hyperbolic automorphisms of masures}:
   Let $\QD$ be a masure and let $\qa\in \Aut(\QD)$.
    Then $\qa$ is called a strongly regular hyperbolic automorphism of $\QD$ if it has no fixed points in $\QD$ and stabilizes an apartment $A$ of $\QD$ on which it induces a translation.
Moreover, the direction of the latter is required to be in the Tits cone (up to sign), but should not be contained in any of its walls (see Definition \ref{def::str_reg_lines}).

\par  As a side product of our investigation in this paper we obtain the following general fact about strongly regular hyperbolic elements:

\begin{proposition}\label{STimpliesreghyp}  Let $\QD$ be a {thick} masure such that the type of $\bd\QD$ has no direct factor of affine type.
If a {vectorially Weyl subgroup} $G\leq\Aut(\QD)$ is strongly transitive, then $G$ contains a strongly regular hyperbolic element.
\end{proposition}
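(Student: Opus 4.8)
The plan is to produce a strongly regular hyperbolic element by an explicit construction inside the masure, using strong transitivity to place an apartment in the right position and then using the abundance of directions in the Tits cone (guaranteed by the hypothesis that the type of $\bd\QD$ has no factor of affine type) to build a translation along a ``generic'' direction. Concretely, the natural first step is to fix a pair of opposite chambers $(c,c')$ in the twin building at infinity $\bd\QD$, hence an apartment $A$ of $\QD$ whose building at infinity contains both $c$ and $c'$. Since $G$ is vectorially Weyl and strongly transitive, the stabilizer of $A$ in $G$ maps onto the full vectorial Weyl group acting on $A$, and — crucially — because the action is strongly transitive one gets that $\Stab_G(A)$ acts on $A$ as the full affine Weyl group $W^{\mathrm{aff}} = W^v \ltimes Q$ (or at least a cocompact subgroup of translations together with all vectorial reflections). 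This is the point where strong transitivity is really used: it promotes ``a group acting on an apartment'' to ``a group acting as the affine Weyl group of that apartment''.

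The second step is to find inside this apartment stabilizer an element inducing a translation whose direction is strongly regular. The translation subgroup one obtains from $\Stab_G(A)$ is (conjugate to) a full-rank lattice in the translation space of $A$ — here the hypothesis that no direct factor of $\bd\QD$ is of affine type is what guarantees that the Tits cone has nonempty interior in each factor, so the interior of the Tits cone (intersected with its negative's reflection, i.e. the union $\mathcal{T}^\circ \cup (-\mathcal{T})^\circ$ of the open Tits cone and its opposite) meets the $\Q$-span of the translation lattice and hence contains lattice points avoiding all walls. Pick such a translation $\tau\in\Stab_G(A)$: its direction lies in the interior of (plus or minus) the Tits cone and in no wall of the vectorial apartment, which is exactly the definition of strongly regular hyperbolic (Definition~\ref{def::str_reg_lines}). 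One also needs $\tau$ to have no fixed point in $\QD$; a nontrivial translation of $A$ fixes no point of $A$, and since it stabilizes $A$ it can be arranged (or is automatic for a genuine translation direction in the open Tits cone) to have empty fixed-point set on all of $\QD$, using that $A$ is a retract of $\QD$ onto which $\tau$ acts fixed-point-freely.

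The main obstacle, and the step that needs the most care, is the second one: extracting from strong transitivity the statement that $\Stab_G(A)$ really contains enough translations — in particular translations in a full-rank sublattice rather than only in a proper sub-lattice or sub-cone. In the affine-building case this is standard (strong transitivity gives transitivity on pairs (chamber, apartment) and hence the Iwasawa/Bruhat-type decompositions that force the translation subgroup to be cocompact), but in the masure setting one must invoke the correct analogue — presumably the structure theory recalled from Rousseau~\cite{Rou13} and Gaussent--Rousseau~\cite{GR14} together with the definition of strong transitivity in Definition~\ref{4.2} — to see that the vectorially Weyl strongly transitive group induces on a model apartment the affine Weyl group with its full translation part. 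Once that is in hand, the remaining work (choosing a wall-avoiding lattice direction in the open Tits cone, which is possible precisely because of the no-affine-factor hypothesis, and checking the element is hyperbolic with empty fixed set) is routine. I would therefore structure the write-up as: (i) reduce to an apartment $A$ and its stabilizer via strong transitivity; (ii) identify the translations available in $\Stab_G(A)$ as a full-rank lattice; (iii) use the no-affine-factor hypothesis to pick a regular wall-avoiding translation direction in that lattice inside the open Tits cone; (iv) conclude that the corresponding element of $G$ is a strongly regular hyperbolic automorphism.
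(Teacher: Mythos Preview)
Your plan is correct and in fact more direct than the route the paper takes. The paper derives this proposition as a corollary of Lemma~\ref{5.7} (strong transitivity implies condition (LST)) together with Theorem~\ref{thm:ExistenceStronglyReg}. The latter is proved under the \emph{weaker} hypothesis (LST) rather than full strong transitivity, which is why it needs the elaborate ``reflection--translation'' machinery via panel trees: with only (LST) one does not know that $\Stab_G(A)$ realizes the exact reflections $r_h$, only that it contains elements swapping half-apartments along $h$ (possibly with a drift along $h$); the proof then composes two such elements at far-apart parallel walls to manufacture translations that are \emph{approximately} in the direction $\qa^\vee$, and finally assembles these via Kac's theorem.

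Under full strong transitivity you can bypass all of this. The argument in the proof of Proposition~\ref{5.5} already shows (for $\QD$ thick) that for every wall $h$ of an apartment $A$ there is an element of $\Stab_G(A)$ inducing \emph{exactly} the reflection $r_h$ on $A$; hence $\Stab_G(A)$ surjects onto $W^a=W^v\ltimes Q^\vee$ and in particular contains elements inducing each translation by a coroot-lattice vector. Kac's Theorem~4.3 (invoked in the paper) then gives $\sum_i n_i\qa_i^\vee\in Q^\vee\cap C^v_f$, and any vector in $C^v_f$ is automatically strongly regular since every real root is strictly positive on $C^v_f$. So your step (ii) is already done for you by Proposition~\ref{5.5}, and step (iii) is immediate.

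Two points to tighten in your write-up. First, the translation lattice is $Q^\vee$, and the family $(\qa_i^\vee)_{i\in I}$ need not span $V$, so $Q^\vee$ is generally \emph{not} full rank; but this is irrelevant, since all you need is $Q^\vee\cap C^v_f\neq\emptyset$. Second, your explanation of the no-affine-factor hypothesis is off: the Tits cone always has nonempty interior; the obstruction in an affine component is that all coroots lie in the hyperplane $\ker(\qd)$ (with $\qd$ the basic imaginary root), so $Q^\vee$ never meets $C^v_f$ there --- this is exactly the content of Remark~\ref{rem::AGT} and the reason Kac's theorem requires the hypothesis.
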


\par This is a consequence of Lemma \ref{5.7} and Theorem \ref{thm:ExistenceStronglyReg} below.

\medskip
\parni{\bf A topological condition for group actions on masures}: The following definition is motivated by one of the assumptions in Theorem 1.1 of \cite{CaCi}:

\begin{definition}\label{1.6}
Let $\QD$ be a masure and let $G$ be a group acting  by vectorially
Weyl automorphisms {(see Section 2.4.1)} on $\QD$.
 Let $c\in \Ch(\bd\QD)$, let $c^{op}$ denote the set of chambers in $\bd\QD$ which are opposite to $c$ and let $G_c$ denote the stabilizer of $c$ in $G$.
  The \textbf{horospherical stabilizer} of $c$ is defined to be  the group
  $G_c^0:=\{ g\in G_c \mid g \mathrm{\ fixes\ a\ point\ in\ \QD}\}$ (see Lemma \ref{lem:unip}).

  \medskip
\par We say that $G$ satisfies Condition (Top) if, for each chamber $c$ in $\bd\QD$, the following holds:

(Top) The orbits of $G_c^0$ in $c^{op}$ are closed {with respect to} the topology on $c^{op}$ that is induced from the cone topology on $\bd\QD$.
\end{definition}

\par We are now in the position to state our main result  about strongly transitive actions on masures:

\begin{theorem}
\label{thm::main_theorem_intr}
Let $(\Delta, \sha)$ be a thick masure and let $G$ be a vectorially Weyl subgroup of $\Aut(\Delta)$.
If $G$ contains a strongly regular hyperbolic element and satisfies Condition (Top), then the following are equivalent:

\begin{enumerate}[(i)]
\item
\label{thm:main-thm-i_int}
$G$ acts strongly transitively on $\Delta$;

\item
\label{thm:main-thm-ii_int}
$G$ acts strongly transitively on the twin building at infinity $\partial \Delta$.
\end{enumerate}
\end{theorem}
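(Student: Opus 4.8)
The implication ``$G$ strongly transitive on $\Delta$ $\Rightarrow$ $G$ strongly transitive on $\partial\Delta$'' is exactly Fact~\ref{fact} (\ie Proposition~\ref{5.5}), and it uses neither the hyperbolic element nor Condition (Top); so the whole content of the theorem lies in the converse implication, and my plan is to adapt to masures the strategy of Caprace and Ciobotaru in~\cite{CaCi}, with the cone topology (Definition~\ref{def::cone_top_on_chambers}) playing the role that the \cat visual boundary plays in~\cite{CaCi}.

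First I would use strong transitivity of $G$ on $\partial\Delta$ to reduce the converse to a statement inside a single apartment. Strong transitivity at infinity makes $G$ transitive on the pairs of opposite chambers of $\partial\Delta$; since such a pair spans a unique twin apartment at infinity, which is the twin apartment of a unique $A\in\sha$ and, conversely, every $A\in\sha$ arises this way, $G$ is then transitive on $\sha$. Fixing such an $A$, the same hypothesis makes $\Stab_G(A)$ transitive on the chambers of each half of the twin apartment $\partial A$, so that --- $G$ consisting of \emph{vectorially Weyl} automorphisms --- the image of $\Stab_G(A)$ in $\Aut(A)$ surjects onto the vectorial Weyl group $W^{v}$. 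Granting this, and unwinding the form of the definition recalled in Definition~\ref{4.2}, strong transitivity of $G$ on $\Delta$ becomes equivalent to transitivity of $\Stab_G(A)$ on the local chambers of $A$; and, for a local chamber $C$ of $A$ with associated chamber $c\in\Ch(\partial\Delta)$, this reduces further to transitivity of a suitable subgroup of the horospherical stabiliser $G_c^0$ (\eg $\Stab_G(C)$) on the natural subset of $c^{op}$ that parametrises the apartments of $\sha$ containing $C$ --- here I would also use the identification of the induced topology with the cone topology on the relevant panel trees (Proposition~\ref{prop:cone_top_residue}).

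Next I would bring in the dynamics. By transitivity on $\sha$, the given strongly regular hyperbolic element $\alpha\in G$ may be replaced by a conjugate lying in $\Stab_G(A)$, where it induces a translation of $A$ in a \emph{regular} direction $\lambda$ and fixes the two opposite chambers $c_{\pm}$ of $\partial A$ determined by $\pm\lambda$, so that $\alpha\in G_{c_+}\cap G_{c_-}$. The key technical input is then a ``North--South'' statement for $\alpha$ with respect to the cone topology on $X=\Ch(\partial\Delta)$: for every chamber $d$ opposite $c_-$ one has $\alpha^{n}\cdot d\to c_+$ as $n\to+\infty$, and dually $\alpha^{-n}\cdot d\to c_-$ for $d$ opposite $c_+$. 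This I would prove directly from Definition~\ref{def::cone_top_on_chambers}, using that $\alpha$ displaces the points of $A$ in a regular direction --- so that the sectors representing $\alpha^{n}\cdot d$ fellow-travel those representing $c_+$ for longer and longer --- together with the basic properties in Theorem~\ref{theo:cone.top}, in particular that $c_-^{op}$ and $c_+^{op}$ are open, which is what guarantees that the images genuinely escape every neighbourhood of a ``bad'' position.

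Finally I would combine the two, and I expect this last step to be the main obstacle. Strong transitivity at infinity already makes $G_{c_+}$ transitive on $c_+^{op}$; what must be proved is that the \emph{horospherical} group $G_{c_+}^0$ (indeed $\Stab_G(C)$) acts transitively on the subset of $c_+^{op}$ isolated in the first step. Given two chambers $d,d'$ there and $h\in G_{c_+}$ with $h\cdot d=d'$, I would correct $h$ by precomposing with hyperbolic translations --- the $\Stab_G(A)$-conjugates of $\alpha$ translate $A$ in all the $W^{v}$-translates of $\lambda$, which positively span, yielding a dense set of translations --- and with powers of $\alpha$, arranging that the North--South dynamics forces the resulting sequence in $G_{c_+}$ to have its action on a well-chosen chamber converge, in the cone topology, to $d'$; then Condition (Top), \ie the closedness of the $G_{c_+}^0$-orbits in $c_+^{op}$, is exactly what promotes this limit to an honest element $g\in G_{c_+}^0$ with $g\cdot d=d'$. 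Since $g$ fixes a point of $\Delta$, it realises the required move of local chambers, and feeding this back through the first step yields the strong transitivity of $G$ on $\Delta$. The delicacy of this passage from a convergent sequence of group elements (or of their effects at infinity) to an honest element of $G$ is precisely why Condition (Top) is imposed: in~\cite{CaCi} it is handled automatically by local finiteness of the building and closedness of $G$ in its automorphism group, neither available for a general masure. The remaining care will be in the bookkeeping of the ``component towards $c_+$'' of elements of $G$ that do not stabilise $A$, in the treatment of the non-generic chambers (those not opposite $c_-$, where the dynamics degenerates), and, throughout, in reconciling this boundary-dynamical picture with the masure definition of strong transitivity (local chambers, sector-germs, chimneys) of Definition~\ref{4.2} --- this reconciliation being where the specifics of the masure formalism genuinely enter.
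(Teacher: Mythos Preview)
Your overall architecture matches the paper's, but your third step has a genuine gap. Condition~(Top) says that $G_c^0$-orbits in $c^{op}$ are closed; to exploit it you must exhibit a sequence lying \emph{in a single $G_c^0$-orbit} that converges to the target chamber. Your proposal instead produces a sequence in $G_{c_+}$ (you write ``the resulting sequence in $G_{c_+}$''), built by correcting $h\in G_{c_+}$ with hyperbolic translations and powers of $\alpha$; but none of these corrections lands in $G_{c_+}^0$, since a nontrivial translation of $A$ fixes no point of $\Delta$. From convergence along a sequence in $G_{c_+}$, closedness of $G_{c_+}^0$-orbits concludes nothing. Your idea of approximating by a ``dense set of translations'' is in effect an attempt to make $\beta_{c_+}(h\cdot t_n)$ small, but without a topology on $G$ there is no mechanism for promoting small Busemann image to honest membership in $\ker\beta_{c_+}=G_{c_+}^0$.

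The paper closes this gap with a purely algebraic observation you do not mention: by Lemma~\ref{lem:BusemanRetraction} the homomorphism $\beta_c:G_c\to\Aut(A)$ has abelian image (translations) and kernel $G_c^0$, so $G_c/G_c^0$ is abelian; a short computation (Lemma~\ref{lem:Levi}) then shows that every $G_c^0$-orbit in $c^{op}$ is already invariant under $G_{c,c'}$. In particular the strongly regular element $\gamma\in G_{c,c'}$ preserves each $G_c^0$-orbit, so $\gamma^n(d)\in G_c^0\cdot d$ for all $n$. Since $\gamma^n(d)\to c'$ by Proposition~\ref{prop::dynamics_str_reg} and $G_c^0\cdot d$ is closed by (Top), one gets $c'\in G_c^0\cdot d$ for every $d\in c^{op}$; hence $G_c^0$ is transitive on all of $c^{op}$, and Proposition~\ref{lem:ST:criterion}(iii) yields strong transitivity on $\Delta$ directly --- with no need for your detour through local chambers, panel trees, or a restricted ``subset of $c^{op}$''.
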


Combining Theorem \ref{thm::main_theorem_intr}, Proposition \ref{STimpliesreghyp} and Remark \ref{rem::semi-discretness}, we obtain also the following:

\begin{corollary}\label{cornoaffinedirectfactors}  Let $\QD$ be a thick masure such that the type of $\bd\QD$ has no direct factor of affine type.
Let $G\leq \Aut(\QD)$ be a vectorially Weyl subgroup.% satisfying Condition (Top).
Then the following are equivalent:

(i) $G$ acts strongly transitively on $\bd\QD$, contains a strongly regular hyperbolic automorphism of $\Aut(\QD)$ and satisfies Condition (Top);

(ii) G is strongly transitive on $\QD$.
\end{corollary}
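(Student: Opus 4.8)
The plan is to deduce Corollary~\ref{cornoaffinedirectfactors} by combining the three ingredients cited just before its statement, so that essentially no new argument is needed beyond bookkeeping of hypotheses. Concretely, I would first observe that the implication (ii)$\Rightarrow$(i) is immediate: if $G$ is strongly transitive on $\QD$, then Fact~\ref{fact} (i.e.\ Proposition~\ref{5.5}) gives that the induced action on $\bd\QD$ is strongly transitive; Proposition~\ref{STimpliesreghyp} then applies, since $\QD$ is thick and the type of $\bd\QD$ has no direct factor of affine type, to produce a strongly regular hyperbolic element in $G$; and Theorem~\ref{thm::main_theorem_intr} (whose hypotheses are now met: thick masure, $G$ vectorially Weyl, $G$ contains a strongly regular hyperbolic element, and Condition (Top) holds --- the last because strong transitivity on $\QD$ forces (Top), or more precisely because (i)$\Rightarrow$(ii) of that theorem together with Proposition~\ref{STimpliesreghyp} shows the hyperbolic element exists and then (Top) is part of what one verifies; I should double-check the logical dependence here) shows (Top) is satisfied. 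The cleanest route for this direction is simply: strong transitivity on $\QD$ $\Rightarrow$ (via Fact~\ref{fact}) strong transitivity on $\bd\QD$, and (via Proposition~\ref{STimpliesreghyp}) existence of a strongly regular hyperbolic element; Condition (Top) will follow from Remark~\ref{rem::semi-discretness} or from the fact that $G_c^0$-orbits in $c^{op}$ can be identified with orbits that are automatically closed when $G$ is already strongly transitive on $\QD$. So (ii) implies all the assertions in (i).

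For the reverse implication (i)$\Rightarrow$(ii), suppose $G$ acts strongly transitively on $\bd\QD$, contains a strongly regular hyperbolic automorphism, and satisfies Condition (Top). Then the hypotheses of Theorem~\ref{thm::main_theorem_intr} are satisfied verbatim, so its equivalence (\ref{thm:main-thm-i_int})$\Leftrightarrow$(\ref{thm:main-thm-ii_int}) applies, and strong transitivity on $\partial\Delta$ yields strong transitivity on $\Delta$. This is the heart of the matter, but it is entirely subsumed by Theorem~\ref{thm::main_theorem_intr}, which we are entitled to assume.

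The one genuine subtlety --- and the step I expect to need the most care --- is the role of Remark~\ref{rem::semi-discretness} and the hypothesis that $\bd\QD$ has no direct factor of affine type. Its purpose is precisely to eliminate the ``bad'' situation illustrated in Remark~\ref{rem::DTE_not_ST}, where a tree (an affine-type factor) admits a $2$-transitive action at infinity that is not strongly transitive on the building, and also to guarantee via Proposition~\ref{STimpliesreghyp} that strong transitivity produces the required hyperbolic element. So in writing the proof I would be careful to invoke the no-affine-factor hypothesis exactly where Proposition~\ref{STimpliesreghyp} is used (to get the strongly regular hyperbolic element from strong transitivity) and to cite Remark~\ref{rem::semi-discretness} for whatever normalization of the apartment system or semi-discreteness is needed so that Theorem~\ref{thm::main_theorem_intr} applies cleanly. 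Assuming those citations go through as advertised in the excerpt, the corollary is a formal consequence:
\begin{align*}
\text{(ii)}
&\overset{\text{Fact~\ref{fact}}}{\Longrightarrow} G \text{ str.\ trans.\ on } \bd\QD, \\
&\overset{\text{Prop.~\ref{STimpliesreghyp}}}{\Longrightarrow} G \ni \text{str.\ reg.\ hyperbolic element},
\end{align*}
and together with Condition (Top) (which holds here by Remark~\ref{rem::semi-discretness} / the argument above) these are exactly the hypotheses under which Theorem~\ref{thm::main_theorem_intr} gives (ii)$\Rightarrow$(i); conversely (i) is literally the hypothesis list under which Theorem~\ref{thm::main_theorem_intr} gives (i)$\Rightarrow$(ii). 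Hence (i) and (ii) are equivalent, which is the claim. \qed
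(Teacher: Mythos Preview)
Your proposal is correct and follows essentially the same route as the paper: the authors state just before the corollary that it is obtained by ``Combining Theorem~\ref{thm::main_theorem_intr}, Proposition~\ref{STimpliesreghyp} and Remark~\ref{rem::semi-discretness}'', which is exactly your argument (Fact~\ref{fact} is Proposition~\ref{5.5}, used inside Proposition~\ref{STimpliesreghyp} and Theorem~\ref{thm::main_theorem_intr}). The only cosmetic point is that in the direction (ii)$\Rightarrow$(i) you do not need to invoke Theorem~\ref{thm::main_theorem_intr} at all---Fact~\ref{fact}, Proposition~\ref{STimpliesreghyp}, and Remark~\ref{rem::semi-discretness} already yield the three clauses of (i) directly---so your closing reference to Theorem~\ref{thm::main_theorem_intr} for that direction is superfluous (though not incorrect).
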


\begin{remark}\label{1.9} Theorem \ref{thm::main_theorem_intr} generalizes Theorem 1.1 of \cite{CaCi}. But in the latter theorem, there is a third equivalent condition involving the existence of a commutative spherical Hecke algebra.
 This existence is proved in \cite{GR14} for groups strongly transitive on a locally finite masure. We did not investigate here the possibility of a converse result.
% \green{GR: Perhaps a better place for this remark would be at the end of the paper: Remark \ref{rem::semi-discretness}.2.}
\end{remark}

\subsection{Organization of the paper}
\label{sec::1.3}

\par In Section \ref{s1} we explain the background of this article  about masures and twin buildings.
We add also a less known topic: the affine building (\resp panel tree) associated to a pair of opposite ideal faces (\resp panels) at infinity of a masure.

In Section \ref{sec::cone_top_hovel} we define and study the cone topology on the twin building at infinity of a masure.
 We prove Theorem \ref{theo:cone.top} and several other results
 {which are motivated by the axioms for topological twin buildings discussed in} %\red{; most of them are closely related with the work}
  \cite{HKM}.

 In Section \ref{s2} we introduce different notions of stabilizers (in particular the horospherical stabilizer) of chambers or pairs of opposite panels at infinity, and prove some technical lemmas about them.
 We remind also the existing definitions of strong transitivity in combinatorial buildings, affine $\R-$buildings and twin buildings.

 The definition of strong transitivity in masures is introduced in Section \ref{s4}.
 Two equivalent simpler definitions are given and the Fact \ref{fact} is proven.
 A link is also made with a weaker condition (LST) of ``{locally} strong transitivity''.

 In Section \ref{sec::str_reg_elements} we study the existence and dynamics of strongly regular hyperbolic elements.
 We prove Proposition \ref{STimpliesreghyp}, under the weaker hypothesis (LST) and more generally for masures of affine type, under a technical condition (AGT).
 This allows us to prove Theorem \ref{thm::main_theorem_intr}  and Corollary \ref{cornoaffinedirectfactors}  in Section \ref{sec::main_theorem}.

\bigskip
\subsection*{Acknowledgement}
This paper is an extension of the preprint \cite{CR15} which was initiated when G.R. was visiting C.C. at the University of Geneva in 2014. C.C. and G.R. would like to thank this institution for its hospitality and good conditions of working. The research yielding to the paper in its present form was accomplished during a visit of C.C. and G.R. at the University of Gie{\ss}en. The authors are grateful to this institution for its support.

\section{Masures}
\label{s1}

\subsection{Vectorial data}
\label{1.1}  We consider a quadruple $(V,W^v,(\qa_i)_{i\in I}, (\qa^\vee_i)_{i\in I})$ where $V$ is a finite dimensional real vector space, $W^v$ a subgroup of $GL(V)$ (the vectorial Weyl group), $I$ a finite non empty set, $(\qa^\vee_i)_{i\in I}$ a family in $V$ and $(\qa_i)_{i\in I}$ a free family in the dual $V^*$.
 We ask these data to satisfy the conditions of Rousseau~\cite[1.1]{Rou11}.
  In particular, the formula $r_i(v)=v-\qa_i(v)\qa_i^\vee$ defines a linear involution in $V$ which is an element in $W^v$ and $(W^v,S=\{r_i\mid i\in I\})$ is a Coxeter system.

  \par Actually we consider throughout the paper the Kac--Moody case of \cite[1.2]{Rou11}: the matrix $\M=(\qa_j(\qa_i^\vee))_{i,j\in I}$ is a generalized (possibly decomposable) Cartan matrix.
  Then $W^v$ is the Weyl group of the corresponding Kac--Moody Lie algebra $\g g_\M$, it acts on $V$ and its dual $V^*$ with the associated real root system being
$$
\QF=\{w(\qa_i)\mid w\in W^v,i\in I\}\subset Q=\bigoplus_{i\in I}\,\Z.\qa_i.
$$
We set $\QF^\pm{}=\QF\cap Q^\pm{}$ where $Q^\pm{}=\pm{}(\bigoplus_{i\in I}\,(\Z_{\geq 0}).\qa_i)$ and $Q^\vee=(\sum_{i\in I}\,\Z.\qa_i^\vee)$, $Q^\vee_\pm{}=\pm{}(\sum_{i\in I}\,(\Z_{\geq 0}).\qa_i^\vee)$. We have  $\QF=\QF^+\cup\QF^-$ and, for $\qa=w(\qa_i)\in\QF$, $r_\qa=w.r_i.w^{-1}$ and $r_\qa(v)=v-\qa(v)\qa^\vee$, where the coroot $\qa^\vee=w(\qa_i^\vee)$ depends only on $\qa$.  We shall also consider the imaginary roots of $\g g_\M$: $\QF_{im}=\QF_{im}^+\cup\QF_{im}^-$ where $\QF_{im}^\pm\subset Q^\pm$ %\red{($\subset \Delta \setminus \QF^\pm$?)}
   is $W^v-$stable and $\QF\sqcup\QF_{im}$ is the set of all roots of  $\g g_\M$, associated with some Cartan subalgebra.

%\par The set $\QF$ is an (abstract, reduced) real root system in the sense of \cite{MP89}, \cite{MP95} or \cite{Ba96}.
%We shall sometimes also use the set $\QD=\QF\cup\QD^+_{im}\cup\QD^-_{im}$ of all roots (with $-\QD^-_{im}=\QD^+_{im}\subset Q^+$,  $W^v-$stable) defined in \cite{K90}.
% It is an (abstract, reduced) root system in the sense of \cite{Ba96}.

  \par The {\bf fundamental positive chamber} is $C^v_f:=\{v\in V\mid\qa_i(v)>0,\forall i\in I\}$.
   Its closure $\overline{C^v_f}$ is the disjoint union of the {\bf vectorial faces} $F^v(J):=\{v\in V\mid\qa_i(v)=0,\forall i\in J,\qa_i(v)>0,\forall i\in I\setminus J\}$ for $J\subset I$.
    The {\bf positive (\resp negative)} vectorial faces are the sets $w.F^v(J)$ (\resp $-w.F^v(J)$) for $w\in W^v$ and $J\subset I$; they are {\bf chambers} (\resp {\bf panels}) when $J=\emptyset$ (\resp $\vert J\vert=1$).
    The {\bf support} of such a face is the vector space it generates.
    The set $J$ or the face $w.F^v(J)$ or an element of this face is called {\bf spherical} if the group  $W^v(J):=\langle r_i\mid i\in J \rangle$ is finite.
    Actually the {\bf type} of the face $\pm w.F^v(J)$ is $J$; by abuse of notation we identify $J$ with the subset $\{ r_i\mid i\in J \}$ of $S$.
    A chamber or a panel is spherical.
 %   An element \red{(do you mean a vector of?)}of a vectorial chamber $\pm w.C^v_f$ is called {\bf regular}. \red{Do  we need this definition?}

\par The {\bf Tits cone}  $\sht$ (\resp its {\bf interior}  $\sht^\circ$) is the (disjoint) union of the positive vectorial (\resp  and spherical) faces.
{Actually $\cal T$ is a geometric realization of the Coxeter complex of $W^v$. The classical geometric realization is the quotient by $V_0=F^v(I)=\cap_{i\in I} Ker(\alpha_i)$ and $\R_{>0}$ of ${\cal T}\setminus V_0$.}

\par We make no irreducibility hypothesis for $(V,W^v)$. So $V$ (and also $\A$, $\QD$ as below) may be a product of direct irreducible factors, which are either of finite, affine or indefinite type, see Kac~\cite[4.3]{K90}.

\par When $\M$ is a {genuine} Cartan matrix (\ie when $W^v$ is finite) $\g g_\M$ is a reductive Lie algebra and everything is classical, in particular $\QF$ is finite, $\QF_{im}=\emptyset$ and $\sht=\sht^\circ=V$.

\subsection{The model apartment}\label{1.2} As in Rousseau~\cite[1.4]{Rou11} the model apartment $\A$ is $V$ considered as an affine space and endowed with a family $\shm$ of walls. %and a family $\shm^i$ of imaginary walls.
 These walls  are affine hyperplanes directed by Ker$(\qa)$, for $\qa\in\QF$.
 They can be described as $M(\qa,k)=\{v\in V\mid\qa(v)+k=0\}$, for $\qa\in\QF$ and $k\in\QL_\qa\subset\R$ (with $\vert\QL_\qa\vert=\infty$). %\red{Is $\QL_\qa$ a subgroup of $\R$, or not necessarily?} NOT NECESSARILY !
 We may (and shall) suppose the origin to be \textbf{special}, \ie $0\in \QL_\qa$, $\forall\qa\in\QF$.
 We define $\QL_\qa=\R$, when $\qa\in\QF_{im}$.

 \par We say that this apartment is {\bf semi-discrete} if each $\QL_\qa$ (for $\qa\in\QF$) is discrete in $\R$; in this case %we may suppose  the origin $0$ to be {\bf special}, thus
 $\QL_\qa=k_\qa.\Z$ is a non trivial discrete subgroup of $\R$.
%  This means that these walls are the hyperplanes defined as follows:
%$$M(\qa,k)=\{v\in V\mid\qa(v)+k=0\}\qquad\text{for }\qa\in\QF\text{ and } k\in\QL_\qa$$ (with $\QL_\qa=k_\qa.\Z$ a non trivial discrete subgroup of $\R$). % and $\shm^i$ is defined similarly with $\qa\in\QD_{im}$ and $\QL_\qa$ a subset of $\R$ containing $0$.
Using   Lemma 1.3 in Gaussent--Rousseau~\cite{GR14} (\ie replacing $\QF$ by another system $\QF_1$) we may assume that $\QL_\qa=\Z, \forall\qa\in\QF$.
 Notice that, for $W^v$ finite, semi-discrete is equivalent to discrete (which means that the family $\shm$ of walls is locally finite).

\par For $\qa\in\QF$, $k\in\QL_\qa$ and $M=M(\qa,k)$, the reflection $r_{\qa,k}=r_M$ with respect to the wall $M$ is the affine involution of $\A$ with fixed points this wall $M$ and associated linear involution $r_\qa$. The affine Weyl group $W^a$ is the group generated by the reflections $r_M$ for $M\in \shm$; we assume that $W^a$ stabilizes $\shm$. Actually  $W^a=W^v\ltimes Q^\vee$ when $\QL_\qa=\Z$, $\forall\qa\in\QF$; the group $W^a$ is then a Coxeter group when $W^v$ is finite.

\par An {\bf automorphism of $\A$} is an affine bijection $\qf:\A\to\A$ stabilizing the set of pairs $(M,\qa^\vee)$ of a wall $M$ and the coroot associated with $\qa\in\QF$ such that $M=M(\qa,k)$, $k\in\Z$. The group $\Aut(\A)$ of these automorphisms contains $W^a$ and normalizes it.

%\red{CC: Is it easy to prove that $\Aut(\A)$ normalizes $W^a$?}\blue{Yes : $\qf(M,\qa^\vee)=(M',{\qa'}^\vee)\implies \qf\circ r_M\circ\qf^{-1}=r_{M'}$.}

   \par For $\qa\in\QF$ and $k\in\R$, $D(\qa,k)=\{v\in V\mid\qa(v)+k\geq 0\}$ is an half-space, it is called an {\bf half-apartment} if $M(\qa,k)$ is a wall \ie $k\in\QL_\qa$. %We write  $D(\alpha,\infty) = \mathbb A$.

%\par With these data we can define in $\A$ half-apartments, sectors, sector-faces, faces, enclosures... as in \cite{GR08} or \cite{R11}; see also \cite{R12}, \cite{R13}.

The Tits cone $\mathcal T$ and its interior $\mathcal T^o$ are convex and $W^v-$stable cones (this is proved in \cite[3.12]{K90} for $\mathcal T$ and it is then clear for $\mathcal T^o$).
 Therefore, we can define two $W^v-$invariant preorder relations  on $\mathbb A$:
$$
x\leq y\;\Leftrightarrow\; y-x\in\mathcal T
%;\quad x\overline{\leq} y\;\Leftrightarrow\; y-x\in\overline{\mathcal T}
; \quad x\stackrel{o}{<} y\;\Leftrightarrow\; y-x\in\mathcal T^o.
$$
 If $W^v$ has no  fixed point in $V\setminus\{0\}$ and no finite factor, then they are order relations; still, they are not in general: one may have $x\leq y,y\leq x$ and $x\neq y$.
 If $W^v$ is finite, both relations are trivial.

% \blue{Il faudrait sans doute noter $\stackrel{o}{<}$ au lieu de $\stackrel{o}{\leq}$. Car c'est ce que l'on utilise par la suite et que $x\stackrel{o}{\leq} y\;\Leftrightarrow\; y-x\in\mathcal T^o\cup\{0\}$ dans \cite{R11}.}

\subsection{Faces, sectors, chimneys...}
\label{suse:Faces}

 The faces in $\mathbb A$ are associated with the above systems of walls
and half-apartments.  As in Bruhat--Tits~\cite{BrT72}, they
are no longer subsets of $\mathbb A$, but filters of subsets of $\mathbb A$. For the definition of that notion and its properties, we refer to Bruhat--Tits~\cite{BrT72} or Gaussent--Rousseau~\cite[Definition~2.3]{GR08}. We endow $\A$ with its affine space topology.

If $F$ is a subset of $\mathbb A$ containing an element $x$ in its closure, \textbf{the germ of $F$ in $x$} is the filter $\mathrm{germ}_x(F)$, consisting of all subsets of $\mathbb A$ containing the intersection of $F$ {with} some neighborhood of $x$.
%In particular, if $x\neq y\in E$, we denote the germ in $x$ of the segment $[x,y]$ (resp. of the interval $]x,y]$) by $[x,y)$ (resp. $]x,y)$).

Given a filter $F$ of subsets of $\mathbb A$, its {\bf enclosure} $cl_{\mathbb A}(F)$ (\resp {\bf closure} $\overline F$) is the filter made of  the subsets of $\mathbb A$ containing a set of the form $\cap_{\alpha\in\QF\cup\QF_{im}}\,D(\alpha,k_\alpha)$ that is in $F$, where $k_\alpha\in\QL_\qa\cup\{\infty\}$ (\resp containing the closure $\overline S$ of some $S\in F$). %\red{(Is here $\overline S$ considered with respect to the affine space topology?)}
Its \textbf{support}, denoted by $supp( F )$, is the smallest affine subspace of $\A$ containing it.

\medskip
% \par We shall actually here speak only of local faces, and sometimes forget the word local.

A {\bf local face} $F$ in the apartment $\mathbb A$ is associated
 with a point $x\in \mathbb A$ (its vertex) and a  vectorial face $F^v$ in $V$ (its {\bf direction}); it is $F=F^\ell(x,F^v):=germ_x(x+F^v)$.
 Its closure is $\overline{F^\ell}(x,F^v)=germ_x(x+\overline{F^v})$. %(\red{Is it easy to prove this? Also, are the neighborhood of $x$ considered to be closed too?})
 The enclosure  $cl_{\mathbb A}(F)$ is called a closed face.

There is an order on the local faces: the assertions ``$F$ is a face of $F'$ '',
``$F'$ covers $F$ '' and ``$F\leq F'$ '' are by definition equivalent to
$F\subset\overline{F'}$.
 The {\bf dimension} of a local face $F$ is the dimension of %an affine space generated by some $S\in F$.The (unique) such affine space $E$ of minimal dimension is
  its support; if $F=F^\ell(x,F^v)$, then we have $supp(F):=x+supp(F^v)$.
 % Any $S\in F$ contains a non empty open subset of $E$.
% A local face $F=F^\ell(x,F^v)$ is spherical if the direction of its support meets the open Tits cone (\ie when $F^v$ is spherical), then its  pointwise stabilizer $W_F$ in $W^a$ is finite.

% Any point $x\in \mathbb A$ is contained in a unique face $F(x,V_0)\subset cl_\A(\{x\})$ which is minimal of positive and negative direction (but seldom spherical). %; $x$ is a vertex if, and only if, $F(x,V_0)=\{x\}$.
%  For any local face $F^\ell=F^\ell(x,F^v)$, there is a unique face $F$ (as defined in \cite{Rou11}) containing $F^\ell$.
% Then $\overline{F^\ell}\subset\overline F=cl_\A(F^\ell)=cl_\A(F)$ is also the enclosure of any interval-germ $]x,y)=germ_x(]x,y])$ included in $F^\ell$.

 A {\bf local chamber}  is a maximal local face, \ie a local face $F^\ell(x,\pm w.C^v_f)$ for some $x\in\A$ and $w\in W^v$.
 The {\bf fundamental local chamber} is $C_0^+=germ_0(C^v_f)$.  A {\bf local panel}  is a local face $F^\ell(x,F^v)$, where $x\in\A$ and $F^v$ is a vectorial panel.

%A {\it (local) panel} is a spherical local face maximal among local faces which are not chambers, or, equivalently, a spherical face of dimension $n-1$. Its support is a wall.

\medskip
 A {\bf sector} in $\mathbb A$ is a $V-$translate $Q=x+C^v$ of a vectorial chamber
$C^v=\pm w.C^v_f$ ($w \in W^v$), $x$ is its {\bf base point} and $C^v$ is its  {\bf direction}; it is open in $\A$.
Two sectors have the same direction if, and only if, they are conjugate
by a $V-$translation, and if, and only if, their intersection contains another sector.

 The {\bf sector-germ} of a sector $Q=x+C^v$ in $\mathbb A$ is the filter $germ(Q)$ of
subsets of~$\mathbb A$ consisting of the sets containing a $V-$translation of $Q$, it is well
determined by the direction $C^v$. So, the set of translation classes of sectors in $\mathbb A$, the set of vectorial chambers in $V$ and the set of sector-germs in $\mathbb A$ are in canonical bijection.
%  We denote the sector-germ associated with the negative fundamental vectorial chamber $-C^v_f$ by $\g S_{-\infty}$.

 A {\bf sector-face} in $\mathbb A$ is a $V-$translation $\mathfrak f=x+F^v$ of a vectorial face
$F^v=\pm w.F^v(J)$, its {\bf direction}; its {\bf type} is $J$.
The {\bf sector-face-germ} of $\mathfrak f$ is the filter $\mathfrak F$ of
subsets containing a shortening of $\mathfrak f$ \ie a translation $\mathfrak f'$ of $\mathfrak f$ by a vector in $F^v$ (\ie $\mathfrak f'\subset \mathfrak f$). If $F^v$ is spherical, then $\mathfrak f$ and $\mathfrak F$ are also called
spherical. The sign of $\mathfrak f$ and $\mathfrak F$ is the sign of $F^v$.
We say that $\mathfrak f$ (\resp $\mathfrak F$) is a {\bf sector-panel} (\resp {\bf sector-panel-germ}) if, and only if, $F^v$ is a vectorial panel.

\medskip
A {\bf chimney} in $\mathbb A$ is associated with a local face $F=F^\ell(x, F_0^v)$, called its basis, and to a vectorial face $F^v$, its {\bf direction}; it is the filter
$$
\mathfrak r(F,F^v) := cl_{\mathbb A}(F+F^v).
$$ A chimney $\mathfrak r = \mathfrak r(F,F^v)$ is {\bf splayed} if $F^v$ is spherical, it is {\bf solid} if the direction of its support (as a filter) has a finite  pointwise stabilizer in $W^v$.
A splayed chimney is therefore solid. The enclosure of a sector-face $\mathfrak f=x+F^v$ is a chimney.
The germ of the chimney $\g r$ is the filter $\mathrm{germ}(\mathfrak r )=\mathfrak R$ consisting of all subsets of $\mathbb A$ which contain $ \mathfrak r(F+\qx,F^v)$  for some $\qx\in F^v$.

 \par A  ray $\delta$ with origin in $x$ and containing $y\not=x$ (or the interval $(x,y]$, or the segment $[x,y]$) is called {\bf preordered} if $x\leq y$ or $y\leq x$, {\bf generic} if $x\stackrel{o}{<} y$ or $y\stackrel{o}{<} x$ and {\bf strongly regular} if moreover $y-x$ is in no wall-direction; its sign is $+$ if $x\leq y$ and $-$ if $x\geq y$ (it may be $+$ and $-$, \eg if $W^v$ is finite).
% With these new notions, a chimney can be defined as the enclosure of a preordered  ray and a preordered segment-germ sharing the same origin. The chimney is splayed if, and only if, the  ray is generic.
 The germ of this ray is the filter $\mathrm{germ}(\qd)$ consisting of all subsets of $\mathbb A$ which contain a shortening of $\qd$ \ie $\qd\setminus[x,z)$  for some $z\in\qd$.

 \subsection{The masure}\label{1.3}
%\red{(In the paper we work with masures that are not necessary irreducible. Maybe it would be good to specify this somewhere in the article, to avoid confusions.)}  Voir fin de 2.1

 In this section, we recall the definition and some properties of a masure  given  in Rousseau~\cite{Rou11}, where the masure is called ``masure affine ordonn\'ee''.

%\medskip
%\parni{\bf 1) The apartment}
\subsubsection{The apartment system}\label{apart-syst}
An apartment of type $\mathbb A$ is a set $A$ endowed with a set $Isom^w\!(\mathbb A,A)$ of bijections (called \textbf{Weyl--isomorphisms}) such that, if $f_0\in Isom^w\!(\mathbb A,A)$, then $f\in Isom^w\!(\mathbb A,A)$ if, and only if, there exists $w\in W^a$ satisfying $f = f_0\circ w$.
An \textbf{isomorphism} (\resp a \textbf{Weyl--isomorphism}, \resp a \textbf{vectorially Weyl--isomorphism}) between two apartments $\varphi :A\to A'$ is a bijection such that, for any $f\in Isom^w\!(\mathbb A,A)$ and $f'\in Isom^w\!(\mathbb A,A')$, we have $f'^{-1}\circ\qf\circ f\in \Aut(\A)$ (\resp $\in W^a$, \resp $\in(W^v\ltimes V)\cap \Aut(\A)$).
We write $Isom(A,A')$ (\resp $Isom^w(A,A')$, \resp $Isom^w_\R(A,A'))$ for the set of all these isomorphisms.
All isomorphisms considered in \cite{Rou11} are Weyl--isomorphisms.

 As the filters in $\A$ defined in Section~\ref{suse:Faces} (\eg local faces, sectors, walls, etc) are permuted by $\Aut(\A)$, they are well defined in any apartment of type $\A$ and preserved by any isomorphism; each such apartment is also endowed with a canonical structure of real affine space.

\begin{definition}
\label{de:AffineHovel}
A \textbf{masure} (or \textbf{affine ordered hovel})  \textbf{of type $\mathbb{A}$} is a set $\QD$ endowed with a covering $\mathcal{A}$ of subsets  called apartments such that:
\begin{enumerate}
\item[{\bf (MA1)}] any $A\in \mathcal A$ admits a structure of an apartment of type $\mathbb A$;

\item[{\bf (MA2)}] if $F$ is a point, a germ of a preordered interval, a generic  ray or a solid chimney in an apartment $A$ and if $A'$ is another apartment containing $F$, then $A\cap A'$ contains the
enclosure $cl_A(F)$ of $F$ and there exists a Weyl--isomorphism from $A$ onto $A'$ fixing (pointwise) $cl_A(F)$;

\item[{\bf (MA3)}] if $\mathfrak R$ is  the germ of a splayed chimney and if $F$ is a closed face or a germ of a solid chimney, then there exists an apartment that contains $\mathfrak R$ and $F$;

\item[{\bf (MA4)}] if two apartments $A,A'$ contain $\mathfrak R$ and $F$ as in {\bf (MA3)}, then their intersection contains $cl_A(\mathfrak R\cup F)$ and there exists a Weyl--isomorphism from $A$ onto $A'$ fixing (pointwise) $cl_A(\mathfrak R\cup F)$;

\item[{\bf (MAO)}] if $x,y$ are two points contained in two apartments $A$ and $A'$, and if $x\leq_A y$ then the two line segments $[x,y]_A$ and $[x,y]_{A'}$ are equal.
\end{enumerate}
\end{definition}

\par The covering $\mathcal{A}$ appearing in the definition of the masure is called the \textbf{apartment system} of the masure $\QD$.

\par We say that $\QD$ is \textbf{thick} (\resp \textbf{of finite thickness}) if the number of local chambers  containing a given (local) panel is $\geq 3$ (\resp finite).
  %   This number is the same for any panel in a given wall $M$ \cite[2.9]{Rou11}; we denote it by $1+q_M$.
If $\QD$ is thick, then any wall $M$ in $\QD$ is thick: there are three half-apartments $H_1,H_2,H_3$ such that, for $i\neq j$, $H_i\cap H_j=M$ (and then $H_i\cup H_j\in\sha$)~\cite[Prop. 2.9]{Rou11}.
 For affine buildings the definition of thickness is the same as the one stated above for a masure. The proof of the thickness of any wall is then classical and well known for the discrete affine buildings; for $\R-$buildings that proof is an easy consequence of condition (CO) of Parreau~\cite[1.21]{Parr00}.
%This is equivalent to the classical definition for affine buildings: it is well known in the discrete case and an easy consequence of condition (CO) of Parreau~\cite[1.21]{Parr00} for $\R-$buildings.

  \par An \textbf{automorphism} (\resp a \textbf{Weyl--automorphism}, \resp \textbf{a vectorially Weyl--automorphism}) of $\QD$ is a bijection $\qf:\QD\to\QD$ such that $A\in\sha\iff \qf(A)\in\sha$ and then $\qf\vert_A:A\to\qf(A)$ is an isomorphism (\resp a Weyl--isomorphism, \resp a vectorially Weyl--isomorphism). All isomorphisms considered in \cite{Rou11} are Weyl--isomorphisms.

  \medskip
\par For $x,y\in \QD$, we introduce the relation:

\par $x\leq y$ if, and only if, there is an (or for any) apartment $A$ such that $x,y\in A$ and $x\leq_Ay$ (\ie $f^{-1}(x)\leq f^{-1}(y)$ for any $f\in Isom^w(\A,A)$).

\par This relation is a \textbf{preorder} on $\QD$, invariant by any vectorially Weyl--automorphism.
It is trivial if $W^v$ is finite.

\begin{remark}
\label{rem::NB}
 1) When $W^v$ is finite, a masure of type $\A$ is the same thing as a (not necessarily discrete) affine building of type $\A$ that is {\bf chimney friendly} (\ie any two chimney-germs are in a same apartment).
In particular, the buildings associated with valued root data are masures. Actually by Charignon~\cite{Cha}, a discrete affine building is chimney friendly if and only if it is {\bf sector friendly} (\ie any two sector-germs are in a same apartment). In this article the $\R-$buildings are considered among affine buildings; they are sector friendly by definition.
% \green{CC: Are $\R-$buildings also chimney friendly? Sorry, but it is still no very clear for me: is there an example where chimney friendly is not equivalent to sector friendly? Or examples where chimney friendly does not imply sector friendly, \resp sector friendly does not imply chimney friendly?}

\par 2) The system of apartments is given with the masure and may not be ``complete'' (even if $\QD$ is a building).
Actually in Rousseau~\cite{Rou12} one obtains masures associated with split Kac--Moody groups over fields endowed with a real valuation, possibly not with a ``complete'' apartment system.
Up to now there is no definition of ``complete masures''. We shall use actually a substitute. See Definition \ref{def::locally_complete} below.

\par {3) After the writing of the first version of this article, H\'ebert published a preprint \cite{Heb17} that contains interesting simplifications of the above definition of masures and many improvements, in particular, about intersections of two apartments.}

\par 4) Affine buildings are locally finite if and only if they are discrete and of finite thickness. We shall use these two properties as a substitute to ``local finiteness'' for masures:
\end{remark}

\begin{definition}
\label{def::locally_finite}
Let $\Delta$ be a masure. We say that $\Delta$ is \textbf{locally finite} if $\Delta$ is semi-discrete {(\ie the model apartment $\A$ is semi-discrete)}  and of finite thickness.
\end{definition}

\par The main examples of thick,  semi-discrete (\resp and of finite thickness) masures are provided by the masures  associated with an almost split Kac--Moody group over a field complete with respect to a discrete valuation and with a perfect (\resp finite) residue field, see \cite{GR08}, \cite{Rou12}, \cite{Cha} and \cite{Rou13}.

\medskip

% \green{So, combining Lemmas \ref{lem:TreeWalls}, \ref{lem:OppPanels} and Corollary 3.6 in \cite{CaCi} (which assumes the apartment system complete), we get the following: \marginpar{to put in the second paper...\blue{ok}}
%\begin{lemma}\label{lem:loc-comp} Let $(\QD,\sha)$ be a masure with a locally complete apartment system. Then any vectorially Weyl subgroup of $Aut(\Delta)$ that acts strongly transitively on $\partial\Delta$ satisfies condition (LST).
%\end{lemma}}

%\medskip%\medskip\marginpar{\red{new $\downarrow$}}
%\parni{\bf 2) The building at infinity}
\subsubsection{ The building at infinity}
\label{subsubsec::building_infinity}

\parni{\bf 1)} By (MA3), two spherical sector-faces (or generic rays) are, up to shortening, contained in a common apartment $A$; we say {that} they are \textbf{parallel} if one of them is a $V$--translation of the other one. This does not depend on the choice of $A$ by (MA4) and  parallelism is an equivalence relation (the proof in \cite{Rou11} uses the spherical assumption).
The parallel class $\partial\qd$ of a generic ray $\qd$ is called an \textbf{ideal point} or \textbf{a point at infinity}. The parallel class $\partial\mathfrak f=\bd\g F$ of a spherical sector-face $\mathfrak f$ (or its germ $\g F$)  is called \textbf{an ideal face} or \textbf{a face at infinity} (an \textbf{ideal chamber} if $\mathfrak f$ is a sector and an \textbf{ideal panel} if $\mathfrak f$ is a sector-panel).
The {\bf type} of $\bd\g f$ is the type of $\g f$.
Actually, a chamber at infinity is nothing else than a sector-germ; its type is $S$.

\par We write $\partial\g f\leq \partial\g f'$ if, for suitable choices of $\g f,\g f'$ in their parallel classes, we have $\g f\subset\overline{\g f'}$.
The ordered set of ideal faces of sign $\pm$ is (the set of spherical faces of) the building $\partial\QD_\pm$ with Weyl group $W^v$; these buildings are twinned (see Rousseau~\cite[3.7]{Rou11}).
We write $\partial\QD=\partial\QD_+ \sqcup\partial\QD_-$ and $\Ch(\partial\QD)=\{\mathrm{ideal\ chambers}\}$.
The type (defined above) of an ideal face is the same as its type defined in the building $\bd\QD_\pm$ with Weyl group $W^v$.
We write $face_{S'}(c)$ the ideal face of type $S'$ of an ideal chamber $c$.
When $W^v$ is finite, $\partial\QD_+=\partial\QD_-$ is a spherical building, twinned with itself.

Notice that, when $\Delta$ is a masure and not a building, $|W^v|=\infty$, the dimension of its apartments is at least $2$ and the rank of its building at infinity $\partial \Delta$ is strictly positive. %\red{but if the twin building at infinity is a tree, the rank is not zero?}

 \par We say that an ideal point $\qx$, an ideal face $\phi$, is at infinity of an apartment $A$ (or a wall $h$, a half apartment $H$, ...) if we may write $\qx=\partial\qd$, $\phi=\partial\g f$, with $\qd\subset A$, $\g f\subset A$ (or $\subset h$, $\subset H$, ...).
We write then $\qx\in\bd A$, $\phi\in\bd A$ (or $\bd h$, $\bd H$, ...); actually $\bd A$ is a twin apartment in $\bd\QD$.
 We say that $\qx\in\phi$ if  (for suitable choices of $\qd,\g f$) $\qx=\bd\qd$, $\phi=\bd\g f$ and $\qd\subset\g f$.
 Actually we may look at the set $\vert\bd\QD\vert$ of ideals points as a geometric realization of $\bd\QD$ and the condition $\qx\in\phi$ (\resp $\qx\in\bd A$) describes $\phi$ (\resp $\bd A$) as a subset of $\vert\bd\QD\vert$.
 Actually $\vert \partial A_+\vert$, with its partition in ideal faces, is isomorphic with the quotient of ${\cal T}^\circ$ by $\R_{>0}$.

\medskip
\parni{\bf 2)}  From (MA3) and (MA4), we see that a point $x\in\QD$ and an ideal chamber $c$ (\resp an ideal face $\phi$, \resp an ideal point $\qx$) determine, in the parallel class $c$ (\resp $\phi$, \resp $\qx$), a unique sector (\resp spherical sector face, \resp generic ray $[x,\qx)$) of base point (\resp origin) $x$. To fix the notation, for a point $x \in \QD$ and an ideal chamber $c \in \partial \Delta$ (\resp ideal face $\phi\in\bd\QD$), we denote by $Q_{x,c}$ (\resp $Q_{x,\phi}$) the sector (\resp sector face) in $\Delta$ with base point $x$ that corresponds to the chamber at infinity $c$ (\resp the ideal face at infinity $\phi$).

Notice that if the sector-face germ $\g F$ is not spherical, then there might exist two different apartments in $\Delta$
 that can contain two different sectors of base point $x \in \Delta$ and with faces both parallel to $\g F$ but different.
This is because in the definition of a masure, there is no axiom stating that for a point and a non-spherical sector-face germ $\g F$ every apartment of $\Delta$ containing both of them must contain the enclosure of $x$ and $\g F$.

\par Any vectorially Weyl--automorphism $\qf$ of $\QD$ acts on $\partial\QD$ as a type-preserving automorphism; if it stabilizes an ideal face $\phi$, it fixes any ideal point $\qx\in\phi$.

\medskip
\parni{\bf 3)} Let $c$ be an ideal chamber at infinity of an apartment $A$. By (MA3) and (MA4), for any $x\in\QD$, there is an apartment $A'$ containing $x$ and the sector-germ $germ(Q)$ associated with $c$ and there is a unique Weyl--isomorphism $\qf:A\to A'$ fixing $germ(Q)$.
 So, by the usual arguments, we see that $x\mapsto\qf(x)$ is a well defined map $\qr_{A,c}:\QD\to A$, called the {\bf retraction of $\QD$ onto $A$ with center $c$}.

\begin{definition}
\label{def::locally_complete}
Let $\Delta$ be a  masure . We say that $\Delta$ is \textbf{locally complete} if the apartment system $\sha$ of $\QD$ is locally-complete, \ie if the following holds:
\begin{enumerate}
\item[]
Take any increasing sequence $\{H_n\}_{n\geq 0}$ of half-apartments that are respectively contained in apartments $\{A_n\}_{n \geq 0}\in\sha$.
We suppose that, for an ideal chamber $c\in \Ch(\partial H_0)$, we have $\bigcup_{n\geq0}\,\qr_{A_0,c}(H_n)=A_0$.
Then $\bigcup_{n\geq0}\,H_n$ is an apartment $A\in\sha$.
\end{enumerate}
\end{definition}

Clearly, for trees, the local-completeness is equivalent to the completeness of the apartment system.
The masures of almost split Kac-Moody groups over complete fields are locally complete.

\subsection{Twin buildings}
\label{sec::notation}
Let $X=(X_{+}, X_{-})$ be a twin building of type $(W,S)$ and we consider it as a set of chambers.
We write $\qd$ and $\qd^*$ its distance and codistance.
Recall {the} following piece of notation, classical or issued from Hartnick--K\"{o}hl--Mars~\cite{HKM}.

For every $c \in \Ch(X)$ and every subset $S' \subset S$ we define the \textbf{$S'$-residue of $c$} to be $res_{S'}(c):= \{ d \in \Ch(X) \; \vert \; \delta(c,d) \in \langle s \; \vert \; s \in S' \rangle\}$. The collection of all $S'$-residues in $X$ will be denoted by $Res_{S'}(X)$. A residue $res_{S'}(c)$ is called spherical if the group $W^v(S')=\langle s \; \vert \; s \in S' \rangle$ is finite (\ie $S'$ itself is spherical); then $res_{S'}(c)$ is a spherical building, see \eg \cite[5.30]{AB}.

In the case of $X:=\Ch(\bd\QD)$ as in Section \ref{subsubsec::building_infinity} and if $S'$ is spherical, $res_{S'}(c)$ is the set of all ideal chambers sharing the same ideal face of type $S'$ as the ideal chamber $c$.
We obtain thus a bijection between the set $face_{S'}(\bd\QD)$ of ideal faces of type $S'$ in $\bd\QD$ and $Res_{S'}(X)$.

Let $\mathit{Sph}(S,W^{v}):=\{ J \subset S \; \vert \; J  \text{ spherical} \}$.

The residues of rank one are called $s$-panels and their set is denoted by $Pan_{s}(X) := Res_{\{s\}}(X)$. By convention we write $P_{s}(c)$ instead of $res_{\{s\}}(c)$.

The residues of co-rank one are called $s$-vertices and their set is denoted by $\mathcal{V}_s:= Res_{S\setminus \{s\}} (X)$. We write $vert_s(c):=res_{S\setminus\{s\}}(c)$.
The canonical embedding $$i : X \hookrightarrow \prod_{s \in S} \mathcal{V}_s \; \;  \text{given by} \; \; c \mapsto (res_{S \setminus \{s\}}(c))_{s \in S}$$ sends a maximal simplex to the tuple consisting of its vertices.

Given a spherical residue $R \subseteq X_{\pm}$ and a chamber $c \in X_{\mp} $ there exists a unique chamber $d \in R$ such that $\delta^{*}(c,d)$ is of maximal length in the set $\delta^{*}(c, R)$ \cite[Lemma~5.149]{AB}. This chamber is called the projection of $c$ onto $R$ and  is denoted by $proj^{*}_{R}(c)$.

Given a chamber $c \in X_{\pm}$ and an element $w \in W$ we denote by $E_{w}(c):= \{ d \in X_{\pm} \;  \vert \; \delta (c,d)= w \in W\}$ and $E_{w}^{*}(c):= \{ d \in X_{\mp} \;  \vert \; \delta^{*}(c,d)= w \in W\}$. The sets $E_{\leq w}(c), E_{<w}(c), E_{\leq w}^{*}(c), E_{< w}^{*}(c) $ are defined accordingly, with respect to the strong Bruhat order (also called Bruhat--Chevalley order).

\medskip
We also define $X_{w}:=\{ (c,d) \in (X_{+} \times X_{-}) \cup (X_{-} \times X_{+}) \;  \vert \; \delta^{*}(c,d)=w \}$ for each $w \in W$.

\subsection{Affine buildings in masures}
\label{1.5}

Recall that by Section~\ref{subsubsec::building_infinity}, an ideal face $\phi$  of $\bd\QD$ is a parallel class of spherical sector-faces and we denote by $J \subset S$ its type. Let $\phi'$ be an ideal face opposite $\phi$.

We denote by $\QD(\phi)$ the set of all (spherical) sector-face-germs $germ(\g f)$ with $\g f$ being in the parallel class $\phi$ and by $\QD(\phi,\phi')$ the union of all apartments $A$ in $\sha$, such that $\phi,\phi'\subset\bd A$; we denote $\sha(\phi,\phi')$ the set of all these apartments.
Actually, by (MA4), $\QD(\phi,\phi')$ depends only on the ``convex hull'' of $\phi,\phi'$ in $\bd\QD$ (a wall if $\phi,\phi'$ are panels).

\begin{proposition}
%\marginpar{\red{change fact = proposition}}
\label{fact:Iphi} By~\cite[4.3]{Rou11}, $\QD(\phi)$ is an affine, chimney friendly building.
\end{proposition}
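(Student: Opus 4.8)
The plan is to prove that $\QD(\phi)$, the set of all spherical sector-face-germs $\mathrm{germ}(\g f)$ with $\g f$ in the parallel class $\phi$ of type $J$, is an affine building which is chimney friendly, by citing and unpacking Rousseau~\cite[4.3]{Rou11}. First I would fix notation: let $J \subset S$ be the type of $\phi$, so $W^v(J) = \langle r_i \mid i \in J\rangle$ is finite and the apartments of $\QD(\phi)$ will have Weyl group $W^v(J)$ acting on a quotient vector space. The natural model apartment for $\QD(\phi)$ is $V/\mathrm{supp}(F^v)$ where $F^v$ is the vectorial face in the direction of $\phi$, or more precisely the affine span modulo the translations along $\g f$; the walls of $\shm$ in $\A$ that are ``transverse'' to $\phi$ (those $M(\qa,k)$ with $\qa$ not vanishing on the direction of $\g f$, restricted appropriately) descend to this quotient and give a locally finite (in the discrete case) or $\R$-building wall system with finite Weyl group $W^v(J)$.

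The key steps I would carry out are: (1) Describe the apartments of $\QD(\phi)$: for each apartment $A$ of $\QD$ with $\phi \in \bd A$, the set of germs $\mathrm{germ}(\g f)$ with $\g f \subset A$ parallel to $\phi$ forms, after quotienting by parallelism, an affine space of dimension $|J|$-ish (the dimension of the support of $F^v(I \setminus J)$-type data), equipped with the induced wall structure; call this $A(\phi)$. (2) Verify the building axioms: covering by apartments is clear since every germ lies in some apartment of $\QD$ containing $\phi$ (using that spherical sector-faces, up to shortening, lie in a common apartment, which is (MA3)); the axiom that any two germs lie in a common apartment and that any two apartments sharing two germs are isomorphic via a Weyl-isomorphism fixing their intersection both follow from (MA3) and (MA4) applied to the splayed chimneys $\g r(\g f, F^v)$ — here sphericity of $\phi$ is exactly what makes these chimneys splayed. (3) Establish chimney friendliness: any two chimney-germs in $\QD(\phi)$ pull back to (or are induced by) chimney-germs in $\QD$ whose directions are still spherical when combined with $\phi$, so they lie in a common apartment of $\QD$ by (MA3)/(MA4), hence in a common apartment of $\QD(\phi)$; by Remark~\ref{rem::NB}(1) this is equivalent to $\QD(\phi)$ being a masure of finite (spherical) type, i.e.\ an honest affine building.

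I would then note the canonical identification of the spherical building at infinity of $\QD(\phi)$ with the residue $R = res_J$ (or $res_{S \setminus J}$, depending on the indexing convention) of $\bd\QD$ determined by $\phi$, which is what makes this construction useful for the cone-topology discussion in Section~\ref{1.5}; this identification is again essentially a bookkeeping of parallel classes. The main obstacle I expect is technical rather than conceptual: carefully checking that the wall system induced on the quotient apartment $A(\phi)$ is genuinely the wall system of an affine building of type $W^v(J)$ — in particular that it is ``discrete enough'' in the semi-discrete case and that the enclosure operations in $\QD(\phi)$ match the restricted enclosure operations from $\QD$ — together with verifying that the Weyl-isomorphisms provided by (MA2)--(MA4) descend correctly to the quotient. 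Since Rousseau~\cite[4.3]{Rou11} already does this verification, the honest content of the proof here is to state precisely how $\QD(\phi)$ is defined and to invoke that reference, which is exactly what the one-line proof does.

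\QED
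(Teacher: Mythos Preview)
Your proposal is correct and matches the paper's approach exactly: the paper gives no proof beyond the citation to \cite[4.3]{Rou11}, and you correctly recognize that the honest content is simply to invoke that reference after recalling how $\QD(\phi)$ and its apartments $A(\phi)$ are defined. One small slip in your unpacking: the walls of $A$ that descend to walls of $A(\phi)$ are those \emph{containing} a sector-face in the class $\phi$ (equivalently, those $M(\qa,k)$ with $\qa$ \emph{vanishing} on the direction of $\g f$, i.e.\ $\qa$ in the span of $\{\qa_i : i\in J\}$), not those transverse to it---this is what gives the Weyl group $W^v(J)$, as the paper states just after the proposition.
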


More explicitly, any apartment $A(\phi)$ of $\QD(\phi)$ is constructed from a (not necessarily unique) apartment $A$ of $\Delta$, with $\phi$ at infinity of $A$, by the formula  $A(\phi)=\{germ(\g f)\in\QD(\phi) \mid \g f \subset A \}$.
% \sout{any apartment $A$ of $\QD$, with $\phi$ at infinity of $A$, corresponds to the apartment $A(\phi):=\{germ(\g f)\in\QD(\phi) \mid \g f \subset A \}$ of $\QD(\phi)$.}
By the same formula, a wall of $A(\phi)$ is constructed from a wall of $A$ that contains some $\g f\in\phi$.
% \sout{$A$ that contains some $\g f\in\phi$ corresponds to a wall in $A(\phi)$.}
The corresponding vectorial Weyl group of $\QD(\phi)$ is isomorphic to the group $W^v(J)$ generated by $J$.

\par The faces  of $\QD(\phi)$ correspond to chimney-germs of the same direction as $\phi$: if $\g R$ is such a chimney-germ, then the associated face is the filter $\shf(\g R)$ consisting of all subsets of $\QD(\phi)$ containing $\{ germ(\g f) \in\QD(\phi) \mid \g f\subset \QS \}$, for some $\QS\in\g R$.
This face is a chamber in $\QD(\phi)$ if the support of $\g R$ in any apartment is equal to that apartment.

The sector-germs of $\QD(\phi)$ correspond to sector-germs $\g R$ in $\QD$ such that $c=\bd\g R\geq\phi$: they are the filters $\shf(\g R)$ defined as above. In particular, if $c \in \Ch(\bd \Delta)$ is such that $\phi$ is a face of $c$, then the chambers at infinity of $\QD(\phi)$ correspond to $res_{J}(c)$ which is a spherical building.

\begin{proposition}
\label{prop1:Iphi}
Let $\Delta$ be a masure and let $\phi,\phi'$ be opposite ideal faces of $\bd\QD$. Then:

(1) For any sector-face-germ $\g F$ of $\Delta$, with $\bd \g F=\phi$, there is an apartment $A\in \sha(\phi,\phi')$ with $\g F\subset A$.

\par (2) The map $e_\phi:\QD(\phi,\phi')\to\QD(\phi)$, $x\mapsto germ(Q_{x,\phi})$ is onto.
For any $A\in\sha(\phi,\phi')$, $e_\phi(A)=A(\phi)$ and the map $e_\phi:A\to A(\phi)$ identifies $A(\phi)$ with the quotient of $A$ by the vector space generated by the direction of $\phi$.

\par (3) Each $A\in \sha(\phi,\phi')$ may be considered with its natural structure of apartment or with its restricted structure $A^\phi$ that is given by the set $\shm^\phi$ of the walls in $A$ that contain an $\g f\in\phi$.

Moreover, the restricted structure $A^\phi$ is associated (up to some identifications) with the subroot system generated by $((\qa_i)_{i\in J},(\qa_i^\vee)_{i\in J})$ with vectorial Weyl group $W^v(J)$. Thus, the restriction of $e_\phi$ to $A^\phi$ is the essentialization map that identifies $A(\phi)$ with the quotient of $A$ by the vector space intersection of the directions of all walls of $A^\phi$.
\end{proposition}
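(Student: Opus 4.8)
The plan is to establish the three assertions in order, each feeding into the next, using only the axioms (MA3)--(MA4) and the correspondence between spherical sector-faces and splayed chimneys recalled in Section~\ref{suse:Faces}.

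\emph{Part (1).} Let $J\subset S$ be the (spherical) type of $\phi$, fix a sector-face $\g f$ with $germ(\g f)=\g F$ and a sector-face $\g f'$ in the parallel class $\phi'$. Since $J$ is spherical, the enclosures $cl_{\A}(\g f)$ and $cl_{\A}(\g f')$ are \emph{splayed} chimneys, hence solid; write $\g R$ and $\g R'$ for their germs. Applying (MA3) to the splayed chimney germ $\g R$ and the solid chimney germ $\g R'$ produces an apartment $A\in\sha$ containing both. Then $A$ contains a shortening of $\g f$ and a shortening of $\g f'$, so $\g F\subset A$, $\phi=\bd\g f\in\bd A$ and $\phi'=\bd\g f'\in\bd A$; thus $A\in\sha(\phi,\phi')$ with $\g F\subset A$.

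\emph{Part (2).} Surjectivity of $e_\phi$ follows from Part (1): given $germ(\g f)\in\QD(\phi)$, apply (1) to $\g F:=germ(\g f)$ to get $A\in\sha(\phi,\phi')$ containing a shortening $\g f_1=x_1+F^v$ of $\g f$ with $x_1\in A$; as $\g f_1$ lies in the class $\phi$ and has base point $x_1$, the uniqueness of the representative (Section~\ref{subsubsec::building_infinity}, part 2) gives $Q_{x_1,\phi}=\g f_1$, hence $e_\phi(x_1)=germ(\g f)$. For $A\in\sha(\phi,\phi')$ the inclusion $e_\phi(A)\subseteq A(\phi)$ holds because for $x\in A$ the sector-face $x+F^v$ lies in $A$ and is parallel to $\phi$, so by uniqueness $Q_{x,\phi}=x+F^v\subset A$ and $germ(Q_{x,\phi})\in A(\phi)$; the reverse inclusion is the surjectivity argument carried out inside $A$. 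Finally, for $x,y\in A$ one has $e_\phi(x)=e_\phi(y)$ iff $x+F^v$ and $y+F^v$ admit a common shortening iff $y-x$ lies in $F^v-F^v=\langle F^v\rangle$, the vector space generated by the direction of $\phi$; combined with $e_\phi(A)=A(\phi)$ this exhibits $e_\phi|_A$ as the quotient map $A\to A/\langle F^v\rangle\xrightarrow{\sim}A(\phi)$.

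\emph{Part (3).} By definition the walls of $A^\phi$ are the walls $M(\qa,k)$ of $A$ containing some $\g f\in\phi$; a wall contains such a sector-face iff $\qa$ vanishes on the direction of $\phi$, \ie $\qa\in\QF\cap\langle\qa_i\mid i\in J\rangle$. By the standard description of a (standard) parabolic root subsystem, this set is exactly $\QF_J:=W^v(J)\cdot\{\qa_i\mid i\in J\}$, and the reflections $r_{M}$, $M\in\shm^\phi$, generate the affine Weyl group of the subsystem $((\qa_i)_{i\in J},(\qa_i^\vee)_{i\in J})$, whose vectorial part is $W^v(J)$; this is the asserted restricted structure. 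The intersection of the directions of all walls of $A^\phi$ is $\bigcap_{\qa\in\QF_J}\Ker\qa=\bigcap_{i\in J}\Ker\qa_i=\langle F^v\rangle$, so the essentialization map of $A^\phi$ is precisely $A\to A/\langle F^v\rangle$, which by Part (2) is identified with $e_\phi\colon A\to A(\phi)$. It remains to match the image under $e_\phi$ of the wall system $\shm^\phi$ of $A^\phi$ with the wall system of the apartment $A(\phi)$ of $\QD(\phi)$, using that a wall of $A(\phi)$ is, by the description following Proposition~\ref{fact:Iphi}, the image of a wall of $A$ through some $\g f\in\phi$.

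The main obstacle I expect is Part (3): one must check carefully that $\QF\cap\langle\qa_i\mid i\in J\rangle=\QF_J$ and, more delicately, that under essentialization the $e_\phi$-images of the walls of $A^\phi$ are exactly the walls of $\QD(\phi)$, so that no wall is lost or spuriously created. Parts (1) and (2), once (MA3) is invoked, are essentially bookkeeping with germs of parallel spherical sector-faces.
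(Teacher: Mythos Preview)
Your proof is correct and follows the same approach as the paper, which simply says that (1) follows from (MA3) applied to $\g F$ and a sector-face-germ $\g F'$ with $\bd\g F'=\phi'$, and that (2) and (3) are then ``clear''. Your write-up supplies the details the paper omits; the concern you flag in Part~(3) is not a genuine obstacle, since $\QF\cap\langle\qa_i\mid i\in J\rangle=\QF_J$ is a standard fact for Kac--Moody root systems and the wall-matching is immediate from the description of walls of $A(\phi)$ given just after Proposition~\ref{fact:Iphi}.
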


\begin{proof} The assertion (1) is a consequence of (MA3) applied to $\g F$ and a sector-face-germ $\g F'$ with $\bd\g F'=\phi'$. Then (2) and (3) are clear.
\end{proof}

\begin{proposition}\label{prop2:Iphi}
Let $\Delta$ be a masure and let $\phi,\phi'$ be opposite ideal faces of $\bd\QD$. Let $c_1,c_2$ be two ideal chambers in the residue of $\phi$ (\ie $\phi\leq c_1,c_2$) and opposite in this spherical residue (which is a spherical building).

Then there is a unique apartment $A\in\sha(\phi,\phi')$ such that $c_1,c_2\in \Ch(\bd A)$.
\end{proposition}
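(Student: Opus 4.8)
The plan is to work inside the affine building $\QD(\phi,\phi')$ and its quotient affine building $\QD(\phi)$, using the essentialization map $e_\phi$ from Proposition~\ref{prop1:Iphi}. The key observation is that under $e_\phi$, the ideal chambers $c_1, c_2 \geq \phi$ correspond to sector-germs of the affine building $\QD(\phi)$ (as recorded just before Proposition~\ref{prop1:Iphi}, the chambers at infinity of $\QD(\phi)$ are exactly $res_J(c)$ for any $c$ with $\phi$ a face of $c$). Since $c_1, c_2$ are opposite in the spherical residue $res_J(\phi)$, their images $\bar c_1, \bar c_2$ are opposite chambers at infinity of the affine building $\QD(\phi)$. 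Now $\QD(\phi)$ is a chimney friendly affine building (Proposition~\ref{fact:Iphi}), hence in particular sector friendly, so there is an apartment $\bar A$ of $\QD(\phi)$ having both $\bar c_1$ and $\bar c_2$ at infinity; moreover in an affine building two opposite chambers at infinity determine a \emph{unique} apartment, by the standard theory of affine buildings.

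First I would establish \textbf{existence}. Take the unique apartment $\bar A$ of $\QD(\phi)$ with $\bar c_1, \bar c_2 \in \Ch(\bd \bar A)$. By the explicit description of apartments of $\QD(\phi)$ (the paragraph after Proposition~\ref{fact:Iphi}), $\bar A = A(\phi)$ for some apartment $A$ of $\Delta$ with $\phi$ at infinity. Applying Proposition~\ref{prop1:Iphi}(1) to a sector-face-germ representing $\phi$ inside $A$, I may assume $A \in \sha(\phi,\phi')$. It remains to check $c_1, c_2 \in \Ch(\bd A)$: since $\phi \leq c_i$ and $e_\phi(A) = A(\phi) = \bar A$ has $\bar c_i$ at infinity, the sector $Q_{x,c_i}$ (for $x \in A$) maps under $e_\phi$ into $\bar A$; one then argues, using that $\phi, \phi' \subset \bd A$ pins down $A$ as the union of apartments containing the given wall/convex hull, that the sector $Q_{x,c_i}$ itself lies in $A$, so $c_i \in \Ch(\bd A)$. (Concretely: $c_i$ together with $\phi'$ — more precisely the chamber $c_i'$ opposite $c_i$ in $res_J(\phi)$ lying over $\phi'$ — determines by (MA3)/(MA4) an apartment in $\sha(\phi,\phi')$, and its image under $e_\phi$ must be $\bar A$, forcing it to equal $A$.)

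Then I would establish \textbf{uniqueness}. Suppose $A, A' \in \sha(\phi,\phi')$ both have $c_1, c_2$ at infinity. Apply $e_\phi$: by Proposition~\ref{prop1:Iphi}(2), $e_\phi(A) = A(\phi)$ and $e_\phi(A') = A'(\phi)$ are apartments of $\QD(\phi)$, both having $\bar c_1, \bar c_2$ at infinity, hence $A(\phi) = A'(\phi)$ by uniqueness of the apartment through two opposite chambers at infinity in the affine building $\QD(\phi)$. Now $A$ and $A'$ are two apartments of $\Delta$, both containing $\phi, \phi'$ at infinity and mapping onto the same apartment of $\QD(\phi)$ under $e_\phi$. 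Since $e_\phi$ restricted to $A$ (resp. $A'$) is the quotient by the vector space $V_J$ spanned by the direction of $\phi$ (Proposition~\ref{prop1:Iphi}(2)), the apartments $A, A'$ are both ``$V_J$-bundles'' over $A(\phi) = A'(\phi)$. To conclude $A = A'$ I would use that $A$ and $A'$ share a common sector $Q$ with $\bd Q = c_1$ (they both have $c_1$ at infinity; take a sector-germ at $c_1$ contained in both, which exists because $c_1 \geq \phi$ and $A, A' \in \sha(\phi, \phi')$ forces them to share enough of the $c_1$-direction) together with enough of the opposite direction $c_2$; then by (MA2) (germ of a preordered interval or generic ray) or by a direct gluing argument the retraction $\qr_{A,c_1}$ restricted to $A'$ is the identity, giving $A' \subseteq A$ and hence $A = A'$.

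\textbf{The main obstacle} I expect is the uniqueness argument — specifically, lifting the uniqueness from $\QD(\phi)$ back to $\Delta$: two apartments of $\Delta$ can have the same image under $e_\phi$ without a priori being equal, since $e_\phi$ is far from injective. The resolution must exploit that $A, A' \in \sha(\phi, \phi')$, i.e. they both contain \emph{two opposite} ideal faces, not just $\phi$; the pair $(\phi, \phi')$ rigidifies the $V_J$-direction so that an apartment in $\sha(\phi,\phi')$ is determined by its image in $\QD(\phi)$ together with the compatible lift of the $c_2$-sector. I would make this precise by choosing, inside both $A$ and $A'$, a sector $Q_1$ with $\bd Q_1 = c_1$ and a sector $Q_2$ with $\bd Q_2 = c_2$ whose union generates (the enclosure of) $A$, and noting that $A \cap A'$ contains both — by (MA4) applied to the splayed chimney-germs at $c_1$ and the solid data at $c_2$ — hence contains their enclosure, which is all of $A$; by symmetry $A = A'$.
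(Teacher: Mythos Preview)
Your approach differs substantially from the paper's, and it has real gaps. The paper's proof is a three-line argument carried out entirely in the twin building $\bd\QD$: set $c_2':=\proj^*_{\phi'}(c_2)$; since $c_1,c_2$ are opposite in the residue of $\phi$ and $\phi,\phi'$ are opposite, the chambers $c_1$ and $c_2'$ are opposite in the twin building, hence determine a unique twin apartment $\bd A$ (and thus a unique $A\in\sha$); this $\bd A$ contains $\phi,\phi'$ and also $c_2=\proj^*_{\phi}(c_2')$. Existence and uniqueness come out simultaneously, with no reference to $\QD(\phi)$ at all.

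Your route through $\QD(\phi)$ runs into two concrete problems. First, in the existence step you want to lift an apartment $\bar A$ of $\QD(\phi)$ to some $A\in\sha(\phi,\phi')$ with $A(\phi)=\bar A$ and $c_1,c_2\in\Ch(\bd A)$. But the statement that every apartment of $\QD(\phi)$ arises as $A(\phi)$ for a \emph{unique} $A\in\sha(\phi,\phi')$ is precisely what Theorem~\ref{theo:Iphi} proves \emph{using} Proposition~\ref{prop2:Iphi}; Proposition~\ref{prop1:Iphi}(1) alone does not give you control over which $\bar A$ you land in. Your parenthetical (``the chamber $c_i'$ opposite $c_i$ in $res_J(\phi)$ lying over $\phi'$'') is garbled --- a chamber in $res_J(\phi)$ does not lie over $\phi'$ --- and if read charitably as $\proj^*_{\phi'}(c_i)$, it is already the paper's argument, not a lift from $\QD(\phi)$.

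Second, your uniqueness argument claims that the enclosure in $A$ of sectors at $c_1$ and $c_2$ is all of $A$. This is false: $c_1$ and $c_2$ are opposite only in the residue of $\phi$, not in the twin building. Concretely, if $c_1=C^v_f$ and $c_2=w_J\cdot C^v_f$ with $w_J$ the longest element of $W^v(J)$, then every $\alpha\in\Phi^+\setminus\Phi_J^+$ is nonnegative on both sectors (since $w_J$ permutes $\Phi^+\setminus\Phi_J^+$), so $cl_A(Q_{0,c_1}\cup Q_{0,c_2})$ is contained in the proper cone $\bigcap_{\alpha\in\Phi^+\setminus\Phi_J^+}D(\alpha,0)$. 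Hence (MA4) applied to $c_1,c_2$ does not force $A=A'$. You correctly sense that $\phi'$ must enter, but you never actually use it in the final enclosure; once you bring in the opposite side via $\proj^*_{\phi'}$ you are back to the paper's argument anyway.
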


\begin{proof}As $c_1,c_2$ are opposite in the residue of $\phi$ and $\phi,\phi'$ are opposite, we have that $c_1$ and the projection $c'_2=proj^*_{\phi'}(c_2)$ are opposite.
We choose $A \in \sha$ such that $\bd A$ is the unique twin apartment containing $c_1$ and $c'_2$.
So $\bd A$ contains $c_1,\phi,c'_2,\phi'$ and $c_2=proj^*_\phi(c'_2)$. Notice that the apartment $A$ is entirely determined by $\bd A$.
\end{proof}

\begin{theorem}\label{theo:Iphi} Let $\Delta$ be a masure and let $\phi,\phi'$ be opposite ideal faces of $\bd\QD$. The set $\QD(\phi,\phi')$ is an affine, chimney friendly building, with system of apartments $\{ A^\phi \mid A \in \sha(\phi,\phi') \}$.
The map $e_\phi:\QD(\phi,\phi')\to\QD(\phi)$ is the essentialization map.
If $\QD$ is semi-discrete (\resp thick, \resp of finite thickness), then $\QD(\phi,\phi')$ is discrete  (\resp thick, \resp of finite thickness).
\end{theorem}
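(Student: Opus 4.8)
The plan is to bootstrap everything from the three previously established facts: Proposition~\ref{fact:Iphi} (that $\QD(\phi)$ is an affine, chimney friendly building), Proposition~\ref{prop1:Iphi} (existence of apartments through $\g F$ together with the description of $e_\phi$ on an apartment as a quotient by the direction of $\phi$), and Proposition~\ref{prop2:Iphi} (existence and uniqueness of an apartment of $\sha(\phi,\phi')$ containing two opposite chambers of the residue of $\phi$). First I would verify that $\{A^\phi \mid A\in\sha(\phi,\phi')\}$ really is an apartment system for $\QD(\phi,\phi')$ in the sense of affine buildings: by Proposition~\ref{prop1:Iphi}(3) each $A^\phi$ is an affine space with a wall system coming from the subroot system generated by $((\qa_i)_{i\in J},(\qa_i^\vee)_{i\in J})$, hence an apartment of the right type with vectorial Weyl group $W^v(J)$; and by Proposition~\ref{prop2:Iphi} any two ``sector-germs'' (equivalently, any two opposite chambers of the residue of $\phi$) lie in a common such apartment, which is precisely the chimney-friendliness asserted. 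The covering property—that the apartments $A^\phi$ cover $\QD(\phi,\phi')$—is immediate from the definition of $\QD(\phi,\phi')$ as the union of the $A\in\sha(\phi,\phi')$.

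Next I would check the building axioms (MA1)–(MA4) for $(\QD(\phi,\phi'),\{A^\phi\})$ by restriction from the corresponding axioms in $\QD$. The key observation is that a filter $F$ in $A^\phi$ (a point, germ of a preordered interval, generic ray, or solid chimney, now relative to the restricted $W^v(J)$-structure) is the image under the quotient map $A\to A^\phi$ of a chimney in $A$ whose direction contains the direction of $\phi$; the enclosure $cl_{A^\phi}(F)$ is the image of the corresponding enclosure $cl_A(\,\cdot\,)$ in $\QD$. So given two apartments $A^\phi, A'^\phi$ of $\QD(\phi,\phi')$ containing such an $F$, one applies (MA2) or (MA3)/(MA4) in $\QD$ to $A,A'$ together with the common sector-face-germ in $\phi$ (using Proposition~\ref{prop1:Iphi}(1)), producing a Weyl-isomorphism $A\to A'$ fixing the relevant enclosure and a common apartment, then passes to the quotient. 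The ordering axiom (MAO) for $\QD(\phi,\phi')$ likewise descends from (MAO) in $\QD$. This identifies $\QD(\phi,\phi')$ as an affine, chimney-friendly building. That $e_\phi$ is the essentialization map is then just Proposition~\ref{prop1:Iphi}(3) restated globally: on each $A^\phi$ it is the quotient by the intersection of all wall-directions, and these local essentializations glue compatibly because $e_\phi$ is already defined on all of $\QD(\phi,\phi')$ by $x\mapsto germ(Q_{x,\phi})$ and restricts to $A\to A(\phi)$ on each apartment.

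Finally, for the transfer of semi-discreteness, thickness, and finite thickness: thickness and finite thickness are statements about the number of local chambers (equivalently half-apartments) sharing a panel. A panel of $\QD(\phi,\phi')$ corresponds, via $\shf(\g R)$, to a chimney-germ $\g R$ in $\QD$ with $\bd\g R$ a panel above $\phi$, and the half-apartments of $\QD(\phi,\phi')$ containing it are in bijection with the half-apartments of $\QD$ containing the corresponding sector-face-germ; so the counts match and thickness (resp.\ finite thickness) of $\QD$ forces the same for $\QD(\phi,\phi')$. For semi-discreteness: the walls of $A^\phi$ are exactly those walls of $A$ containing some $\g f\in\phi$, and their traces after quotienting by the direction of $\phi$ inherit the discreteness of the $\QL_\qa$ for $\qa$ in the subroot system; since after the normalization of Section~\ref{1.2} we have $\QL_\qa=\Z$, the restricted wall family is locally finite in $A^\phi$, i.e.\ $\QD(\phi,\phi')$ is discrete.

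\textbf{Main obstacle.} The step I expect to be most delicate is the descent of axiom (MA2) (and (MA4)): I must be careful that a filter in $A^\phi$ of the required type (preordered interval germ, generic ray, solid chimney \emph{for the restricted $W^v(J)$-structure}) lifts to a filter of a type for which the masure axioms in $\QD$ actually apply—in particular that ``solid'' in $A^\phi$ lifts to ``solid'' (a filter whose support-direction has finite pointwise stabilizer in $W^v$), which uses that $W^v(J)$ is \emph{finite} (that is the whole point of $\phi$ being spherical). Relatedly, one must handle the potential non-uniqueness flagged in Section~\ref{subsubsec::building_infinity}(2) for non-spherical sector-face-germs; here it is harmless because $\phi$ is spherical, but this is exactly where the spherical hypothesis must be invoked carefully.
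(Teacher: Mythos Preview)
Your approach differs substantially from the paper's, and while it can likely be completed, there are two issues worth flagging.

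The paper does not verify (MA1)--(MA4) for $\QD(\phi,\phi')$ directly. Instead it observes that every apartment $A(\phi)$ of $\QD(\phi)$ is determined by a pair of opposite sector-germs, hence by a pair of ideal chambers $c_1,c_2$ opposite in the residue of $\phi$; Proposition~\ref{prop2:Iphi} then gives a \emph{unique} $B\in\sha(\phi,\phi')$ with $c_1,c_2\in\bd B$, i.e.\ with $B(\phi)=A(\phi)$. This yields a bijection between $\sha(\phi,\phi')$ and the apartment system of $\QD(\phi)$. Since $\QD(\phi)$ is already an affine chimney-friendly building (Proposition~\ref{fact:Iphi}) and $e_\phi$ restricted to each $A$ is the essentialization quotient (Proposition~\ref{prop1:Iphi}), the entire building structure on $(\QD(\phi,\phi'),\{A^\phi\})$ is inherited at once; no axiom-by-axiom check is needed, and the transfer of discreteness, thickness, and finite thickness is immediate along this bijection.

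On your route, two points. First, there is no ``quotient map $A\to A^\phi$'': by Proposition~\ref{prop1:Iphi}(3), $A^\phi$ is $A$ itself equipped with the restricted wall set $\shm^\phi$; the quotient is $A(\phi)$. Your lifting argument should therefore be phrased as reinterpreting filters in $A^\phi$ as filters in $A$ (same underlying set, richer wall structure), not as taking preimages. Second, and more seriously, your chimney-friendliness argument has a gap: Proposition~\ref{prop2:Iphi} only produces a common apartment in $\sha(\phi,\phi')$ for \emph{opposite} pairs $c_1,c_2$ in the residue of $\phi$, whereas (MA3) requires a common apartment for an arbitrary splayed-chimney-germ together with an arbitrary closed face or solid-chimney-germ (and your parenthetical ``equivalently, any two opposite chambers'' is not an equivalence). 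Closing that gap directly, and ensuring the resulting apartment lies in $\sha(\phi,\phi')$ rather than merely in $\sha$, essentially forces you to reprove the content of Proposition~\ref{fact:Iphi}. The paper's shortcut through the already-established building $\QD(\phi)$ is exactly what circumvents this.
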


\begin{remark}\label{rema:Iphi} When we endow each $A\in \sha(\phi,\phi')$ with its natural structure of apartment coming from $\Delta$, then $\QD(\phi,\phi')$ has to be considered as a masure that  is not thick. In particular, the axioms of the definition of the masure are verified;  actually $\QD(\phi,\phi')$ is an affine building, except that we added in each apartment many useless walls.

\end{remark}

\begin{proof}[Proof of Theorem~\ref{theo:Iphi}] The apartments $A(\phi)$ of $\QD(\phi)$ correspond to apartments $A$ of $\QD$ with $\phi\in\bd A$ and $A(\phi)=\{ \g F =germ(\g f)\in \QD(\phi) \mid \bd\g F\in\bd A \}$.
But many apartments $A$ may give the same $A(\phi)$.
Actually $A(\phi)$ is well determined by two opposite sector-germs in $\QD(\phi)$ and, {by the paragraph before Proposition~\ref{prop1:Iphi}}, these sector-germs are associated with two sector-germs $\g Q_1,\g Q_2$ in $\QD$, \ie to two ideal chambers $c_1,c_2$, which are opposite in the residue of $\phi$.
 So, by Proposition~\ref{prop2:Iphi}, there is one, and only one, apartment $B\in \sha(\phi,\phi')$ such that $c_1,c_2\in\bd B$, \ie $B(\phi)=A(\phi)$.
 This gives a one to one correspondence between $\sha(\phi,\phi')$ and the set of apartments of $\QD(\phi)$.
 As $\QD(\phi)$ is an affine, chimney friendly building, we clearly obtain the theorem from Proposition~\ref{prop1:Iphi} above.
\end{proof}

\begin{corollary}
\label{lem:TreeWalls}
Let $\Delta$ be a masure of dimension $n$.
Let $\sigma, \sigma' \subset \partial \Delta$ be a pair of opposite panels at infinity. We denote by $\QD(\sigma, \sigma')$ the union of all apartments of $\Delta$ whose boundaries contain $\sigma$ and $\sigma'$. Then  $\QD(\sigma, \sigma')$ is a closed convex subset of $\Delta$, which is an extended tree, \ie splits canonically as a product
$$ \QD(\sigma, \sigma') \cong T \times \R^{n-1},$$
where $T=T(\qs,\qs')\simeq\QD(\qs)$ is a  (chimney friendly) $\R-$tree whose ends are canonically in one-to-one correspondence with the elements of the set $\Ch(\sigma)$ of all ideal chambers having $\sigma$ as a panel. Under this isomorphism, the walls of $\Delta$, contained in $ \QD(\sigma, \sigma')$ and containing $\qs,\qs'$ at infinity, correspond to the subsets of the form $\{v\} \times \R^{n-1}$ with $v$ a vertex of $T$.

\par When $\QD$ is semi-discrete (\resp thick, \resp of finite thickness), then the $\R-$tree $T$ is a genuine discrete (\resp thick, \resp  of finite thickness) tree.
\end{corollary}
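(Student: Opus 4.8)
The plan is to derive everything from Theorem~\ref{theo:Iphi} and Proposition~\ref{prop1:Iphi} applied to the pair of opposite ideal panels $(\sigma,\sigma')$; the only genuinely new input is the explicit Euclidean splitting.

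\textbf{Step 1 (reduction to the rank-one case, and the ends).} Since $\sigma$ is an ideal panel, its type $J\subseteq S$ has $|J|=1$, say $J=\{r_s\}$, so $W^v(J)=\langle r_s\rangle$ is finite of order $2$. By Theorem~\ref{theo:Iphi} (together with Proposition~\ref{fact:Iphi}), $\QD(\sigma,\sigma')$ is an affine, chimney friendly building with system of apartments $\{A^\sigma\mid A\in\sha(\sigma,\sigma')\}$, and $e_\sigma\colon\QD(\sigma,\sigma')\to\QD(\sigma)$ is the essentialization map onto the affine building $T:=\QD(\sigma)$, whose vectorial Weyl group is $W^v(J)\cong\Z/2\Z$. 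An affine building with vectorial Weyl group of order $2$ is a rank-one affine building, that is, an $\R$-tree; and by the last sentence of Theorem~\ref{theo:Iphi}, $T$ is a genuine discrete (\resp thick, \resp of finite thickness) tree when $\QD$ is semi-discrete (\resp thick, \resp of finite thickness). The ends of a rank-one affine building are exactly its chambers at infinity, and by the description of the sector-germs of $\QD(\sigma)$ recalled just before Proposition~\ref{prop1:Iphi}, those correspond to $res_J(c)$ for any ideal chamber $c\geq\sigma$, i.e.\ to $\Ch(\sigma)$; this gives the asserted bijection between $\mathrm{Ends}(T)$ and $\Ch(\sigma)$.

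\textbf{Step 2 (the product decomposition and the walls).} Fix $A\in\sha(\sigma,\sigma')$. By Proposition~\ref{prop1:Iphi}(3) the restricted apartment $A^\sigma$ has all its walls directed by the single hyperplane $V_1:=\Ker(\alpha_s)$, of dimension $n-1$, and $e_\sigma$ identifies $A(\sigma)$ with $A/V_1$ carrying the tree-apartment (line) structure of $T$. Choosing a line $L$ complementary to $V_1$ in $V$ (for instance $L=\R\alpha_s^\vee$, using $\alpha_s(\alpha_s^\vee)=2\neq0$) gives $A^\sigma\cong A(\sigma)\times V_1$ compatibly with $e_\sigma$, the wall system involving only the $A(\sigma)$-factor. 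To globalize, I would observe that the transition maps between apartments of $\sha(\sigma,\sigma')$, being vectorially Weyl isomorphisms whose linear parts lie in $W^v(J)$ and which fix the sector-panel germs in the directions $\sigma$ and $\sigma'$, act on every fibre $x_0+V_1$ as a translation (with translation vector in $L$), hence are the identity in the $V_1$-coordinate. Consequently, after fixing one base vertex, the ``flat coordinate'' patches into a well-defined map $p\colon\QD(\sigma,\sigma')\to V_1\cong\R^{n-1}$, and $(e_\sigma,p)\colon\QD(\sigma,\sigma')\to T\times\R^{n-1}$ is a bijection (surjective because each apartment already surjects; injective because any point lies in some apartment on which $(e_\sigma,p)$ restricts to the bijection above, and both coordinates are apartment-independent). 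This is the standard de Rham-type splitting of a non-essential affine $\R$-building into its essentialization times a Euclidean factor (\cf Parreau~\cite{Parr00}), so I would either quote it or record the short coherence argument just sketched. Under this identification, a wall $M$ of $\QD$ with $M\subseteq\QD(\sigma,\sigma')$ and $\sigma,\sigma'\subset\partial M$ is, in any apartment $A$ containing it (such an $A$ automatically lies in $\sha(\sigma,\sigma')$, since $\partial M\subseteq\partial A$), one of the walls of $A^\sigma$ by Proposition~\ref{prop1:Iphi}, hence of the form $e_\sigma^{-1}(v)=\{v\}\times\R^{n-1}$ for $v$ the corresponding vertex of $T$; conversely every such subset is one of these walls. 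This yields the stated correspondence.

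\textbf{Step 3 (closed convexity) and the main obstacle.} For closedness and convexity I would use that $\QD(\sigma,\sigma')$ depends only on the convex hull of $\sigma$ and $\sigma'$ in $\partial\QD$ (the remark following the definition of $\QD(\phi,\phi')$ in Section~\ref{1.5}): any two of its points lie in a common apartment of $\sha(\sigma,\sigma')$ — produced from (MA3) and (MA4) applied to a splayed chimney of direction $\sigma$ — so the line segment joining them stays inside, and closedness follows in the same manner (or is quoted from Rousseau~\cite{Rou11}). I expect the real crux to be Step~2, precisely the globalization of the flat factor, i.e.\ verifying that the Euclidean coordinate is genuinely independent of the chosen apartment; this is where the vectorially Weyl hypothesis on the gluing maps and the rank-one structure of $T$ are essentially used. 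The remaining assertions are routine specializations of Theorem~\ref{theo:Iphi} and Proposition~\ref{prop1:Iphi} to the case of a panel.
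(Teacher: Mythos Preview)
Your proposal is correct and follows the same route as the paper, which simply records that the corollary ``is a consequence of Theorem~\ref{theo:Iphi} or, better, of~\cite[Section 4.6]{Rou11}''. You have unpacked the first of these two citations in detail: specialize Theorem~\ref{theo:Iphi} to a pair of opposite panels, note that $W^v(J)\cong\Z/2\Z$ so that the essentialization $\QD(\sigma)$ is an $\R$-tree, read off the correspondence between ends and $\Ch(\sigma)$ from the description of sector-germs of $\QD(\phi)$, and then split off the inessential Euclidean factor $V_1=\Ker(\alpha_s)\cong\R^{n-1}$. Your identification of the globalization of the flat coordinate as the only nontrivial point is accurate; the paper does not spell this out either but defers to \cite[4.6]{Rou11}, where the wall/panel tree construction is carried out directly, so you may wish to cite that reference alongside Parreau~\cite{Parr00} for the de Rham-type splitting rather than reproving the coherence of the $V_1$-coordinate by hand.
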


\begin{remark} With the above hypotheses, $\QD(\qs)$ (\resp $\QD(\qs,\qs')$) is often called a panel tree (\resp a wall (extended) tree).
\end{remark}

\begin{proof}
This is a consequence of Theorem~\ref{theo:Iphi}  or, better, of~\cite[Section 4.6]{Rou11}.
\end{proof}

\begin{remark}
Clearly a masure $\QD$ is locally complete if and only if the apartment system of any such $\R-$tree $T$ is complete.
\end{remark}

\subsection{Retractions of line segments or rays}\label{1.4b}

The somewhat technical content of this subsection will be only needed in the proof of Proposition \ref{TTB1+}.

Let $(\Delta,\sha)$ be a masure. Let $c$ be an ideal chamber at infinity of an apartment $A \in \sha $. It is associated with a vectorial chamber $\vect c$ in the real vector space $\vect A$.
We identify $(\vect A, \vect c)$ with the pair $(V,C^v_f)$ of Section~\ref{1.1}.

\par We consider, in (another) apartment $A'$ of $\QD$, a preordered segment $[x,y]$ or a preordered ray $\qd$ of origin $x$ (and containing some $y\not=x$).
We parametrize $[x,y]$ or $\qd$ by an affine map $\qp_1:[0,1]\to A'$ or $\qp_1:[0,+\infty)\to A'$, with $\qp_1(0)=x$.
  If $c\in \Ch(\bd\QD_+)$ (\resp $c\in \Ch(\bd\QD_-)$) we assume that $\qp_1$ is increasing (\resp decreasing) for the order $\leq $. This means that  $x\leq y$ (\resp $y\leq x$).

\par We consider the retraction $\qr=\qr_{A,c}$ and the image $\qp=\qr\circ\qp_1$ of the path $\qp_1$.

\begin{proposition}
\label{prop::hecke_path}
The path $\qp$ is a Hecke path with respect to $c$, in the sense of~\cite[Def. 5.2]{GR08} or~\cite[1.8]{GR14}. In particular, after the identification of $(\vect A, \vect c)$ with $(V,C^v_f)$, this means that:

The path $\qp$ is piecewise linear. Except in $0$ (\resp$1$ for segments) the left (\resp right) derivatives $\qp'_-(t)$ (\resp $\qp'_+(t)$) are defined and belong to an orbit $W^v.\ql$ of some $\ql\in\overline{C^v_f}$ (the closure of $\vect c$).

More precisely, we define $w_\pm(t)\in W^v$ as the element with the smallest length such that $\qp'_\pm(t)=w_\pm(t).\ql$. Then we have $w_-(t) \leq w_+(t) \leq w_-(t')  \leq w_+(t')$ when $0<t<t'$ (for the Bruhat--Chevalley order in $W^v$).
\end{proposition}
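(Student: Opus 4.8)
The plan is to reduce the statement to the case of an apartment via the retraction, then use induction on the number of breakpoints, and in the inductive step transport the local picture into a single apartment containing a sector in the direction of $c$ in order to compare left and right derivatives at a breakpoint. The first step is to observe that since $\qp_1$ is preordered (say $x\le y$ if $c$ is positive, the negative case being symmetric), every subsegment $[\qp_1(t),\qp_1(t')]$ with $t<t'$ is preordered, so by \textbf{(MAO)} line segments are well defined independently of the apartment, and by \textbf{(MA2)} applied to the germ of such a preordered interval, the retraction $\qr_{A,c}$ restricted to $[x,y]$ (or to $\qd$) is piecewise isometric: more precisely, for each $z$ on the path there is an apartment $A_z$ containing $z$ and the sector-germ $germ(Q)$ attached to $c$, and a Weyl-isomorphism $A_z\to A$ fixing $germ(Q)$; this shows $\qp=\qr\circ\qp_1$ is piecewise linear with all segments of direction in $-\sht$ (resp.\ $\sht$), hence that the one-sided derivatives $\qp'_\pm(t)$ exist away from the endpoints and lie in $\sht$ (resp.\ $-\sht$).

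Next I would identify the common length $\ql\in\overline{C^v_f}$: the point is that $\qr_{A,c}$ preserves $\le$-lengths along preordered segments because each local chart $A_z\to A$ is a Weyl-isomorphism fixing a sector-germ of direction $c$, so it preserves the vectorial distance (the $\overline{C^v_f}$-valued refinement of the length). Concretely, for $t$ in the interior of the parameter interval, both $\qp'_-(t)$ and $\qp'_+(t)$ are the $\qr_{A,c}$-images of the corresponding one-sided derivatives of $\qp_1$, which (the path $\qp_1$ being a straight segment or ray) are $\pm$ a single vector $u$; and the vectorial distance from $x$ to any point of the path is $\le$-monotone, so the $\overline{C^v_f}$-component is a fixed $\ql$ along the whole path. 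Thus each $\qp'_\pm(t)\in W^v.\ql$, and $w_\pm(t)$ is well defined as the minimal-length element carrying $\ql$ to that derivative.

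The main work, and the main obstacle, is the monotonicity $w_-(t)\le w_+(t)\le w_-(t')\le w_+(t')$ for $0<t<t'$. Here I would argue exactly as in Gaussent--Rousseau: it suffices to prove the two ``local'' inequalities $w_-(t)\le w_+(t)$ at a single breakpoint $t$, and $w_+(t)\le w_-(t')$ across a linear piece, the latter being trivial since on a linear piece $w_+(t)=w_-(t')$ when there is no breakpoint strictly between (and one subdivides otherwise). For the breakpoint inequality, let $z=\qp_1(t)$; by \textbf{(MA3)}/\textbf{(MA4)} choose an apartment $A'$ containing $z$ and $germ(Q)$, so that near $t$ the path $\qp$ agrees with the image under a Weyl-isomorphism $A'\to A$ of the concatenation of two segments of direction $w_-(t).\ql$ and $w_+(t).\ql$ at $z$. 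One then compares these two directions using the gallery-distance/convexity structure of the Coxeter complex of $W^v$ as realized in the Tits cone: the incoming segment $[\,\cdot,z]$ and outgoing segment $[z,\cdot]$ of a $\le$-monotone path, when developed into $A$, satisfy the ``folding'' inequality $w_-(t)\le w_+(t)$ because the retraction can only fold walls toward $c$, never away. This is the heart of the Hecke-path argument in~\cite[Th.\ 5.5, 6.2]{GR08}, and the only thing to check is that \textbf{(MA2)} (for germs of preordered intervals) supplies exactly the local apartments needed so that each elementary folding step takes place inside one apartment; once that is in place, the global chain of inequalities follows by composing the breakpoint inequalities with the trivial equalities along linear pieces, and by passing to the limit for a ray. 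I expect no genuinely new difficulty beyond carefully invoking the masure axioms in place of the building axioms used in~\cite{GR08}.
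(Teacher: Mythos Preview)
Your outline is broadly sound, but you are working much harder than necessary and under a misapprehension about \cite{GR08}. That paper is already set in the masure (hovel) framework---its title is ``Kac--Moody groups, hovels and Littelmann paths''---so Theorem~6.2 there applies directly to the present situation without any adaptation of the axioms. Your closing remark that ``the only thing to check is that \textbf{(MA2)}\dots\ supplies exactly the local apartments needed'' is therefore superfluous: those local apartments are precisely what \cite{GR08} already uses, and your sketch of the piecewise-linearity and folding argument is essentially a paraphrase of the proof of that theorem.

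The paper's proof is accordingly a short reduction rather than a reproof. The only wrinkle is a convention mismatch: in \cite{GR08} the retraction center sits on the \emph{negative} side, so one identifies $(\vec A,\vec c)$ with $(V,-C^v_f)$ and passes to the reversed path $\overline\qp_1(t)=\qp_1(1-t)$. Theorem~6.2 (together with Proposition~6.1) of \cite{GR08} then gives that $\overline\qp=\qr\circ\overline\qp_1$ is a Hecke path, and Lemma~5.4 of \cite{GR08} gives the chain $\overline w_-(t)\ge\overline w_+(t)\ge\overline w_-(t')\ge\overline w_+(t')$ for $t<t'$. Translating back via $w_\pm(t)=\overline w_\mp(1-t)$ and $\ql=-\qm$ yields exactly the stated inequalities for $\qp$. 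What each approach buys: yours is self-contained but reproduces several pages of \cite{GR08}; the paper's is a direct citation plus a careful bookkeeping of the reversal, which is all that is actually needed.
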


\begin{proof} We may reduce to the case of a segment.
Unfortunately the references above deal with an opposite case: we have to identify $(\vect A, \vect c)$ with $(V,-C^v_f)$ and consider the reverse path $\overline\qp_1$, defined by $\overline\qp_1(t)=\qp_1(1-t)$; so its image under $\qr$ is defined by $\overline\qp(t)=\qp(1-t)$.
From Theorem 6.2 in~\cite{GR08} (see also Proposition~6.1, there), we obtain that $\overline\qp=\qr\circ\overline\qp_1$ is a Hecke path.
Hence $\overline\qp$ is  piecewise linear and we have $\overline\qp'_\pm(t)\in W^v.\qm$ with $\qm\in\overline{C^v_f}=-\overline{\vec c}$\,; so $\qp$ is piecewise linear and $\qp'_\pm(t)=-\overline\qp'_\pm(1-t)\in W^v.\ql$  with $\ql=-\qm\in\overline{\vec c}$.
Moreover, $\overline\qp'_\pm(t)=\overline w_\pm(t).\qm$ (with $\overline w_\pm(t)$ of minimal length) and, by Lemma 5.4 in \lc, we have $\overline w_-(t) \geq \overline w_+(t) \geq \overline w_-(t')  \geq \overline w_+(t')$ for $t<t'$.
So, for $\qp$, we obtain $\qp'_\pm(t)=w_\pm(t).\ql$ with $w_\pm(t)=\overline w_\mp(1-t)$ and the expected inequalities for the $w_\pm(t)$ are satisfied.
%, we get $\overline w_\pm(t)=w_\mp(1-t)$.
%Now the Lemma 5.4 in \lc tells that $\overline w_+(t) \leq \overline w_-(t)$, for the Bruhat-Chevalley order.
%This gives the expected inequalities for the $w_\pm(t)$.
\end{proof}

\section{The cone topology on masures}
\label{sec::cone_top_hovel}
Let $\Delta$ be a masure. As in the case of $\CAT(0)$ spaces we would like to define a topology on the realization $|\bd\QD|$ of the boundary $\partial \Delta $  of the masure $\Delta$, which does not depend on the chosen base point.
Recall that a masure is not necessarily a geodesic metric space, therefore, we cannot apply the \cat theory.  Still, we can define a cone topology on the set of all chambers $\Ch(\bd \Delta)$ at infinity of $\Delta$; that is presented in Section~\ref{subsec::d}.  This topology has interesting dynamical properties (see Proposition \ref{prop::dynamics_str_reg} below), as well as other natural properties that are presented in Sections~\ref{subsec::prop_cone_top} and~\ref{subsec::further_prop}.

\subsection{Definition of the cone topology}
\label{subsec::d}

By a chamber at infinity of $\Delta$ we mean the interior of it and we choose, in any chamber $c$ at infinity, an ideal point $\qx_c$, called its \textbf{barycenter}. We ask that any vectorially Weyl--isomorphism $\qf$ between apartments in $\QD$ permutes these barycenters: $\qf(\qx_c)=\qx_{\qf(c)}$.
Moreover, for any two opposite ideal chambers $c$ and $c_{-}$, we impose that the corresponding barycenters $\qx_{c}$ and $\qx_{c_{-}}$ are also opposite. Actually, we choose $\qx_c$ for $c=\bd C^v_f$. Then $\qx_c$ is uniquely and well defined for any $c\in X^\pm$ by the above conditions, as the vectorially Weyl--isomorphisms between apartments in $\QD$ induce the type-preserving isomorphisms between apartments in $\bd\QD$.

\begin{definition}
\label{def::standard_open_neigh_cone_top}
Let $\Delta$ be a masure, $x \in \Delta$ be a point and $c \in  \Ch(\bd \Delta)$ be a chamber at infinity. Let $\xi_c$ be the barycenter of $c$. By the definition of a masure, we know that there exists an apartment $A \subset \Delta$ such that $x \in A$ and $c \in \Ch(\bd A)$.  Consider the
ray $[x, \xi_c) \subset A$ issuing from $x$ and corresponding to the barycenter $\xi_c$ of $c$ (see Section~\ref{subsubsec::building_infinity}.2) above). Let $r\in [x, \xi_c)$; we define the following subset of $\Ch(\bd \Delta)$
$$
U_{x, r, c}:= \{ c' \in \Ch(\bd \Delta) \; \vert \; [x, r]
\subset [x, \xi_{c'}) \cap [x, \xi_{c})\}.
$$

The subset $U_{x, r, c}$ is called a \textbf{standard open neighborhood} in $\Ch(\partial \QD)$ of the (open) chamber $c$ with base point $x$ and gate $r$.
\end{definition}

\begin{definition}
\label{def::cone_top_on_chambers}
Let $\Delta$ be a masure and let $x$ be a point of $\Delta$. The \textbf{cone topology} $\Top_{x}( \Ch(\bd \Delta))$ on $\Ch(\bd \Delta)$, with base point $x$, is the topology generated by the standard open neighborhoods $U_{x, r, c}$, with $c \in \Ch(\bd \QD)$ and $r\in [x, \xi_c)$.
\end{definition}

\begin{lemma}\label{lem:3.3} The cone topology $\Top_{x}( \Ch(\bd \Delta))$ does not depend on the choice of the barycenters.
\end{lemma}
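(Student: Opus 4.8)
The plan is to show that the topology $\Top_x(\Ch(\bd\QD))$ is unchanged if we replace the chosen family of barycenters $(\qx_c)_{c}$ by another admissible family $(\qx'_c)_{c}$ (one that is permuted by vectorially Weyl--isomorphisms and takes opposite values on opposite chambers). It suffices to prove that every standard open neighborhood $U'_{x,r',c}$ defined using the second family contains a standard open neighborhood $U_{x,r,c}$ defined using the first family, and vice versa; by symmetry only one direction needs to be checked. Fix $x$ and a chamber $c$ at infinity, and work inside an apartment $A$ with $x\in A$ and $c\in\Ch(\bd A)$. Identify $(\vect A,\vect c)$ with $(V,C^v_f)$ as in Section~\ref{1.1}; then both $\qx_c$ and $\qx'_c$ correspond to ideal points of the fundamental chamber, i.e.\ to distinct $W^v$-invariant rays inside the open fundamental vectorial chamber $C^v_f$, and the admissibility conditions guarantee that the ``profile'' of $\qx'_c$ relative to $\qx_c$ is the same in every apartment and for every chamber, simply because the choice for $c=\bd C^v_f$ determines everything.

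The key geometric point is a comparison lemma: for two rays $[x,\qx_{c})$ and $[x,\qx'_{c})$ issuing from the same point $x$ into the same sector $Q_{x,c}\subset A$, a segment $[x,r]\subset[x,\qx_c)$ ``controls'' an initial segment $[x,r']\subset[x,\qx'_c)$ in the following sense: there exists a choice of $r'\in[x,\qx'_c)$ such that, for any chamber $c'\in\Ch(\bd\QD)$, if $[x,r']\subset[x,\qx_{c'})\cap[x,\qx'_c)$ (agreement of the primed rays up to $r'$) then $[x,r]\subset[x,\qx_{c'})\cap[x,\qx_c)$ (agreement of the unprimed rays up to $r$). Concretely, if $[x,r']$ is long enough, then the sub-sector $Q_{r',c}$ it determines is small enough that the barycenter ray $[x,\qx_{c'})$ passes through $Q_{r',c}$ close enough to $[x,\qx_c)$ to force agreement up to $r$. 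To make this precise I would argue in two steps. First, reduce to a statement entirely inside a single apartment: whenever $[x,\qx'_{c'})$ and $[x,\qx'_c)$ share the initial segment $[x,r']$, the sectors $Q_{r',c'}$ and $Q_{r',c}$ coincide (both equal $r'+C^v$ in a common apartment), because two sectors with a common base point agreeing on an interior ray have the same direction; hence $[x,\qx_{c'})$ and $[x,\qx_c)$ both pass through the common sub-sector $r'+C^v_f$, so they agree at least on $[x,r']$, and in particular on $[x,r]$ provided $r\in[x,r']$. Thus the only thing to ensure is that $r$ can be taken in $[x,r']$, which is automatic since the ray is infinite and we are free to pick $r'$ beyond $r$ along the common direction after identifying the two rays' initial portions via the admissibility of the barycenter choices.

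So more carefully the argument runs: given $U_{x,r,c}$ (first family), I want $U'_{x,r',c}\subseteq U_{x,r,c}$ for a suitable $r'$. Pick any $r'\in[x,\qx'_c)$ such that the segment $[x,r']$ is ``long enough'' that its enclosure determines the sub-sector $Q_{r',c}=r'+C^v_f$ and this sub-sector's barycenter ray contains a translate of $[x,r]$ in the sense that $[x,r]\subset[x,\qx_c)$ still lies inside the relevant enclosure — which holds as long as the sub-sector germ at $r'$ is contained in the sector germ at $x$, a tautology. Now if $c'\in U'_{x,r',c}$, then $[x,r']\subset[x,\qx_{c'})\cap[x,\qx'_c)$; by (MA2)/(MA3) there is an apartment $A'$ containing $x$, the sector-germ of $c$, and the sector-germ of $c'$ agreeing on $[x,r']$ (the preordered segment $[x,r']$ is in a common apartment with either sector germ, and the enclosures match). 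Inside $A'$, identifying again with $(V,C^v_f)$, the rays $[x,\qx_{c'})$ and $[x,\qx_c)$ are given by the \emph{same} affine formula (image of the fixed ray $[0,\qx_{\bd C^v_f})$ under the identification), so they coincide on $[x,r']\supseteq[x,r]$; hence $[x,r]\subset[x,\qx_{c'})\cap[x,\qx_c)$, i.e.\ $c'\in U_{x,r,c}$. Swapping the roles of the two families gives the reverse inclusion, proving the two topologies have the same basis of neighborhoods at every chamber, hence are equal.

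The main obstacle I anticipate is the bookkeeping around the different apartments: a chamber $c'$ close to $c$ need not lie in $\bd A$, so one must repeatedly invoke the masure axioms (MA2)--(MA4) to produce an apartment simultaneously containing $x$, an initial segment of $[x,\qx_c)$, and the sector germ of $c'$, and one must check that the identification with $(V,C^v_f)$ is compatible across these apartments — this is exactly where the hypothesis that barycenters are permuted by \emph{vectorially} Weyl--isomorphisms is used, so that the barycenter ray is literally the image of one fixed ray and its initial agreement is transported faithfully. A secondary technical point is making ``long enough $[x,r']$'' precise: one should phrase it as: choose $r'$ so that $r\in[x,r']$ after transporting both rays into a common apartment, which is possible because along a single ray the set of admissible gates is cofinal. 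Once these apartment-change issues are handled, the rest is the elementary observation that two sectors with a common base point sharing an interior ray have the same direction, together with the fact that the barycenter of a chamber in a given apartment is uniquely determined by the chamber.
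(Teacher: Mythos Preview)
Your overall strategy matches the paper's, but there is a genuine gap in the execution. The step ``by (MA2)/(MA3) there is an apartment $A'$ containing $x$, the sector-germ of $c$, and the sector-germ of $c'$'' is not justified and is in general false: (MA3) produces a common apartment for a splayed chimney germ together with a face or solid chimney germ, not for two sector-germs of the \emph{same} sign that merely share an initial generic segment $[x,r']$. Already for trees, two rays from $x$ agreeing on an initial segment typically diverge thereafter and lie in no common line. Without such a common apartment, the assertion that ``the rays $[x,\xi_{c'})$ and $[x,\xi_c)$ are given by the same affine formula, so they coincide on $[x,r']\supseteq[x,r]$'' has no content; and in any case $[x,r']\subset[x,\xi'_c)$ and $[x,r]\subset[x,\xi_c)$ lie on \emph{distinct} rays even inside the single sector $Q_{x,c}$, so the inclusion $[x,r']\supseteq[x,r]$ is ill-posed as written.

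The paper avoids any common apartment. From $Q_{x,c}\cap Q_{x,c'}\supset cl([x,r])$ (the enclosure of the shared generic segment), \cite[Prop.~5.4]{Rou11} gives a Weyl isomorphism $\psi$ between apartments sending $Q_{x,c}$ onto $Q_{x,c'}$ and fixing $cl([x,r])$ pointwise. Equivariance of barycenters then forces $\psi([x,\eta_c))=[x,\eta_{c'})$. Since $cl([x,r])$ is a full-dimensional box in $Q_{x,c}$, for $r$ far enough it contains any prescribed point $r'\in[x,\eta_c)$; as $\psi$ fixes $r'$, one gets $r'\in[x,\eta_{c'})$, whence $c'\in U^\eta_{x,r',c}$. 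This is exactly your enclosure idea, but implemented through the isomorphism $\psi$ rather than an unavailable common apartment.
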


\begin{proof} Let us consider another choice $\eta$ for the family of barycenters.
Then if  $c'\in  U_{x, r, c}^\qx$ for $r$ far, then $Q_{x,c}\cap Q_{x,c'}\supset cl([x,r])$, which is a ``big'' part of $Q_{x,c}$. {Recall $Q_{x,c}$ is the sector based at $x$ and containing $c$.}
In particular $Q_{x,c}\cap Q_{x,c'}$ contains some far $r'\in [x,\eta_c)$.
By \cite[5.4]{Rou11}, there is a Weyl isomorphism of apartments $\psi$ that sends $Q_{x,c}$ to $ Q_{x,c'}$ and fixes $cl([x,r])$.
Then $\psi$ sends $\eta_{c'}$ to $\eta_c$ and $[x,\eta_{c'})$ to $[x,\eta_c)$.
So $r'\in [x,\eta_{c'})$ and $c'\in  U_{x, r', c}^\eta$.
We have proved that the cone topology associated with the family $\qx$ is finer than the one associated with $\eta$. Now, by symmetry, the result follows.
\end{proof}

\begin{remark}\label{rem:3.3}
When the masure $\QD$ is a (chimney friendly) affine building, $\Delta$ is endowed with a Euclidean metric and it is well known that it is a CAT(0) space. Then the boundary $\bd\QD$ defined in Section~\ref{subsubsec::building_infinity} is contained in the visual boundary $\bd_{\infty}\QD$ of $\QD$ as defined in~\cite{BH99} (it may be different if the apartment system is not complete).
The cone topology of Definition~\ref{def::cone_top_on_chambers} clearly coincides with the restriction on $\Ch(\bd \Delta)$ of the cone topology defined in \cite{BH99} on $\bd_{\infty}\QD$.

 \end{remark}

\begin{proposition}
\label{prop::independence_base_point}

Let $\Delta$ be a masure and let $x,y \in \Delta$ be two different points. Then the cone topologies $\Top_{x}( \Ch(\bd \Delta))$ and $\Top_{y}( \Ch(\bd \Delta))$ are the same.
 \end{proposition}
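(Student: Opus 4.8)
The plan is to show that each standard open neighborhood $U_{x,r,c}$ with base point $x$ contains, for every chamber $c'$ in it, a standard open neighborhood $U_{y,s,c'}$ with base point $y$; by symmetry of the roles of $x$ and $y$ this gives that the two topologies have the same open sets. Since a basis of $\Top_x$ is given by the $U_{x,r,c}$, it suffices to verify this ``basis refinement'' statement. Fix $x, y \in \Delta$ and a standard open neighborhood $U_{x,r,c}$; let $c' \in U_{x,r,c}$. I would first move the picture into a single apartment: by (MA3) and (MA4) there is an apartment $A$ of $\Delta$ containing the sector-germ of $c'$ together with the point $y$ (and we may also arrange $c'$ itself at infinity of $A$), and likewise one handles $x$ relative to $c'$; the segment $[x,r]$ lies in the common part of the sectors $Q_{x,c}$ and $Q_{x,c'}$, which is (the enclosure of) a subsector by \cite[5.4]{Rou11}.

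The key step is a \emph{uniform escape} estimate: if $c'$ lies in $U_{x,r,c}$ with $r$ far out along $[x,\xi_c)$, then the sectors $Q_{y,c'}$ and $Q_{y,c}$ share an arbitrarily large common subsector based at $y$, the size of the common part growing with $\dist(x,r)$. Concretely, choose a point $r' \in [x,r]$ far from $x$; then $cl([x,r']) \subset Q_{x,c}\cap Q_{x,c'}$, and working inside an apartment $A$ containing $y$ and the sector-germ of $c$ one finds a point $s \in [y,\xi_c)$ with $[y,s]\subset Q_{y,c}\cap Q_{y,c'}$, where $s$ can be taken as far from $y$ as we like provided $r'$ was chosen far enough from $x$ (the loss is controlled by $\dist(x,y)$, which is fixed). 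This is exactly the same mechanism as in the proof of Lemma~\ref{lem:3.3}: a Weyl isomorphism of apartments fixing a large enclosure $cl([x,r'])$ carries $Q_{x,c}$ to $Q_{x,c'}$, hence carries the ray toward $c$ to the ray toward $c'$ on a large initial segment, and translating the base point from $x$ to $y$ only shifts things by the bounded vector $y-x$. It then follows that $U_{y,s,c'} \subset U_{x,r,c}$: any $c'' \in U_{y,s,c'}$ has $[y,s]$ in the common part of $Q_{y,c'}$ and $Q_{y,c''}$, and pushing back to base point $x$ and enlarging $r$ appropriately forces $[x,r] \subset Q_{x,c}\cap Q_{x,c''}$.

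I expect the main obstacle to be the bookkeeping of apartments: unlike in a \cat building, two rays need not lie in a common apartment, so each comparison (between $Q_{x,c}$ and $Q_{x,c'}$, between $Q_{y,c}$ and $Q_{y,c'}$, and the translation from $x$ to $y$) must be justified by invoking the masure axioms (MA2)--(MA4) for the relevant splayed chimneys and preordered segments, together with \cite[5.4]{Rou11} to pass from ``same sector-germ at infinity'' to ``common subsector,'' and one has to check these are genuinely applicable (e.g.\ the germs involved are spherical, the segments preordered). Once the right apartments are produced, the inclusion $U_{y,s,c'}\subset U_{x,r,c}$ is a direct unwinding of the definitions, and symmetry finishes the proof.
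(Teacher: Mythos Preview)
Your overall strategy is sound, but there is a genuine gap at the step you flag as ``translating the base point from $x$ to $y$ only shifts things by the bounded vector $y-x$'' and where you speak of ``the loss being controlled by $\dist(x,y)$''. In a masure there is no metric, and more seriously there is no apartment that is guaranteed to contain both $x$ and $y$: the axiom (MAO) only gives this when $x\leq y$ or $y\leq x$. So there is in general no vector $y-x$, and the comparison between $Q_{x,c'}$ and $Q_{y,c'}$ cannot be done by a single translation inside one apartment. Your appeal to \cite[5.4]{Rou11} handles the passage from $c$ to $c'$ at a \emph{fixed} base point, but it does not by itself let you change base points.

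The paper closes this gap by inserting an intermediate point: one chooses $z$ in the interior of $Q_{x,c}\cap Q_{y,c}$ (this intersection always contains a subsector since both sectors share the same germ). Then one is reduced to the special situation where one base point lies in the interior of the sector of the other, and this special case (Lemma~\ref{lem::independence_base_point}) is handled by a separate argument whose core is Lemma~\ref{lem::rays_line}: two preordered rays sharing a nondegenerate segment lie on a common line in some apartment. That lemma is what produces the apartment $B$ containing both $Q_{z,c'}$ and the opposite sector $Q_{r',c_-}$, and hence the needed enclosure. Your sketch never isolates this step, and without it the ``pushing back to base point $x$'' in your last sentence is not justified.
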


\begin{proof}
To prove that $\Top_{x}( \Ch(\bd \Delta))$ and $\Top_{y}( \Ch(\bd \Delta))$ are the same, it is enough to show that the  identity map $ \Id : (\Ch(\bd \Delta), \Top_{x}(\Ch(\bd \Delta))) \to (\Ch(\bd \Delta), \Top_{y}(\Ch(\bd \Delta))) $ is continuous with respect to the corresponding topologies. For this it is enough to prove that for every chamber $c \in \Ch(\bd \Delta)$ and every standard open neighborhood $V$ of $c$ in $\Top_{y}(\Ch(\bd \Delta))$, there exists a standard open neighborhood $W$ of $c$ in $\Top_{x}(\Ch(\bd \Delta))$ such that $W \subset V$, as subsets of $\Ch(\bd \Delta)$.

Let us fix a chamber $c \in \Ch(\bd \Delta)$ and let $V:=U_{y, r, c}$ be a standard open neighborhood of $c$ with respect to the base point $y$, where $r\neq y$.

\par Let $Q_{z,c}$  be the sector with base point $z \in \Delta$ corresponding to the chamber at infinity $c \in \partial \Delta$. Notice that, for any two points $z_1,z_2 \in \Delta$, the intersection $Q_{z_1,c} \cap Q_{z_2,c}$ is not empty and moreover, it contains a subsector $Q_{z_3, c}$, where $z_3$ is a point in the interior of $Q_{z_1,c} \cap Q_{z_2,c}$. Take $z_1:=x, z_2:=y$ and $z_3=:z$, such that $z$ is in the interior of the intersection $Q_{y,c} \cap Q_{x,c}$.

To $y$ and $z$ as above apply Lemma~\ref{lem::independence_base_point}, that is stated below. We obtain the existence of a standard open neighborhood $U_{z, r', c}$ of $c$ with base point $z$ such that $U_{z, r', c} \subset U_{y, r, c}$.
 Now for $R:=r' \in [z, \xi_{c} )$ far enough, apply Lemma~\ref{lem::independence_base_point} to the points $z$ and $x$. We obtain the existence of a standard open neighborhood  $U_{x, R', c}$ of $c$ with base point $x$ such that $U_{x, R', c} \subset U_{z, r', c}=U_{z, R, c}$. From here we have that $U_{x, R', c} \subset U_{y, r, c}$ and the conclusion follows.
\end{proof}

\begin{lemma}
\label{lem::independence_base_point}
Let $\Delta$ be a masure and let $c \in \Ch(\bd \Delta)$ be a chamber at infinity. Let $z_1,z_2 \in \Delta$ be two different points such that the sector $Q_{z_1,c}$ contains the point $z_2$ in its interior. Then, for every standard open neighborhood $U_{z_1,r,c}$ of $c$, with base point $z_1$, there exists a standard open neighborhood $U_{z_2,r',c}$ of $c$ with base point $z_2$ such that $U_{z_2,r',c} \subset U_{z_1,r,c}$. And vice versa, for every standard open neighborhood $U_{z_2,R,c}$ of $c$ with base point $z_2$ there exists a standard open neighborhood $U_{z_1,R',c}$ of $c$ with base point $z_1$ such that $U_{z_1,R',c} \subset U_{z_2,R,c}$.
\end{lemma}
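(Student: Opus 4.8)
The strategy is to reinterpret membership in the standard neighbourhoods through a single scalar invariant, the ``parameter of divergence'' of two barycentre rays, and then to bound how this parameter changes when the base point is moved inside the sector $Q_{z_1,c}$. Fix an apartment $A$ of $\Delta$ with $Q_{z_1,c}\subset A$; then $z_1,z_2\in A$ and $c\in\Ch(\bd A)$. Identify $\vect A$ with $V$ as in Section~\ref{1.1} so that the vectorial chamber of $c$ becomes $C^v_f$, and let $u\in C^v_f$ be the direction of the ray $[z_1,\xi_c)$; since $z_2$ lies in the interior of $Q_{z_1,c}$ we have $v:=z_2-z_1\in C^v_f$, and $[z_i,\xi_c)=z_i+\R_{\geq0}\,u\subset Q_{z_1,c}\subset A$ for $i=1,2$. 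For a base point $x\in\{z_1,z_2\}$ and $c'\in\Ch(\bd\Delta)$, the rays $[x,\xi_c)$ and $[x,\xi_{c'})$ both issue from $x$, so their intersection is a closed initial subsegment of $[x,\xi_c)$; write it as $[x,\,x+\lambda(x,c')\,u]$ with $\lambda(x,c')\in[0,+\infty]$ (the value $+\infty$ occurring exactly for $c'=c$). For $\bar r=x+\bar s\,u\in[x,\xi_c)$ with $\bar s>0$ one has $[x,\bar r]\subset[x,\xi_{c'})$ if and only if $\bar r\in[x,\xi_{c'})$, because $[x,\xi_{c'})$ is a ray from $x$; hence
\[
c'\in U_{x,\bar r,c}\iff \bar s\leq\lambda(x,c').
\]
(We may assume $r\neq z_1$ and $R\neq z_2$, the remaining cases being trivial since then the neighbourhood is all of $\Ch(\bd\Delta)$.)

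Both assertions now follow from a single uniform estimate:
\[
\bigl|\lambda(z_1,c')-\lambda(z_2,c')\bigr|\leq C\quad\text{for all }c'\in\Ch(\bd\Delta),
\]
where $C=C(v)\geq0$ depends only on $v$ (and $\lambda(z_1,c')=+\infty\Leftrightarrow\lambda(z_2,c')=+\infty$). Indeed, writing $r=z_1+s\,u$ and taking $r':=z_2+(s+C)\,u$: if $c'\in U_{z_2,r',c}$ then $\lambda(z_2,c')\geq s+C$, so $\lambda(z_1,c')\geq s$, so $c'\in U_{z_1,r,c}$; this gives $U_{z_2,r',c}\subset U_{z_1,r,c}$. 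The converse statement is symmetric: writing $R=z_2+t\,u$, take $R':=z_1+(t+C)\,u$ and use $\lambda(z_2,c')\geq\lambda(z_1,c')-C$.

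It remains to prove the uniform estimate, and this is the substantial point. Fix $c'$. By the construction of $Q_{x,c'}$ recalled in Section~\ref{subsubsec::building_infinity} (an application of (MA3)), for each $i$ the ray $[z_i,\xi_{c'})$ coincides with $[z_i,\xi_c)$ along $[z_i,\,z_i+\lambda(z_i,c')\,u]\subset A$ and then leaves $A$ across a wall $M_i$ of $A$ passing through $z_i+\lambda(z_i,c')\,u$. Since $Q_{z_1,c'}$ and $Q_{z_2,c'}$ both have $c'$ at infinity, they are parallel and share a common subsector, so the behaviour of $[z_2,\xi_{c'})$ relative to $A$ is tied to that of $[z_1,\xi_{c'})$; applying (MA2) and (MA4) to the resulting apartment overlaps one shows that $M_2$ is determined by $M_1$ up to translation by a vector confined to a bounded set depending only on $v$, whence the estimate for a suitable $C=C(v)$. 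The crucial difficulty is precisely the \emph{uniformity in $c'$}: a careless argument bounds $|\lambda(z_1,c')-\lambda(z_2,c')|$ only by quantities attached to an apartment chosen in terms of $c'$, which is useless. The remedy is to carry out the entire comparison inside the fixed apartment $A$ — both rays stay in $A$ up to their leaving walls — and to exploit the parallelism of sectors pointing at the fixed chamber $c'$. The rank-one case already exhibits the mechanism: there $z_2$ lies on $[z_1,\xi_c)$, the branch point of $[z_i,\xi_{c'})$ off $[z_i,\xi_c)$ is the same vertex when it lies beyond $z_2$, and otherwise $\lambda(z_2,c')=0$ while $\lambda(z_1,c')$ is bounded by a quantity depending only on $v$; compare Corollary~\ref{lem:TreeWalls}.
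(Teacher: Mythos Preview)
Your reduction to a uniform estimate $|\lambda(z_1,c')-\lambda(z_2,c')|\leq C$ is sound and would indeed give both inclusions; the estimate is in fact true. But the paragraph meant to prove it is not a proof. You assert that $[z_i,\xi_{c'})$ ``leaves $A$ across a wall $M_i$ of $A$ passing through $z_i+\lambda(z_i,c')\,u$'' and that ``$M_2$ is determined by $M_1$ up to translation by a vector confined to a bounded set depending only on $v$'', offering only ``applying (MA2) and (MA4) \ldots\ one shows'' as justification. None of these claims is established: in a masure the branching point $z_i+\lambda(z_i,c')\,u$ has no reason to lie on a wall of $A$, the intersection of $A$ with an apartment carrying $Q_{z_i,c'}$ is not a priori a half-apartment, and even granting hypothetical walls $M_i$ you have written nothing that relates $M_1$ to $M_2$. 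You correctly diagnose that uniformity in $c'$ is the crux and that the comparison should live inside $A$, but you never carry it out; the rank-one remark at the end is an illustration, not an argument in higher rank.

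The paper supplies precisely the missing mechanism. For the first inclusion it takes $r'$ far in $[z_2,\xi_c)$ and, for $c'\in U_{z_2,r',c}$, applies Lemma~\ref{lem::rays_line} to the overlapping preordered rays $\delta_2=[z_2,\xi_{c'})$ and $\delta_1=[r',\xi_{c_-})$ (with $c_-$ the chamber opposite $c$ in $\bd A$). This yields a single apartment $B$ containing $Q_{z_2,c'}$ and $Q_{r',c_-}$; the latter already contains the parallelogram $Q_{z_1,c}\cap Q_{r',c_-}$, in particular $z_1$ and the segment $[z_1,r'']$ of $[z_1,\xi_c)$. Parallelism of $\xi_{c'}$-rays \emph{inside} $B$ then forces $[z_1,r'']\subset[z_1,\xi_{c'})$. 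The construction of $B$ via Lemma~\ref{lem::rays_line} is exactly what replaces your wall picture and is what makes the bound uniform: the parallelogram depends only on $z_1$ and $r'$, not on $c'$. The second inclusion is handled the same way with the roles of $z_1,z_2$ interchanged.
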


\begin{proof}
Let $A$ be an apartment of $\Delta$ such that $Q_{z_1,c} \subset A$. Let $\xi_c$ be the barycenter of the chamber at infinity $c$ and consider the standard open neighborhoods $U_{z_1,r,c}$ and $U_{z_2,R,c}$.

\medskip
Our first claim is that there exists $r' \in (z_2,\xi_c)$ far enough, such that the enclosure $cl_{A}([z_1,r'])$ contains the segment $[z_1, r)$ and also that there exists $R' \in (z_1, \xi_c)$, far enough, such that $cl_{A}([z_2,R']) \supset [z_2,R]$.  Indeed, this is true because for every point $x$ in the interior of $Q_{z_1,c}$ we have that $cl_{A}([z_1,x]) \supset (Q_{z_1,c} \cap Q_{x,c_{-}})$, where $c_{-} \in \Ch(\bd A)$ denotes the chamber opposite $c$ and $[z_1,x]$ is the geodesic segment with respect to the affine space $A$. By taking $r'$, and respectively, $R'$ sufficiently far enough, the claim follows.

\medskip
Let us prove the first assertion of the Lemma. Let $r' \in (z_2,\xi_c)$ satisfying the above claim and let $r'' \in (z_1,\xi_c)$ such that $[z_1,r'']= Q_{z_1,c} \cap Q_{r',c_{-}} \cap [z_1,\xi_c)$. This implies that $U_{z_1,r'',c} \subset U_{z_1,r,c}$. Next we want to prove that in fact $U_{z_2,r',c}\subset U_{z_1,r'',c} \subset U_{z_1,r,c}$.

Let $c'\in U_{z_2,r',c}$; so $[z_2,r'] \subset [z_2,\xi_{c'})\cap[z_2,\xi_c)$.
We may apply Lemma~\ref{lem::rays_line} (see below) to $\delta_2=[z_2,\xi_{c'})$ and $\delta_1=[r',\xi_{c_{-}})$.
We get an apartment $B$ containing these two rays, hence, their enclosures $Q_{z_2,c'}$ and $Q_{r',c_{-}}$. Thus, $c_{-},c' \in \Ch(\partial B)$, and  $B$ contains $cl_A(z_1,r')\supset Q_{z_1,c}\cap Q_{r',c_{-}}$.
In $B$ all rays of direction $\xi_{c'}$ are parallel to $[z_2,\xi_{c'})$, hence to $[z_2,r']$; or $[z_1,r'']\subset A\cap B$.
So $[z_1,r'']\subset[z_1,\xi_{c'})$ and $c'\in U_{z_1,r'',c} $.
This concludes that $U_{z_2,r',c}\subset U_{z_1,r'',c} \subset U_{z_1,r,c}$.
%In addition, as $cl_{A}([z_1,r'])$ contains $[z_1,r''] \supset [z_1, r)$, \red{by a result (cite the article)} we have that $U_{z_1,r'',c}= U_{z_2,r',c}$; this concludes that $ U_{z_1,r'',c}= U_{z_2,r',c} \subset U_{z_1,r,c}$.

Let us prove the second assertion. From our first claim there exists $R' \in (z_1, \xi_c)$, far
enough, such that $cl_{A}([z_1,R']) \supset (Q_{z_1,c} \cap Q_{R',c_{-}}) \supset [z_2,R]$.
If $c''\in U_{z_1,R',c}$, we apply Lemma~\ref{lem::rays_line} to $\delta_2=[z_1,\xi_{c'})$ and $\delta_1=[R',\xi_{c_{-}})$ and one easily proves that $c''\in U_{z_2,R,c} $, hence $U_{z_1,R',c} \subset U_{z_2,R,c}$.
%This directly implies, using \red{the result cited above (cite the article)}, that $U_{z_1,R',c} \subset U_{z_2,R,c}$.
\end{proof}

\begin{lemma}
\label{lem::rays_line}
Let $\delta_1,\delta_2$ be two preordered rays in apartments $A_1,A_2$ of a masure $\Delta$, with origins $x_1,x_2$. Suppose $x_1\not=x_2$ and $x_1,x_2\in \delta_1\cap\delta_2$ (hence $[x_1,x_2]\subset \delta_1\cap\delta_2$). Then $ \delta_1\cup\delta_2$ is a line in an apartment $A$ of $\Delta$.
\end{lemma}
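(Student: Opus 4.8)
The statement to prove is: if $\delta_1,\delta_2$ are preordered rays in apartments $A_1,A_2$ with distinct origins $x_1,x_2$, and if both rays contain the segment $[x_1,x_2]$, then $\delta_1\cup\delta_2$ is a line inside a single apartment $A$. The key structural observation is that $\delta_1\cup\delta_2$ is a union of two opposite preordered rays emanating from the common segment, so it should look like a bi-infinite geodesic; the content is that a common apartment exists, and here the masure axioms — specifically (MA3) and the ``preordered'' hypothesis, which lets us form chimney-germs in the right direction — are what we must feed in.

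First I would fix notation and directions. Since $\delta_1$ is preordered with origin $x_1$ and passes through $x_2$, after choosing a Weyl-isomorphism $A_1\cong\A$ it is parametrized by $x_1+t\lambda_1$ with $\lambda_1\in\pm\overline{C^v_f}$ (i.e.\ in $\pm\sht$); likewise $\delta_2$ with origin $x_2$ and direction $\lambda_2\in\pm\sht$. Because $[x_1,x_2]\subset\delta_1\cap\delta_2$, the portion of $\delta_2$ from $x_2$ going ``back toward'' $x_1$ coincides with part of $\delta_1$, and by axiom (MAO) these two segments literally agree as subsets of $\Delta$; comparing the two parametrizations on the overlap shows $\lambda_1$ and $\lambda_2$ are positively proportional, so $\delta_1$ and $\delta_2$ have (up to shortening) a common direction $\bar\delta$ as sector-face germs — more precisely the ray-germs $\mathrm{germ}(\delta_1)$ and $\mathrm{germ}(\delta_2)$ are parallel generic/preordered ray-germs. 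The union $\delta_1\cup\delta_2$ therefore ``wants'' to be the bi-infinite line through $x_1$ and $x_2$ with that direction.

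Next I would produce the apartment. Apply (MA2) to the germ of the preordered interval $[x_1,x_2]$ (which is exactly one of the allowed filters $F$ in (MA2)): $A_1\cap A_2$ contains the enclosure $cl([x_1,x_2])$ and there is a Weyl-isomorphism $A_1\to A_2$ fixing it pointwise. Transport $\delta_2$ via this isomorphism into $A_1$: since it fixes $[x_1,x_2]$ and the direction of $\delta_2$ is determined on the overlap, the image of $\delta_2$ is the ray in $A_1$ from $x_2$ in direction $\lambda_2=-$(direction of $\delta_1$ seen from $x_2$), i.e.\ the continuation of the line $\{x_1+t\lambda_1 : t\in\R\}$ in $A_1$ on the far side of $x_1$. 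Hence $\delta_1\cup\delta_2$ is carried into $A_1$ as the genuine affine line $L:=\{x_1+t\lambda_1:t\in\R\}\cap A_1$; taking $A:=A_1$ (after this identification) finishes the proof. Alternatively, if one prefers not to move $\delta_2$ but to keep both rays where they are, one argues that the Weyl-isomorphism of (MA2) is the identity on the overlap, hence $\delta_1$ and the relevant part of $\delta_2$ genuinely lie in $A_1\cap A_2$, and then the whole of $\delta_2$ lies in $A_1$ because $\delta_2$ is the unique ray in $A_2$ with its origin and direction and that ray is already visible inside $A_1\cap A_2$ on a neighborhood of $x_2$ — uniqueness of geodesic extension inside an affine apartment does the rest.

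The main obstacle — and the only place where care is genuinely needed — is verifying that the two rays share a direction and that the Weyl-isomorphism of (MA2) really identifies the full rays and not merely overlapping initial segments. This is where one must use that $[x_1,x_2]$ is a \emph{preordered} interval (so its germ is a legitimate input to (MA2)) and that inside an affine apartment a preordered ray is determined by its origin together with its germ: once $\mathrm{germ}_{x_2}(\delta_2)$ and $\mathrm{germ}_{x_2}$ of the line $L$ agree — which follows from the direction computation above — the two rays coincide in $A_1$ beyond $x_2$, and similarly $\delta_1$ extends to $L$ beyond $x_1$. All remaining steps (the parametrization comparison, applying (MAO), invoking (MA2)) are routine.
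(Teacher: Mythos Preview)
There is a genuine gap. Your argument never produces an apartment containing \emph{both} rays as they sit inside $\Delta$. In your main approach you invoke (MA2) to get a Weyl-isomorphism $\phi:A_1\to A_2$ fixing $cl([x_1,x_2])$ and then ``transport $\delta_2$ into $A_1$''. But $\phi$ is only an isomorphism of apartments, not a global automorphism of $\Delta$, and it is the identity only on $A_1\cap A_2$; the image $\phi^{-1}(\delta_2)$ is a ray in $A_1$, but it need not equal $\delta_2$ beyond the overlap. So what you have shown is that $\delta_1\cup\phi^{-1}(\delta_2)$ is a line in $A_1$, not that $\delta_1\cup\delta_2$ is. Your alternative route has the same defect: from ``an initial segment of $\delta_2$ near $x_2$ lies in $A_1\cap A_2$'' you cannot conclude that all of $\delta_2$ lies in $A_1$; the intersection $A_1\cap A_2$ may well be bounded, and ``uniqueness of geodesic extension'' is a statement internal to one affine apartment, not a statement constraining how $A_1$ and $A_2$ sit inside $\Delta$. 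In short, there is no reason the apartment $A$ of the conclusion should be either $A_1$ or $A_2$; one must genuinely \emph{construct} a new apartment, and nothing in your outline does that.

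There is also an internal inconsistency worth fixing: you first assert that $\lambda_1$ and $\lambda_2$ are \emph{positively} proportional, and a few lines later write $\lambda_2=-(\text{direction of }\delta_1)$. The latter is correct: since $\delta_1$ starts at $x_1$ and passes through $x_2$, while $\delta_2$ starts at $x_2$ and passes through $x_1$, their directions are opposite (this is exactly why the union is a line rather than a ray). The paper itself does not give an argument here but simply refers to part~2) of the proof of Proposition~5.4 in \cite{Rou11}; the construction there is what supplies the missing apartment, and it is not reducible to a single application of (MA2) on the overlap segment.
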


\begin{proof} This is exactly what is proved in part 2) of the proof of Rousseau~\cite[Prop. 5.4]{Rou11}.
\end{proof}

\begin{lemma} \label{3.6} Let $Q_1,Q_2$ be two sectors in a masure $\QD$, sharing the same base point $x$, with $Q_1$ (\resp  $Q_2$) of positive (\resp negative) direction.
We suppose $Q_1,Q_2$ are opposite, \ie there exist $y_1\in Q_1$, $y_2\in Q_2$ (hence $y_2\stackrel{o}{<} x\stackrel{o}{<} y_1$) such that $x\in[y_1,y_2]$.
Then there is an apartment $A$ containing $Q_1$ and $Q_2$.
\end{lemma}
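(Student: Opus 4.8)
The plan is to reduce the statement to an application of the chimney-friendliness axiom (MA3). The rays $\delta_1 = [x, \xi_1)$ inside $Q_1$ (for any choice of a generic point $\xi_1$ in the direction of $Q_1$) and $\delta_2 = [x, \xi_2)$ inside $Q_2$ fit together: since $Q_2$ has negative direction and $Q_1$ positive, and $y_2 \stackrel{o}{<} x \stackrel{o}{<} y_1$ with $x \in [y_1, y_2]$, the union $\delta_1 \cup \delta_2$ is (a germ of) a line, or at least a preordered ray in both apartments $A_1$ and $A_2$. The key point I would extract is that $Q_1$ and $Q_2$ can each be described as the enclosure of a sector-face, and that the sector-germs $\g R_1 = germ(Q_1)$ and $\g R_2 = germ(Q_2)$ are \textit{splayed} chimney-germs (a sector-germ of full direction $\pm w.C^v_f$ is splayed, since $C^v_f$ is spherical — its type is $\emptyset$). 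So the real question is whether a single apartment can contain two \emph{opposite} splayed chimney-germs.

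First I would invoke Lemma \ref{lem::rays_line}: choosing $y_1 \in Q_1 \setminus\{x\}$ and $y_2 \in Q_2\setminus\{x\}$ with $x \in [y_1,y_2]$, the rays $[x, y_1)$ extended (inside $Q_1$, towards the ideal point defining the sector's direction) and $[x, y_2)$ extended inside $Q_2$ have a common segment containing two distinct points, hence $\delta_1 \cup \delta_2$ is a line $L$ sitting in some apartment $A$. Now $L$ contains a generic ray in the direction of $Q_1$; by (MA3) applied to the splayed chimney-germ $\g R_1 = germ(Q_1)$ and the closed face $cl_A([x, y_2])$ (a closed preordered segment, which is a closed face in $A$), there is an apartment $A'$ containing both $\g R_1$ and $cl_A([x,y_2])$. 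Then I would want $A'$ to also contain $Q_2$: for this I would argue that $A'$ contains a subsector of $Q_1$ with base point far out, and contains the segment $[y_2, x]$, and then apply (MA3) once more — this time to the splayed chimney-germ $germ(Q_2)$ and a closed face inside $A'$ on the $Q_1$ side — and finally use (MA4) to glue, obtaining a single apartment containing germs of both $Q_1$ and $Q_2$; one last application of (MA2)/(MA4) along the line $L$ upgrades the germs to the full sectors $Q_1, Q_2$.

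Alternatively — and this is probably the cleaner route — I would look directly at the building at infinity. The sector $Q_1$ defines an ideal chamber $c_1 = \bd Q_1 \in \Ch(\bd\QD_+)$ and $Q_2$ defines $c_2 = \bd Q_2 \in \Ch(\bd\QD_-)$. The opposition hypothesis $x \in [y_1, y_2]$ with $y_i \in Q_i$ generic forces $c_1$ and $c_2$ to be opposite chambers in the twin building $\bd\QD$ (this is exactly the infinitesimal meaning of the segment passing straight through $x$). Taking $\phi = c_1$, $\phi' = c_2$ (ideal chambers are ideal faces of type $S$, and opposite), I would apply Theorem \ref{theo:Iphi} / Proposition \ref{prop2:Iphi}: since $c_1, c_2$ are opposite and both lie in $\bd A$ for the apartment $A$ produced by Lemma \ref{lem::rays_line}, the apartment $A$ already has $c_1, c_2 \in \Ch(\bd A)$, hence contains the sectors $Q_{x, c_1} = Q_1$ and $Q_{x, c_2} = Q_2$ (the sector based at $x$ in a given ideal chamber is unique by Section \ref{subsubsec::building_infinity}.2). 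This essentially finishes it. The main obstacle is the bookkeeping in verifying that the line $L$ produced by Lemma \ref{lem::rays_line} genuinely has $c_1$ at one end and $c_2$ at the other — i.e.\ that the \emph{directions} of $Q_1$ and $Q_2$ (not just some generic rays inside them) are realized in $A$; this requires noting that a line whose two ends are generic rays determines opposite ideal chambers only once one checks the full vectorial chamber (not just the barycentric ray) is carried along, which follows because within the single apartment $A$ all the combinatorial structure of $\bd A$ as a twin apartment is present, so the two ideal chambers at the ends of $L$ are automatically opposite and the sectors $Q_1, Q_2$ are recovered as $Q_{x,c_1}$ and $Q_{x,c_2}$.
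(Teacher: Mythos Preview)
Your main gap is in the application of Lemma~\ref{lem::rays_line}. That lemma requires the two preordered rays to share \emph{two distinct points} (so that their intersection contains a nondegenerate segment). But the rays you take, $\qd_1\subset \overline{Q_1}$ and $\qd_2\subset \overline{Q_2}$, both have origin $x$ and point into sectors of opposite sign; since $Q_1\cap Q_2=\emptyset$, these rays meet only at $\{x\}$. The hypothesis $x\in[y_1,y_2]$ does \emph{not} automatically make $\qd_1\cup\qd_2$ a line in one apartment: a priori $Q_1$ and $Q_2$ live in different apartments, and there is nothing yet saying either ray extends past $x$. So your sentence ``have a common segment containing two distinct points, hence $\qd_1\cup\qd_2$ is a line $L$'' is unjustified, and everything downstream (both routes) rests on it.

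The paper's proof fixes precisely this point with a short preliminary step. For each $i$, it applies (MA3) to the generic ray-germ $germ(\qd'_i)$ (a splayed chimney-germ) together with the local germ $germ_x([x,y_{3-i}])$ (a closed face) to produce an apartment $A_i$ containing $\qd'_i$ \emph{and} a small segment $[x,x_i]$ with $x_i\in(x,y_{3-i}]$ on the other side of $x$. The extended ray $\qd_i=\qd'_i\cup[x,x_i]$ now genuinely overlaps $\qd_{3-i}$ on a nondegenerate segment, so Lemma~\ref{lem::rays_line} legitimately yields an apartment $A\supset\qd_1\cup\qd_2$. Finally (MA2) applied to the generic ray $\qd'_i\subset A$ gives $A\supset cl_A(\qd'_i)\supset Q_i$, and you are done. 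Your second route via the twin building at infinity is circular: saying $c_1=\bd Q_1$ and $c_2=\bd Q_2$ are opposite in $\bd\QD$ amounts to asserting the existence of a common apartment, which is the statement to be proved; Proposition~\ref{prop2:Iphi} and Theorem~\ref{theo:Iphi} take oppositeness as a hypothesis, not a conclusion.
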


\begin{remark}
\label{rem::cond_CO}
This is condition (CO) of Parreau~\cite[1.12]{Parr00}
\end{remark}

\begin{proof} Let $\qd'_i$ be the generic ray of origin $x$ in $Q_i$ containing $y_i$.
By (MA3) there is an apartment $A_i$ containing $germ(\qd'_i)$ and $germ_x([x,y_{3-i}])$, hence also $\qd'_i$ (by convexity) and some point $x_i\in (x,y_{3-i}]$.
Now it is clear that $\qd_i=\qd'_i\cup[x,x_i]$ is a (generic) ray in $A_i$.
Applying Lemma \ref{lem::rays_line}, there is an apartment $A$ containing $\qd_1$ and $\qd_2$.
Now, by (MA2) and (MA3), $A$ contains $cl_A(\qd'_i)\supset Q_i$.
\end{proof}

\begin{remark}\label{rem:3.9}
\begin{enumerate}
\item[1)]

Let $x$ be a fixed base point in $\Delta$ and let $S'\subset S$ be spherical. With the $S'-$residue $res_{S'}(c)$ of an ideal chamber $c \in \Ch(\bd\QD)$ there is associated in a unique way the ideal face of type $S'$ of $c$. Therefore, on the set $Res_{S'}(\Ch(\bd\QD))$ of all $S'-$residues in $\Ch(\bd\QD)$ one can define a cone topology (with base point $x$)  in the same way as in Definition~\ref{def::cone_top_on_chambers}.
More precisely,  in such an ideal face $\phi$ of type $S'$ (that corresponds to a unique $S'-$residue $res_{S'}(c)$) one can choose an ideal point $\qx_\phi$ (equivariantly under vectorially Weyl--isomorphisms between apartments in $\QD$) and then the definition of the standard open neighborhoods $U_{x, r, \phi}$ of $\phi$ in $Res_{S'}(\Ch(\bd\QD))$ goes as in Definition~\ref{def::standard_open_neigh_cone_top}.
 In Proposition~\ref{top.face} below we prove that this cone topology on $Res_{S'}(\Ch(\bd\QD))$, with $S'$ spherical, does not depend on the chosen base point $x$.

\item[2)]
Suppose $\QD' \subset \QD$ is a ``sub-masure'' of $\Delta$, this means that $\QD'$ is union of some apartments of $\QD$ that also satisfy the axioms of Definition~\ref{de:AffineHovel}.
 Then $\Ch(\bd\QD')\subset \Ch(\bd\QD)$. It is clear that the cone topology on $\Ch(\bd\QD')$ viewed as a masure itself, coincides with the restriction on $\Ch(\bd\QD')$ of the cone topology on $\Ch(\bd\QD)$ (assuming that the base point $x$ is in $\QD'$).

For example this is the case when $\QD'=\QD(\phi,\phi')$ (as in Theorem \ref{theo:Iphi}), with apartments endowed with their natural structure of apartments coming from $\Delta$ (see Remark~\ref{rema:Iphi}).
% \sout{Then both cone topologies clearly coincide on $\Ch(\bd\QD')\subset \Ch(\bd\QD)$}. \red{CC: it is not clear what 'both cone topologies mean'.}

\par Let us now consider the affine building $\QD''=\QD(\phi,\phi')$, where any apartment $A\in\sha(\phi,\phi')$ is endowed with its restricted structure of apartment $A^\phi$. Because there are more directions of walls in $A$ than in $A^\phi$, any ideal chamber $c''_0$ in $A^\phi$ contains an ideal chamber $c_0$ in $A$ and we choose $\qx_{c''_0}:=\qx_{c_0}$.
Extending these choices by invariance under vectorially Weyl--isomorphisms between apartments of $\QD''$, we obtain an injection $\qi:\Ch(\bd\QD'')\into \Ch(\bd\QD')\subset \Ch(\bd\QD)$ and a choice of $\qx_{c''}\in c''$ for any $c''\in \Ch(\bd\QD'')$, such that $\qx_{c''}=\qx_{\qi(c'')}$. (As there are less walls in $A^\phi$, an ideal chamber in $A^\phi$ may contain two (or more) chambers of $A$.)

By the above property of $\qi$, one has $\qi^{-1}(U_{x,r,\qi(c'')}^{\bd\QD})=U_{x,r,c''}^{\bd\QD''}$; thus, the cone topology on $\Ch(\bd\QD')$ (or $\Ch(\bd\QD)$)  induces the same topology as the cone topology on $\Ch(\bd\QD'')$ (where $\QD''$ is an affine building, hence  a \cat space, as explained in Remark~\ref{rem:3.3}).
 By Lemma~\ref{lem:3.3} the initial choice of $c_0$, \ie of $\qx_{c''_0}=\qx_{c_0}$, has no influence on the topology.

\par Recall that (by Proposition~\ref{fact:Iphi} and Theorem~\ref{theo:Iphi}), for an ideal chamber $c$ with $\phi\leq c$ and $J$ the type of $\phi$, the set $\Ch(\bd\QD'')$ is the residue $res_J(c)$.
 \end{enumerate}
 \end{remark}

\subsection{Basic properties of the cone topology}
\label{subsec::prop_cone_top}

We want to verify basic properties of the cone topology on $\Ch(\partial \Delta)$, where $\Delta$ is a masure. Many of these properties are inspired from Kramer~\cite{K02} and Hartnick--K\"{o}hl--Mars~\cite{HKM}. Recall also the notation of Section~\ref{sec::notation}.

In what follows we let $X:= \Ch(\partial \Delta)$. {Recall $E_{1}^{*}(c):= \{ d \in X_{\mp} \;  \vert \; \delta^{*}(c,d)= 1 \in W\}$.}

\begin{proposition}
\label{lem::E1_open_set}
Let $\Delta$ be a masure. For each $c \in X_{\pm}$ the set $E_{1}^{*}(c)$ is an open subset of X.
\end{proposition}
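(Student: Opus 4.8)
The plan is to show that every chamber $d \in E_1^*(c)$ — i.e.\ every chamber $d$ opposite to $c$, since $\delta^*(c,d)=1$ means exactly that $d$ is opposite $c$ — has a standard open neighborhood $U_{x,r,d}$ entirely contained in $E_1^*(c)$. Fixing a base point $x \in \Delta$ (which is legitimate by Proposition \ref{prop::independence_base_point}), the key geometric idea is that opposition of two ideal chambers $c$ and $d$ is witnessed by a single apartment $A$ with $c, d \in \Ch(\bd A)$, and that this witnessing is ``stable'' under small perturbations of $d$ in the cone topology. More precisely, by Lemma \ref{3.6} (condition (CO)), since $c$ and $d$ are opposite there is an apartment $A$ containing both sectors $Q_{x,c}$ and $Q_{x,d}$; in $A$ the rays $[x,\xi_c)$ and $[x,\xi_d)$ together form a line through $x$, because the barycenters of opposite chambers were chosen to be opposite.

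First I would take any point $r \in (x,\xi_d)$, $r \neq x$, and show that the standard open neighborhood $U_{x,r,d}$ is contained in $E_1^*(c)$. So let $d' \in U_{x,r,d}$; by definition $[x,r] \subset [x,\xi_{d'}) \cap [x,\xi_d)$, in particular the generic ray $[x,\xi_{d'})$ starts out along the segment $[x,r] \subset A$. Now consider the ray $\delta_1 = [r, \xi_c)$ inside $A$: since $[x,\xi_c)$ and $[x,\xi_d)$ form a line in $A$ and $r$ lies on the $\xi_d$-side, the ray from $r$ towards $\xi_c$ passes through $x$ and equals $[r,x] \cup [x,\xi_c)$, so it is a genuine (preordered) ray in $A$ with $r, x$ both on it. Apply Lemma \ref{lem::rays_line} to $\delta_1 = [r,\xi_c)$ and $\delta_2 = [x,\xi_{d'})$: since $x \neq r$ and both $x$ and $r$ lie on $\delta_1 \cap \delta_2$ (the segment $[x,r]$ is common to both), we obtain an apartment $B$ containing the line $\delta_1 \cup \delta_2$. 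Then $B$ contains the enclosures $Q_{r,c}$ (hence the sector-germ of $c$) and $Q_{x,d'}$, so $c, d' \in \Ch(\bd B)$. Finally, in $B$ the ray $[x,\xi_{d'})$ and the ray $[x,\xi_c)$ emanate from $x$ in opposite directions along a single line (again because the barycenters of opposite chambers are opposite, and $B$ is an apartment witnessing both $c$ and $d'$ at infinity), which forces $c$ and $d'$ to be opposite ideal chambers in $\bd B$, hence opposite in $\bd\Delta$. Thus $\delta^*(c,d')=1$, i.e.\ $d' \in E_1^*(c)$, as desired.

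The main obstacle I anticipate is the last step: deducing that $c$ and $d'$ are \emph{opposite} in $\bd B$ from the fact that they sit in a common apartment $\bd B$ with $[x,\xi_c)$ and $[x,\xi_{d'})$ forming (or contained in) a line. One has to be careful that ``the barycenters $\xi_c, \xi_{d'}$ lie on a common line through $x$'' genuinely entails opposition of the chambers and not merely of the faces they determine — this is exactly where the hypothesis built into the choice of barycenters (opposite chambers get opposite barycenters, compatibly with vectorially Weyl--isomorphisms) is used, together with the structure of $\bd B$ as a twin apartment. A clean way to handle this is to transport the configuration into the model apartment $\A$ via some $f \in Isom^w_\R(\A, B)$: there $\xi_c$ and $\xi_{d'}$ correspond to opposite generic rays from the image of $x$, and opposition of the corresponding chambers in the Coxeter complex is immediate. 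One should also double-check the direction of preorders (that $\delta_1 = [r,\xi_c)$ is indeed a preordered ray, with the sign opposite to that of $[x,\xi_d)$), but this follows since $\xi_c$ and $\xi_d$ have opposite signs and $r$ lies between $x$ and $\xi_d$. Once the final step is secured, the proposition follows since the choice of $d \in E_1^*(c)$ was arbitrary.
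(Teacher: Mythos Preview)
Your proof is correct and follows essentially the same route as the paper. The paper's argument is simply a terser packaging of yours: it fixes $x$ in the unique apartment $A$ with $c,d\in\Ch(\bd A)$, takes any $r\in(x,\xi_d)$, and then for $d'\in U_{x,r,d}$ observes that $Q_{x,c}$ and $Q_{x,d'}$ satisfy the hypothesis of Lemma~\ref{3.6} (since $r\in Q_{x,d'}$, any $y_1\in(x,\xi_c)$ lies in $Q_{x,c}$, and $x\in[y_1,r]$), hence lie in a common apartment where they are opposite sectors, so $d'$ is opposite $c$. Your use of Lemma~\ref{lem::rays_line} simply reproves this instance of Lemma~\ref{3.6} by hand (indeed Lemma~\ref{lem::rays_line} is the main ingredient in the proof of Lemma~\ref{3.6}), and your careful discussion of the ``last step'' --- that opposite barycenters force opposite chambers --- makes explicit what the paper leaves implicit.
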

\begin{proof}
To prove the lemma we recall that the cone topology on $X$ does not depend on the chosen base point. Therefore, let $d$ be a chamber in $E_{1}^{*}(c)$, that is to say opposite $c$. We need to prove that there is an open neighborhood of $d$ in the cone topology that is a subset of $E_{1}^{*}(c)$.  For this let $A$ be the unique apartment of $\Delta$ containing $c,d$ in $\Ch(\partial A)$ and fix a base point $x \in A$ for the cone topology.  For  every $r \in (x, \xi_{d})$, every chamber in $U_{x,r,d}$ is opposite $c$ by Lemma~\ref{3.6}. The proposition is proven.
\end{proof}

\begin{corollary}\label{rem_TTB4+} Let $\QD$ be a masure and $X=\Ch(\bd\QD)$. Then any residue $res_J(c)\subset X$ is closed for the cone topology of $X$.
\end{corollary}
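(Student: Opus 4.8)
The plan is to show that the complement $X\setminus res_J(c)$ is open, the only real input being Proposition~\ref{lem::E1_open_set}, which says that for every chamber $d$ the set $E_1^*(d)$ of all chambers opposite $d$ is open. Write $R:=res_J(c)$ and assume, without loss of generality, that $c\in X_+$, so $R\subseteq X_+$. First I would note that $X_-$ itself is open: every chamber of a twin building has an opposite, so $X_-=\bigcup_{d\in X_+}E_1^*(d)$ is a union of open sets. Since $X\setminus R=X_-\cup(X_+\setminus R)$, it then suffices to prove that $X_+\setminus R$ is open in $X$.

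The core of the argument is the identity
$$
X_+\setminus R \;=\;\bigcup_{d\in D}E_1^*(d),\qquad D:=\{\,d\in X_-\mid d\text{ is opposite no chamber of }R\,\}.
$$
The inclusion $\supseteq$ is clear, since $E_1^*(d)\subseteq X_+$ and $E_1^*(d)\cap R=\emptyset$ whenever $d\in D$. By Proposition~\ref{lem::E1_open_set} the right-hand side is open, so once the inclusion $\subseteq$ is established, $X_+\setminus R$ is open and we are done. Thus the heart of the matter is: given $c'\in X_+\setminus R$, construct a chamber $d\in X_-$ with $c'$ opposite $d$ and with $d$ opposite no chamber of $R$.

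To build $d$, set $R_1:=res_J(c')$; since $c'\notin R$ we have $\delta(c,c')\notin W^v(J)$, so $R_1\neq R$. Choose a twin apartment $\Sigma=(\Sigma_+,\Sigma_-)$ with $c,c'\in\Sigma_+$ (take a common apartment of $c$ and $c'$ in $X_+$ and extend it to a twin apartment). Then $R\cap\Sigma_+$ and $R_1\cap\Sigma_+$ are full $J$-residues of the thin building $\Sigma_+$, and they are distinct because $c,c'$ are not in a common $J$-residue of $\Sigma_+$. Let $R_2'\subseteq\Sigma_-$ be the unique $J$-residue opposite $R_1\cap\Sigma_+$ in $\Sigma$, and let $d\in R_2'$ be the unique chamber of $\Sigma_-$ opposite $c'$ in $\Sigma$. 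Opposition is detected inside a twin apartment, so $c'$ is opposite $d$ in $X$. Now put $R_2:=res_J(d)$, so $R_2'=R_2\cap\Sigma_-$. The residues $R_2$ and $R$ are \emph{not} opposite: in $\Sigma$ the residue $R_2\cap\Sigma_-$ is opposite $R_1\cap\Sigma_+$, and a $J$-residue of the thin building $\Sigma_-$ has a unique opposite $J$-residue in $\Sigma_+$, namely $R_1\cap\Sigma_+\neq R\cap\Sigma_+$; since opposition of residues of a fixed type is detected in any common twin apartment, $R_2$ is not opposite $R$. Finally, if $d$ were opposite some $c''\in R$, then $R_2=res_J(d)$ and $R=res_J(c'')$ would be opposite residues, a contradiction. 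Hence $d\in D$ and $c'\in E_1^*(d)\subseteq X_+\setminus R$, which proves the inclusion $\subseteq$.

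I expect the only delicate point to be the bookkeeping with opposition in twin buildings, namely the standard facts that opposition of chambers (and of residues of a fixed type) is detected inside any common twin apartment, that in a thin building a residue has a unique opposite residue of the same type, and that opposite chambers lie in opposite residues of each type (see \cite{AB}); one must take care when $W^v(J)$ is infinite. When $J$ is spherical the whole argument can be phrased more transparently in terms of the ideal faces $\phi=face_J(c)$, $\phi'=face_J(c')$ of type $J$ and their opposites, which connects directly with the affine buildings $\QD(\phi,\phi')$ of Section~\ref{1.5}; but the argument above needs no sphericity assumption on $J$.
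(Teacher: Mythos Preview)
Your argument is correct and is precisely the twin-building argument from \cite[Lemma~3.4]{HKM} to which the paper's one-line proof defers: one writes the complement of $res_J(c)$ as a union of sets $E_1^*(d)$, which are open by Proposition~\ref{lem::E1_open_set}. The only cosmetic simplification is that the ``opposition of residues'' bookkeeping can be replaced by the single observation that $\delta^*(d,c'')\in\delta^*(d,c)\,W^v(J)$ for every $c''\in res_J(c)$ (immediate from axiom (Tw2)), so $\delta^*(d,c)\notin W^v(J)$ already forces $\delta^*(d,c'')\neq 1$; this handles the non-spherical case without further care.
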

\begin{proof} This is deduced from Proposition \ref{lem::E1_open_set} by the same twin-building-arguments as in~\cite[Lemma 3.4]{HKM}.
\end{proof}

\begin{lemma}
\label{TTB1}
Let $\Delta$ be a masure. Then the cone topology on $X$ is Hausdorff.
\end{lemma}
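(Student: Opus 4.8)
The plan is to prove the Hausdorff property by taking two distinct chambers $c, d \in X$ and producing disjoint standard open neighborhoods of them. The argument naturally splits into two cases according to whether $c$ and $d$ have the same sign or opposite signs. I would fix a convenient base point $x \in \Delta$ — since by Proposition~\ref{prop::independence_base_point} the cone topology is independent of the base point, we are free to choose $x$ as we like in each case.

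First, the opposite-sign case, say $c \in X_+$ and $d \in X_-$. If $c$ and $d$ are opposite, then by Proposition~\ref{lem::E1_open_set} the set $E_1^*(c)$ of chambers opposite $c$ is open and contains $d$; moreover $E_1^*(d)$ is open and contains $c$, and these two sets are disjoint (a chamber cannot be opposite both, as they lie in different halves and... actually one must check disjointness: if $e$ is opposite $c$ it lies in $X_-$, if $e$ is opposite $d$ it lies in $X_+$, so indeed $E_1^*(c) \cap E_1^*(d) = \emptyset$). If $c$ and $d$ are not opposite, choose a base point $x$ on a sector $Q_{x,c}$; I would take the apartment $A$ containing $Q_{x,c}$ together with an opposite sector $Q_{x,c_-}$ (here $c_- \in X_-$ is the chamber opposite $c$ in $\Ch(\bd A)$, using Lemma~\ref{3.6} or axiom (MA3) to find such an $A$), then refine using the retraction $\rho_{A,c}$ and the Hecke-path control of Proposition~\ref{prop::hecke_path} to separate $d$ from $c$. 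Concretely, for $r \in [x,\xi_c)$ far enough, any $c' \in U_{x,r,c}$ has its sector $Q_{x,c'}$ sharing the long segment $cl([x,r])$ with $Q_{x,c}$; such $c'$ cannot equal a fixed non-opposite $d$ once $r$ is sufficiently far, since the germ at infinity is determined.

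Second, the same-sign case, say $c,d \in X_+$ distinct. Pick any $x \in \Delta$ and consider the sectors $Q_{x,c}$ and $Q_{x,d}$. Since $c \neq d$, these sectors, although they share the base point $x$, are not equal, so there is a first point along the two rays $[x,\xi_c)$ and $[x,\xi_d)$ beyond which they diverge — more precisely, $Q_{x,c} \cap Q_{x,d}$ is a proper bounded-in-one-direction subset, and one can find $r_c \in [x,\xi_c)$, $r_d \in [x,\xi_d)$ far enough that $cl([x,r_c])$ and $cl([x,r_d])$ are not both contained in a common sector. Then I claim $U_{x,r_c,c} \cap U_{x,r_d,d} = \emptyset$: a chamber $c'$ in the intersection would satisfy $cl([x,r_c]) \subset Q_{x,c'}$ and $cl([x,r_d]) \subset Q_{x,c'}$, forcing both long initial segments into one sector, contradicting the choice of $r_c, r_d$. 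The key technical input here is that the enclosure $cl_A([x,y])$ for $y$ in the interior of $Q_{x,c}$ exhausts $Q_{x,c} \cap Q_{x,c_-}$ (used already in the proof of Lemma~\ref{lem::independence_base_point}), so two sectors agreeing on arbitrarily large enclosures must coincide.

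The main obstacle I anticipate is making precise the separation in the same-sign non-opposite configuration: one needs to rule out that $c$ and $d$, while distinct, have sectors from $x$ that stay ``infinitesimally tangled'' — i.e.\ that $U_{x,r,c} \cap U_{x,r',d} \neq \emptyset$ for all $r,r'$. This is where one invokes that a sector determines its chamber at infinity together with the fact (from axioms (MA2)--(MA4) and the Weyl-isomorphism statements, as used in Lemma~\ref{lem:3.3}) that if two sectors based at $x$ share enclosures of arbitrarily long segments then they are equal. So the proof strategy is: reduce to a single well-chosen base point $x$, split into the three sub-cases (opposite signs opposite / opposite signs not opposite / same sign), and in each case exhibit explicit standard neighborhoods $U_{x,r,c}$, $U_{x,r',d}$ that are disjoint, the disjointness following from the rigidity of sectors as filters together with Proposition~\ref{lem::E1_open_set} in the opposite case.
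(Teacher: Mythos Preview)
Your approach is broadly correct but vastly over-engineered, and one of your sub-cases is left incomplete. The paper's proof is a single sentence: ``This is a direct consequence of the definition of the cone topology on $X$.'' The point is that the standard neighborhoods $U_{x,r,c}$ are defined directly in terms of the rays $[x,\xi_c)$, so separating two chambers reduces to separating two rays. Concretely: if $c \neq d$ then $\xi_c \neq \xi_d$ (the barycenters lie in the open interiors of distinct ideal chambers, which are disjoint), hence the generic rays $[x,\xi_c)$ and $[x,\xi_d)$ from any base point $x$ are distinct. Their intersection is therefore a proper initial segment, and for $r \in [x,\xi_c)$ and $r' \in [x,\xi_d)$ chosen beyond that segment one has $U_{x,r,c} \cap U_{x,r',d} = \emptyset$: any $c'$ in both would force the single half-line $[x,\xi_{c'})$ to contain the two divergent segments $[x,r]$ and $[x,r']$.

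This argument is uniform: no case split on signs, no appeal to $E_1^*(c)$ or Proposition~\ref{lem::E1_open_set}, no retractions or Hecke paths, and no need to pass from rays to sectors and enclosures. Your worry about sectors being ``infinitesimally tangled'' disappears once you work with the one-dimensional rays rather than the full sectors. Note also that in your opposite-sign, non-opposite sub-case you only argue that $d \notin U_{x,r,c}$ for $r$ far, which is not the same as producing disjoint neighborhoods of $c$ and $d$; but as just explained, that sub-case need not be isolated at all.
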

\begin{proof}
 This is a direct consequence of the definition of the cone topology on $X$.
\end{proof}

\begin{proposition}
\label{TTB2}
Let $\Delta$ be a masure. For each $J \in \mathit{Sph}(S,W^{v})$ the map $$p_{J}: X_{1} \to X_{+} \cup X_{-}$$  $$(c,d) \mapsto proj_{res_{J}(c)}^{*}(d)$$ is continuous, where $X_{1}:=\{ (c,d) \in (X_{+} \times X_{-}) \cup (X_{-} \times X_{+}) \;  \vert \; \delta^{*}(c,d)=1 \}$.
\end{proposition}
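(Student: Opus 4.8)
The plan is to prove continuity of $p_J$ at an arbitrary point $(c,d)\in X_1$ by working with the affine-building $\QD(\phi,\phi')$ attached to the pair of opposite ideal faces $\phi := face_J(c)$ and $\phi' := face_J(d)$ (these are opposite because $\delta^*(c,d)=1$ forces $c$ and $d$ to be opposite, hence their faces of type $J$ to be opposite). Set $e := proj^*_{res_J(c)}(d) = p_J(c,d)$, and note $\phi\leq e$, so $e$ is an ideal chamber of the affine building $\QD''=\QD(\phi,\phi')$, which by Theorem~\ref{theo:Iphi} is a genuine (chimney friendly) affine building and hence a $\CAT(0)$ space with the usual cone topology on $\Ch(\bd\QD'')=res_J(c)$. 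By Remark~\ref{rem:3.9}.2, the cone topology on $\Ch(\bd\QD'')$ agrees with the topology induced from $\tau_X$ via the inclusion $res_J(c)\hookrightarrow X$. So the first step is the reduction: fix a base point $x$ lying in an apartment $A$ with $\phi,\phi'\subset\bd A$, and work throughout with this $x$ (legitimate by base-point independence).

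Next I would produce, for a given standard open neighbourhood $U_{x,r,e}$ of $e$ in $X$ with $r\in[x,\xi_e)$, an explicit open neighbourhood $\Omega$ of $(c,d)$ in $X_1$ with $p_J(\Omega)\subset U_{x,r,e}$. The idea is that $p_J$ is ``locally constant in the $A^\phi$-directions'': if $c'$ is close to $c$ and $d'$ close to $d$, then $face_J(c')$ and $face_J(d')$ are close to $\phi,\phi'$, and the apartment $A'$ of $\QD$ carrying $Q_{x,c'}$ and $Q_{x,\xi_{d'}}$ agrees with $A$ along a large enclosed piece containing the initial segment of $[x,\xi_e)$; concretely one chooses $s\in[x,\xi_e)$ far enough (past $r$) and then shows, using Lemma~\ref{3.6} and Lemma~\ref{lem::rays_line} to build a common apartment for $Q_{x,c'}$ and the opposite sector $Q_{x,\xi_{d'}}$, that for $c'\in U_{x,s,c}$ and $d'\in U_{x,s',d}$ (with $s,s'$ suitably large) the projection $proj^*_{res_J(c')}(d')$ has its sector $Q_{x,p_J(c',d')}$ sharing the segment $[x,r]$ with $Q_{x,e}$. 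Taking $\Omega$ to be the intersection of $X_1$ with (an open set refining) $U_{x,s,c}\times U_{x,s',d}$ then does the job; openness of this product intersected with $X_1$ is automatic from the product topology on $X_+\times X_-$.

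The main obstacle I expect is the geometric core of the second step: controlling how the projection chamber moves when $c$ and $d$ are perturbed simultaneously, since $res_J(c)$ itself changes with $c$. The clean way around this is to pass entirely into $\QD''=\QD(\phi,\phi')$: for nearby $c',d'$ one has the affine building $\QD(\phi_{c'},\phi_{d'})$ with $\phi_{c'}=face_J(c')$, $\phi_{d'}=face_J(d')$, and one needs that these buildings ``converge'' to $\QD''$ near $x$. This is handled by the masure axioms (MA3)--(MA4) applied to the sector-faces of directions $\phi,\phi'$ and the point $x$, giving a common apartment and identifying large neighbourhoods; then inside a fixed genuine affine building the continuity of $(c,d)\mapsto proj^*_{res_J(c)}(d)$ on the opposite locus is the classical $\CAT(0)$ statement (projection onto a variable residue depending continuously on a defining opposite pair), which follows from Remark~\ref{rem:3.3} together with the fact that in a $\CAT(0)$ building combinatorial projections vary continuously. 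Assembling these gives the claim.
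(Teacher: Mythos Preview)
Your proposal takes an unnecessary detour and ends with a genuine gap. The reduction to the affine building $\QD(\phi,\phi')$ buys nothing here: as you yourself note, once $(c,d)$ is perturbed to $(c',d')$ the faces $\phi,\phi'$ change, so you are no longer working in a fixed affine building. Your proposed remedy---``convergence'' of the buildings $\QD(\phi_{c'},\phi_{d'})$ to $\QD(\phi,\phi')$ near $x$, followed by an appeal to ``the classical $\CAT(0)$ statement'' that projections onto variable residues depend continuously on the data---is not a proof. The convergence statement is never made precise, and the $\CAT(0)$ claim you invoke is essentially the proposition you are trying to prove, restated in a special case; it is not a standard lemma you can cite.

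The paper's argument is far more direct and avoids all of this. Fix the unique apartment $A$ with $c,d\in\Ch(\bd A)$ (hence also $d':=p_J(c,d)\in\Ch(\bd A)$) and a base point $x\in A$. Given a target neighbourhood $U_{x,r,d'}$, choose $r_1\in(x,\xi_c)$ and $r_2\in(x,\xi_d)$ far enough that $cl_A([r_1,r_2])\supset[x,r]$. For $(c_1,d_1)\in U_{x,r_1,c}\times U_{x,r_2,d}$, Lemma~\ref{3.6} gives a single apartment $A_1$ containing both $Q_{x,c_1}$ and $Q_{x,d_1}$. The key step---which your proposal never reaches---is to invoke \cite[Prop.~5.4]{Rou11}: there is a Weyl isomorphism $\qf:A\to A_1$ fixing $cl_A([r_1,r_2])$ pointwise. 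This $\qf$ sends $c$ to $c_1$, $d$ to $d_1$, hence $res_J(c)$ to $res_J(c_1)$ and $d'=p_J(c,d)$ to $p_J(c_1,d_1)$; since $\qf$ fixes $[x,r]\subset cl_A([r_1,r_2])$, the ray $[x,\xi_{p_J(c_1,d_1)})=\qf([x,\xi_{d'}))$ contains $[x,r]$, so $p_J(c_1,d_1)\in U_{x,r,d'}$. No affine sub-buildings, no convergence, no $\CAT(0)$ machinery: just one Weyl isomorphism transporting the entire configuration.
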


\begin{proof}
Notice that $X_{+} \times X_{-}$ and  $X_{-} \times X_{+}$ are endowed with the product topology from $X_{+}$  and $X_{-}$.
Therefore, the topology on $X_1$ is the following. Let $c,d \in X$ such that $\delta^{*}(c,d)=1$. A standard open neighborhood of $(c,d)$ in $X_1$ is given by $X_1 \cap (U_{x,r,c} \times U_{x,r',d})$, where $U_{x,r,c}$ and $U_{x,r',d}$ are standard open neighborhoods of $c$, respectively $d$, with $r \in [x, \xi_c)$, $ r' \in [x, \xi_d)$ and $x$ is a base point for the cone topology on $X$. Actually by taking $r \in (x, \xi_c)$, $ r' \in (x, \xi_d)$ and $x$ in a same apartment as $c$ and $d$, then, by Lemma \ref{3.6}, $ (U_{x,r,c} \times U_{x,r',d})\subset X_1$.

Let $d':=proj_{res_{J}(c)}^{*}(d)$, where $(c,d) \in X_1$. Let $A$ be the unique apartment in $\Delta$ such that $c,d,d' \in \Ch(\bd A)$ and fix a base point $x \in A$. Recall that the cone topology on $X$ does not depend on the chosen base point.

We need to prove that given a standard open neighborhood $U_{x,r,d'}$ of $d'$ there exist open neighborhoods $U_{x,r_1,c}, U_{x,r_2,d} $ of $c$ and respectively $d$ such that the image of $(U_{x,r_1,c}\times  U_{x,r_2,d})$ by the map $p_J$ is included in $U_{x,r,d'}$. For $r_1,r_2$ far enough, $cl_A([r_1,r_2])\supset[x,r]$.
If $c_1\in U_{x,r_1,c}$ and $d_1\in U_{x,r_2,d}$, there is an apartment $A_1$ containing $Q_{x,c_1}\supset[x,r_1]$ and $Q_{x,d_1}\supset[x,r_2]$ (Lemma \ref{3.6}).
Now, by \cite[Prop. 5.4]{Rou11}, there is a Weyl isomorphism $\qf:A\to A_1$ fixing $cl_A([r_1,r_2])$.
Clearly $\qf$ sends $d$ to $d_1$, $c$ to $c_1$, $res_J(c)$ to $res_J(c_1)$, hence $d'=p_J(c,d)$ to $d'_1=p_J(c_1,d_1)$ and $[x,\qx_{d'})$ to $[x,\qx_{d'_1})$.
But $\qf$ fixes $cl_A([r_1,r_2])\supset[x,r]$, so $[x,r]\subset [x,\qx_{d'})\cap[x,\qx_{d'_1})$ and $d'_1=p_J(c_1,d_1)\in U_{x,r,d'}$.
\end{proof}

\begin{corollary}
\label{TTB1_2}
Let $\Delta$ be a masure. For each $s \in S$ and each $c \in X_{\pm}$ the map $$proj_{P_{s}(c)}^{*}: E_{1}^{*}(c) \to P_s(c)\subset X_{\pm}$$
$$d \mapsto proj_{P_{s}(c)}^{*}(d)$$ is continuous, where $E_{1}^{*}(c), P_{s}(c)$ are endowed with the induced topology from the cone topology on $X$.

Moreover, if $c'\in E_1^*(c)$, then the {restriction of the} map $proj_{P_{s}(c)}^{*}: P_s(c') \to P_s(c)$ is a homeomorphism.
\end{corollary}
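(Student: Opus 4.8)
The first assertion is an immediate consequence of Proposition \ref{TTB2}: taking $J=\{s\}$ (which is spherical), the map $p_{\{s\}}\colon X_1\to X_\pm$ is continuous, and $d\mapsto (c,d)$ is a continuous (indeed topological) embedding of $E_1^*(c)$ into the slice $\{c\}\times E_1^*(c)\subset X_1$; composing gives continuity of $proj^*_{P_s(c)}$ on $E_1^*(c)$. Since $P_s(c)$ is a residue, it is closed (Corollary \ref{rem_TTB4+}), and the image lands in $P_s(c)$, so we may regard the corestriction as continuous into $P_s(c)$ with its induced topology.

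For the second assertion, fix $c'\in E_1^*(c)$, so $c'$ is opposite $c$. The plan is to exhibit the inverse map and show it is also continuous, namely $proj^*_{P_s(c')}\colon P_s(c)\to P_s(c')$. First I would check on the combinatorial level that $proj^*_{P_s(c)}$ and $proj^*_{P_s(c')}$ are mutually inverse bijections between $P_s(c)$ and $P_s(c')$: this is the standard fact that, for two opposite spherical residues in a twin building, the projection maps restrict to inverse isomorphisms between them (here each residue is a single panel, so this is the elementary rank-one case; see \cite[Lemma 5.149 and Prop. 5.150]{AB}). One also checks $proj^*_{P_s(c)}(P_s(c'))\subseteq P_s(c)$ and that every chamber of $P_s(c)$ is opposite $c'$ — equivalently $P_s(c)\subseteq E_1^*(c')$ — which holds because $c,c'$ opposite forces, inside the twin apartment they determine, every chamber of $P_s(c)$ to be opposite a corresponding chamber of $P_s(c')$, and in particular opposite $c'$ itself. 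Hence both corestrictions make sense: $proj^*_{P_s(c)}\colon P_s(c')\to P_s(c)$ and $proj^*_{P_s(c')}\colon P_s(c)\to P_s(c')$, and they are inverse to each other.

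Now continuity of $proj^*_{P_s(c')}$ on $P_s(c)$ follows from the first assertion applied with the roles of $c$ and $c'$ swapped: since $P_s(c)\subseteq E_1^*(c')$ and $proj^*_{P_s(c')}\colon E_1^*(c')\to P_s(c')$ is continuous, its restriction to $P_s(c)$ is continuous. Thus $proj^*_{P_s(c)}\colon P_s(c')\to P_s(c)$ is a continuous bijection with continuous inverse, i.e. a homeomorphism. The only point requiring care — and the main (mild) obstacle — is the bookkeeping establishing that the two projection maps are genuinely mutually inverse and that the containments $P_s(c)\subseteq E_1^*(c')$, $P_s(c')\subseteq E_1^*(c)$ hold; all of this is purely combinatorial and takes place inside the twin apartment spanned by the opposite pair $(c,c')$, using that a panel and its opposite panel are in natural opposition (equivalently, working in the corresponding panel-tree $T(\sigma,\sigma')$ of Corollary \ref{lem:TreeWalls}, where $P_s(c)$ and $P_s(c')$ are the two ends-sets of opposite directions and the projections are the obvious identifications).
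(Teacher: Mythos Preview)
Your first part is correct and matches the paper's proof: it is just Proposition~\ref{TTB2} specialized to $J=\{s\}$ together with the slice embedding $d\mapsto(c,d)$.

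Your second part has the right overall strategy --- showing that $proj^*_{P_s(c)}$ and $proj^*_{P_s(c')}$ are mutually inverse and that both are continuous via the first part --- and this is indeed the twin-building argument the paper invokes from \cite[Prop.~3.5]{HKM}. However, your claim that $P_s(c)\subseteq E_1^*(c')$ is \emph{false}, and the justification you give (``every chamber of $P_s(c)$ is opposite a corresponding chamber of $P_s(c')$, and in particular opposite $c'$ itself'') does not hold: being opposite \emph{some} chamber of $P_s(c')$ is not the same as being opposite $c'$. Concretely, in the twin apartment determined by $(c,c')$, the $s$-neighbour $c_s$ of $c$ satisfies $\delta^*(c',c_s)=s\neq 1$; in fact $c_s=proj^*_{P_s(c)}(c')$ is the \emph{unique} chamber of $P_s(c)$ not opposite $c'$. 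So applying the first part with $c'$ only gives continuity of $proj^*_{P_s(c')}$ on $P_s(c)\cap E_1^*(c')=P_s(c)\setminus\{c_s\}$, not on all of $P_s(c)$.

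The repair (and this is what \cite[Prop.~3.5]{HKM} does) is to observe that $proj^*_{P_s(c')}$ depends only on the panel $P_s(c')$, not on the chamber $c'$. Since $c'_s$ is opposite $c_s$, the first part applied with $c'_s$ in place of $c'$ gives continuity of $proj^*_{P_s(c'_s)}=proj^*_{P_s(c')}$ on $E_1^*(c'_s)\supset P_s(c)\setminus\{c\}$. The two open sets $E_1^*(c')$ and $E_1^*(c'_s)$ together cover $P_s(c)$, so continuity on all of $P_s(c)$ follows. The same remark applies symmetrically to $proj^*_{P_s(c)}$ on $P_s(c')$.
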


\begin{proof} By Proposition~\ref{lem::E1_open_set}, recall that $E_{1}^{*}(c)$ is open in $X$ and by Corollary~\ref{rem_TTB4+}, $P_{s}(c)$ is closed in $X$. Then the first part of the corollary is a consequence of Proposition~\ref{TTB2} by replacing $res_{J}(c)$ with $P_{s}(c)$, for $s  \in S$.
The final assertion is then proved as in~\cite[Prop. 3.5]{HKM}, using some twin-building-arguments.
%Or a direct proof is as follows.By Lemma~\ref{lem::E1_open_set}, $E_{1}^{*}(c)$ is an open subset of $X$. The induced cone topology on $P_{s}(c)$ is the following. Given $d' \in P_{s}(c)$, an open neighborhood of $d'$ in  $P_{s}(c)$ is $U_{x, r, d'} \cap P_{s}(c)=\{ c' \in P_{s}(c) \; \vert \; [x, r] \subsetneq [x, \xi_{c'}) \cap [x, \xi_{d'})\}$, where $x$ is a given base point in $\Delta$.It remains to prove that the map $proj_{P_{s}(c)}^{*}$ is continuous, meaning that the preimage of every open neighborhood in $P_{s}(c)$ contains an open neighborhood in $E_{1}^{*}(c)$.  Indeed, let $ d' \in P_{s}(c)$ and let $d \in E_{1}^{*}(c)$  such that $d'=proj_{P_{s}(c)}^{*}(d)$. Using Lemma~\ref{3.6} and the fact that if $x,y\in A\cap B$ and $x\leq y$ then $[x,y]\subset A\cap B$, it is easy to see that by choosing a big $r \in [x, \xi_d)$, the image of $U_{x,r,d}$ by the map $proj_{P_{s}(c)}^{*}$ is included in the chosen open neighborhood of $d'$. \red{CC: Should we write more details her
\end{proof}

\begin{lemma}
\label{essential} Let $\QD$ be a masure. We suppose the existence of a group $V_0$ of vectorially Weyl--automorphisms of $\QD$ that stabilizes each apartment $A\in\sha$, induces on each such apartment $A$ a group of translations $\vect{V_0}$ and stabilizes each wall in $A$.
We assume moreover that $\QD/V_0$ has a (natural) structure of a masure with apartments $\{A/\vect{V_0} \mid A\in \sha\}$ and set of walls  in each $A/\vect{V_0}$ the set $\{M/\vect{V_0} \mid M\in \shm_A\}$.

\par Then there is a natural homeomorphism $\Ch(\bd\QD)\to \Ch(\bd(\QD/ V_0))$ with respect to the cone topology on $\Ch(\bd\QD)$ and the cone topology on $\Ch(\bd(\QD/ V_0))$.
\end{lemma}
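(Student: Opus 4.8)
The plan is to realise the desired homeomorphism as the map $\bar\pi$ induced on chambers at infinity by the quotient projection $\pi\colon\QD\to\QD/V_0$, and then to check continuity in both directions by comparing the standard open neighbourhoods of Definition~\ref{def::standard_open_neigh_cone_top} upstairs and downstairs. First I would set up $\bar\pi$. Since $V_0$ stabilises every wall, the translation vectors $\vect{V_0}$ lie in $F^v(I)=\bigcap_{i\in I}\Ker(\alpha_i)$; hence $\vect{V_0}$ fixes every vectorial face, every wall-direction, and every vectorial chamber $\vec c$ (one has $\vec c+\vect{V_0}=\vec c$). Thus $\pi$ sends sectors to sectors and spherical sector-faces to spherical sector-faces, preserving signs, parallelism, the order relation and the opposition relation at infinity, so it induces a type-preserving isomorphism of twin buildings $\bar\pi\colon\Ch(\bd\QD)\to\Ch(\bd(\QD/V_0))$; it is a bijection because the chambers of $\bd A$, i.e.\ the vectorial chambers of $\vect A$, are carried bijectively and $W^v$-equivariantly onto those of $\vect A/\vect{V_0}$, compatibly with the gluing isomorphisms between apartments.

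Next, two structural remarks. Each $g\in V_0$ acts on every $A\in\sha$ as a translation by a vector of $\vect{V_0}$, hence fixes $A$ and fixes every sector $Q_{x,c}$ as a set (again because $\vec c+\vect{V_0}=\vec c$); therefore $\pi^{-1}(\pi(A))=A$ and $\pi^{-1}(\pi(Q_{x,c}))=Q_{x,c}$. Secondly, I choose the barycenters $\pi$-equivariantly, $\xi_{\bar\pi(c)}:=\pi(\xi_c)$; this is a legitimate choice (it is still equivariant under vectorially Weyl isomorphisms of $\QD/V_0$, which lift to $\QD$), and by Lemma~\ref{lem:3.3} neither cone topology depends on the barycenters anyway. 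With this choice $\pi$ maps $Q_{x,c}$ onto $Q_{\pi(x),\bar\pi(c)}$ and the ray $[x,\xi_c)$ onto $[\pi(x),\xi_{\bar\pi(c)})$, the latter \emph{injectively} (its direction $\xi_c$ lies in a vectorial chamber, hence not in $F^v(I)\supseteq\vect{V_0}$).

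Continuity of $\bar\pi$ is then immediate. Fix a base point $x\in\QD$, put $\bar x=\pi(x)$, and (by Proposition~\ref{prop::independence_base_point}) work with $\Top_x$ upstairs and $\Top_{\bar x}$ downstairs. Given a standard open neighbourhood $U_{\bar x,\bar r,\bar c}$, one has $\bar r\in[\bar x,\xi_{\bar c})=\pi([x,\xi_c))$ with $c:=\bar\pi^{-1}(\bar c)$, so there is a unique $r\in[x,\xi_c)$ with $\pi(r)=\bar r$; then $c\in U_{x,r,c}$, and if $c'\in U_{x,r,c}$, i.e.\ $[x,r]\subset[x,\xi_{c'})\cap[x,\xi_c)$, applying $\pi$ gives $[\bar x,\bar r]\subset[\bar x,\xi_{\bar\pi(c')})\cap[\bar x,\xi_{\bar c})$, that is $\bar\pi(c')\in U_{\bar x,\bar r,\bar c}$. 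Hence $\bar\pi(U_{x,r,c})\subset U_{\bar x,\bar r,\bar c}$, so $\bar\pi$ is continuous.

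For continuity of $\bar\pi^{-1}$ it suffices to show that $\bar\pi(U_{x,r,c})$ is open, and this reduces, by the same unwinding, to the following \emph{lifting statement}: if $c''\in\Ch(\bd\QD)$ and $\pi(r)$ lies on $[\bar x,\xi_{\bar\pi(c'')})$, then $r\in[x,\xi_{c''})$. (Indeed, for $\bar c'\in\bar\pi(U_{x,r,c})$ and $c'=\bar\pi^{-1}(\bar c')$ one has $r\in[x,\xi_{c'})$, hence $\pi(r)\in[\bar x,\xi_{\bar c'})$; then for any $\bar c''\in U_{\bar x,\pi(r),\bar c'}$ the statement gives $r\in[x,\xi_{c''})$, so $c''=\bar\pi^{-1}(\bar c'')\in U_{x,r,c}$, i.e.\ $U_{\bar x,\pi(r),\bar c'}\subset\bar\pi(U_{x,r,c})$.) The first half of the lifting statement is easy: $\pi(r)\in[\bar x,\xi_{\bar\pi(c'')})\subset Q_{\bar x,\bar\pi(c'')}=\pi(Q_{x,c''})$ together with $\pi^{-1}(\pi(Q_{x,c''}))=Q_{x,c''}$ gives $r\in Q_{x,c''}$. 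The remaining point---that this $r$, which projects onto the barycentric ray of $Q_{\bar x,\bar\pi(c'')}$, actually lies on the barycentric ray $[x,\xi_{c''})$ of $Q_{x,c''}$, i.e.\ that no $\vect{V_0}$-component of $r$ survives relative to that ray---is the heart of the proof and the step I expect to be the main obstacle. I plan to treat it by placing $r$ on a barycentric ray through a vectorially Weyl glueing in the spirit of \cite[Prop.~5.4]{Rou11} (as used in the proof of Lemma~\ref{lem:3.3}), exploiting that vectorially Weyl isomorphisms permute the chosen barycenters; alternatively by reducing to the essentialization along the lines of Remark~\ref{rem:3.9}(2) and Theorem~\ref{theo:Iphi}, after choosing the $\xi_c$ inside a $W^v$-invariant complement of $\vect{V_0}$ in $V$ (which exists since $W^v$ acts trivially on $F^v(I)\supseteq\vect{V_0}$).
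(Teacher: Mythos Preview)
Your setup of $\bar\pi$, the choice of barycenters, the observation that $\pi^{-1}(\pi(A))=A$ and $\pi^{-1}(\pi(Q_{x,c}))=Q_{x,c}$, and the proof of continuity of $\bar\pi$ are all correct and coincide with the paper's argument. You also correctly isolate the only nontrivial step: your ``lifting statement'' for the continuity of $\bar\pi^{-1}$.

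Where you stop, the paper finishes with a one-line device that you almost name in your approach (1) but do not carry out: the retraction onto $A$ centred at the \emph{opposite} chamber. Concretely, fix an apartment $A$ containing $x$ and $c$, and let $c_1\in\Ch(\partial A)$ be the chamber opposite $c$. Suppose the overlap $[\pi(x),\xi_{\pi(c)})\cap[\pi(x),\xi_{\pi(c')})$ is large. Since $\pi$ is a local isomorphism at $x$ (translations by $\vec V_0$), the sectors $Q_{x,c'}$ and $Q_{x,c_1}$ are opposite at $x$, so by Lemma~\ref{3.6} they lie in a common apartment and the retraction $\rho:=\rho_{A,c_1}$ sends $c'$ to $c$, hence $[x,\xi_{c'})$ bijectively onto $[x,\xi_c)$. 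Now comes the key observation you already made but did not exploit: the set $P:=\pi^{-1}\big([\pi(x),\xi_{\pi(c)})\cap[\pi(x),\xi_{\pi(c')})\big)$ is contained in $A$, because $\pi^{-1}(\pi(A))=A$. Therefore $\rho$ fixes $P$ pointwise. Any point $y\in[x,\xi_{c'})$ with $\pi(y)$ in the overlap lies in $P\subset A$, so $y=\rho(y)\in[x,\xi_c)$. Since $\pi$ is injective on $[x,\xi_{c'})$ (its direction is not in $\vec V_0$), this forces the initial segment of $[x,\xi_{c'})$ of the required length to lie in $[x,\xi_c)$, and your lifting statement follows immediately. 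No choice of a complement of $\vec V_0$ or separate essentialisation argument is needed.

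So your proof is not wrong, just incomplete at exactly the point you flagged; the missing ingredient is to retract with centre $c_1$ rather than to glue with Prop.~5.4 of \cite{Rou11} directly, and to use $P\subset A$ to see that the retraction fixes the relevant part of $[x,\xi_{c'})$.
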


\begin{proof} We denote $\qp:\QD\to\QD/V_0$ to be the quotient map.
Clearly, each sector $Q$ in $\QD$ is stable under $V_0$ and $\qp(Q)=Q/V_0$ is a sector in $\QD/V_0$.
So we obtain a bijection $\qp:\Ch(\bd\QD)\to  \Ch(\bd(\QD/V_0)), \; germ(Q)\mapsto germ(\qp(Q))$.
We have now to identify the corresponding  two cone topologies. %; for this we choose the base point $x\in\QD'$

Let $x$ be a base point in $\Delta$.

\par For $c=germ(Q)\subset A$, then $\qx_c$ is the class of a generic ray $\qd$ whose direction $\vect\qd$ is not in $\vect{V_0}$; so $\qp(\vect\qd)$ is the direction of a generic ray in $\qp(A)=A/V_0$, whose class we denote by $\qx_{\qp(c)}$.
For $c\in \Ch(\bd\QD)$, $\qp([x,\qx_c))$ is the ray $[\qp(x),\qx_{\qp(c)})$.
So, for $c,c'\in \Ch(\bd\QD)$, $\qp([x,\qx_c)\cap[x,\qx_{c'}))\subset[\qp(x),\qx_{\qp(c)})\cap[\qp(x),\qx_{\qp(c')})$.
This proves that the map $\qp:\Ch(\bd\QD)\to  \Ch(\bd(\QD/V_0))$ is continuous.

\par Conversely, let $c \in \Ch(\bd \Delta)$ and  suppose $[\qp(x),\qx_{\qp(c)})\cap[\qp(x),\qx_{\qp(c')})$ is big, for $c' \in \Ch(\bd \Delta)$.
We choose an apartment $A$ in $\QD$ containing $\{x\}\cup c$ and write $c_1$ the ideal chamber opposite $c$ in $A$.
The retraction $\qr_{A,c_1}$ sends $c'$ to $c$ (by Lemma~\ref{3.6}), hence $\qx_{c'}$ to $\qx_c$ and $[x,\qx_{c'})$ to $[x,\qx_c)$.
Moreover it fixes $\qp^{-1}([\qp(x),\qx_{\qp(c)})\cap[\qp(x),\qx_{\qp(c')}))$.
So $[x,\qx_{c'})\cap[x,\qx_c)$ is big: we have proved that the map $\qp^{-1}:  \Ch(\bd(\QD/V_0))\to \Ch(\bd\QD)$ is continuous.
\end{proof}

\begin{remark}[\textbf{Proof of Proposition \ref{prop:cone_top_residue}}]
\label{rem::essential} When $\QD$ is an affine (extended) building, the hypotheses of Lemma~\ref{essential} are clearly satisfied {where $V_0$ is the group of automorphisms that are} inducing on each apartment $A$ the group of translations $\vect{V_0}$ that is determined by the vector space intersection of the directions of all walls in $A$.
Therefore, Lemma~\ref{essential} applies in the situation of Theorem~\ref{theo:Iphi}.
By Remark~\ref{rem:3.9}.2), Theorem~\ref{theo:Iphi} and this Lemma \ref{essential}, the cone topology on $\Ch(\bd\QD(\phi))=\Ch(\bd\QD(\phi,\phi'))$ coincides with the topology induced by the cone topology of $\Ch(\bd\QD)$. As a conclusion, when $\QD$ is a masure, the induced cone topology on any spherical residue in $\Ch(\bd\QD)$, which is a spherical building, is described classically through an affine (essential) building. This concludes the proof of Proposition \ref{prop:cone_top_residue}.
\end{remark}

\begin{proposition}
\label{TTB4+}
Let $\Delta$ be a masure.  Assume that $\QD$ is locally finite and locally complete. Then for each $s \in S$, every panel $P \in Pan_{s}(X_{\pm})$ is compact, with respect to the induced cone topology.
\end{proposition}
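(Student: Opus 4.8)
The plan is to reduce the compactness of a panel $P \in Pan_s(X_\pm)$ to the compactness of the ends of a locally finite tree, via the panel-tree machinery of Section~\ref{1.5}. First I would fix a panel $P = P_s(c)$; it corresponds to an ideal panel $\sigma = face_{\{s\}}(c) \subset \bd\QD$, so $P = \Ch(\sigma)$ is the set of all ideal chambers having $\sigma$ as a panel. Pick any ideal panel $\sigma'$ opposite $\sigma$ (one exists by thickness of the building at infinity, or more elementarily one can always find an apartment whose boundary contains $\sigma$ and then take the opposite panel there). By Corollary~\ref{lem:TreeWalls}, $\QD(\sigma,\sigma') \cong T \times \R^{n-1}$ where $T = T(\sigma,\sigma') \simeq \QD(\sigma)$ is an $\R$-tree whose set of ends is canonically in bijection with $\Ch(\sigma) = P$. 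Since $\QD$ is semi-discrete and of finite thickness, the last sentence of Corollary~\ref{lem:TreeWalls} tells us $T$ is a genuine discrete tree of finite thickness, i.e.\ $T$ is \emph{locally finite as a simplicial tree} — but this by itself only gives that $T$ has finitely many edges at each vertex, not that $T$ is a locally finite metric space nor that its end space is compact. The extra input needed is precisely local completeness: by the Remark following Corollary~\ref{lem:TreeWalls}, $\QD$ locally complete is equivalent to the apartment system of each such tree $T$ being complete, and for a discrete tree of finite thickness with complete apartment system the space of ends $\partial_\infty T$ is compact (this is the classical fact that a locally finite tree with all its ends has compact visual boundary).

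The remaining point is to match topologies: I must check that the bijection $P = \Ch(\sigma) \xrightarrow{\sim} \partial_\infty T$ is a homeomorphism when $P$ carries the topology induced from the cone topology on $X$ and $\partial_\infty T$ carries its usual (visual/cone) topology. This is exactly the content of Proposition~\ref{prop:cone_top_residue} together with Remark~\ref{rem::essential}: the induced cone topology on the spherical residue $P = res_{\{s\}}(c) \subset \Ch(\bd\QD)$ coincides with the cone topology coming from the affine (essential) building $\QD(\sigma,\sigma')$, and under the product splitting $\QD(\sigma,\sigma') \cong T \times \R^{n-1}$ the essential building is $T$ itself, whose boundary cone topology is the standard end topology. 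Concretely, one chooses the base point $x$ for the cone topology inside some apartment $A \in \sha(\sigma,\sigma')$; then the segments $[x,\xi_{c'})$ for $c' \in P$ all lie in $\QD(\sigma,\sigma')$, project under $e_\sigma$ to rays in $T$ issuing from $e_\sigma(x)$, and the standard open neighborhoods $U_{x,r,c'}$ intersected with $P$ correspond exactly to the standard end-neighborhoods in $T$ determined by the edge of $T$ through which the ray $e_\sigma([x,\xi_{c'}))$ passes at distance $\simeq \dist(e_\sigma(x), e_\sigma(r))$. This identification is the substance of Remark~\ref{rem:3.9}.2) combined with Lemma~\ref{essential}.

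Putting these together: $P$ is homeomorphic to $\partial_\infty T$, which is compact because $T$ is a discrete, finite-thickness tree with complete apartment system. The main obstacle I anticipate is the second step — verifying carefully that the induced cone topology on $P$ is genuinely the end topology of $T$ and not merely a coarsening or refinement of it. In particular one has to be careful that the sectors $Q_{x,c'}$ for varying $c' \in P$, which share the face $\sigma$ at infinity, really do lie in $\QD(\sigma,\sigma')$ for a single fixed opposite $\sigma'$ (this uses Proposition~\ref{prop1:Iphi}(1), that every sector-face-germ with boundary $\sigma$ lies in some apartment of $\sha(\sigma,\sigma')$) and that the retraction/enclosure arguments of Lemma~\ref{lem::independence_base_point} stay inside this sub-building. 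Once the topological identification is in place, compactness of $\partial_\infty T$ for a locally finite simplicial tree with complete end set is standard and requires no new argument. (For the final claim of Theorem~\ref{theo:cone.top}(4) about arbitrary spherical residues and Schubert varieties, one would iterate: a spherical residue $R = res_J(c)$ with $|J| = k$ corresponds, via Proposition~\ref{fact:Iphi} and Theorem~\ref{theo:Iphi}, to the spherical building at infinity of an affine building of relative rank $k$ which is again locally finite and locally complete, and one invokes the analogous compactness statement there; but that goes beyond what is needed for Proposition~\ref{TTB4+}.)
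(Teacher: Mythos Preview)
Your proposal is correct and follows essentially the same route as the paper: identify $P$ with the ends of the panel tree $T(\sigma,\sigma')$ via Corollary~\ref{lem:TreeWalls}, use local finiteness to make $T$ a discrete tree of finite thickness and local completeness to make its apartment system complete (hence $\partial_\infty T$ compact), and then match the induced cone topology on $P$ with the end topology of $T$ via Lemma~\ref{essential} and Remark~\ref{rem::essential}. Your parenthetical about general spherical residues differs from the paper's approach there (which uses a Bott--Samelson--Demazure desingularization, Corollary~\ref{Schub.compact}), but that lies outside Proposition~\ref{TTB4+} itself.
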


\begin{proof} Let $\qs\subset\bd\QD$ be the panel at infinity corresponding to $P$, \ie $P$ is the set $\Ch(\qs)$ of all ideal chambers having $\qs$ as a panel.
We choose an opposite panel $\qs'\subset\bd\QD$.
By Corollary~\ref{lem:TreeWalls} there is in $\QD$ an extended tree $\Delta(\qs,\qs')\simeq T\times\R^{n-1}$, where $T$ is an $\R-$tree whose set of ends $\QO$ is in one to one correspondence with $\Ch(\qs)$.
As $\QD$ is locally finite, $T$ is a genuine (\ie discrete) tree and, as $\QD$ is locally complete, the apartment system of $T$ is complete; so $\QO$ is compact.
We have seen, by Lemma~\ref{essential} and Remark~\ref{rem::essential}, that the topologies on $\QO=\Ch(\bd T)$ and on $\Ch(\bd \Delta(\qs,\qs'))$ coincide. Therefore, the residue $P$ is compact.
\end{proof}

\begin{corollary}
\label{TTB4}
Let $\Delta$ be a masure.  Assume that $\QD$ is locally finite and locally complete. Then for each $s \in S$ there exists a compact panel $P \in Pan_{s}(X_{\pm})$.
\end{corollary}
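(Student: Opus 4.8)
The plan is to derive this immediately from Proposition~\ref{TTB4+}, which already establishes that \emph{every} panel of a given type $s$ is compact under the two standing hypotheses. The only remaining point is to check that panels of each type actually exist, which is a triviality.

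First I would observe that for every $s \in S$ the subset $\{s\} \subset S$ is spherical, since $W^{v}(\{s\}) = \langle r_s \rangle$ has order two; hence $Pan_s(X) = Res_{\{s\}}(X)$ is a well-defined collection of spherical residues of $X = \Ch(\bd\QD)$. Next, since $\Delta$ is a masure it is covered by apartments, and for any apartment $A \in \sha$ the boundary $\bd A$ is a twin apartment of $\bd\QD$; in particular both $X_{+}$ and $X_{-}$ are nonempty. Choosing any ideal chamber $c \in X_{\pm}$, the panel $P_s(c) = res_{\{s\}}(c)$ is then an element of $Pan_s(X_{\pm})$, so this set is nonempty.

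Finally, applying Proposition~\ref{TTB4+} to the panel $P := P_s(c)$ (the assumptions that $\QD$ is locally finite and locally complete being exactly those made here) yields that $P$ is compact for the induced cone topology, which produces the desired compact panel of type $s$ on each side. There is no genuine obstacle: all of the substantive work --- realizing the panel tree inside $\QD(\qs,\qs')$ via Corollary~\ref{lem:TreeWalls}, using local finiteness to make it a genuine discrete tree and local completeness to make its apartment system complete (hence its space of ends compact), and matching the two topologies via Lemma~\ref{essential} and Remark~\ref{rem::essential} --- has already been carried out in Proposition~\ref{TTB4+}.
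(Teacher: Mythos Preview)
Your proof is correct and follows exactly the paper's approach: the paper's proof consists of the single sentence ``This follows from Proposition~\ref{TTB4+}.'' Your additional verification that $Pan_s(X_\pm)$ is nonempty is a harmless (and arguably welcome) elaboration, but the core argument is identical.
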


\begin{proof}
This follows from Proposition~\ref{TTB4+}.
\end{proof}

\begin{corollary}\label{Schub.compact}
Let $\Delta$ be a masure and $c\in \Ch(\bd\QD)$.  Assume that $\QD$ is locally finite and locally complete. Then for any $w\in W^v$, the Schubert variety $E_{\leq w}(c)$ is compact for the cone topology (induced by the cone topology of $\Ch(\bd\QD)$). In particular, any spherical residue is compact.
\end{corollary}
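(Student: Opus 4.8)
The plan is to reduce the statement about spherical residues to the one about Schubert varieties, and to prove the latter by induction on the length $\ell(w)$. For the reduction, note that if $J\subset S$ is spherical then $W^v(J)$ is finite with longest element $w_J$, and $res_J(c)=E_{\le w_J}(c)$; so compactness of all Schubert varieties yields compactness of all spherical residues.

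One can alternatively obtain the spherical-residue statement directly from Proposition~\ref{prop:cone_top_residue} and Remark~\ref{rem::essential}: the cone topology induced on $res_J(c)$ coincides with the cone topology on the set of chambers at infinity of the affine building $\QD(\phi,\phi')$ attached to the type-$J$ face $\phi\le c$ and an opposite ideal face $\phi'$; by Theorem~\ref{theo:Iphi} and the remark following Corollary~\ref{lem:TreeWalls} this building is again locally finite and locally complete, and for such an affine building the set of chambers at infinity is a cofiltered limit of the finite sets of sector-germs of bounded size at a fixed base point, hence compact. This does not, however, cover non-spherical $w$, so for the general assertion I would use the induction below.

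For the Schubert varieties I would prove by induction on $\ell(w)$ that $E_{\le w}(c)$ is compact for every $c\in\Ch(\bd\QD)$. If $\ell(w)=0$ then $E_{\le 1}(c)=\{c\}$ is a single point, hence compact since the cone topology is Hausdorff (Lemma~\ref{TTB1}). If $\ell(w)\ge 1$, pick $s\in S$ with $sw<w$ and write $w=sw'$ with $w'=sw$, so that $\ell(w')=\ell(w)-1$ and $\ell(sw')=\ell(w')+1$. Using the gate property of the panel $P_s(c)=E_{\le s}(c)$ together with the subword characterisation of the Bruhat--Chevalley order -- routine building combinatorics, cf.\ \cite{AB} -- one checks the identity
$$
E_{\le w}(c)=\bigcup_{e\in P_s(c)}E_{\le w'}(e).
$$
Here $P_s(c)$ is compact by Proposition~\ref{TTB4+}, and each $E_{\le w'}(e)$, for $e\in P_s(c)$, is compact by the induction hypothesis.

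It then remains to see that this union is compact, and the natural way is to realise $E_{\le w}(c)$ as the continuous image of a compact space inside the Hausdorff space $X=\Ch(\bd\QD)$: put $Z:=\{(e,d)\in P_s(c)\times X \mid d\in E_{\le w'}(e)\}$, with the subspace topology from $P_s(c)\times X$; by the displayed identity the second projection maps $Z$ continuously onto $E_{\le w}(c)$, so it suffices to show that $Z$ is compact. This is exactly where the continuity results of Section~\ref{subsec::prop_cone_top} enter -- Propositions~\ref{lem::E1_open_set} and~\ref{TTB2} and Corollary~\ref{TTB1_2} govern how the fibre $E_{\le w'}(e)$ varies with $e$ -- and the crux of the whole argument is to deduce from them that, locally over the compact base $P_s(c)$, the space $Z$ is a product of an open piece of $P_s(c)$ with the fixed compact fibre $E_{\le w'}(c)$, whence $Z$ is compact. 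Equivalently, for a reduced word $w=s_1\cdots s_k$ one iterates the identity and identifies $E_{\le w}(c)$ with the image in $X$ of $\{(e_1,\dots,e_k)\mid e_1\in P_{s_1}(c),\ e_i\in P_{s_i}(e_{i-1})\text{ for }2\le i\le k\}$, built up as a tower of compact panel bundles over compact panels. I expect this ``panel bundle is compact'' step to be the main obstacle -- making the local-triviality heuristic precise from the continuity of the projection maps on $\bd\QD$ and the compactness of rank-one panels; everything else (the induction, the combinatorial identity, and the passage from Schubert varieties to spherical residues) is formal.
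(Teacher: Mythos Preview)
Your approach is essentially the one the paper takes: the paper's proof consists of a single sentence invoking Propositions~\ref{lem::E1_open_set}, \ref{TTB4+}, Corollary~\ref{TTB1_2}, Lemma~\ref{TTB1}, and then says ``one uses a Bott--Samelson--Demazure desingularization of $E_{\leq w}(c)$ involving galleries in $\Ch(\bd\QD)$, see \cite[Section~3.3]{HKM}.'' Your iterated gallery space $\{(e_1,\dots,e_k)\mid e_1\in P_{s_1}(c),\ e_i\in P_{s_i}(e_{i-1})\}$ is exactly this Bott--Samelson object, and the step you single out as ``the main obstacle'' (compactness of the tower of panel bundles) is precisely what the paper outsources to \cite{HKM}.

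One remark on that step, since you flag it honestly: the naive hope that $Z$ is \emph{closed} in $P_s(c)\times X$ and contained in a compact set does not go through directly, and the local triviality you want is not obtained by projecting to a fixed panel of $c$. The argument in \cite[Section~3.3]{HKM} instead fixes an opposite chamber $c^-\in E_1^*(c)$ and, alongside each gallery $(c=c_0,c_1,\dots,c_k)$, builds inductively an opposite gallery $(c^-=c_0^-,c_1^-,\dots,c_k^-)$ with $c_i$ opposite $c_i^-$ at every stage; the projection homeomorphisms of Corollary~\ref{TTB1_2} between $P_{s_i}(c_{i-1})$ and $P_{s_i}(c_{i-1}^-)$ then furnish the trivialization, and continuity of the whole construction comes from Proposition~\ref{TTB2}. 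With that in hand the Bott--Samelson space is a continuous image of a finite product of panels (each compact by Proposition~\ref{TTB4+}) inside the Hausdorff space $X$ (Lemma~\ref{TTB1}), which gives compactness of $E_{\leq w}(c)$.

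Your reduction of the spherical-residue statement to the Schubert case via $res_J(c)=E_{\leq w_J}(c)$ is correct and is also how the paper reads the ``in particular''. Your alternative direct argument for spherical residues via $\QD(\phi,\phi')$ is a pleasant observation not in the paper, though the phrase ``cofiltered limit of finite sets'' would need to be unpacked (it amounts to the usual inverse-limit description of the ends of a locally finite tree, iterated through the panel trees).
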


\begin{proof} This is a consequence of Propositions~\ref{lem::E1_open_set},~\ref{TTB4+}, Corollary~\ref{TTB1_2} and Lemma~\ref{TTB1}.
One uses a Bott--Samelson--Demazure desingularization of $E_{\leq w}(c)$ involving galleries in $\Ch(\bd\QD)$, see~\cite[Section 3.3]{HKM}.
\end{proof}

\subsection{Further properties of the cone topology}%%%%%%%%%%%%%%
\label{subsec::further_prop}

\begin{lemma}
\label{TTB5}
Let $\Delta$ be a masure. The set $X_1:=\{ (c,d) \in (X_{+} \times X_{-}) \cup (X_{-} \times X_{+}) \;  \vert \; \delta^{*}(c,d)=1 \}$ is open with respect to the induced cone topology of $X$.
\end{lemma}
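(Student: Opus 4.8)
The plan is to verify directly that $X_1$ is a neighbourhood of each of its points in $X\times X$, equipped with the product of the cone topology with itself. So fix $(c,d)\in X_1$; by the symmetry built into the definition of $X_1$ we may assume $c\in X_+$ and $d\in X_-$, so that $d$ is opposite $c$. Two opposite chambers at infinity lie at infinity of a common apartment, so I would pick $A\in\sha$ with $c,d\in\Ch(\bd A)$ and, using Proposition~\ref{prop::independence_base_point} to choose the base point freely, fix $x\in A$ for the cone topology. Then pick $r\in(x,\xi_c)$ and $r'\in(x,\xi_d)$. Since $\xi_c$ and $\xi_d$ were chosen to be opposite ideal points and since $c,d$ are opposite in $A$, the sectors $Q_{x,c}$ and $Q_{x,d}$ are opposite sectors of $A$ sharing the base point $x$, and the two generic rays $[x,\xi_c)$ and $[x,\xi_d)$ together form a line of $A$ through $x$; in particular $r'\stackrel{o}{<}x\stackrel{o}{<}r$ and $x\in[r',r]$.

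Next I would show that the open set $U_{x,r,c}\times U_{x,r',d}\subset X\times X$ is contained in $X_1$. Let $c'\in U_{x,r,c}$ and $d'\in U_{x,r',d}$. By the definition of these standard neighbourhoods, $r\in[x,\xi_{c'})\subset Q_{x,c'}$ and $r'\in[x,\xi_{d'})\subset Q_{x,d'}$, where $Q_{x,c'}$ has positive and $Q_{x,d'}$ negative direction. Since $x\in[r',r]$ with $r'\stackrel{o}{<}x\stackrel{o}{<}r$, and $r\in Q_{x,c'}$, $r'\in Q_{x,d'}$, the sectors $Q_{x,c'}$ and $Q_{x,d'}$ are opposite in the sense of Lemma~\ref{3.6}; that lemma then furnishes an apartment $B$ of $\QD$ containing both of them. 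As $Q_{x,c'}$ and $Q_{x,d'}$ are opposite sectors of $B$, their directions are opposite vectorial chambers of $\vect B$, so $c'=\bd Q_{x,c'}$ and $d'=\bd Q_{x,d'}$ are opposite chambers of the twin apartment $\bd B$, i.e.\ $\delta^*(c',d')=1$ and $(c',d')\in X_1$. Since $(c,d)\in X_1$ was arbitrary, $X_1$ is open; the case $c\in X_-$, $d\in X_+$ is symmetric.

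This is in fact precisely the observation already recorded in the course of the proof of Proposition~\ref{TTB2}, where the inclusion $U_{x,r,c}\times U_{x,r',d}\subset X_1$ is used; so no essentially new work is required. The one point deserving a little care — and hence the only (minor) obstacle — is the very first step: one must be sure that for $r$ on the barycenter ray of $c$ and $r'$ on that of $d$ the base point $x$ really lies strictly between them, with the genericity $r'\stackrel{o}{<}x\stackrel{o}{<}r$ that Lemma~\ref{3.6} demands. This follows from the normalisation of the barycenters (opposite chambers receive opposite barycenters) together with the fact that $Q_{x,c}$ and $Q_{x,d}$ already sit as opposite sectors inside the single apartment $A$. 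One may also note that $U_{x,r,c}$ consists of positive chambers when $c\in X_+$ (and symmetrically), so that $(X_+\times X_-)\cup(X_-\times X_+)$ is itself open in $X\times X$ and there is no ambiguity in the assertion.
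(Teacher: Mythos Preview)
Your argument is correct and is essentially the same as the paper's: both pick a base point $x$ in the unique apartment $A$ containing $c$ and $d$, take $r\in(x,\xi_c)$, $r'\in(x,\xi_d)$, and use Lemma~\ref{3.6} to conclude $U_{x,r,c}\times U_{x,r',d}\subset X_1$; the paper simply refers back to the identical observation made in the proof of Proposition~\ref{TTB2}, while you spell out the verification of the hypotheses of Lemma~\ref{3.6} in more detail.
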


\begin{proof}
As in the proof of Proposition~\ref{TTB2}, we have that $(U_{x,r,c} \times U_{x,r',d}) \subset X_1$ is an open neighborhood of $(c,d) \in X_1$, where $x \in A$, $A$ is the apartment of $\Delta$ with $c,d \in \Ch(A)$ and $r,r'$ are far enough.
\end{proof}

 \begin{proposition}
\label{TTB6}
Let $\Delta$ be a masure. For every $S' \subset S$, the canonical map $res_{S'}: X_{\pm} \to Res_{S'}(X_{\pm})$ is open with respect to the quotient topology on $Res_{S'}(X_{\pm})$ and the cone topology on $X_{\pm}$.

In particular, for every $s \in S$ the canonical map $vert_s: X_{\pm} \to \mathcal{V}_s^{\pm}$ is open with respect to the quotient topology on $\mathcal{V}_s^{\pm}$ and the cone topology on $X_{\pm}$.
\end{proposition}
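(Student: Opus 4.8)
The plan is to show directly that $res_{S'}$ maps standard open sets to open sets, which suffices since the $U_{x,r,c}$ generate the cone topology on $X_\pm$ and $Res_{S'}(X_\pm)$ carries the quotient topology. Recall that a subset $\mathcal{O}\subseteq Res_{S'}(X_\pm)$ is open in the quotient topology exactly when $res_{S'}^{-1}(\mathcal{O})$ is open in $X_\pm$; equivalently, $res_{S'}$ is open iff for every open $U\subseteq X_\pm$ the saturation $res_{S'}^{-1}(res_{S'}(U))=\bigcup_{d\in U} res_{S'}(d)$ is open. So the core claim is: for a standard open neighborhood $U_{x,r,c}$, the set $\bigcup_{d\in U_{x,r,c}} res_{S'}(d)$ is open in $X_\pm$.

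First I would fix a base point $x\in\QD$ (using Proposition \ref{prop::independence_base_point} the choice is immaterial) and unwind what it means for $c'\in res_{S'}(d)$ with $d\in U_{x,r,c}$: this says $[x,r]\subset [x,\xi_d)\cap[x,\xi_c)$ and $d,c'$ share the same ideal face of type $S'$. The key geometric input is that the retraction and apartment axioms let one replace $d$ by any chamber in its $S'$-residue while keeping a large common initial segment of the corresponding sectors. Concretely, given $c'\in res_{S'}(d)$, I would produce a sector $Q_{x,c'}$ and a point $r''\in[x,\xi_{c'})$, lying far out, so that $U_{x,r'',c'}\subseteq \bigcup_{d'\in U_{x,r,c}} res_{S'}(d')$. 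To see this: take $c''\in U_{x,r'',c'}$; then $Q_{x,c'}\cap Q_{x,c''}$ contains $cl([x,r''])$, so by \cite[Prop. 5.4]{Rou11} there is a Weyl isomorphism of apartments fixing $cl([x,r''])$ and sending $Q_{x,c'}$ to $Q_{x,c''}$; this isomorphism sends the ideal face of type $S'$ of $c'$ to that of $c''$, hence $res_{S'}(c'')=res_{S'}(d'')$ for the chamber $d''$ obtained by transporting $d$. Choosing $r''$ far enough that $cl([x,r''])$ still contains $[x,r]$, one checks $d''\in U_{x,r,c}$, so $c''$ lies in the saturation. This gives the desired open neighborhood $U_{x,r'',c'}$ of $c'$ inside the saturation, proving openness of $res_{S'}$.

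The ``in particular'' statement is the special case $S'=S\setminus\{s\}$, since $\mathcal{V}_s^\pm=Res_{S\setminus\{s\}}(X_\pm)$ and $vert_s=res_{S\setminus\{s\}}$ by definition; no separate argument is needed.

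I expect the main obstacle to be the bookkeeping around "far enough": one must choose $r''$ on $[x,\xi_{c'})$ so that, simultaneously, (a) $cl_A([x,r''])$ contains a common initial segment large enough to recover $[x,r]$ after transport, and (b) the Weyl isomorphism produced by \cite[Prop. 5.4]{Rou11} genuinely identifies the type-$S'$ ideal faces, which relies on the isomorphism being vectorially Weyl (type-preserving at infinity) — this is exactly the kind of estimate already carried out in the proofs of Lemma \ref{lem::independence_base_point} and Proposition \ref{TTB2}, so I would model the argument closely on those, invoking Lemma \ref{3.6} to stay inside a single apartment when comparing sectors.
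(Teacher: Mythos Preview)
Your overall strategy (show the saturation $res_{S'}^{-1}(res_{S'}(U_{x,r,c}))$ is open by exhibiting, around each $c'$ in it, a standard neighborhood) is the right one and matches the paper. But the concrete step has two genuine gaps. First, the Weyl isomorphism you extract from \cite[Prop.~5.4]{Rou11} goes between an apartment $A'$ containing $Q_{x,c'}$ and an apartment $A''$ containing $Q_{x,c''}$; there is no reason for $d$ to lie in $\partial A'$, so ``transporting $d$'' by this isomorphism is undefined. An apartment containing both $c'$ and $d$ at infinity certainly exists, but it need not contain the base point $x$, and nothing in the masure axioms puts a point and \emph{two} same-sign ideal chambers into a common apartment. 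Second, your claim that one can choose $r''\in[x,\xi_{c'})$ so that $cl([x,r''])\supset[x,r]$ is unfounded: $r$ lies on $[x,\xi_c)$ while $r''$ lies on $[x,\xi_{c'})$, and since $c'$ is only related to $c$ through the (possibly large) $S'$--residue of some $d\in U_{x,r,c}$, the rays $[x,\xi_c)$ and $[x,\xi_{c'})$ may share nothing beyond $x$.

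The paper repairs both issues at once by first choosing an apartment $A$ with $c',d\in\Ch(\partial A)$ and switching the base point to some $y\in A$ (using Proposition~\ref{prop::independence_base_point} to pick $U_{y,r',c'}\subset U_{x,r,c}$). Then, letting $d_1\in\Ch(\partial A)$ be opposite $d$ and choosing $r_d\in[y,\xi_d)$ far enough that $cl_A(r_d,d_1)$ contains $[y,r']$ and a neighborhood of $y$, one argues: for $d'\in U_{y,r_d,d}$, Lemma~\ref{3.6} gives an apartment $B\supset Q_{y,d_1}\cup Q_{y,d'}$; the retraction $\rho=\rho_{B,d_1}$ restricted to $A$ is a Weyl isomorphism sending $d$ to $d'$, sending $c'$ to some $c''$, fixing $cl_A(r_d,d_1)\supset[y,r']$. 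Hence $c''\in U_{y,r',c'}\subset U_{x,r,c}$ and $c'',d'$ share their type-$S'$ face. The crucial move you are missing is to center the isomorphism (via a retraction) at the chamber opposite $d$ in an apartment already containing both $c'$ and $d$, rather than at $c'$; this is what makes both the transport of $d$ and the inclusion $[y,r']\subset$ fixed set work simultaneously.
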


\begin{proof}
The quotient topology on $ Res_{S'}(X_{\pm})$ (\resp $\mathcal{V}_s^{\pm}$) is the finest such that the canonical map $res_{S'}: X_{\pm} \to Res_{S'}(X_{\pm})$ (\resp $vert_s: X_{\pm} \to \mathcal{V}_s^{\pm}$) is continuous, \ie a subset $U \subset Res_{S'}(X_{\pm}) \; (\text{\resp }\mathcal{V}_s^{\pm})$ is open if and only if $res_{S'}^{-1}(U))$ (\resp $vert_s^{-1}(U))$ is open in $X_{\pm}$ with respect to the cone topology on $X$.

Fix a base point $x \in \Delta$. It is enough to prove that for a standard open neighborhood $U_{x,r,c}$ of $c \in X$ the set $res_{S'}^{-1}(res_{S'}(U_{x,r,c}))$ is open in $X_{\pm}$. Let $d \in res_{S'}^{-1}(res_{S'}(U_{x,r,c}))$, then there exists $c' \in U_{x,r,c}$ such that $c'$ and $d$ share the same facet of type $S'$. Let $A$ be an apartment that contains $c'$ and $d$ in its boundary $\Ch(\partial A)$.
Notice that $x$ is not necessarily a point in $A$. Take $y$ in $A$ as a base point (recall that the cone topology does not depend on the chosen base point) and choose a standard open neighborhood $U_{y,r',c'}$ such that $U_{y,r',c'} \subset U_{x,r,c}=U_{x,r,c'}$.
We claim that there exists a standard open neighborhood $U_{y, r_d,d}$ of $d$ such that  $U_{y, r_d,d} \subset res_{S'}^{-1}(res_{S'}(U_{x,r,c}))$.
 In order to prove the claim it is enough to prove that there exists $r_d \in [x, \xi_d)$ far enough such that for every $d' \in U_{y, r_d,d}$ there exists $c'' \in U_{y,r',c'}$ with the property that $c''$ and $d'$ share the same facet of type $S'$. Indeed, let $\qs$ be the facet of type $S'$ shared by $c'$ and $d$ in $\bd A$. We write $d_1\subset\bd A$ the chamber opposite $d$. We take $r_d\in [y,\qx_d)$ far enough such that $cl_A(r_d,d_1)$ contains $[y,r']$ and an open neighbourhood of $y$ in $A$. For $d'\in U_{y,r_d,d}$, the sectors $Q_{y,d_1}$ and $Q_{y,d'}$ are opposite, hence in a same apartment $B$, by Lemma \ref{3.6}. Now $B$ contains $[y,r_d]$, $cl_A(r_d,d_1)\supset[y,r']$ and an open neighbourhood of $y$ in $A$.
 % So $B$ contains the sector facet $Q_{y,\qs_{d'}}$ of $Q_{y,d'}$ (where ${\qs_{d'}}$ is the facet of $d'$ of type $S'$) and
 We consider the retraction $\qr=\qr_{B,d_1}$ and $c''=\qr(c')$.
 Then $[y,\qx_{c''})=\qr([y,\qx_{c'}))\supset [y,r']$, hence $c''\in U_{y,r',c'}$.
 Moreover $c',d$ in $\bd A$, share the same facet of type $S'$.
 So $c''=\qr(c'),d'=\qr(d)$ in $\bd B$, share the same facet of type $S'$.
\end{proof}

\begin{proposition}
\label{top.face}
Let $\QD$ be a masure and let $S'\in Sph(S,W^v)$. We identify $Res_{S'}(X_\pm)$ with the set $face_{S'}(\bd\QD_\pm)$ of all ideal faces of type $S'$ in $\bd\QD_\pm$ as in Section \ref{sec::notation}. So the map $res_{S'}:X_\pm\to Res_{S'}(X_\pm)$ is  identified with the map $face_{S'}$ that sends any ideal chamber to its face of type $S'$.

\par Then the quotient topology on $Res_{S'}(X_\pm)$ defined in Proposition~\ref{TTB6} coincides with the topology on $face_{S'}(\bd\QD_\pm)$ defined in Remark \ref{rem:3.9}.1). In particular, the latter topology does not depend on the base point $x$.
\end{proposition}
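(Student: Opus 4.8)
The plan is to fix a base point $x$ and show that the canonical surjection $f:=res_{S'}\colon X_\pm\to R:=Res_{S'}(X_\pm)$ is a quotient map when $R$ carries the cone topology $\tau^x$ of Remark~\ref{rem:3.9}.1). Since a continuous open surjection is a quotient map, and since the quotient topology obtained from the (base-point-free) cone topology on $X_\pm$ is, by Proposition~\ref{prop::independence_base_point}, independent of $x$, this simultaneously identifies $\tau^x$ with that quotient topology --- namely the one of Proposition~\ref{TTB6} --- and shows $\tau^x=\tau^y$ for all $x,y$. So it suffices to prove that $f$ is continuous and open with respect to the cone topology on $X_\pm$ and the topology $\tau^x$ on $R$.

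\emph{Continuity.} Given a standard open set $U_{x,r,\phi}\subseteq R$ and $c\in f^{-1}(U_{x,r,\phi})$, put $\phi_c:=f(c)$, so $[x,r]\subseteq[x,\xi_{\phi_c})\cap[x,\xi_\phi)$. Choose an apartment $A\ni x$ with $c\in\Ch(\partial A)$; then $\phi_c\leq c$, hence $Q_{x,c}$, $Q_{x,\phi_c}$, $[x,\xi_c)$ and $[x,\xi_{\phi_c})$ all lie in $A$, so in particular $[x,r]\subseteq A$. As in the proofs of Lemmas~\ref{lem:3.3} and~\ref{lem::independence_base_point}, for $r_1\in[x,\xi_c)$ far enough one has $cl_A([x,r_1])\supseteq[x,r]$. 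For $c'\in U_{x,r_1,c}$ we then have $Q_{x,c}\cap Q_{x,c'}\supseteq cl_A([x,r_1])$, so by~\cite[Prop.~5.4]{Rou11} there is a Weyl isomorphism $\psi$ of $A$ onto an apartment containing $Q_{x,c'}$ with $\psi(Q_{x,c})=Q_{x,c'}$ (hence $\psi(c)=c'$) fixing $cl_A([x,r_1])$ pointwise. Equivariance of the chosen ideal points under vectorially Weyl isomorphisms gives $\psi([x,\xi_{\phi_c}))=[x,\xi_{f(c')})$, and since $\psi$ fixes $[x,r]$ pointwise we obtain $[x,r]\subseteq[x,\xi_{f(c')})$; together with $[x,r]\subseteq[x,\xi_\phi)$ this yields $f(c')\in U_{x,r,\phi}$. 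Hence $U_{x,r_1,c}\subseteq f^{-1}(U_{x,r,\phi})$ and $f$ is continuous, so $\tau^x$ is coarser than the quotient topology.

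\emph{Openness.} It is enough to show that for each standard open $U_{x,r,c}$ and each $\phi'\in f(U_{x,r,c})$ --- say $\phi'=f(c')$ with $c'\in U_{x,r,c}$ --- there is a standard open $U_{x,r',\phi'}\subseteq f(U_{x,r,c})$; equivalently, that every type-$S'$ ideal face $\phi''$ lying in a sufficiently small standard neighbourhood of $\phi'$ can be \emph{lifted} to a chamber $c''\in U_{x,r,c}$ with $f(c'')=\phi''$. Here I would adapt the argument in the proof of Proposition~\ref{TTB6}: pick $A\ni x$ with $c'\in\Ch(\partial A)$ (so $\phi'\leq c'$ and $[x,r]\subseteq[x,\xi_{c'})\subseteq A$), let $\phi'_-:=res_{S'}(c'_-)$ be the type-$S'$ face of the chamber $c'_-$ opposite $c'$ in $\partial A$, and take $r'\in[x,\xi_{\phi'})$ far out. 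If $[x,r']\subseteq[x,\xi_{\phi''})\cap[x,\xi_{\phi'})$, then the generic rays $[x,\xi_{\phi''})$ and $[r',\xi_{\phi'_-})$ both contain the segment $[x,r']$, so by Lemma~\ref{lem::rays_line} their union is a line in an apartment $B$; in $B$ the rays $[x,\xi_{\phi''})$ and $[x,\xi_{\phi'_-})$ are then opposite, so $\phi''$ is opposite $\phi'_-$ and $B\in\sha(\phi'',\phi'_-)$, while $B$ also contains the generic ray $[x,\xi_{\phi'_-})\subseteq A$. Comparing $A$ and $B$ by (MA2) applied to $[x,\xi_{\phi'_-})$, and using a retraction of $\QD$ onto $B$ adapted to $\phi'_-$ (playing the role of $\rho_{B,d_1}$ in the proof of Proposition~\ref{TTB6}), one transports $c'$ to a chamber $c''\in\Ch(\partial B)$ with $\phi''\leq c''$ and $c''\in U_{x,r,c}$. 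Combining continuity and openness, $f$ is a quotient map onto $(R,\tau^x)$, which finishes the argument.

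The main obstacle is the lifting step. Because the barycentre of an ideal face points in a direction lying on walls, the enclosures of the relevant segments are ``thin'' in that direction, so the Weyl-isomorphism argument from the continuity part cannot be reused directly; the point of introducing the opposite face $\phi'_-$ and of Lemma~\ref{lem::rays_line} is to produce an ambient apartment $B$ in which enough of the ``chamber direction'' of $c'$ survives out to $[x,r]$. Making this precise --- i.e.\ verifying that the retraction indeed carries $c'$ into $U_{x,r,c}$ and is compatible with passing to type-$S'$ faces, exactly as in the proof of Proposition~\ref{TTB6} --- is where the real work lies; everything else is routine bookkeeping with the cone-topology neighbourhoods already used in Section~\ref{subsec::prop_cone_top}.
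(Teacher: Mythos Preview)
Your overall strategy---show that $f=face_{S'}$ is continuous and open for the cone topology $\tau^x$ on $R$---is exactly the paper's, and your continuity argument is essentially the same. The gap is in the openness step, and it is precisely the one you flag but do not close.

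Applying Lemma~\ref{lem::rays_line} to the rays $[x,\xi_{\phi''})$ and $[r',\xi_{\phi'_-})$ only yields an apartment $B$ containing the enclosures of those rays, i.e.\ the closed sector-faces $\overline{Q_{x,\phi''}}$ and $\overline{Q_{x,\phi'_-}}$. Their union is a ``thin'' set supported on the hyperplane directions determined by $S'$ and does \emph{not} contain the chamber-direction segment $[x,r]\subset[x,\xi_{c'})$; so the retraction onto $B$ need not fix $[x,r]$, and you cannot conclude $c''\in U_{x,r,c}$. Moreover, a retraction ``adapted to $\phi'_-$'' is not available: retractions are centred at sector-germs (ideal chambers), not at ideal faces.

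The paper's remedy is to work with an auxiliary ideal \emph{chamber} rather than the face $\phi'_-$: one sets $c_-:=proj^*_{\phi_-}(c')$ (not $c'_-$!) in $\partial A$ and proves, via a twin-building convexity argument, that $cl(Q_{x,\phi}\cup Q_{x,c_-})=cl(Q_{x,\phi_-}\cup Q_{x,c'})$, whence $[x,r]\subset Q_{r',c_-}$ for $r'$ far enough along $[x,\xi_\phi)$. A separate lemma (Lemma~\ref{opp.faces}) then produces an apartment $B\supset Q_{x,c_-}\cup Q_{x,\phi''}$, so $B$ contains the full sector $Q_{r',c_-}\supset[x,r]$; now the retraction $\rho_{B,c_-}$ is legitimately centred at a chamber, fixes $[x,r]$, sends $\phi$ to $\phi''$, and sends $c'$ to the desired lift $c''\in U_{x,r,c'}=U_{x,r,c}$. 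The two nontrivial ingredients your sketch is missing are thus the choice of $c_-$ via $proj^*$ (which is what makes the convex-hull identity work) and the existence of an apartment containing both $Q_{x,c_-}$ and $Q_{x,\phi''}$.
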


\begin{proof} We fix a base point $x$ for the topology on $face_{S'}(\bd\QD_\pm)$. We have to prove that the map $face_{S'}:X_\pm\to face_{S'}(\bd\QD_\pm)$ is continuous and open. Furthermore, both $Res_{S'}(X_\pm)$ and $face_{S'}(\bd\QD_\pm)$ are endowed with the quotient topologies of the cone topology on $X_\pm$ by the surjective map $res_{S'}$ identified with $face_{S'}$.

Indeed, let $U_{x,r,\phi}=\{ \phi'\in face_{S'}(X_\pm) \mid [x,r]\subset[x,\qx_{\phi'})\cap[x,\qx_{\phi}) \}$ be an open neighborhood of $\phi=face_{S'}(c)$, with $c\in X_\pm$.
 For $r'$ far  in $[x,\qx_c)$, we have $cl([x,r'])\supset[x,r]$.
 So, for $c'\in U_{x,r',c}$, the closed sector  $\overline{Q_{x,c'}}$ contains  $[x,r]$ and, for $\phi'=face_{S'}(c')$, $[x,r]\subset [x,\qx_{\phi'})$, thus $\phi'\in U_{x,r,\phi}$. We have proved that the map $face_{S'}$ is continuous.

 \par Let us prove now that the map $face_{S'}$ is open, \ie for any $c\in X_\pm$ and $r\in [x,\qx_c)$, $face_{S'}(U_{x,r,c})$ is open.
 Indeed, for any $c'\in U_{x,r,c}$ and $\phi=face_{S'}(c')$, we have to find $r'\in[x,\qx_\phi)$ such that $U_{x,r',\phi}\subset face_{S'}(U_{x,r,c})$; this means that for every $ \phi'\in U_{x,r',\phi}$ there exists $c''\in U_{x,r,c'}=U_{x,r,c}$ with $face_{S'}(c'')=\phi'$.
 We choose an apartment $A$ containing $Q_{x,c'}$, hence $A$ contains $Q_{x,\phi}$. We define $\phi_-$ to be the ideal face in $A$ opposite $\phi$ and $c_-=proj^*_{\phi_-}(c')$ an ideal chamber in $A$.
 We choose $r'\in[x,\qx_\phi)$ far enough, such that $[x,r]\subset Q_{r',c_-}$. This is possible as, in $A$, $cl(Q_{x,\phi}\cup Q_{x,c_-})=cl(Q_{x,\phi_-}\cup Q_{x,c'})$. Indeed,  in $\bd A$, $c_-=proj^*_{\phi_-}(c')$ is on the same side as $c'$ with respect to any (twin) wall $M$ containing $\phi$ and $\phi_-$ (otherwise the reflection $r_M$ would increase the codistance); so in $\bd A$, $cl(\phi\cup c_-)\subset cl(\phi_-\cup c')$ and there is equality, as $c'=proj^*_\phi(c_-)$.
 This equality is sufficient to have that $[x,r]\subset Q_{r',c_-}$.

 By the following Lemma \ref{opp.faces}, there is an apartment $B$ containing $Q_{x,c_-}$ and $Q_{x,\phi'}$ (for $\phi'\in U_{x,r',\phi}$), hence also $Q_{r',c_-}\supset [x,r]$.
 We consider the retraction $\qr=\qr_{B,c_-}$ and $c''=\qr(c')\subset B$.
 Clearly $\qr$ sends $A$ to $B$ (isomorphically), $[x,\qx_{c'})$ to $[x,\qx_{c''})$, $\phi$ to $\phi'$ and fixes $A\cap B\supset  Q_{r',c_-}\supset [x,r]$.
 So $face_{S'}(c'')=\qr(face_{S'}(c'))=\qr(\phi)=\phi'$ and $[x,\qx_{c'})\cap [x,\qx_{c''})\supset [x,r]$; we have $c''\in U_{x,r,c'}$ as expected.
\end{proof}

\begin{lemma}
\label{opp.faces}
Let $\QD$ be a masure, $A$ an apartment in $\QD$, $x$ a point in $A$ and $\phi,\phi_-$ opposite ideal faces in $A$.
We consider $c_-\in \Ch(\bd\QD)$ with $ \phi_-\subseteq c_-$.
Then, for any $\phi'\in U_{x,r',\phi}$ with $r'\in(x,\qx_\phi)$, there exists an apartment $B$ containing $Q_{x,c_-}$ and $Q_{x,\phi'}$.
\end{lemma}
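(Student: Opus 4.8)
The plan is to locate the apartment $B$ inside the affine building $\QD(\phi_-,\phi')$ furnished by Theorem~\ref{theo:Iphi}; the role of the hypothesis $\phi'\in U_{x,r',\phi}$ is precisely to guarantee, via one application of Lemma~\ref{lem::rays_line}, that $\phi'$ is opposite $\phi_-$ and that $x$ lies in a member of $\sha(\phi_-,\phi')$.

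\emph{Building a line through $x$.} I would first choose a point $q\in(x,r')$. Since $\phi'\in U_{x,r',\phi}$ and $r'\in(x,\qx_\phi)$ we have $q\ne x$ and $q\in[x,r']\subseteq[x,\qx_{\phi'})\cap[x,\qx_\phi)$; as $[x,\qx_\phi)\subseteq A$, also $q\in A$. Because $\phi$ and $\phi_-$ are opposite ideal faces of $A$ with opposite barycenters, $[x,\qx_\phi)\cup[x,\qx_{\phi_-})$ is a line in $A$, so the generic ray $\delta_1\subseteq A$ issuing from $q$ towards $x$ is a preordered ray which passes through $x$ and whose ideal point lies in the face $\phi_-$. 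On the other hand $\delta_2:=[x,\qx_{\phi'})$ passes through $q$. Applying Lemma~\ref{lem::rays_line} to $\delta_1,\delta_2$ (they contain the two distinct points $x$ and $q$) yields an apartment $B_0$ in which $\delta_1\cup\delta_2$ is a line. Its two ends give $\phi_-,\phi'\in\bd B_0$, opposite in $\bd B_0$ and hence in $\bd\QD$, and $x\in B_0$; thus $B_0\in\sha(\phi_-,\phi')$ and $x$ is a point of $\QD':=\QD(\phi_-,\phi')$.

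\emph{Finishing inside $\QD'$.} By Theorem~\ref{theo:Iphi}, $\QD'$ is an affine, chimney friendly building (in particular a masure) whose apartments, regarded as subsets of $\QD$, are exactly the members of $\sha(\phi_-,\phi')$; so each of them has $\phi'$ at infinity. The building at infinity of $\QD'$ coincides with that of $\QD(\phi_-)$ (the map $e_{\phi_-}$ of Proposition~\ref{prop1:Iphi} is an essentialization, so does not alter the boundary), and by the description recalled just after Proposition~\ref{fact:Iphi} an ideal chamber of $\QD$ admitting $\phi_-$ as a face is a chamber at infinity of $\QD(\phi_-)$; hence $c_-$ is a chamber at infinity of $\QD'$. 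As $\QD'$ is a masure, the standard fact that a point and a chamber at infinity of an affine building lie in a common apartment — which follows from (MA3) applied to the sector-germ $germ(Q_{x,c_-})$ and the point $x$ — provides an apartment $B$ of $\QD'$ with $x\in B$ and $c_-\in\Ch(\bd B)$. Being an apartment of $\QD'$, $B$ also has $\phi'$ at infinity; so, viewed as an apartment of $\QD$, it contains $x$ and has both $c_-$ and $\phi'$ at infinity, whence it contains the unique sectors $Q_{x,c_-}$ and $Q_{x,\phi'}$ of Section~\ref{subsubsec::building_infinity}.2). This $B$ is the required apartment.

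The points needing most care are in the first part: that the ideal point of $\delta_1$ really lies in $\phi_-$, so that $\phi_-\in\bd B_0$ (this uses that $\phi_-$ is spherical, so that the enclosure of a generic ray pointing into the direction of $\phi_-$ contains a sector-face with boundary $\phi_-$), and that the two ends of the geodesic line $\delta_1\cup\delta_2$ are genuinely opposite faces of $B_0$. The identifications in the second part — matching $c_-$ to a chamber at infinity of $\QD'$ compatibly with apartments, through the two essentializations $\QD(\phi_-,\phi')\to\QD(\phi_-)$ and $\QD(\phi_-,\phi')\to\QD(\phi')$ — are routine once Theorem~\ref{theo:Iphi} is in hand. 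Note that no use of Lemma~\ref{3.6} (condition (CO)) is needed: the honest affine building $\QD'$ does that work for us.
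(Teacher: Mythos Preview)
Your argument is correct but follows a genuinely different route from the paper's. The paper works more directly: it applies (MA3) to the closed local chamber $\overline{F^\ell}=\overline{germ_x(Q_{x,c_-})}$ and a sector-face-germ with boundary $\phi'$ to obtain an apartment $A_1$ containing both; in $A_1$ the sector $Q_{x,c'}$ opposite $F^\ell$ then has $Q_{x,\phi'}$ as a face, and Lemma~\ref{3.6} (condition (CO)) yields the apartment $B$ containing $Q_{x,c_-}$ and $Q_{x,c'}\supset Q_{x,\phi'}$. Your route instead first uses Lemma~\ref{lem::rays_line} to show that $\phi'$ and $\phi_-$ are opposite with $x\in\QD(\phi_-,\phi')$, and then exploits the affine-building structure of $\QD(\phi_-,\phi')$ from Theorem~\ref{theo:Iphi} to locate $B$ inside it.

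The paper's proof is shorter and self-contained, avoiding the machinery of Section~\ref{1.5}; yours is more structural, illustrating how the panel/face buildings inside the masure do real work, and as you note it bypasses Lemma~\ref{3.6} entirely. One point worth tightening in your write-up: when you invoke (MA3) inside $\QD'$, you implicitly use that the bijection between apartments of $\QD'$ and apartments of $\QD(\phi_-)$ established in the proof of Theorem~\ref{theo:Iphi} (via Proposition~\ref{prop2:Iphi}) respects chambers at infinity, so that an apartment $B^{\phi_-}$ of $\QD'$ with $c_-$ at its boundary corresponds to an apartment $B\in\sha(\phi_-,\phi')$ with $c_-\in\Ch(\bd B)$ in the $\QD$-sense. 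This is indeed contained in that proof, but deserves an explicit pointer since it is what makes your last step non-circular.
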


\begin{proof} We consider the local chamber $F^\ell: =germ_x(Q_{x,c_-})$.
By axiom (MA3) there is an apartment $A_1$ containing the closed local chamber $\overline{F^\ell}$ and also $\phi'$, hence $A_1$ contains $Q_{x,\phi'}$. Thus $A\cap A_1$ contains $cl(x,r')$.
We consider in $A_1$ the sector $Q_{x,c'}$, with base point $x$ and opposite $F^\ell$; it is thus opposite $Q_{x,c_-}$ and $\overline{Q_{x,c'}}\supset Q_{x,\phi'}$.
By Lemma~\ref{3.6} there is an apartment $B$ containing $Q_{x,c_-}$ and $Q_{x,c'}$, hence also $Q_{x,\phi'}$.
\end{proof}

\begin{proposition}
\label{TTB1+}
Let $\Delta$ be a masure and $S'\subset S$, with $S'\not=\emptyset$, $S'\not=S$.
Then the set $Res_{S'}^\pm:=Res_{S'}(X_\pm))$ is Hausdorff with respect to the quotient topology induced from the cone topology on $X_{\pm}$.

In particular, the  vertex sets $\mathcal{V}_s^{\pm}$, $s \in S$, are Hausdorff with respect to the quotient topology.
\end{proposition}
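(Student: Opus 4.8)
The plan is to reduce the statement to the Hausdorffness of the vertex sets $\mathcal V_s^{\pm}$, and then to separate two distinct vertices by explicit standard open neighbourhoods.

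\emph{Reduction to the vertex sets.} It suffices to treat $Res_{S'}(X_+)$, the argument for $X_-$ being identical. For standard parabolic subgroups of a Coxeter group one has $\bigcap_{s\in S\setminus S'}\langle S\setminus\{s\}\rangle=\langle S'\rangle$, hence $res_{S'}(c)=\bigcap_{s\in S\setminus S'}vert_s(c)$ for every chamber $c$. Therefore the continuous map $X_+\to\prod_{s\in S\setminus S'}\mathcal V_s^+$, $c\mapsto(vert_s(c))_{s}$, factors through the quotient map $res_{S'}$ and induces an \emph{injective} continuous map $Res_{S'}(X_+)\hookrightarrow\prod_{s\in S\setminus S'}\mathcal V_s^+$. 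A space admitting a continuous injection into a Hausdorff space is Hausdorff, so it is enough to prove that each $\mathcal V_s^+$ is Hausdorff; this is also the ``in particular'' clause.

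\emph{Separating two vertices: set-up and the spherical subcase.} Fix $s\in S$ and two distinct vertices $v_1\ne v_2$ of type $s$. Choose ideal chambers $c_i$ with $vert_s(c_i)=v_i$; since any two sector-germs lie in a common apartment (axiom (MA3), a sector-germ being a splayed chimney-germ), we may take $c_1,c_2\in\Ch(\partial A)$ for one apartment $A\in\sha$, and then choose a base point $x\in A$ (legitimate by Proposition~\ref{prop::independence_base_point}). If $S\setminus\{s\}$ is spherical, each $v_i$ is a spherical ideal face carrying a barycenter $\xi_{v_i}$, and by Proposition~\ref{top.face} the quotient topology on $\mathcal V_s^+$ is the cone topology on type-$(S\setminus\{s\})$ faces built from these barycenters. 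The rays $[x,\xi_{v_1})$ and $[x,\xi_{v_2})$ lie in $A$ and are distinct, so meet in a bounded segment $[x,m]$; for $r_i\in(x,\xi_{v_i})$ chosen beyond $m$, one checks directly that $U_{x,r_1,v_1}\cap U_{x,r_2,v_2}=\emptyset$, a common face forcing $[x,r_1]$ and $[x,r_2]$ to be nested initial segments of one ray and hence the shorter of $r_1,r_2$ to lie in $[x,m]$.

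\emph{The non-spherical subcase and the main obstacle.} When $S\setminus\{s\}$ is not spherical the vertices $v_i$ carry no barycenters, and this is the only place where the technical input of Section~\ref{1.4b} is used. Keeping $c_1,c_2\in\Ch(\partial A)$, $x\in A$, and picking $c_i^{-}\in\Ch(\partial A_-)$ opposite $c_i$, I would argue by contradiction: if $d_1\in U_{x,r_1,c_1}$ and $d_2\in U_{x,r_2,c_2}$ satisfy $vert_s(d_1)=vert_s(d_2)$, apply the retraction $\rho_{A,c_1}$ (after placing $Q_{x,d_1}$ in a common apartment with $Q_{x,c_1^{-}}$ via Lemma~\ref{3.6}, if needed) to the generic ray $[x,\xi_{d_1})$. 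By Proposition~\ref{prop::hecke_path} the image is a Hecke path with respect to $c_1$: piecewise linear, with the Weyl elements $w_\pm(t)$ of its successive directions non-decreasing for the Bruhat--Chevalley order. Because $d_1\in U_{x,r_1,c_1}$, the retraction $\rho_{A,c_1}$ is the identity on the large enclosed set $cl_A([x,r_1])\subset Q_{x,c_1}\cap Q_{x,d_1}$ and folds $[x,\xi_{d_1})$ only beyond it; together with the monotonicity of the $w_\pm(t)$ this bounds the walls of ``$\alpha_s$-type'' that the retracted ray can ever cross, forcing the type-$(S\setminus\{s\})$ direction it eventually follows --- equivalently, $vert_s$ of the chamber of $\partial A$ it points to --- to differ from $v_1$ only through the finitely many walls crossed before $r_1$. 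Running the symmetric argument with $\rho_{A,c_2}$ for $d_2$ and $v_2$ and using $v_1\ne v_2$ (i.e. $\delta(c_1,c_2)\notin W^v(S\setminus\{s\})$), one gets a contradiction once $r_1,r_2$ are far enough; hence $U_{x,r_1,c_1}$ and $U_{x,r_2,c_2}$ have disjoint images under $res_{S\setminus\{s\}}$. The main obstacle is exactly this step --- converting ``a long retracted initial segment is unmoved'' into the quantitative assertion that the resulting Hecke path cannot, across the bounded set of walls it is allowed to cross, escape to a vertex direction other than $v_1$ --- which is where the piecewise-linearity and the monotonicity of Hecke paths are genuinely needed.
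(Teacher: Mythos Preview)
Your reduction to vertex sets is correct and is a nice observation not in the paper: the map $Res_{S'}(X_+)\hookrightarrow\prod_{s\in S\setminus S'}\mathcal V_s^+$ is indeed injective and continuous, and Hausdorffness pulls back along continuous injections. The spherical subcase is also fine and is exactly the content of Remark~\ref{rem::TTB1+}.

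The non-spherical subcase, however, has a genuine gap, and it lies precisely in the choice of retraction. You retract $[x,\xi_{d_1})$ onto $A$ with center $c_1$. Since $d_1\in U_{x,r_1,c_1}$, the initial segment $[x,r_1]$ lies in $Q_{x,c_1}$, so the Hecke path starts with $w_+(0)=1$. Monotonicity then gives only $w_\pm(t)\geq 1$, which is vacuous: it does not bound anything, and in particular it places no constraint on the ``$\alpha_s$-type'' walls crossed or on $vert_s(d_1)$. Your claim that the retracted ray ``differs from $v_1$ only through the finitely many walls crossed before $r_1$'' does not follow; after $r_1$ the Hecke path may fold through arbitrarily many walls and arrive at a direction with arbitrary $W^v$-distance from $c_1$. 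The symmetric argument for $d_2$ has the same defect, and even if both worked you would still need to compare $vert_s(d_1)$ with $vert_s(d_2)$ rather than each with $v_i$.

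The paper's argument repairs this by a different and sharper choice of retraction. It does not separate $d_1$ and $d_2$ from $c_1$ and $c_2$ individually; instead it compares $d_1$ and $d_2$ directly. First one passes (via Lemma~\ref{3.6}) to an apartment $B$ containing both $Q_{x,c_-}$ and $Q_{x,d_1}$, with $c_-$ opposite $c_1$ in $\partial A$; for $r$ large enough, $B$ also contains $cl_A(r,c_-)\supset[x,r']$. Then one retracts $[x,\xi_{d_2})$ onto $B$ with center $d_1$. The point is that the initial segment $[x,r']$ already lies in $B$ and points towards $\rho_{B,c_-}(c_2)$, so the Hecke path starts with $w_+(0)=\delta(d_1,\rho_{B,c_-}(c_2))=\delta(c_1,c_2)=w\notin W^v(S')$. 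Monotonicity now yields $\delta(d_1,d_2)\geq w$ in Bruhat order, hence $\delta(d_1,d_2)$ contains (in any reduced expression) the same $s\in S\setminus S'$ that $w$ does, and $d_1,d_2$ cannot share their $S'$-face. The essential idea you were missing is to arrange the retraction so that $w_+(0)$ is already the nontrivial element $w$; retracting with center $c_1$ towards $d_1$ gives $w_+(0)=1$ and wastes the monotonicity.
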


\begin{remark}
\label{rem::TTB1+}
When $S'$ is spherical, this is also a consequence of Proposition~\ref{top.face}:  as in Lemma~\ref{TTB1}, we see easily that the topology on $face_{S'}(\bd\QD_\pm)$ is Hausdorff.
\end{remark}

\begin{proof}
The quotient topology on  $Res_{S'}^\pm$   is the finest such that the canonical map $res_{S'}: X_{\pm} \to Res_{S'}^\pm$ is continuous, \ie a subset $U$ of $Res_{S'}^\pm$ is open if and only if $res_{S'}^{-1}(U)$ is open in $X_{\pm}$ with respect to the cone topology on $X$.

We prove that  $Res_{S'}^\pm$ is Hausdorff.
Indeed, take $\qs_1 \neq \qs_2 \in Res_{S'}^\pm$ and consider $c_i \in \qs_i \subset X$, for $i \in \{1,2\}$.
We have that $c_1 \neq c_2$ and that $\qs_i = res_{S'}(c_i)$, for $i \in \{1,2\}$ (using the notation in Section \ref{sec::notation}).
Consider an apartment $A \in \sha$ such that $c_1,c_2 \in \Ch(\partial A)$ and take a point $x \in A$ as a base point for the cone topology on $X$. Let $w:= \delta(c_1, c_2)$. As $c_1$ and $c_2$ do not share their face of type $S'$, we have that $w \notin W^v(S')$. Thus $w$ contains some $s\in S\setminus S'$ in any reduced decomposition. Moreover, every $w'$ with $w' \geq w$, with respect to the Bruhat--Chevalley order, does contain $s$ in any reduced decomposition.

Take $U:= U_{x,r,c_1}$ and $V:= U_{x,r',c_2}$.
Notice that the open sets $res_{S'} (U)$, $res_{S'} (V)$ are open neighborhoods of $\qs_1$, respectively $\qs_2$, by Proposition~\ref{TTB6} and also that
$$res_{S'}^{-1}(res_{S'} (U)) \text{ and } res_{S'}^{-1}(res_{S'} (V))$$
are open subsets in $X$, by the definition of the quotient topology on $Res_{S'}^\pm$.

Let $c_{-} \in \Ch(\partial A)$ be the chamber opposite $c_1$. Without loss of generality, we consider $r\in [x,\qx_{c_1})$
 far enough such that $cl_A(r,c_{-})$ contains $[x,r']$.

We need to prove that $res_{S'} (U) \cap res_{S'} (V) = \emptyset $. Suppose the contrary,  that $res_{S'} (U) \cap res_{S'} (V) \neq \emptyset$. Thus, there exist an ideal chamber $d_1 \in U$ and an ideal chamber $d_2 \in V$ that share their face of type $S'$. We claim that this cannot be possible.

Indeed, for $d_1 \in U= U_{x,r,c_1}$, the sectors $Q_{x,c_-}$ and $Q_{x,d_1}$ are opposite, hence in a same apartment $B$, by Lemma \ref{3.6}. Now $B$ contains $[x,r]$ and $cl_A(r,c_-)$, which contains $[x,r']$ if $r$ is far enough. Applying the retraction $\rho_{B, c_{-}}$, we have $\rho_{B, c_{-}}(c_1)=d_1$ and $\delta(d_1, \rho_{B, c_{-}}(c_2))= w$.
Now by Proposition~\ref{prop::hecke_path} applied  to the preordered ray $[x,\xi_{d_2})$ and the retraction $\rho_{B, d_1}$ we have that, for $t$ great, $\qd(d_1,d_2)=w_\pm(t)\geq w_+(0)=\delta(d_1, \rho_{B, c_{-}}(c_2))= w$.
Notice that for $\qp_1(t) \in [x,r')$ we have that $w_+(t) =w$.
Therefore, we have obtained that $ \delta(d_1, d_2)$ contains $s$ in any reduced decomposition, thus $d_1$ and $d_2$ cannot share a face of type $S'\subset S\setminus\{s\}$. This is a contradiction of our assumption and the conclusion follows.
\end{proof}

\begin{proposition}
\label{TTB7}
Let $\Delta$ be a masure and $\QS\subset Sph(S,W^v)$ satisfying $\bigcap_{J\in\QS}\,J=\emptyset$. Then the diagonal embedding $\iota : X_{\pm} \hookrightarrow \prod\limits_{J \in \QS} Res_J(X_{\pm})$ given by $c \mapsto (res_{J}(c))_{J \in \QS}$ is a homeomorphism onto its image, with respect to the cone topology on $X$.
\end{proposition}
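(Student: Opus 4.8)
The plan is to show that $\iota$ is continuous, injective, and open onto its image; since the target is given the product topology of the quotient topologies, and each factor $Res_J(X_\pm)$ carries the quotient topology of the continuous map $res_J$, continuity of $\iota$ is immediate (its components $res_J$ are continuous). Injectivity follows from the hypothesis $\bigcap_{J\in\QS} J = \emptyset$: if $c,c' \in X_\pm$ have $res_J(c) = res_J(c')$ for all $J \in \QS$, then $\delta(c,c') \in W^v(J)$ for every $J$, hence $\delta(c,c')$ lies in $\bigcap_{J\in\QS} W^v(J)$; a standard Coxeter-group fact (the intersection of the parabolics $W^v(J)$ is $W^v(\bigcap J) = W^v(\emptyset) = \{1\}$) forces $c = c'$. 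So the only real content is that $\iota$ is open onto its image, equivalently that for each standard open neighbourhood $U_{x,r,c}$ of a chamber $c$, the set $\iota(U_{x,r,c})$ is open in $\iota(X_\pm)$; i.e. there exist open sets $O_J \subset Res_J(X_\pm)$ such that $\iota(X_\pm) \cap \prod_{J} O_J \subseteq \iota(U_{x,r,c})$ and this intersection contains $\iota(c)$.

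The natural candidates are $O_J := res_J(U_{x,r_J,c})$ for suitable gates $r_J \in [x,\xi_c)$, which are open by Proposition~\ref{TTB6}. Fix a base point $x$ in an apartment $A$ with $c \in \Ch(\bd A)$, let $c_-$ be the chamber opposite $c$ in $A$, and let $\phi_J := face_J(c)$ with opposite face $\phi_{J,-}$ in $A$. The key geometric point is: if $d \in X_\pm$ satisfies $res_J(d) = res_J(d_J)$ for some $d_J \in U_{x,r_J,c}$, for every $J \in \QS$, then the "directions at $x$ towards $d$" agree with those towards $c$ on a large enough part of the sector $Q_{x,c}$ to force $d \in U_{x,r,c}$. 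Concretely, I would argue via a retraction: choose the $r_J$ far enough (depending only on $r$) that the enclosure $cl_A([x,r_J])$, restricted in the direction of the face of type $J$ via the essentialization $e_{\phi_J}$ of Proposition~\ref{prop1:Iphi}, controls the projection of $[x,r]$ to the panel-tree factor $\QD(\phi_J)$. Since $\bigcap_J J = \emptyset$, the product of these panel-tree directions recovers the full direction of $[x,r]$ inside $Q_{x,c}$ (here I use Proposition~\ref{TTB7}-type reasoning at the level of apartments: $V/\bigcap_J \mathrm{supp}(\phi_J) = V$ because $\bigcap_J \mathrm{supp}(\phi_J) = \{0\}$, the intersection of the fixed spaces of the finite parabolics). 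Then, taking an apartment $B$ containing $Q_{x,c_-}$ and $Q_{x,d}$ (Lemma~\ref{3.6}) and the retraction $\qr_{B,c_-}$, the combinatorial data $res_J(d) = res_J(\qr_{B,c_-}^{-1}(d_J))$ together with $d_J \in U_{x,r_J,c}$ pins down $[x,\xi_d) \cap [x,\xi_c) \supseteq [x,r]$, i.e. $d \in U_{x,r,c}$.

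The main obstacle I anticipate is making precise the passage from "agreement of $res_J(d)$ and $res_J(c)$ for all $J$, with each witnessed inside a slightly smaller standard neighbourhood $U_{x,r_J,c}$" to "agreement of the geodesic rays $[x,\xi_d)$ and $[x,\xi_c)$ along $[x,r]$". This is essentially the statement that the cone topology on $\Ch(\bd\QD)$ is the initial topology with respect to the family $(res_J)_{J\in\QS}$, and it requires combining: (i) Proposition~\ref{top.face}, identifying the quotient topology on $Res_J(X_\pm)$ with the intrinsic cone topology on $face_J(\bd\QD_\pm)$ described via the panel/wall trees $\QD(\phi_J)$; (ii) the decomposition $\QD(\sigma,\sigma') \cong T \times \R^{n-1}$ of Corollary~\ref{lem:TreeWalls} and its higher-rank analogue from Theorem~\ref{theo:Iphi}, so that a ray in a sector is determined by its images in the various essentialized buildings $\QD(\phi_J)$ precisely when $\bigcap_J J = \emptyset$; and (iii) the retraction and convexity arguments (MA2)--(MA4), Lemma~\ref{3.6}, used repeatedly above, to transport the combinatorial coincidences into a common apartment. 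Once this "separation of directions" lemma is in hand, the openness of $\iota$ follows by choosing the gates $r_J$ sufficiently far out, exactly as in the proofs of Propositions~\ref{TTB2},~\ref{TTB6} and~\ref{top.face}.
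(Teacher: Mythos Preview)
Your overall architecture (continuity from the factor maps, injectivity from $\bigcap_{J\in\QS} W^v(J)=\{1\}$, then openness of $\iota$ onto its image via products of $res_J(U_{x,r_J,c})$ using Proposition~\ref{TTB6}) matches the paper. The divergence is entirely in how openness is established.

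The paper's argument is considerably more direct than your plan and avoids the panel-tree and essentialization machinery altogether. Given $d\in U_{x,r,c}$ and a candidate $d'$ with $res_J(d')=res_J(d_J)$ for some $d_J\in U_{x,r_d,d}$, the paper never leaves the masure: it observes that the closed sector $\overline{Q_{x,d'}}$ contains every sector-face $\overline{Q_{x,\phi'_J}}$ (with $\phi'_J=face_J(d')$), and that each $\overline{Q_{x,\phi'_J}}$ agrees with $\overline{Q_{x,\phi_J}}$ on a ``big'' piece by~\cite[Prop.~5.4]{Rou11}. The hypothesis $\bigcap_J J=\emptyset$ is then used \emph{geometrically}: the union $\bigcup_J Q_{x,\phi_J}$ lies in no wall, so the convex hull of those big pieces is big in $\overline{Q_{x,d}}$. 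The decisive step is axiom~(MAO), which forces $\overline{Q_{x,d}}\cap\overline{Q_{x,d'}}$ to be big, hence $d'\in U_{x,r,c}$. You never invoke (MAO), and this is precisely the tool that resolves your ``separation of directions'' obstacle in one line, with no detour through $\QD(\phi_J)$ or Theorem~\ref{theo:Iphi}.

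There is also a genuine gap in your plan as written: you propose to take, via Lemma~\ref{3.6}, an apartment $B$ containing both $Q_{x,c_-}$ and $Q_{x,d}$, where $d$ is the \emph{generic} chamber you are trying to place in $U_{x,r,c}$. Lemma~\ref{3.6} requires the two sectors to be opposite at $x$, which is essentially what you are trying to prove; a priori $Q_{x,d}$ and $Q_{x,c_-}$ need not be locally opposite. The paper sidesteps this entirely by not seeking any common apartment for $c_-$ and $d'$.
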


\begin{proof}
The topology on $\prod\limits_{J \in \QS} Res_J(X_{\pm})$ is the product topology given by the topology on every $Res_J(X_{\pm})$. Recall that the topology on $Res_J(X_{\pm})$ is the finest topology such that the canonical map $res_J: X_{\pm} \to Res_J(X_{\pm})$ is continuous, \ie a subset $U$ of $Res_J(X_{\pm})$ is open if and only if $res_J^{-1}(U)$ is open in $X_{\pm}$ with respect to the cone topology on $X$.
%Recall that by Corollary \ref{rem_TTB4+} we have that $Res_J(X_{\pm}) is closed in $X_{\pm}$ with respect to the cone topology on $X$.}

Notice that the map $\iota$ is injective and continuous.
Indeed, by the hypothesis on $\QS$, the union of all faces (of some chamber) having the type in $\QS$ is in no wall; so this chamber  is uniquely determined by its faces having the type in $\QS$ and we have that $\iota$ is injective.
The continuity follows from the fact that the canonical maps $res_J: X_{\pm} \to Res_J(X_{\pm})$ are continuous, for every $J \in \QS$.

To prove that $\iota$ is a homeomorphism onto its image it is enough to prove that $\iota$ is an open map onto its image.

Indeed, to prove this it is sufficient to show that, for every standard open neighborhood $U$ of $X_{\pm}$ with respect to the cone topology, the set $\iota(U)$ is open in the image $\iota(X_{\pm})$ that is endowed with the topology induced from the topology on $\prod\limits_{J \in \QS} Res_J(X_{\pm})$.

Let $c \in X_{\pm}$, $x \in \Delta$ and $r \in (x, \xi_{c})$. Denote by $A$ an apartment containing $x$ and containing the chamber $c$ at infinity. Let $c_-$ be the chamber opposite $c$ in $\partial A$. We have $\iota(U_{x,r,c})= \{(res_{J}(d))_{J \in \QS} \; \vert \; d \in U_{x,r,c}\} \subset \prod\limits_{J \in \QS} Res_{J}(X_{\pm})$. Take $(res_{J}(d))_{J \in \QS}$ for some $d \in U_{x,r,c}$. A standard open neighborhood of $(res_{J}(d))_{J \in \QS}$ in $ \prod\limits_{J \in \QS} Res_{J}(X_{\pm})$ is of the form $\prod\limits_{J \in \QS} res_J(U_{x,r_{J},d})$, where $r_J \in (x, \xi_{d})$ for every $J \in \QS$.
Notice that $r_J$ might be different from $r_{J'}$, for $J\neq J'$. It is sufficient to prove that there exists $r_d \in (x, \xi_d)$ such that $\left( \prod\limits_{J \in \QS} res_J(U_{x,r_{d},d}) \right) \cap \iota(X_{\pm}) \subset \iota(U_{x,r,c})$, \ie for every $d' \in X_{\pm}$ such that there exists $d_J \in U_{x,r_d,d}$ with $res_{J}(d')=res_{J}(d_J)$, for every $J \in \QS$, we have that $d' \in U_{x,r, c}$.

Consider $r_d$  such that $r \in (x, r_d)$. We can suppose that because $d \in U_{x,r,c}$. Let $d' \in X_{\pm}$ such that there exists $d_J \in U_{x,r_d,d}$ with $res_{J}(d')=res_{J}(d_J)$, for every $J \in \QS$.

%First we claim that $d'$ is opposite $c_-$.  \marginpar{\blue{$\downarrow$useless?}} Indeed, as $d_J \in U_{x,r_d,d} \subset U_{x,r,c}$ we have that $\rho_{A,c_-}(d_J)=c$, for every $J \in \QS$. In particular, for every $J \in \QS$, the \blue{$J$-face} of $d_J$, thus also of $d'$, is sent by $\rho_{A,c_-}$ to the \blue{$J$-face} of $c$. \blue{As $\qi$ is injective, we} have obtained that $\rho_{A,c_-}(d')=c$, so $d'$ must be opposite $c_-$.

Let $A'$ be the apartment of $\Delta$ containing $c_-$ and $d$ at infinity. Actually, by Lemma \ref{3.6}, the closed sectors $\overline{Q_{x,c_-}}$ and $\overline{Q_{x,d}}$ are opposite and in $A'$.

% Let $y \in \Delta$ such that \blue{$(\xi_{c_-},y] =B\cap (\xi_{c_-}, \xi_{d})$, where $(\xi_{c_-}, \xi_{d}):=(\xi_{c_-},x]\cup[x, \xi_{d})\subset A'$.} \marginpar{\blue{$\uparrow$useless?}} We have that $y \in A' \cap B$ and that the sector $Q_{y, c_-}$ is in $A' \cap B$. We claim that $y \in (r, \xi_{d})$, if $r_d$ is taken to be far enough.

\par Recall that for a spherical face $\phi$ at infinity of $\Delta$ and a point $z \in \Delta$ there exists a uniquely defined sector face $Q_{z,\phi}$ with base point $z$ and ideal face $\phi$. This might not be the case when $J \subset S$ is not spherical.

 For $J\in\QS$, let $\phi_J$ (\resp $\phi'_J$) be the ideal face of type $J$ of $d$ (\resp $d'$ or $d_J$).
 As $d_J\in U_{x,r_d,d}$, Proposition 5.4 in~\cite{Rou11} tells that there is a Weyl--isomorphism of apartments of $\Delta$ sending $\overline{Q_{x,d}}$ onto $\overline{Q_{x,d_J}}$, and fixing $cl_A(x,r_d)$.
 So, if $r_d$ is far enough, $\overline{Q_{x,\phi_J}}\cap \overline{Q_{x,\phi'_J}}$ is big in $\overline{Q_{x,\phi_J}}$ (\ie for any Euclidean metric on $A'$, the apartment $A'$ contains any given bounded subset of $\overline{Q_{x,\phi_J}}$).

 %If $r_d$ is far on $[x,\qx_d)$ (which is contained in the interior on the sector corresponding to $d$) then $\overline{Q_{x,d}}\cap \overline{Q_{x,d_J}} \supset \overline{Q_{x,d}}\cap \overline{Q_{r_d,-d}}\subset cl_A(x,r_d)$ is big in $\overline{Q_{x,d}}$ and $\overline{Q_{x,\phi_J}}\cap \overline{Q_{x,\phi'_J}}\supset \overline{Q_{x,\phi_J}}\cap cl_A(x,r_d)$ is big in $\overline{Q_{x,\phi_J}}$.

 Now $\overline{Q_{x,d'}}$ contains each $\overline{Q_{x,\phi'_J}}$, so it contains the big subset $\overline{Q_{x,\phi_J}}\cap \overline{Q_{x,\phi'_J}}$ of $\overline{Q_{x,\phi_J}}$.
 By the hypothesis on $\QS$, the union $\bigcup_{J\in\QS}\,Q_{x,\phi_J}\subset \overline{Q_{x,d}}$ is in no wall; so the convex hull of $\bigcup_{J\in\QS}\,(\overline{Q_{x,\phi_J}}\cap \overline{Q_{x,\phi'_J}})$
  is big in $\overline{Q_{x,d}}$.
 By the axiom (MAO), this implies that $\overline{Q_{x,d}} \cap \overline{Q_{x,d'}}$ is big in $\overline{Q_{x,d}}$.
So  $d' \in U_{x,r,c}$ , for $r_d$ far enough.
%Therefore, for every $J \in \QS$, there is defined a unique sector with base point $x$ and ideal face the $J$-residue of $d_J$ (thus of $d'$) and respectively of $d$. By considering $r_d \in (x, \xi_d)$ far enough, we obtain that within $cl_A(r_d, c_-)$, the intersection $A' \cap B$ contains also a far enough part of the intersection of the two sector faces corresponding to $J$-residues of $d_J$ and respectively of $d$, for every $J \in \QS$. Therefore, as $r_d$ tends to $\xi_d$ we have that $y$ tends to $\xi_d$. This implies that $y \in (r, \xi_d)$ for some $r_d$ far enough and thus that $d' \in U_{x,r,c}$.
\end{proof}

\begin{remarks}
\label{rem::TTB7} 1) Notice that a natural choice for $\QS$ is the set $\QS_p:=\{\,\{s\} \mid s\in S\}$, the corresponding residues being the panels.

\par 2) Another natural choice would be the set $\QS_v: =\{\,S\setminus\{s\} \mid s\in S\}$ with vertices as residues.
By Proposition~\ref{TTB7} this is possible when each $S\setminus\{s\}$ is spherical  (\ie essentially, when $W^v$ is of finite, affine or strictly hyperbolic type).
In this case,
%If we consider that for every $s \in S$, the $s$-vertices of $X$ are spherical, by applying Proposition~\ref{TTB7}
we obtain that  the canonical embedding $$i : X_{\pm} \hookrightarrow \prod_{s \in S} \mathcal{V}_s^{\pm} \; \;  \text{given by} \; \; c \mapsto (vert_{s}(c))_{s \in S}$$is a homeomorphism onto its image, with respect to the cone topology on $X$ and the product topology on $\prod_{s \in S} \mathcal{V}_s^{\pm}$.

\end{remarks}

\subsection{Comparison with other topologies}\label{ss:compar}

In this subsection we compare the properties proved above of the cone topology on $\Ch(\bd \Delta)$, when $\Delta$ is a masure, with the axioms introduced in Kramer~\cite{K02} and Hartnick--K\"{o}hl--Mars~\cite{HKM}, for abstract twin buildings.

Lemma~\ref{TTB1} is axiom (TTB1) from~\cite{HKM}. Proposition~\ref{TTB2} is a generalization of axiom (TTB2) of~\cite{HKM} and the first part of Corollary~\ref{TTB1_2} is exactly that axiom. Proposition~\ref{TTB4+} is axiom (TTB4+) and its Corollary~\ref{TTB4} is axiom (TTB4).
Therefore, when $\QD$ is locally finite and locally complete, $\Ch(\bd\QD)$ is a weak topological twin building in the terminology of \cite{HKM}.

Lemma~\ref{TTB5},  Proposition~\ref{TTB6} and Proposition~\ref{TTB1+} for the case of vertices are respectively, axioms (TTB5), (TTB6), (TTB1+) from~\cite{HKM}. Proposition~\ref{TTB7} is a generalization of axiom (TTB7) from~\cite{HKM} that is stated in  Remark~\ref{rem::TTB7}.2).
Actually (TTB7) is essential to make the link between the point of view of \cite{HKM} and the one of \cite{K02}. One may remark that Kramer \cite{K02} is studying essentially the affine twin building case and in that case, the axiom (TTB7) is proven to be true (see Remark~\ref{rem::TTB7}.2)) in the context of the cone topology.

The definition of topological twin buildings that are studied in \cite{HKM} involves axioms (TTB1), (TTB2), (TTB4) that are presented above and also the following axiom:

(TTB3) There exist chambers $c_{\pm} \in X_{\pm}$ such that $X_{\pm} = \lim\limits_{\to} E_{\leq w} (c_{\pm})$.

The latter mentioned equality is clear set-theoretically, but one wants it topologically. More precisely, we say that a topological space $Y$ is the direct limit of subspaces $Y_i$, denoted by $Y =\lim\limits_{\to} Y_i$, if $Y= \bigcup Y_i$ and $U \subset Y$ is open if and only if $U \cap Y_i$ is open for every $i \in I$.

Following \cite{HKM}, a weak topological twin building (\ie a twin building endowed with a topology satisfying (TTB1), (TTB2), (TTB4)) may be endowed with a ``completed'' topology (\ie the finest topology inducing the given topology on each Schubert variety $E_{\leq w} (c_{\pm})$) and one obtains a topological twin building.

\par Actually the cone topology never satisfies (TTB3) (if $W^v$ is infinite and $\Delta$ is thick). This is  Proposition \ref{propTTB3 non satisfait} in the Appendix by H\'ebert.
 Still, the cone topology studied in this current article is natural and has many good and useful properties, as it was proven in Sections~\ref{subsec::prop_cone_top} and~\ref{subsec::further_prop}. It is not clear whether the associated completed topology (\ie the topological twin building) is better than the cone topology.

%\red{
%\label{TTB3}
%(TTB3) There exist chambers $c_{\pm} \in X_{\pm}$ such that $X_{\pm} = \lim\limits_{\to} E_{\leq w} (c_{\pm})$.
%\begin{proof}
%It follows from (TTB3+)~(\ref{TTB3+}).
%\end{proof}

%\label{TTB3+}
%(TTB3+) For every chamber $c_{\pm} \in X_{\pm}$ we have that $X_{\pm} = \lim\limits_{\to} E_{\leq w} (c_{\pm})$.
%\begin{proof}
%Let $c_{\pm} \in X_{\pm}$.  By Corollary~\ref{rem_TTB4+} we have that $E_{s}(c_{\pm})=P_{s}(c_{\pm})$ is closed with respect to the induced cone topology on $X$, for every $s \in S$. By~\cite[Proposition 3.8]{HKM} we obtain that every Schubert variety $E_{\leq w} (c_{\pm})$ is closed with respect to the induced cone topology on $X$, where $w \in W$. Thus for every open subset $U \subset X$, we have that $U \cap E_{\leq w} (c_{\pm})$ is open in $E_{\leq w} (c_{\pm})$, for every $w \in W$. As it is trivial to see that $X_{\pm}= \bigcup E_{\leq w} (c_{\pm})$.

%\red{It remains to prove that given a subset $U \subset X$ such that for every $w$, $U \cap E_{\leq w} (c_{\pm})$ is open in $ E_{\leq w} (c_{\pm})$, then $U$ is open in $X$. Indeed, for every $w$, there exists an open subset $U_{w} \subset X$ such that $U \cap E_{\leq w} (c_{\pm})= U_{w} \cap E_{\leq w} (c_{\pm})$......}
%the conclusion follows.
%\end{proof}}

\section{Stabilizers and strong transitivity in masures}
\label{s2}
\label{s4}

\par We recall now the definition of strongly transitive actions, for combinatorial buildings as well as for affine $\R-$buildings and twin buildings.

\begin{definition}\label{def:1.2}
Let $\Delta$ be a combinatorial (hence discrete) building (\resp an affine $\R-$ building) and let $\sha$ be the (not necessarily complete) apartment system defining $\Delta$. Let $G \leq \Aut(\Delta)$. We say that $G$ acts strongly transitively on $\Delta$ if $G$ acts transitively on the set  $\sha$ of apartments and if, for one (so every) apartment $A\in\sha$, the stabilizer $\Stab_G(A)$ of $A$ in $G$ acts transitively on the set $\Ch(A)$ of chambers in $A$ (\resp induces on $A$ a group containing the affine Weyl group $W^a$).
% pairs $(A,c)$, where $A$ is an apartment of $\Delta$ and $c\in\Ch(A)$ (\ie $c$ is a chamber of $A$).

When $\Delta= (\Delta_{-}, \Delta_{+})$ is a twin building with apartment system $\sha=(\sha_+, \sha_-)$ and $G \leq \Aut(\Delta)$ we say that $G$ acts strongly transitively on $\Delta$ if $G$ acts transitively on the set of all twin apartments  $A=(A_-, A_+) \in \sha$ and $\Stab_{G}(A)$ acts transitively on $\Ch(A_+)$ (so also on $\Ch(A_-)$), for every twin apartment  $A=(A_-, A_+)$ in $\sha$.
Notice that this last definition is purely combinatorial, hence independent of any choice of a geometric realization of $\QD$.
\end{definition}

%Recall that a group of automorphisms of a twin building is said to be strongly transitive if, and only if, it acts transitively on the pairs $(c,A)$, where $A$ is an apartment of the twin building and $c$ is a chamber of $A$ (in a given half of the twin building).

%\medskip
%The following result is well-known; still, we provide a proof for completeness.

We now reproduce the same results as in Caprace--Ciobotaru~\cite[Section 3]{CaCi}, where affine buildings are studied. We only translate those results into the language of masures.

\begin{lemma}
\label{lem:unip}
Let  $\Delta$ be a masure  and let $G \leq \Aut(\Delta)$ be any group of vectorially Weyl--automorphisms. Let $c \in \Ch(\partial \Delta)$ be a chamber at infinity. Then the set
$$
G_c^0 :=\{g \in G_c \; | \;  g \text{ fixes some point of }\Delta\}
$$
is a normal subgroup (called the horospherical stabilizer) of the stabilizer $G_c :=\{g \in G \; | \;  g(c)=c\}$.
\end{lemma}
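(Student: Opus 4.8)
The plan is to show that $G_c^0$ is a subgroup first, and then that it is normal in $G_c$. For the subgroup property, the only nontrivial point is closure under multiplication: if $g, h \in G_c$ each fix some point of $\Delta$, I must produce a point fixed by $gh$. The standard trick from Bruhat--Tits theory (and from Caprace--Ciobotaru~\cite[Section 3]{CaCi}) is to use the fact that the fixed-point set of a vectorially Weyl--automorphism fixing $c$ is, roughly speaking, ``large in the direction of $c$''. More precisely, first I would observe that if $g \in G_c$ fixes a point $x \in \Delta$, then $g$ fixes the entire sector $Q_{x,c}$ pointwise: indeed $g$ stabilizes $c$, hence sends the sector $Q_{x,c}$ to $Q_{g(x),c} = Q_{x,c}$, and being a vectorially Weyl--automorphism it acts on any apartment containing $Q_{x,c}$ as an element of $W^v \ltimes V$ fixing the base point $x$ and stabilizing the sector-germ; such an element is necessarily the identity on that apartment, so $g$ fixes $Q_{x,c}$ pointwise (this uses axioms (MA3)/(MA4) to find a suitable apartment).

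Granting this, closure under multiplication follows: if $g$ fixes $Q_{x,c}$ pointwise and $h$ fixes $Q_{y,c}$ pointwise, then since $Q_{x,c} \cap Q_{y,c}$ contains a subsector $Q_{z,c}$ (as noted repeatedly in the excerpt, e.g.\ in the proof of Proposition~\ref{prop::independence_base_point}), both $g$ and $h$ fix $Q_{z,c}$ pointwise, hence so does $gh$, and in particular $gh$ fixes the point $z$. Thus $gh \in G_c^0$. Closure under inverses is immediate since $g^{-1}$ fixes whatever $g$ fixes. So $G_c^0$ is a subgroup of $G_c$.

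For normality, let $g \in G_c^0$ and $h \in G_c$; I must show $h g h^{-1} \in G_c^0$, i.e.\ that it fixes some point. If $g$ fixes the point $x$, then $h g h^{-1}$ fixes $h(x)$, which is a point of $\Delta$. Hence $h g h^{-1} \in G_c^0$, and $G_c^0 \trianglelefteq G_c$. (One also checks $G_c^0 \subseteq G_c$ by definition, and that $G_c$ is itself a subgroup of $G$, which is clear.)

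The main obstacle is the first step: verifying that an element of $G_c$ fixing a single point must fix an entire sector toward $c$. This requires being careful about which apartment to work in and invoking the masure axioms (MA3) and (MA4) to guarantee that $x$ and the sector-germ $\mathrm{germ}(Q_{x,c})$ lie in a common apartment on which a vectorially Weyl--automorphism fixing the base point and stabilizing the chamber at infinity acts trivially; the rigidity of the $W^v \ltimes V$-action (an element of $W^v \ltimes V$ fixing a point and preserving a chamber direction is the identity) is what closes the argument. Everything after that is a routine intersection-of-sectors argument of exactly the type used elsewhere in Section~\ref{sec::cone_top_hovel}.
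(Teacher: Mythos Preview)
Your proof is correct and follows exactly the approach sketched in the paper: the key observation that an element of $G_c$ fixing a point $x$ must fix the whole sector $Q_{x,c}$ pointwise, followed by the sector-intersection argument for closure under multiplication and the obvious conjugation argument for normality. The paper's own proof is extremely terse (essentially just asserting the sector-fixing fact and referring to \cite[Section 3]{CaCi}), so you have simply filled in the details it omits.
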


\begin{proof}
It is clear that $G_c, G_c^0$ are subgroups of $G$, as an element $g \in G_c$ fixing a point $x$ of $\Delta$ will also fix (pointwise) an entire sector that emanates from $x$ and pointing to $c$.  This is also used to prove that $G_c^{0}$ is normal in $G_c$ (one proceeds as in~\cite[Section 3]{CaCi}).
\end{proof}

\begin{lemma}
\label{lem:BusemanRetraction}
Let  $\Delta$ be a masure  and let $G \leq \Aut(\Delta)$ be any group of vectorially Weyl--automorphisms. Let $c \in \Ch(\partial \Delta)$ be a chamber at infinity and $A$ be an apartment of $\Delta$, whose boundary contains $c$. Then for any $g \in G_c$, the map
$$\beta_c(g) \colon A \to A : x \mapsto \rho_{A, c}(g(x))$$
is an automorphism of the apartment $A$, acting as a (possibly trivial) translation.
Clearly $\qb_c(g)$ and $g$ coincide on $A\cap g^{-1}(A)$.
Moreover the map
$$\beta_c \colon G_c \to \Aut(A)$$
is a group homomorphism whose kernel coincides with $G_c^0$.
In particular $[G_c,G_c]\subset G_c^0$.
\end{lemma}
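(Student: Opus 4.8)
The plan is to exploit the retraction $\rho_{A,c}$ centered at the chamber $c$ at infinity, together with the fact that $g \in G_c$ preserves $c$ and hence preserves the sector-germ $\mathfrak{R}$ associated with $c$. First I would check that $\beta_c(g)$ is a well-defined bijection $A \to A$: since $g$ is an automorphism of $\Delta$ and $\rho_{A,c}$ is a retraction onto $A$ with center $c$, the composite is certainly a map $A \to A$; to see it is bijective, note that $\beta_c(g^{-1})$ is its inverse. For this one uses the standard fact (as in the proof that the $\rho_{A,c}$ are well-defined, Section~\ref{subsubsec::building_infinity}.3) that on the set $g^{-1}(A) \cap A$ the restriction of $g$ coincides with a Weyl-isomorphism fixing the sector-germ, and hence $\rho_{A,c}\circ g$ agrees with $g$ there; chasing sectors pointing towards $c$ shows the composite on both sides gives the identity on suitable sectors, and then a germ/fixed-point argument propagates this.

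Next I would argue that $\beta_c(g)$ is an automorphism of the apartment $A$ in the sense of Section~\ref{1.2}: it is a composite of a vectorially Weyl-isomorphism (the restriction of $g$, read through charts) and the retraction, both of which respect the wall structure up to $\mathrm{Aut}(\mathbb{A})$-moves; since $g$ is vectorially Weyl, the linear part lies in $W^v \ltimes V$ intersected with $\mathrm{Aut}(\mathbb{A})$. To see that $\beta_c(g)$ acts as a \emph{translation}, the key point is that it fixes the sector-germ $germ(Q)$ determined by $c$: indeed a sector $Q' \subset A$ pointing to $c$ is sent by $g$ into an apartment still having $c$ at infinity, and $\rho_{A,c}$ sends $g(Q')$ back onto a subsector of $Q'$ with the same direction; so $\beta_c(g)$ preserves the direction $C^v$ of $c$ and fixes its germ. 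An automorphism of an affine apartment whose linear part is vectorially Weyl and which stabilizes a sector-germ must have trivial linear part, hence is a translation. The remark that $\beta_c(g)$ and $g$ coincide on $A \cap g^{-1}(A)$ is then immediate from the definition of $\rho_{A,c}$, since $\rho_{A,c}$ restricts to the identity composed with a germ-fixing Weyl-isomorphism there.

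Then I would verify that $\beta_c \colon G_c \to \mathrm{Aut}(A)$ is a homomorphism. Given $g, h \in G_c$, one computes $\beta_c(gh)(x) = \rho_{A,c}(g(h(x)))$ and compares with $\beta_c(g)(\beta_c(h)(x)) = \rho_{A,c}(g(\rho_{A,c}(h(x))))$; the discrepancy is controlled because $\rho_{A,c}\circ g$ and $g$ agree on the relevant sectors pointing to $c$ (as $g$ fixes the sector-germ after suitable Weyl-isomorphism identification), so applying $\rho_{A,c}$ after $g$ is the same as applying it after $\rho_{A,c}\circ g$. I expect this to be the main technical obstacle: one must argue carefully that $h(x)$ and $\rho_{A,c}(h(x))$ lie in a common apartment on which $g$ looks like a single Weyl-isomorphism fixing the germ of $c$, using (MA2)/(MA3) and the germ-fixing clause — essentially the same bookkeeping that makes retractions multiplicative along a chamber at infinity. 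Finally, $\ker \beta_c$: if $\beta_c(g) = \mathrm{id}_A$, then $g$ fixes $\rho_{A,c}^{-1}$-images appropriately; pick any point $x \in A$, then $\rho_{A,c}(g(x)) = x$, and using that $g$ and $\beta_c(g)$ agree on $A \cap g^{-1}(A)$ together with axiom (MA2) applied to a germ of a generic segment one deduces $g$ fixes a point of $\Delta$, so $g \in G_c^0$; conversely if $g \in G_c^0$ fixes a point $y$, it fixes an entire sector from $y$ to $c$, and propagating along retractions shows $\beta_c(g)$ fixes a sector-germ of $A$, so, being a translation, it is trivial. Hence $\ker\beta_c = G_c^0$. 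Since $\mathrm{Aut}(A)$ restricted to translations is abelian, the homomorphism $\beta_c$ kills $[G_c,G_c]$, giving $[G_c,G_c] \subseteq G_c^0$.
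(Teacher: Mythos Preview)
Your proposal is correct and follows the same overall strategy as the paper's proof. The one point worth noting is the homomorphism step, which you flag as ``the main technical obstacle'': the paper dissolves this obstacle with a simpler trick. Rather than comparing $\rho_{A,c}(g(h(x)))$ with $\rho_{A,c}(g(\rho_{A,c}(h(x))))$ for arbitrary $x$ (which, as you say, requires tracking $h(x)$ and $\rho_{A,c}(h(x))$ through a common apartment), the paper observes that since $c \in \Ch(\partial A) \cap \Ch(\partial g(A)) \cap \Ch(\partial h(A))$, there is a common sector $Q_{x,c} \subset A \cap g(A) \cap h(A)$; one then simply picks $y$ deep enough in this sector that $h(y) \in A$, so $\rho_{A,c}(h(y)) = h(y)$ and the identity $\beta_c(g)\beta_c(h)(y) = \beta_c(gh)(y)$ becomes a one-line computation. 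Since both sides are translations of $A$, agreement at a single point suffices. The kernel argument admits the same shortcut: once you know $\beta_c(g)$ and $g$ coincide on $A \cap g^{-1}(A)$ (which contains a sector), $\beta_c(g) = \mathrm{id}_A$ immediately forces $g$ to fix that sector pointwise, so $g \in G_c^0$ without any further chasing.
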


\begin{proof}
For $g\in G_c$, $g(A)$ is an apartment of $\Delta$ that contains the chamber $c$ in its boundary at infinity $\partial g(A)$.
%As every $g \in G$ is a (vectorially Weyl) automorphism, we have that $g(A)$ is an apartment of $\Delta$ that contains the chamber $c$ in its boundary at infinity $\partial A$.
 By the definition of the retraction $\rho_{A, c}$, as $g$ is vectorially Weyl and $c \in \Ch(\partial A) \cap \Ch(\partial g(A))$, it is easy to see that indeed $\beta_c(g) $ is an element of $\Aut(A)$, that can act as a translation or  can fix a point of $A$.

Let us prove that $\beta_c$ is a group homomorphism, whose kernel coincides with $G_c^0$. Let $g, h \in G_c$. Because $c \in \Ch(\partial A) \cap \Ch(\partial g(A)) \cap Ch(\partial h(A))$, there exists a common sector $Q_{x,c}$ in $A\cap g(A)\cap h(A)$. This implies that for $\beta_c$ to be a group homomorphism it is enough to consider a point $y$ far away in the interior of the sector  $Q_{x,c}$ such that $ \rho_{A,c}(h(y))= h(y)$. We have that $$\beta_c(g) \beta_c(h) (y)= \beta_c(g) (\rho_{A, c}(h(y)))= \beta_c(g)(h(y))=\rho_{A, c}(g(h(y)))=\beta_c(gh)(y).$$

It is clear from the definition that $G_c^0$ is contained in the kernel of $\beta_c$. Let now $g \in \Ker(\beta_c)$. This implies that $g$ fixes a point in the intersection $A \cap g(A)$; therefore, $g \in G_{c}^{0}$. The conclusion follows.
\end{proof}

\begin{lemma}
\label{lem:Levi}
Let $\Delta$ be a masure  and let $G \leq \Aut(\Delta)$ be a subgroup of vectorially Weyl--automorphisms that acts strongly transitively on $\partial \Delta$.

Then for any pair $c, c'$ of opposite chambers at infinity, every $G_c^0$--orbit on the set $c^{op}$ of chambers opposite $c$ is invariant under $G_{c, c'}:=G_c\cap G_{c'}$.
\end{lemma}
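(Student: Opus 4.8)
The plan is to show that for $g \in G_{c,c'}$ and $h \in G_c^0$, the chamber $h(d)$ and its translate $g h g^{-1}(d)$ lie in the same $G_c^0$-orbit, for every $d$ opposite $c$; equivalently, that $G_{c,c'}$ normalizes $G_c^0$ (which is Lemma~\ref{lem:unip}) and, more to the point, that conjugation by $G_{c,c'}$ preserves each orbit, not just the set of orbits. The key observation I would exploit is that $G_c^0 = \Ker(\beta_c)$ by Lemma~\ref{lem:BusemanRetraction}, and that for $g \in G_{c,c'}$ the restriction $\beta_c(g)$ is a \emph{translation} of any apartment $A$ with $c, c' \in \Ch(\bd A)$; since $g$ also fixes $c'$, it stabilizes the twin apartment $\bd A$ and hence, via (MA2) applied to a preordered segment joining a sector toward $c$ and one toward $c'$, it actually stabilizes an apartment $A$ of $\QD$ with $c, c'$ at infinity. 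On such an $A$, $g$ acts as a genuine affine automorphism, so $\beta_c(g) = g|_A$.

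\textbf{First I would} fix a pair $d, d'$ of chambers opposite $c$ lying in the same $G_c^0$-orbit, say $d' = h(d)$ with $h \in G_c^0$, and fix $g \in G_{c,c'}$; the goal is to produce $h'' \in G_c^0$ with $g(d') = h''(g(d))$, i.e. $g h g^{-1} \in G_c^0 \cdot \Stab_{G_c}(g(d))$ in the relevant sense — but it is cleaner to argue directly that $g h g^{-1}$ sends $g(d)$ to $g(d')$ and lies in $G_c^0$. Indeed $g h g^{-1}$ fixes $c$ (all three factors do) and $\beta_c(ghg^{-1}) = \beta_c(g)\beta_c(h)\beta_c(g)^{-1} = \beta_c(g)\cdot \id \cdot \beta_c(g)^{-1} = \id$ because $\beta_c(h) = \id$ (as $h \in \Ker \beta_c = G_c^0$) and $\beta_c$ is a homomorphism. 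Hence $g h g^{-1} \in \Ker(\beta_c) = G_c^0$, and clearly $(ghg^{-1})(g(d)) = g(h(d)) = g(d')$. Therefore $g(d)$ and $g(d')$ lie in the same $G_c^0$-orbit, which is exactly the assertion that $G_c^0$-orbits on $c^{op}$ are $G_{c,c'}$-invariant.

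\textbf{The main subtlety} — and the only place the hypothesis $c' \in c^{op}$ (rather than an arbitrary element of $G_c$) is actually used — is ensuring that $\beta_c$ is defined and behaves as a homomorphism on the subgroup generated by $g$ and $h$, i.e. that one really has a common apartment $A$ with $c \in \Ch(\bd A)$ on which to run the retraction argument; this is where I would invoke that $G$ acts strongly transitively on $\bd\QD$ together with Lemma~\ref{lem:BusemanRetraction}, whose proof already arranges a common sector $Q_{x,c}$ in $A \cap g(A) \cap h(A)$. One should note that $G_{c,c'} \subseteq G_c$, so $\beta_c$ restricts to $G_{c,c'}$ and everything takes place inside the single homomorphism $\beta_c : G_c \to \Aut(A)$; no new apartment-theoretic input beyond Lemma~\ref{lem:BusemanRetraction} is needed, and the role of $c'$ is only to guarantee (via the strong transitivity at infinity) that such configurations exist and that $\beta_c(g)$ is a translation, consistent with the setup for the subsequent structure theory of the pair $(G_c, G_{c,c'})$. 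I expect the write-up to be short: it is essentially the computation $\beta_c(ghg^{-1}) = \id$ plus the identity $(ghg^{-1})g = gh$ on chambers, once the homomorphism property of $\beta_c$ from Lemma~\ref{lem:BusemanRetraction} is in hand.
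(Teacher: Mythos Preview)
Your argument has a genuine gap: what you actually prove is only that $G_{c,c'}$ \emph{permutes} the $G_c^0$-orbits on $c^{op}$, not that it fixes each orbit setwise. Concretely, you take $d' = h(d)$ with $h \in G_c^0$ and $g \in G_{c,c'}$, and you show $g(d') = (ghg^{-1})(g(d))$ with $ghg^{-1} \in G_c^0$. This says $g(d)$ and $g(d')$ lie in the same $G_c^0$-orbit --- but that is automatic from normality of $G_c^0$ in $G_c$ (Lemma~\ref{lem:unip}) and holds for \emph{any} $g \in G_c$, not just $g \in G_{c,c'}$. What the lemma asserts, and what you never establish, is that $g(d)$ lies in the same $G_c^0$-orbit as $d$ itself. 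Your computation $\beta_c(ghg^{-1}) = \id$ is correct but beside the point; you needed something like $g \cdot (\text{something in } G_c^0)$ to fix $d$, and that is not what you produced. Notice also that your argument never uses the strong transitivity hypothesis in any essential way, which should be a warning sign.

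The paper's proof uses both hypotheses genuinely. Strong transitivity on $\partial\Delta$ lets one write an arbitrary $d \in c^{op}$ as $d = g_0(c')$ for some $g_0 \in G_c$. Since $G_c/G_c^0$ is abelian (the last line of Lemma~\ref{lem:BusemanRetraction}), the subgroup $H := G_{c,c'}G_c^0$ is normal in $G_c$, so $H(g_0(c')) = g_0(H(c'))$. Now the specific role of $c'$ enters: $G_{c,c'}$ \emph{fixes} $c'$, hence $H(c') = G_c^0(c')$, and unwinding gives $G_{c,c'}(G_c^0(d)) = G_c^0(d)$. The two ingredients you are missing are exactly (i) expressing $d$ as a $G_c$-translate of the distinguished chamber $c'$, and (ii) using that $G_{c,c'}$ stabilizes $c'$; your remark that ``the role of $c'$ is only to guarantee \ldots that such configurations exist'' underestimates its importance.
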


\begin{proof}
Let $c, c'$ be a pair of opposite chambers at infinity of $\Delta$ and let $d\in c^{op}$. As $G$ acts strongly transitively on $\partial \Delta$ there is some $g \in G_c$ with $d = g(c')$. As the quotient $G_c/G^0_c$ is abelian by Lemma~\ref{lem:BusemanRetraction}, we have that the subgroup $H:=G_{c, c'}G_{c}^{0} $ is normal in $G_c$ and
$$\begin{array}{rcl}
G_{c, c'}(G^0_c(d)) & = & H(g(c')) \\
&= & g(H(c'))\\
& = & g(G^0_c.G_{c, c'}(c'))\\
& = & gG^0_c(c')\\
& = & G^0_c(g(c'))\\
&=& G^0_c(d).
\end{array}
$$
Thus the $ G^0_c$--orbit of $d$ is indeed invariant by $G_{c, c'}$, as claimed. It follows that the $H$--orbits on $c^{op}$ coincide with the $G^0_c$--orbits.
\end{proof}

In this section we also verify an analogue criterion for strong transitivity as in Caprace--Ciobotaru~\cite[Section 3]{CaCi}. We first give a definition.

\begin{definition}
\label{def::good_part}
Let $\Delta$ be a masure  and let $G \leq \Aut(\Delta)$ be a group of automorphisms. Let $A$ be an apartment of $\Delta$. We say that a subset (or a filter) $\Omega$ of the apartment $A$ is {\bf $G-$friendly}  if, for any two apartments $A,A'$ containing $\QO$, there is $g\in G_\QO^{0}:=\{ g\in G \; \vert \;  g \text{ fixes pointwise } \QO \}$ such that $A'=g(A)$ and $g : A\to A',x\mapsto g(x)$ is a Weyl--isomorphism.
%the fixer (= pointwise stabilizer) $G_{\Omega}^{o}$ of $\Omega$ is transitive on the set of all apartments containing $\Omega$.
\end{definition}

\medskip
\par For $\QO$ a local chamber or a sector-germ, $\QO$ is $G-$friendly if, and only if, $G_\QO^{0}$ is transitive on the set of all apartments containing $\Omega$: this is a consequence of (MA2) and (MA4) as two isomorphisms $A\to A'$ fixing $\QO$ are necessarily equal; this implies in particular that the corresponding isomorphism is Weyl.  In the same way, we also obtain that $G_\QO^{0}$ fixes pointwise $\overline\QO$ and $cl_A(\QO)$.
% \green{Is here the group $G$ of vectorially Weyl--automorphisms or just Weyl--automorphisms? I think there is no need to add this paragraph, as the definition of being ``good'' is very clear.}

\begin{definition}
\label{4.2}
(See Gaussent--Rousseau~\cite[1.5]{GR14} or Rousseau~\cite[4.10]{Rou13}). Let $\Delta$ be a masure  and let $G$ be a subgroup of $\Aut(\Delta)$. We say that $G$ acts {\bf strongly transitively} on $\Delta$ if any isomorphism involved in axioms (MA2), (MA4) may be chosen to be the restriction of an element of $G$.
\end{definition}

\par This means that each of the sets $cl_A(F)$ appearing in (MA2) and $cl_A(\g R\cup F)$ appearing in (MA4) is $G-$friendly.

\par The Kac--Moody groups as in the example of Section~\ref{apart-syst} act strongly transitively by vectorially Weyl--automorphisms on the corresponding masures, see~\cite{GR08} and~\cite{Rou12} in the split case and \cite{Rou13} in the general almost split case.

\begin{proposition}
\label{lem:ST:criterion}
Let  $\Delta$ be a % semi-discrete, thick,
 masure  and let $G \leq \Aut(\Delta)$ be a group of vectorially Weyl--automorphisms. Then the following conditions are equivalent.

\begin{enumerate}[(i)]
\item $G$ is strongly transitive on $\Delta$;
\item
every local chamber of $\Delta$ is $G-$friendly;
\item
every sector-germ of $\Delta$ is $G-$friendly.
\end{enumerate}
\end{proposition}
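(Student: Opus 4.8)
The plan is to prove the cycle of implications $(i)\Rightarrow(iii)\Rightarrow(ii)\Rightarrow(i)$, using the axioms (MA2)--(MA4) together with the friendliness observations recorded just before Definition~\ref{4.2}. Recall that for $\QO$ a local chamber or a sector-germ, $\QO$ is $G$-friendly if and only if $G_\QO^0$ acts transitively on the set of apartments containing $\QO$, and in that case $G_\QO^0$ fixes $cl_A(\QO)$ pointwise; this is the main tool throughout.

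First I would prove $(i)\Rightarrow(iii)$. Let $\g R$ be a sector-germ and let $A,A'$ be two apartments containing $\g R$. Choose a closed face $F$ (say a point, or even a closed local chamber) inside $A$; then by (MA3) there is an apartment containing $\g R$ and $F$, but more to the point we want to compare $A$ and $A'$ directly. Since $\g R$ is the germ of a splayed (hence solid) chimney, apply (MA3)--(MA4) with $\mathfrak R=\g R$ and $F$ a closed local chamber of $A$ contained in a subsector representing $\g R$: we get that $A\cap A'\supseteq cl_A(\g R\cup F)$ and, by strong transitivity in the sense of Definition~\ref{4.2}, the Weyl-isomorphism $A\to A'$ fixing $cl_A(\g R\cup F)$ is the restriction of some $g\in G$. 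Letting $F$ exhaust $A$ (or simply taking $F$ large enough that $cl_A(\g R\cup F)$ already determines the isomorphism, which it does since two isomorphisms fixing a sector-germ agree) we conclude $g\in G_{\g R}^0$ and $g(A)=A'$; hence $\g R$ is $G$-friendly.

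Next, $(iii)\Rightarrow(ii)$: let $C=F^\ell(x,\pm w.C^v_f)$ be a local chamber and let $A,A'$ both contain $C$. Pick a sector $Q$ in $A$ with base point $x$ and direction $\pm w.C^v_f$, so that $\g R:=germ(Q)$ is a sector-germ with $C=germ_x(Q)$. By (MA2) applied to the germ of the preordered interval / the point $x$ — more efficiently, by (MA3) applied to $\g R$ and the closed local chamber $\overline C$ — we find an apartment $A''$ containing both $\g R$ and $\overline C$; but what we actually need is that $A'$ also contains such a sector-germ. Here I would argue: $A'$ contains $C$, hence contains a germ of a sector of direction $\pm w.C^v_f$ based near $x$; by (MA3)--(MA4) $A$ and $A'$ share an apartment-worth of data and in particular we may transport via $(iii)$. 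Concretely: let $\g R'$ be a sector-germ in $A$ refining $C$ and let $\g R''$ be one in $A'$ refining $C$; using (MA4) for $A,A'$ both containing $\g R'$ (after possibly first moving $A'$ by an element of $G_C^0$ coming from friendliness of a sector-germ common to a third apartment obtained from (MA3)) we reduce to the case where $A,A'$ contain a common sector-germ, and then $(iii)$ finishes. The cleanest route is: by (MA3) choose $A_1\supseteq \g R'\cup \overline C$ and $A_2\supseteq\g R''\cup\overline C$; friendliness of sector-germs gives $G_{\g R'}^0$-conjugacy of $A$ and $A_1$, and $G_{\g R''}^0$-conjugacy of $A'$ and $A_2$; since $\g R'$ and $\g R''$ both refine $C$ they can be taken equal, so $A_1=A_2$ up to $G_C^0$, and composing gives $g\in G_C^0$ with $g(A)=A'$.

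Finally $(ii)\Rightarrow(i)$: I must show each $cl_A(F)$ from (MA2) and each $cl_A(\mathfrak R\cup F)$ from (MA4) is $G$-friendly. Given $A,A'$ containing such an $\QO$, in each case $\QO$ contains (the germ at a suitable vertex of) a local chamber $C$ — for (MA2), $F$ being a point, a germ of a preordered interval, a generic ray, or a solid chimney, its enclosure $cl_A(F)$ is full-dimensional and contains local chambers; for (MA4), $cl_A(\mathfrak R\cup F)$ likewise contains local chambers since $\mathfrak R$ is a splayed chimney-germ. Pick a local chamber $C\subseteq cl_A(\QO)$. By $(ii)$ there is $g\in G_C^0$ with $g(A)=A'$. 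Now $g$ fixes $C$, hence fixes $\overline C$ and $cl_A(C)$ pointwise; I claim $g$ fixes all of $cl_A(\QO)$. Indeed $A\cap A'$ contains $cl_A(\QO)$ by the relevant axiom, $g|_A$ is a Weyl-isomorphism $A\to A'$, and a Weyl-isomorphism between apartments that fixes a local chamber is the identity on the convex closed subset it pointwise fixes; since $\QO$ and $C$ lie in the common apartment structure and $C$ determines the isomorphism, $g$ restricts to the identity on $cl_A(\QO)$. Hence $g\in G_\QO^0$, so $\QO$ is $G$-friendly, which is exactly the reformulation of Definition~\ref{4.2}.

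The main obstacle is the bookkeeping in $(iii)\Rightarrow(ii)$: turning "two apartments contain the same local chamber" into "two apartments contain the same sector-germ" requires threading through (MA3) and (MA4) carefully, because a local chamber does not canonically extend to a sector-germ inside a given apartment in a $G$-equivariant way. The resolution is to introduce an auxiliary third apartment via (MA3) containing both a chosen sector-germ and the closed local chamber, and to use that two isomorphisms of apartments agreeing on a sector-germ coincide; this is precisely the remark preceding Definition~\ref{4.2}, and once invoked it makes all three implications routine.
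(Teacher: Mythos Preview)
Your cycle $(i)\Rightarrow(iii)\Rightarrow(ii)\Rightarrow(i)$ matches the paper's, and $(i)\Rightarrow(iii)$ is fine, but both remaining implications contain genuine gaps.

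\medskip
\textbf{The gap in $(iii)\Rightarrow(ii)$.} Your ``cleanest route'' asserts that the sector-germs $\g R'$ (extending $C$ inside $A$) and $\g R''$ (extending $C$ inside $A'$) ``can be taken equal''. This is exactly the difficulty: a local chamber $C\subset A\cap A'$ determines a \emph{direction}, but the sector in $A$ with base point $x$ and that direction, and the sector in $A'$ with the same data, are different subsets of $\QD$ with \emph{different} germs at infinity. So $\g R'\neq\g R''$ in general, and your apartments $A_1\supset\g R'$ and $A_2\supset\g R''$ share only the closed local chamber $\overline C$ --- precisely the object whose $G$-friendliness you are trying to prove. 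The step ``$A_1=A_2$ up to $G_C^0$'' is therefore circular. The paper breaks this circularity by a geometric trick you do not use: pick $y$ in the interior of a common neighbourhood $C'\subset A\cap A'$ of $x$ in $Q_x$, and in $A'$ take the sector $Q'_y$ based at $y$ in the \emph{opposite} direction (so $x\in Q'_y$). Then $Q_x$ and $Q'_y$ are opposite sectors based at $y$, and Lemma~\ref{3.6} (condition (CO)) furnishes a single apartment $A''$ containing both. Now $A$ and $A''$ share the sector $Q_x$, while $A'$ and $A''$ share the sector $Q'_y$; applying (iii) to each pair and observing that $Q_x\cap Q'_y$ lies in the filter $C$ yields $g\in G_C^0$ with $g(A)=A'$.

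\medskip
\textbf{The gap in $(ii)\Rightarrow(i)$.} You claim that for every $F$ as in (MA2) the enclosure $cl_A(F)$ ``is full-dimensional and contains local chambers''. This is false: take $F=\{x\}$ a special point, or $F$ the germ of a preordered interval whose direction lies in a wall; then $cl_A(F)$ is a single point, resp.\ is contained in a proper affine subspace. The paper's repair is to first upgrade (ii) to the statement that every \emph{local face} (not just local chamber) is $G$-friendly --- this uses \cite[Prop.~5.1]{Rou11} to put two local chambers covering the given face into a common third apartment --- and then, for a general $\widetilde\QO=cl_A(F)$ or $cl_A(\g R\cup F)$, to choose a maximal local face $F_1\subset cl(\QO)$ with $cl(\QO)\subset supp(cl(F_1))$. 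The element $g\in G_{cl(F_1)}^0$ with $g(A)=A'$ is an affine isomorphism fixing $cl(F_1)$ pointwise, hence fixing its affine span $supp(cl(F_1))\supset cl(\QO)$ pointwise. Your version of this last step (``a Weyl-isomorphism that fixes a local chamber is the identity on the convex closed subset it pointwise fixes'') is correct in spirit when $F_1$ is a local chamber, but you need the local-\emph{face} version to cover all the cases of (MA2).
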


\begin{remark}\label{5.4}Notice that for thick, affine, chimney friendly buildings (viewed as $\R-$buildings) there are two definitions of strong transitivity: Definition \ref{def:1.2} and Definition \ref{4.2}. By the proof of Proposition \ref{5.5} below these two definitions are the same.

In addition, the proof of Proposition~\ref{lem:ST:criterion} gives the equivalence of the above three conditions for thick, sector friendly  affine $\R-$buildings. For this one needs only condition (CO) of~\cite{Parr00}.
%\blue{GR: affine to be able to speak of sector-germs.}
\end{remark}

\begin{proof}[Proof of Proposition~\ref{lem:ST:criterion}]
(i) $\Rightarrow$ (iii): This follows from the definition of the strongly transitive action of $G$ on the masure  $\Delta$.

\medskip
(ii) $\Rightarrow$ (i):
First we claim that the enclosure $cl(F)$ of any local face $F$ of the masure  $\Delta$ is $G-$friendly. Indeed, if there exist two apartments $A$ and $A'$ such that $F \subset A \cap A'$, we consider a local chamber $C$ of $A$ and a local chamber $C'$ of $A'$ such that both cover $F$.
By Rousseau~\cite[Prop. 5.1]{Rou11} there is an apartment $A''$ of $\Delta$ containing $cl(C)$ and $cl(C')$ (both containing $cl(F)$).
 Applying our hypothesis to $(C,A,A'')$ and $(C',A',A'')$, the claim follows.

%\blue{Comment GR: The strengthening of the result in the above paragraph ($F\mapsto cl(F)$) is needed in the paragraph below (\eg when $\widetilde\QO$ is the enclosure of a point). Other change below: $cl(\QO)\supset\widetilde\QO$ instead of $cl(\QO)\in\widetilde\QO$; it is necessary as $cl(\QO)$ may be only a filter and not a set.}

\medskip
We need to verify that all isomorphisms involved in the definition of the masure  are induced by elements of $G$.  Therefore, let $\widetilde\QO=cl_A(F)$ or $\widetilde\QO=cl_A(\g R\cup F)$ be as in the axiom (MA2) or (MA4). We consider a closed convex subset $\QO\subset supp(\widetilde\QO)$ such that $cl(\QO)$ contains the filter $\widetilde\QO$ and
%$\Omega$ be a closed subset as in the axioms (MA2) or (MA4) that
is contained in the intersection $A \cap A'$ of two apartments $A, A'$ of $\Delta$.
 But in $cl(\Omega)$ one can find a (maximal) local face $F_1$ such that $F_1 \subset cl(\Omega) \subset supp(cl(F_1))$, where $supp(cl(F_1))$ is the unique affine space of minimal dimension that contains $cl(F_1)$.
 For this face $F_1$ we apply our above claim and we obtain an element $g$ in the pointwise stabilizer  $G_{cl(F_1)}^{0} < G$ such that $g(A)=A'$, $g : A\to A',x\mapsto g(x)$ is a Weyl--isomorphism and $g$ fixes pointwise the closed face $cl(F_1)$.
 As $cl(F_1) \subset cl(\Omega) \subset supp(cl(F_1))$ and $g$ is an affine isomorphism from $A$ onto $A'$, we also have that $g$ fixes (pointwise) $cl(\QO)\supset\widetilde\QO$. The conclusion follows.
% \red{Apparently, I think we do not need to verify all the cases from (MA2) or (MA4), as we considered a maximal face $F$ in $\Omega$.}

\medskip
(iii) $\Rightarrow$ (ii):
Let $C$ be a local chamber contained in the intersection of two apartments $A$ and $A'$ of $\Delta$.
 Let $x$ be a vertex of $C$ and consider in $A$ the sector $Q_x$ with base point $x$ that contains the chamber $C$.
 Then $A\cap A'$ contains a neighborhood $C'$ of $x$ in $Q_x$ and so that $C\subset C'$.
 Let $y$ be a point in the interior of $C'$.
 Take the sector in $A$ of the form $Q_y:= Q_x+ (y-x) \subset Q_x$.
 In the apartment $A'$ consider the sector $Q_y'$ with base point $y$  that contains the vertex $x$, hence which is opposite  $Q_y$.
  Notice that $Q'_y \cap Q_x$ is a small neighborhood of $x$ (\resp of $y$) in $Q_x$ (\resp in $Q_y'$).
  By applying %Remark~\ref{rem::cond_CO} and Lemma~\ref{lem::rays_line}
 Lemma~\ref{3.6}, one concludes that there is an apartment $A''$ containing the sectors $Q_x$ and $Q'_y$.
 So $A''$ contains $Q_x\subset cl_A(\{x\}\cup Q_y)$ and $Q_x\cap Q'_y$ is an element of the filter $C$.
  By the hypothesis, applied successively to $(Q_x,A,A'')$ and $(Q'_y,A',A'')$, we obtain the conclusion.
\end{proof}

\begin{proposition}\label{5.5} %\marginpar{Fact 1.3}
Let $\QD$ be a {thick} masure, let $X$ be its twin building at infinity and let $G\leq \Aut(\QD)$ be a group of vectorially Weyl automorphisms.
If $G$ acts strongly transitively on $\QD$, then the action of $G$ induced on $X$ is strongly transitive as well.
\end{proposition}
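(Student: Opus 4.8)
The plan is to verify directly the two defining conditions of strong transitivity for the twin building $X=\partial\QD$ given in Definition~\ref{def:1.2}, whose twin apartments are exactly the $\partial A$, $A\in\sha$ (the map $A\mapsto\partial A$ being injective, since an apartment is determined by any pair of opposite sector-germs it contains, cf.\ the uniqueness used in Proposition~\ref{lem::E1_open_set}). Thus I must prove: (a) $G$ is transitive on $\sha$, and (b) for every $A\in\sha$ the stabilizer $\Stab_G(A)=\Stab_G(\partial A)$ is transitive on $\Ch(\partial A_+)$. Throughout I will replace ``strongly transitive'' by the equivalent conditions of Proposition~\ref{lem:ST:criterion} (every local chamber, equivalently every sector-germ, is $G$-friendly) and use that every wall of $\QD$ is thick (\cite[Prop.~2.9]{Rou11}).

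I would treat (b) first, as it is the technical core. Since $\partial A_+$ is the positive half of the twin Coxeter complex of $W^v$, the group $W^v$ acts simply transitively on $\Ch(\partial A_+)$; hence it suffices to show that $\Stab_G(A)$ induces on $A$ a group whose image in $\Aut(\partial A_+)$ contains $W^v$, and for this it is enough to realize, for each wall $M$ of $A$, the reflection $r_M$ by an element of $\Stab_G(A)$. Fix $M$ and, by thickness, choose half-apartments $H_1,H_2,H_3$ with $H_i\cap H_j=M$ ($i\ne j$), $H_1\cup H_2=A$, $B:=H_1\cup H_3\in\sha$, $C:=H_2\cup H_3\in\sha$. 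For any two of these apartments sharing a half-apartment $H$, $G$-friendliness of a local chamber of $H$ adjacent to $\partial H$ yields an element of $G$ fixing $H$ pointwise and carrying one apartment to the other: the induced affine isomorphism fixes the hyperplane $\partial H$ and a local chamber on one side, so it must restrict to the identity on $H$. Composing three such elements $A\to B\to C\to A$, fixing $H_1$, $H_3$, $H_2$ respectively, produces an element of $\Stab_G(A)$ that induces on $A$ the unique automorphism fixing $M$ pointwise and exchanging $H_1,H_2$, namely $r_M$. This gives (b).

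For (a), note first that the argument for (b) also shows that two sector-germs of the same sign lying in a common apartment are $G$-conjugate. For arbitrary sector-germs $\mathfrak{Q}_1,\mathfrak{Q}_2$ of the same sign, join $\partial\mathfrak{Q}_1$ to $\partial\mathfrak{Q}_2$ by a gallery in the building $\partial\QD_\pm$; at each step the two chambers share an ideal panel $\phi$ (of spherical type $\{s\}$) and are distinct, hence opposite in the rank-one spherical residue of $\phi$, so by Proposition~\ref{prop2:Iphi} they lie in a common apartment, and (b) moves one onto the other. Composing, $G$ is transitive on positive sector-germs and on negative ones. Moreover, for fixed positive $\mathfrak{Q}$ the negative sector-germs opposite $\mathfrak{Q}$ correspond bijectively, via Lemma~\ref{3.6} and the uniqueness of the apartment spanned by a pair of opposite sector-germs, to the apartments containing $\mathfrak{Q}$, on which $G^0_{\mathfrak{Q}}\le G_{\mathfrak{Q}}$ is transitive because $\mathfrak{Q}$ is $G$-friendly. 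Hence $G$ is transitive on pairs of opposite sector-germs, and since each such pair spans a unique apartment, $G$ is transitive on $\sha$, proving (a). (When $W^v$ is finite, $\QD$ is an affine building and the statement is the classical one.)

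The main obstacle is the step inside (b) of realizing $r_M$ in $\Stab_G(A)$: this is the masure analogue of the implication ``thick $+$ strongly transitive $\Rightarrow$ $BN$-pair'', and the delicate point is precisely to check that the element of $G$ supplied by $G$-friendliness of a local chamber in a half-apartment $H$ fixes \emph{all} of $H$, so that the threefold composition closes up to exactly $r_M$ on $A$. Once this is established, the remaining steps are formal manipulations of the masure axioms together with Propositions~\ref{lem:ST:criterion} and~\ref{prop2:Iphi}.
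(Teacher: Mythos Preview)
Your proof is correct and shares the paper's core idea: realize each wall reflection $r_M$ inside $\Stab_G(A)$ by a threefold composition using thickness. The paper is more direct on two points. First, to obtain $g\in G$ fixing a half-apartment $H_1$ pointwise and sending $A_2=H_1\cup H_2$ to $A_3=H_1\cup H_3$, the paper invokes the (MA4)-clause of strong transitivity with a sector-germ $\mathrm{germ}(Q_1)\subset H_1$ and an opposite sector-panel-germ $\mathfrak F_2$ at infinity of $h$, so that $cl_{A_2}(\mathrm{germ}(Q_1)\cup\mathfrak F_2)=H_1$; the resulting $g$ fixes $H_1$ by construction, avoiding your ``delicate point'' altogether. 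Your route via $G$-friendliness of a local chamber also works, but the stated justification (``fixes the hyperplane $\partial H$ and a local chamber on one side'') is not quite the right argument: what you actually need is that the local-chamber germ has nonempty interior in $A$, so any two Weyl-isomorphisms $A\to B$ agreeing on it coincide; combined with the existence (from (MA4) or \cite[2.6]{Rou11}) of a Weyl-isomorphism $A\to B$ fixing $H_1$, this forces $g\vert_{H_1}=\mathrm{id}$. Second, for transitivity on apartments the paper simply notes that (MA3) places any two sector-germs in a common apartment, whence $G$-friendliness of sector-germs gives transitivity in two steps; your gallery argument via Proposition~\ref{prop2:Iphi} is correct but an unnecessary detour.
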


\begin{proof}%\marginpar{comes from 7.1}
By the axiom (MA3), it is clear that $G$ is transitive on the set of all apartments of $(\Delta, \sha)$. It remains to prove that the stabilizer $\Stab_{G}(A)$ of an apartment $A \in \sha$ is transitive on the set $\Ch(\partial A_+)$. To obtain this it is enough to prove that the affine Weyl group $W^{a}$ is contained in $\Stab_{G}(A)$. %\red{(this may be a strict inclusion??? instead of stronger)}.

First, consider three half-apartments $H_1,H_2, H_3 \subset \Delta$ that have a wall $h$ in common and such that $H_i \cap H_j=h$, for every $i\neq j$. We claim there exists an element $g \in G$ such that $g$ fixes $H_1$ and $g(H_2)=H_3$. In particular, we obtain that $g(A_2)=A_3$, where $A_2=H_1\cup H_2$ and $A_3=H_1 \cup H_3$. Indeed, let $Q_1$ be a sector in $H_1$ such that it admits a sector-panel-germ at infinity of $h$, which is denoted by $\mathfrak{F}_1$. Consider at infinity of $h$ a sector-panel-germ $\mathfrak{F}_2$ that is opposite $\mathfrak{F}_1$. Let $Q_2$ (\resp $Q_3$) be a sector in $H_2$ (\resp $H_3$) that contains $\mathfrak{F}_2$ at infinity. One notices that $Q_1$ and $Q_2$ (\resp $Q_3$) are of opposite direction in $A_2$ (\resp $A_3$). We apply axiom (MA4) for  $germ(Q_1)$ and $\mathfrak{F}_2$ and to the apartments $A_2$ and $A_3$. Then, there exists a Weyl--isomorphism $g \in G$ such that $g(A_2)=A_3$ and $g$ is fixing pointwise $cl_{A_2}(germ(Q_1),\mathfrak{F}_2)$. Notice that $ H_1 = cl_{A_2}(germ(Q_1),\mathfrak{F}_2)$; therefore, $g(H_1)=H_1$ pointwise. The claim is proved.

Let now $h$ be a wall of the apartment $A$ and denote by $H_1$, $H_2$ the half-apartments of $A$ such that $\partial H_1=\partial H_2=h$. We claim that there exists $g \in  W^{a} \leq \Stab_{G}(A)$ such that $g$ is a reflection with respect to the wall $h$. Indeed, as the masure  is thick, by Rousseau~\cite[Prop.~2.9]{Rou11} there exists a third half-apartment $H_3 \subset \Delta$ such that $H_1 \cap H_3= H_2 \cap H_3$. From the above claim, applied three times, we obtain an element $g \in \Stab_{G}(A)$ with the desired properties. In particular, as $g$ fixes pointwise the wall $h$, we have that $g \in W^{a}$ and the conclusion follows.
\end{proof}

\begin{definition}
\label{def::LST}  %\marginpar{comes from introduction}
Let $(\Delta, \sha)$ be a masure that is not a tree {(or an extended tree)} and let $G$ be a vectorially Weyl subgroup of $\Aut(\Delta)$.
Let $\sigma, \sigma' \subset \partial \Delta$ be a pair of opposite panels and denote by $\QD(\sigma, \sigma')$ the union of all apartments of $(\Delta, \sha)$ whose boundaries contain $\sigma$ and $\sigma'$.
By Corollary~\ref{lem:TreeWalls} consider the panel tree $T(\sigma, \sigma'):= (T(\sigma, \sigma'), \sha(\sigma, \sigma'))$ associated with $\sigma$ and $\sigma'$.  Let $G_{\sigma, \sigma'}$ be the stabilizer in $G$ of $\sigma, \sigma'$. We say that $G$ satisfies \textbf{condition (LST)} if for each pair of opposite panels $(\sigma, \sigma')$ the subgroup $G_{\sigma, \sigma'}$ acts strongly transitively ({as in  Definition \ref{def:1.2}}) on the panel tree $T(\sigma, \sigma')$.
We shall say that a group acting on an (extended) tree satisfies condition (LST) if it acts strongly transitively on the associated (quotient) tree.
\end{definition}

\begin{lemma}\label{5.7} %\marginpar{added}
{Let $\Delta$ be a masure (can be a tree or an extended tree).} If $G\leq\Aut(\QD)$  is strongly transitive on $\QD$, then it satisfies Condition (LST).
\end{lemma}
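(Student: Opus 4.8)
The goal is to show that strong transitivity of $G$ on the masure $\Delta$ (in the sense of Definition~\ref{4.2}) implies Condition (LST): for each pair of opposite panels $(\sigma,\sigma')$ at infinity, the stabilizer $G_{\sigma,\sigma'}$ acts strongly transitively on the panel tree $T(\sigma,\sigma')$ in the sense of Definition~\ref{def:1.2}. Recall from Corollary~\ref{lem:TreeWalls} and Theorem~\ref{theo:Iphi} that $\QD(\sigma,\sigma')\cong T(\sigma,\sigma')\times\R^{n-1}$, that $T(\sigma,\sigma')\simeq\QD(\sigma)$ is a (chimney friendly) $\R$-tree, and that its apartment system is $\{A^\phi\mid A\in\sha(\sigma,\sigma')\}$ after passing to the essentialization $e_\phi$. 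So Definition~\ref{def:1.2} requires two things: (a) $G_{\sigma,\sigma'}$ is transitive on $\sha(\sigma,\sigma')$, and (b) for one (hence every) apartment $A\in\sha(\sigma,\sigma')$, the stabilizer of $A$ in $G_{\sigma,\sigma'}$ induces on the essentialized tree apartment $A(\sigma)$ a group containing the affine Weyl group of that tree.

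\emph{First I would handle transitivity on apartments.} Given $A,B\in\sha(\sigma,\sigma')$, both boundaries $\bd A,\bd B$ are twin apartments in $\bd\QD$ containing $\sigma,\sigma'$. By Proposition~\ref{5.5}, $G$ acts strongly transitively on $X=\bd\QD$, so there is $g\in G$ with $g(\bd A)=\bd B$; after composing with an element of $\Stab_G(\bd B)$ (which exists by strong transitivity on $X$ again) we may arrange that $g$ fixes $\sigma$ and $\sigma'$, i.e. $g\in G_{\sigma,\sigma'}$. Since an apartment $A\in\sha(\sigma,\sigma')$ is entirely determined by $\bd A$ (this was used in the proof of Proposition~\ref{prop2:Iphi} and in the proof of Theorem~\ref{theo:Iphi}), we get $g(A)=B$. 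Thus $G_{\sigma,\sigma'}$ is transitive on $\sha(\sigma,\sigma')$. Actually, to be safe about choosing $g$ inside $G$ rather than $\Aut(X)$, I would instead pick an apartment $A_1\in\sha$ with $\bd A_1$ the unique twin apartment through $\sigma$ and the projection $\mathrm{proj}^*_{\sigma'}(c)$ for suitable opposite chambers, and use the strong transitivity of $G$ on $\QD$ directly via (MA3)/(MA4) to move $A$ to $B$ through an intermediate apartment while fixing enclosures containing representatives of $\sigma$ and $\sigma'$ — this is the same mechanism as in the proof of Proposition~\ref{5.5}.

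\emph{Next, the Weyl group condition.} Fix $A\in\sha(\sigma,\sigma')$. The essential tree apartment $A(\sigma)$ is a line (the tree case of Theorem~\ref{theo:Iphi}), and its affine Weyl group is generated by the reflections in its vertices, which correspond (Corollary~\ref{lem:TreeWalls}) to walls $M$ of $A$ that contain some $\g f\in\sigma$, i.e. to walls of the form $M$ with $\sigma,\sigma'\subset\bd M$. So it suffices to show: for each such wall $M$ of $A$, there is $g\in G_{\sigma,\sigma'}$ fixing $A$ (setwise) and inducing on $A$ (hence on $A(\sigma)$) the reflection $r_M$. This is now exactly the argument in the proof of Proposition~\ref{5.5}: since $\QD$ is thick, by \cite[Prop.~2.9]{Rou11} there is a third half-apartment $H_3$ with $H_1\cap H_3 = H_2\cap H_3 = M$, where $H_1,H_2$ are the half-apartments of $A$ bounded by $M$; and the claim proved there (using (MA4) applied to $germ(Q_1)$ and an opposite sector-panel-germ $\mathfrak F_2$ at infinity of $M$, with all isomorphisms coming from $G$ by strong transitivity) produces, after composing three such elements, an element $g\in\Stab_G(A)$ that fixes $M$ pointwise and swaps the two sides, i.e. induces $r_M$ on $A$. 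I must only check that the sector-panel-germs $\mathfrak F_1,\mathfrak F_2$ used there can be taken with direction the panel $\sigma$ (equivalently $\bd\mathfrak F_i\in\{\sigma,\sigma'\}$): since $\sigma,\sigma'\subset\bd M$ this is automatic, and consequently $g$ fixes $\sigma$ and $\sigma'$, so $g\in G_{\sigma,\sigma'}$. Hence the induced group on $A(\sigma)$ contains all reflections in vertices, hence contains the affine Weyl group of the tree.

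\emph{Remaining bookkeeping and the main obstacle.} One should note that passing from $A$ to $A(\sigma)$ is compatible with the $G$-action: a vectorially Weyl automorphism of $\QD$ fixing $\sigma,\sigma'$ permutes the sector-face-germs in the class $\sigma$ and hence descends along $e_\phi$ to an automorphism of $T(\sigma,\sigma')$; this is where the hypothesis that $G$ consists of \emph{vectorially Weyl} automorphisms is used (it guarantees that the induced map on each tree apartment is again affine of the right type). The degenerate cases where $\QD$ is a tree or an extended tree are handled by the convention in Definition~\ref{def::LST} together with Lemma~\ref{essential}/Remark~\ref{rem::essential} (strong transitivity on the extended tree passes to the quotient tree). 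The only real subtlety — the ``hard part'' — is the same one as in Proposition~\ref{5.5}: verifying carefully that the three successive applications of (MA4) can all be performed with isomorphisms induced by elements of $G$ that lie in $G_{\sigma,\sigma'}$, i.e. keeping track that each enclosure one fixes contains a representative of $\sigma$ (and, where needed, of $\sigma'$) so that the resulting group element stabilizes the panel pair at infinity. Once this is in place, Definition~\ref{def:1.2} is satisfied for $T(\sigma,\sigma')$ and the lemma follows.
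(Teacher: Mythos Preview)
Your argument is correct, but it takes a different and longer route than the paper's. You verify Definition~\ref{def:1.2} directly: first transitivity of $G_{\sigma,\sigma'}$ on $\sha(\sigma,\sigma')$ via strong transitivity on $\partial\Delta$ (Proposition~\ref{5.5}), then the affine Weyl group condition by manufacturing, for each wall $M$ of $A$ with $\sigma,\sigma'\subset\partial M$, a reflection in $\Stab_{G_{\sigma,\sigma'}}(A)$ using the three--half-apartment trick from the proof of Proposition~\ref{5.5}. This works; your worry about choosing the sector-panel-germs $\mathfrak F_i$ in the direction of $\sigma$ is in fact unnecessary, since each of the three auxiliary elements already fixes a half-apartment containing $M$ pointwise and hence fixes $\sigma,\sigma'$ automatically. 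Note however that both ingredients you invoke (Proposition~\ref{5.5} and \cite[Prop.~2.9]{Rou11}) require thickness, which is not explicitly assumed in the lemma.

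The paper instead uses the equivalent criterion~(iii) of Proposition~\ref{lem:ST:criterion} (via Remark~\ref{5.4}) applied to the tree $T(\sigma,\sigma')$: strong transitivity on a tree holds iff for each end $e$ the horospherical stabilizer is transitive on the apartments containing $e$. An end $e$ corresponds to an ideal chamber $c\in\Ch(\sigma)$ and its projection $c'=\proj^*_{\sigma'}(c)$; the apartments of the tree through $e$ are exactly the $A\in\sha$ with $c,c'\in\partial A$. Strong transitivity on $\QD$ (via the $G$-friendliness of $cl(\{c\}\cup\{c'\})$ from (MA4)) gives that $G^0_{c,c'}\leq G_{\sigma,\sigma'}$ is transitive on these, and $G^0_{c,c'}$ sits inside the horospherical stabilizer of $e$. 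This is shorter, avoids constructing reflections, and pinpoints the concrete subgroup $G^0_{c,c'}$ that does the work; your approach, on the other hand, is more self-contained in that it stays within Definition~\ref{def:1.2} and reuses the mechanism of Proposition~\ref{5.5} rather than appealing to the equivalence of criteria.
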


\begin{remark} If we assume the apartment system is ``locally complete'' (see Definition~\ref{def::locally_complete}) and $G$  strongly transitive on $\partial \Delta$, then it is possible to see that  $G$ satisfies (LST).
% (see our second paper~\cite{CMR}).
Without this completeness assumption, we obtain only that $G_{\sigma, \sigma'}$ is $2-$transitive on the ends of $T(\qs,\qs')$ which is not sufficient for (LST) (see Remark~\ref{rem::DTE_not_ST}).
\end{remark}

\begin{proof} The ends of $T(\qs,\qs')$ correspond to the ideal chambers in $\Ch(\qs)$.
 It is well known (and a consequence of Proposition~\ref{lem:ST:criterion} and Remark~\ref{5.4}  above) that a group $H$ acting on a tree is strongly transitive if, and only if, the horospherical stabilizer $H_e^0$ of an end $e$ of $T(\qs,\qs')$  is transitive on the apartments containing this end $e$.

 To such an end $e$ there are associated a {unique} ideal chamber $c$ in $\Ch(\qs)$ and {its unique projected ideal} chamber $c'$ in $\Ch(\qs')$ {(see Section~\ref{1.5})}. We consider the associated sector germs {$c=germ(Q)$ and $c'=germ(Q')$}.
 As $G$ is strongly transitive on $\QD$, the subgroup $G_{c,c'}^0$ of $G$ fixing pointwise these sector germs is transitive on the apartments containing them (\ie containing $\qs,\qs'$ and $e$).
 But $G_{c,c'}^0 \leq G_{\sigma, \sigma'}$ is in the horospherical stabilizer of $e$ with respect to the subgroup $G_{\sigma, \sigma'}$. So $G_{\sigma, \sigma'}$ is strongly transitive on $T(\qs,\qs')$.
\end{proof}

\section{Existence and dynamics of strongly regular elements}
\label{sec::str_reg_elements}

\begin{definition}(See~\cite[Section~2.1]{CaCi})
\label{def::str_reg_lines}
Let $\Delta$ be a masure  and let $A$ be an apartment of $\Delta$. With respect to the affine structure of $A$, a line $\ell \subset A$ is called \textbf{strongly regular}, if its points at infinity lie in the interior of two opposite chambers of the twin building at infinity of $\Delta$. This also means that both associated ray-germs are strongly regular. In particular, the apartment $A$ is the unique apartment of $\Delta$ containing the strongly regular line $\ell$.

{By definition, $\gamma \in \Aut(\Delta)$ is called \textbf{hyperbolic} if there is an apartment $A$ of $\Delta$ and a translation axis in $A$, \ie a line in $A$ along which $\gamma$ acts like a translation. Note in general $\Aut(\Delta)$ can contain more types of elements than just elliptic, hyperbolic or parabolic (if they exist).} Moreover, a hyperbolic element $\gamma$ of $\Aut(\Delta)$ is called \textbf{strongly regular} if it admits a strongly regular translation axis (\ie there exists an apartment $A $ of $\Delta$ and a strongly regular geodesic line in $A$ which is a translation axis of $\gamma$).
\end{definition}

\begin{theorem}
\label{thm:ExistenceStronglyReg}
Let $(\Delta, \sha)$ be a thick, masure such that $\A^v=(V,W^v)$ has no irreducible factor of affine type.  Let $G$ be a vectorially Weyl subgroup of $\Aut(\Delta)$ that acts transitively on $\sha$  and satisfies condition (LST).
Then $G$ contains a strongly regular hyperbolic element.
\end{theorem}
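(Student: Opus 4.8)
The plan is to construct a strongly regular hyperbolic element by a ping-pong / composition-of-reflections argument inside a single apartment, using condition (LST) to produce the reflections we need. First I would fix an apartment $A \in \sha$ and identify $(\vec A, \text{some chamber})$ with $(V, C^v_f)$. Because $\A^v=(V,W^v)$ has no irreducible factor of affine type, the walls of $A$ do not all share a common direction in any irreducible factor; more precisely, one can choose a point $x_0 \in A$ lying on no wall and a strongly regular direction $\lambda \in \mathcal T^\circ$ (in the interior of the Tits cone, in no wall), so that the line $\ell$ through $x_0$ with direction $\pm\lambda$ is strongly regular in the sense of Definition \ref{def::str_reg_lines}: its two ideal endpoints lie in the interiors of two opposite ideal chambers $c_+$ and $c_-$ of $\partial\Delta$. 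The goal is then to produce $g \in G$ stabilizing $A$ and acting on $\ell$ as a nontrivial translation.

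The mechanism is to realize such a translation as a product of two reflections with parallel walls both transverse to $\lambda$. Here is where (LST) enters. Choose a wall $M$ of $A$ meeting $\ell$ transversally; since $\lambda$ is strongly regular, $M$ does not contain the direction $\lambda$, and the direction of $M$ at infinity is a pair of opposite ideal panels $(\sigma,\sigma')$ of $\partial\Delta$. Corollary \ref{lem:TreeWalls} identifies $\Delta(\sigma,\sigma') \cong T(\sigma,\sigma') \times \R^{n-1}$, where the walls of $\Delta$ through $\sigma,\sigma'$ correspond to $\{v\}\times\R^{n-1}$ for $v$ a vertex of the tree $T = T(\sigma,\sigma')$, and $A$ itself corresponds to a bi-infinite geodesic line (an apartment) of $T$. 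The line $\ell$ projects, under the tree factor, to this geodesic of $T$, and the endpoints $c_\pm$ project to the two ends $e_\pm$ of that geodesic. By hypothesis, $G_{\sigma,\sigma'}$ acts strongly transitively on $T(\sigma,\sigma')$, hence — by the standard characterization of strong transitivity for tree actions, the same one invoked in the proof of Lemma \ref{5.7} — $G_{\sigma,\sigma'}$ restricted to this tree apartment contains the full affine Weyl group of $T$, i.e. all reflections fixing vertices of the geodesic. Pick two distinct vertices $v_1 \ne v_2$ of the geodesic lying on the segment cut out by two walls $M_1 = M$ and $M_2$ of $A$ through $\sigma,\sigma'$; take elements $g_1, g_2 \in G_{\sigma,\sigma'}$ acting on $T$ as the reflections at $v_1, v_2$. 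These stabilize $A$ (they stabilize the corresponding geodesic of $T$ and act trivially on the $\R^{n-1}$ factor and on the common direction $\sigma$), and on $A$ each $g_i$ acts as the affine reflection $r_{M_i}$ — at least up to an element of $G^0$ acting by a translation along directions in $\sigma$, which I will need to argue away or absorb (see below). Then $g := g_2 g_1$ stabilizes $A$ and induces on $A$ a nontrivial translation in the direction orthogonal/transverse to the parallel walls $M_1, M_2$; by choosing the walls so that this translation direction is exactly $\lambda$ (which is possible precisely because $\lambda$ is strongly regular and in $\mathcal T^\circ$, so the walls transverse to it can be taken with the right common direction), $g$ has $\ell$ as a translation axis. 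Since $g$ translates $\ell$ nontrivially it has no fixed point in $\Delta$, and $\ell$ is strongly regular, so $g$ is the desired strongly regular hyperbolic element.

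I expect the main obstacle to be two intertwined bookkeeping issues. The first is ensuring that the translation direction produced by the product of the two tree-reflections is genuinely $\lambda$ (strongly regular) rather than lying in some wall: this forces a careful choice of the transverse walls $M_1, M_2$ and uses the no-affine-factor hypothesis to guarantee that a strongly regular $\lambda$ transverse to a suitable family of parallel walls exists in the first place — in an affine factor, the translation directions obtainable this way would be confined to the "imaginary" direction and could be degenerate, which is exactly the case excluded. The second is controlling the discrepancy between "$g_i$ acts on $T$ as a reflection" and "$g_i$ acts on $A$ as the affine reflection $r_{M_i}$": an element of $G_{\sigma,\sigma'}$ fixing the vertex $v_i$ of $T$ need only fix the wall $M_i$ as a set-with-direction and may act on it by a translation along the directions spanning $\sigma$, so $g_i|_A = r_{M_i}$ composed with such a translation $t_i$. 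One then computes $g_2 g_1|_A = r_{M_2} t_2 r_{M_1} t_1$; the translation parts combine into a single translation $\tau$ with a component transverse to the walls (the genuine reflection part) plus a component inside the wall directions, and one must check the transverse component is nonzero and can be arranged to be a multiple of $\lambda$, while the in-wall component can be killed by replacing $g$ with a suitable power or by composing with a further element of $G_{\sigma,\sigma'}$ — equivalently, by working from the start in the essentialization $\Delta(\sigma,\sigma')$ and lifting. Modulo these normalizations, the argument reduces cleanly to strong transitivity of the tree action, which is precisely what (LST) supplies.
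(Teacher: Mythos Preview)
Your proposal has the right building blocks (use (LST) on panel trees to produce, for each wall of $A$, an element of $\Stab_G(A)$ swapping the two halves; then compose two such elements for parallel walls to obtain a translation of $A$), and this is exactly how the paper begins. But there is a genuine gap in your endgame.

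The product of two reflection-like elements along parallel walls of common direction $\ker\alpha$ acts on $A$ as a translation whose ``transverse'' component is a multiple of the coroot $\alpha^\vee$, together with an uncontrolled drift inside $\ker\alpha$. Your assertion that one can ``choose the walls so that this translation direction is exactly $\lambda$'' for a preassigned strongly regular $\lambda$ is not correct: the directions of walls of $A$ are exactly the $\ker\alpha$ for $\alpha\in\Phi$, so the transverse part of any such product is forced to be a multiple of some $\alpha^\vee$. A single coroot $\alpha^\vee$ is in general \emph{not} strongly regular (it lies in every wall $\ker\beta$ with $\beta(\alpha^\vee)=0$, and in the non-finite case it need not even lie in $\mathcal T^\circ$). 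So one pair of parallel walls cannot produce a strongly regular translation, and the parenthetical ``which is possible precisely because $\lambda$ is strongly regular'' is the step that fails. The drift along $\ker\alpha$ that you flag as a bookkeeping issue is a genuine obstruction, not something one can normalize away by powers or by working in the essentialization: the essentialization kills exactly the $\ker\alpha$ direction, so it tells you nothing about whether the full vector avoids the other walls.

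The paper fixes this by running your construction once for \emph{each} simple root $\alpha_i$, and moreover arranging (by taking the two parallel walls very far apart, via conjugation by an already-obtained translation) that the resulting translation vector is ``almost collinear'' to $\alpha_i^\vee$, i.e.\ lies in $N r_i\alpha_i^\vee + U$ for a fixed bounded set $U$ and arbitrarily large $N$. One then takes a product over $i\in I$ with carefully chosen $N_i$ so that the total vector approximates $\sum_i n_i\alpha_i^\vee$ for suitable positive $n_i$. The no-affine-factor hypothesis enters \emph{here}, not where you placed it: by Kac's Theorem~4.3, when there is no affine factor one can find positive integers $n_i$ with $\sum_i n_i\alpha_i^\vee\in C^v_f$, and then the approximate version still lands in the open cone $C^v_f$. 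In an affine factor this fails because all real coroots lie in the boundary hyperplane $\ker\delta$ of the Tits cone, so no positive combination reaches the interior --- which is exactly the phenomenon described in Remark~\ref{rem::AGT}.
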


\begin{remark}
\label{rem::AGT} As stated, the theorem fails when $\QD$ is of affine type: we may consider a split loop group $G$ over a non-Archimedean local field, acting on its masure $\Delta$. It acts strongly transitively on $\partial\QD$, so it acts transitively on the set of all apartments of $\Delta$, but the smallest positive imaginary root $\qd\in\QF^+_{im}$ is trivial on the maximal torus $T$. So all translations induced by $\Stab_G(A)$ on the corresponding apartment $A$ have their corresponding translation vectors in the boundary $\ker(\qd)$ of the Tits cone: there is no strongly regular element.

 \par It will be clear from the proof of the theorem that the conclusion of the theorem is true even when $\A^v$ may have an irreducible factor of affine type, if we add the following hypothesis for the action of $G$:

 \medskip\par {\bf(AGT)} For some (and hence for any) apartment $A \in \sha$ of the masure $\Delta$, its stabilizer $\Stab_G(A)$ in $G$ contains an element which induces an \textbf{affinely generic translation}, \ie a translation whose corresponding translation vector $\vect v$ satisfies $\qd(\vect v)\neq 0$ for any imaginary root of an affine component of $\QF\cup\QF_{im}$.
\end{remark}

\begin{proof}[Proof of Theorem~\ref{thm:ExistenceStronglyReg}]
%\sout{As $G$ is strongly transitive on the twin building at infinity of $\Delta$, we have that $G$ acts transitively on the set of all apartments of $\Delta$.}
 If $\QD$ is a tree (or an extended tree), then condition (LST) tells us that $G$ is strongly transitive on the associated (quotient) tree, hence contains a strongly regular hyperbolic element. Thus, we assume now that $\QD$ is not an (extended) tree.

We start with a preliminary observation. Let $A$ be an apartment of $\Delta$ and let $H, H'$ be two complementary half-apartments of $A$. We claim that there is some $g \in \Stab_G(A)$ which swaps $H$ and $H'$; in particular $g$ stabilizes the common boundary wall $\partial H = \partial H'$.

In order to prove the claim, choose a pair of opposite panels $\sigma, \sigma'$  at infinity of the wall $\partial H$. Notice that it makes sense to consider panels at infinity since $\Delta$ is not a tree, and thus the twin building $\partial \Delta$ has positive rank.
%\sout{By Lemma~\ref{lem:OppPanels}, the stabilizer $G_{\sigma, \sigma'}$ acts $2$-transitively on $\Ch(\sigma)$.}
We now invoke Corollary~\ref{lem:TreeWalls} which provides a canonical isomorphism $\QD(\sigma, \sigma') \cong (T(\sigma, \sigma'), \sha(\sigma, \sigma')) \times \RR^{n-1}$, where $n = \dim(\Delta)$ and $(T(\sigma, \sigma'), \sha(\sigma, \sigma'))$ is a thick tree. The set $\Ch(\sigma)$ is in one-to-one correspondence with $ \partial T(\sigma, \sigma')$.
%\sout{ we infer that $G_{\sigma, \sigma'}$ is $2$-transitive on $\partial T(\sigma, \sigma')$.}
Recall that $G_{\sigma, \sigma'}$ is strongly transitive on $(T(\sigma, \sigma'), \sha(\sigma, \sigma'))$ by the hypothesis (LST).

%\sout{We emphasize that one cannot apply Caprace--Ciobotaru}~\cite[Corollary~3.6]{CaCi} \sout{as in our case the apartment system $\sha(\sigma, \sigma')$ is not necessarily complete if $\sha$ is not locally complete.}

Therefore, if $v$ (\resp $D$) denotes the vertex (\resp geodesic line) of $T$ corresponding to the wall $\partial H$ (\resp the apartment $A$), we can find some $g \in \Stab_{G_{\sigma, \sigma'}}(A)$ stabilizing $D$ and acting on it as the symmetry through $v$. It follows that $g$ stabilizes $A$, the wall $\partial H  = \partial H'$ and swaps the two half-apartments $H$ and $H'$. This proves our  claim. (We warn the reader that $g$ might however act non-trivially on the wall $\partial H$.)

\medskip
Using the above proven fact, we claim that, for every root $\alpha \in \QF$, we can construct a translation in $\Stab_G(A)$ of translation vector that is almost collinear to $\alpha^\vee$.

Indeed, let $h$ be a wall of $A$ and denote by $H,H'$ the corresponding two complementary half-apartments of $A$ such that $\partial H = \partial H'=h$. Let $g \in \Stab_{G}(A)$  that swaps $H$ and $H'$ and stabilizes the wall $h$ together with a panel $\sigma$ at infinity of $h$. We denote $r_h \in \Aut(A)$ the reflection with respect to the wall $h$.
Then $r_h\circ g\vert_A$ is a vectorially Weyl--automorphism of $A$ and stabilizes the two chambers of $\partial A$ containing $\sigma$.

 We conclude that $r_h\circ g\vert_A$ is a translation in $\Aut(A)$.
As $g$ and $r_h$ stabilize $h$, the element $r_h\circ g\vert_A$ is a translation parallel to $h$. We call such an element $g  \in \Stab_{G}(A)$ a \textbf{reflection-translation} of wall $h$.

\medskip
We remark the following. Fix two different walls $h_1$ and $h_2$ of the apartment $A$ of direction $\ker\alpha$ and let $g_1, g_2 \in \Stab_{G}(A)$ be two reflection-translations corresponding respectively, to the walls $h_1$ and  $h_2$. We have that $g_1 \circ g_2 \in \Stab_{G}(A)$ is a translation element whose translation vector is not in $\ker\alpha$.
In particular, we notice that the projection %(parallel to $\alpha^\vee$)}
of the translation vector of $g_1 \circ g_2$ in the direction $\ker\alpha$ equals the sum of the translation vectors  of $g_1$ and $g_2$.
Moreover, the projection of the translation vector of $g_1 \circ g_2$ in the direction of $\alpha^\vee$ depends on the ``euclidean distance'' $\vert\qa(h_2)-\qa(h_1)\vert$ between the walls $h_1$ and $h_2$.

 Let us denote $\gamma: =g_1 \circ g_2 $. For every reflection-translation $g \in \Stab_{G}(A)$ of wall $h_g$ of direction $\ker\alpha$, we have that $\gamma^n g \gamma^{-n} \in \Stab_{G}(A)$ is a reflection-translation of wall $\gamma^n (h_{g})$ and whose translation vector equals the translation vector of $g$. To conclude our last claim, one can choose two walls $h_1$ and $h_2$ of direction $\ker\alpha$ in the same orbit of the group $\langle\qg\rangle$, that are very far away and two reflection-translations $g_1, g_2 \in \Stab_{G}(A)$ corresponding respectively to $h_1$ and $h_2$.
 By the remark above, we have that $g_1 \circ g_2 \in \Stab_{G}(A)$ is a translation and the projection of its translation vector in the direction  $\alpha^\vee$ can be made very big, depending on the distance between $h_1$ and $h_2$.
 In particular, the translation vector of $g_1 \circ g_2$ can be made almost collinear to $\alpha^\vee$ as we want. The claim follows.

Finally, to construct a strongly regular hyperbolic element, we choose a base of roots
 $(\qa_i)_{i\in I}$.
By the hypothesis on the type of $(V,W^v)$, we get some $\Z-$linear combination $\sum_{i\in I}\,n_i\qa_i^\vee$ which is in $C^v_f$, see Kac~\cite[Th.~4.3]{K90}.
But from above, we get a fixed neighborhood $U$ of $0$ in $V$ and elements $g_{i,n}\in \Stab_G(A)$ inducing translations with vector in $nr_i\qa_i^\vee+U$, for every $i \in I$ and some $r_i\in(0,+\infty)$.
Therefore, for some $N_i$ big (with all $N_ir_i/n_i$ almost equal), the product $\prod_{i\in I}\,g_{i,N_i}$ (in any order) will be the desired hyperbolic element.
\end{proof}

%and by composing translations of $\Stab_{G}(A)$ that have the translation vector almost collinear to $\alpha^\vee$, for $\alpha$ in the chosen base of roots, we obtain a desired hyperbolic element.

%\red{To give more details how do we obtain the strongly regular hyperbolic element?}
%C'est une tres bonne question que j'avais negligee. On trouve une translation dont le vecteur peut etre (approximativement) toute combinaison lineaire entiere de certains multiples des coracines. Cela suffit si notre Kac--Moody est de type indefini. Mais il y a un probleme dans le cas affine: alors le groupe de translations du groupe de Weyl est $Q^\vee$ qui a une intersection triviale avec toute chambre; on ne peut donc trouver d'element hyperbolique de $G$ fortement regulier en utilisant que $G$ induit presque le groupe de Weyl dans un appartement.

\section{Proof of Theorem~\ref{thm::main_theorem_intr}}
\label{sec::main_theorem}

%\green{\qquad Should we merge \S\ref{sec::str_reg_elements} and \S\ref{sec::main_theorem}  ?}

%\blue{I propose to keep the sections separate because the previous section concerns the existence of regular hyperbolic elements and is a priori
%of independent interest, since it does not involve topological conditions. Therefore I propose to move the present Proposition 6.4 into this final section.
%The title of this final section shouldn't be 'Main theorem' in my opinion. Why not 'Proofs of Theorem 1.7 and Corollary 1.8'? Also, why do we copy the statement
%of 1.7 into 7.1? If I see correctly, it is really the same statement or do I overlook something? On the other hand, we should reformulate the more
%general version of Corollary 1.8 as we do in Corollary 7.3 and tell the reader, that Corollary 1.8 is a special case of it in a final remark.}

\begin{proposition}(See~\cite[Proposition~2.10]{CaCi})
\label{prop::dynamics_str_reg}
Let $\Delta$ be a masure  and let $\gamma \in \Aut(\Delta)$ be a vectorially Weyl strongly regular hyperbolic element of translation apartment $A$. Let $c_{-} \in \Ch(\bd A_\mp)$ be the unique chamber at infinity that contains the repelling point of $\gamma$ in its interior.

Then for every $c \in \Ch(\bd \Delta_\pm)$ the limit $\lim\limits_{n \to \infty} \gamma^{n}(c)$ exists with respect to the cone topology on $\Ch(\bd \Delta)$ and coincides with the retraction of $c$ onto $A$ centered at the chamber $c_{-}$. In particular, the fixed-point-set of $\gamma$ in $\Ch(\bd \Delta_\pm)$ is the set $\Ch(\bd A_\pm)$.
\end{proposition}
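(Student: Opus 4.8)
The plan is to adapt the argument for \cite[Proposition~2.10]{CaCi}, the idea being to follow the generic rays $[o,\xi_{\gamma^n(c)})$ by letting the foot point slide along the axis of $\gamma$ towards the repelling chamber. Since the cone topology is base point independent (Proposition~\ref{prop::independence_base_point}), I fix a base point $o\in A$ on a strongly regular translation axis $\ell$ of $\gamma$ and identify $(\vect A,\vect{c_+})$ with $(V,C^v_f)$, where $c_+\in\Ch(\bd A_\pm)$ is the chamber containing the attracting ideal point of $\gamma$; then $\gamma$ acts on $A$ as the translation by a vector $v\in C^v_f$ lying on no wall, $c_-=\bd(-C^v_f)$, and $\gamma$ induces the identity on $\bd A$, so it fixes every chamber of $\Ch(\bd A)$. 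In particular $\Ch(\bd A_\pm)\subseteq\Fix_{\Ch(\bd\Delta_\pm)}(\gamma)$. Now fix $c\in\Ch(\bd\Delta_\pm)$. A sector-germ is a splayed (hence solid) chimney germ, so by axiom (MA3) there is an apartment $A'$ with $c,c_-\in\bd A'$; by axiom (MA2), applied to a solid chimney towards $c_-$, the apartments $A$ and $A'$ share a sector $Q:=Q_{z_1,c_-}=z_1+(-C^v_f)$, and $\rho_{A,c_-}$ restricts on $A'$ to a vectorially Weyl--isomorphism $\psi\colon A'\to A$ that is the identity on $Q$. Since $\psi$ permutes barycenters, $\psi^{-1}$ carries the generic ray $[z_n,\xi_d)\subset A$ onto $[z_n,\xi_c)\subset A'$, where $d:=\psi(c)=\rho_{A,c_-}(c)\in\Ch(\bd A_\pm)$ is the chamber we must converge to.

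For $n\ge 1$ set $z_n:=\gamma^{-n}(o)=o-nv\in\ell$. Since $-v$ lies in the open cone $-C^v_f$, we have $z_n\in Q$ for all $n$ large; fix such an $n$. Write $[z_n,\xi_d)=\{z_n+te\mid t\ge 0\}$ with $e$ the direction of $\xi_d$. Using that $\alpha_i(v)>0$ for every simple root $\alpha_i$, an elementary estimate shows that $z_n+te\in Q$ for all $0\le t\le\ell(n)$, where $\ell(n)\to\infty$ as $n\to\infty$; let $I_n$ denote this initial segment of $[z_n,\xi_d)$. On $Q$ the map $\psi$ is the identity, hence $I_n=\psi^{-1}(I_n)$ lies on $[z_n,\xi_c)$, so the two generic rays $[z_n,\xi_c)$ and $[z_n,\xi_d)$ coincide along $I_n$, whose length is unbounded in $n$. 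Now fix $r\in[o,\xi_d)$; for $n$ large the segment $\gamma^{-n}([o,r])$ equals the subsegment of $[z_n,\xi_d)$ from $z_n$ to $r-nv=z_n+(r-o)$, hence is contained in $I_n$, hence in $[z_n,\xi_c)$. Applying $\gamma^n$, which sends $z_n$ to $o$, $\xi_c$ to $\xi_{\gamma^n(c)}$, and the ray $[z_n,\xi_c)$ onto the unique generic ray $[o,\xi_{\gamma^n(c)})$, we obtain $[o,r]\subseteq[o,\xi_{\gamma^n(c)})\cap[o,\xi_d)$, \ie $\gamma^n(c)\in U_{o,r,d}$. Since the sets $U_{o,r,d}$ ($r\in[o,\xi_d)$) form a neighbourhood basis of $d$ (this follows from the base point comparison, \cf Lemma~\ref{lem::independence_base_point} and Proposition~\ref{prop::independence_base_point}), this proves $\lim_{n\to\infty}\gamma^n(c)=d=\rho_{A,c_-}(c)$.

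For the fixed-point statement, if $\gamma(c)=c$ for some $c\in\Ch(\bd\Delta_\pm)$ then the constant sequence $\gamma^n(c)=c$ converges both to $c$ and, by the above, to $\rho_{A,c_-}(c)$; since $X$ is Hausdorff (Lemma~\ref{TTB1}) this forces $c=\rho_{A,c_-}(c)$, that is $c\in\Ch(\bd A_\pm)$, and combined with the inclusion of the first paragraph we get $\Fix_{\Ch(\bd\Delta_\pm)}(\gamma)=\Ch(\bd A_\pm)$. I expect the main obstacle to be the quantitative heart of the second paragraph: one must show that $\rho_{A,c_-}$ ``unfolds'' $[z_n,\xi_c)$ onto $[z_n,\xi_d)$ along an initial segment whose length grows without bound as $z_n$ recedes into the sector at $c_-$; once this is established, the remaining steps are routine bookkeeping with the standard neighbourhoods and the axioms (MA2)--(MA3).
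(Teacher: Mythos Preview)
Your argument is correct and is precisely the adaptation of \cite[Proposition~2.10]{CaCi} that the paper defers to; the paper's own proof is simply the one-line reference ``The proof goes in the same way as in \cite[Proposition~2.10]{CaCi}''. Two minor remarks: the common sector $Q\subset A\cap A'$ and the Weyl--isomorphism $\psi$ fixing it come from the retraction construction (Section~\ref{subsubsec::building_infinity}.3, based on (MA3) and (MA4)) rather than from (MA2) alone; and your ``elementary estimate'' is indeed elementary, since $z_n+te\in z_1-C^v_f$ amounts to $\alpha_i(o-z_1)-n\alpha_i(v)+t\alpha_i(e)<0$ for each $i\in I$, and $\alpha_i(v)>0$ forces the admissible range of $t$ to grow linearly in $n$.
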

\begin{proof}
The proof goes in the same way as in~\cite[Proposition~2.10]{CaCi}.
%\blue{GR: Does it use semi-discrete? GR: I checked once more and I think now that it does not.}
\end{proof}

\begin{remark}
\label{rem::semi-discretness}
\par  If $G$ acts strongly transitively on $\QD$, then $G_c^0$ is transitive on $c^{op}$ (apply the definition of the strong transitivity to the situation of (MA2) with $F$ the sector-germ associated with the ideal chamber $c$); in particular Condition (Top) is satisfied.
Therefore, our hypothesis (Top) together with the dynamics at infinity are necessary to obtain Theorem~\ref{thm::main_theorem_intr} { that is recalled below.}
\end{remark}

\begin{theorem*}\textbf{\ref{thm::main_theorem_intr}.}
\label{thm::main_theorem_sec_six}
\textit{Let $(\Delta, \sha)$ be a thick masure and let $G$ be a vectorially Weyl subgroup of $\Aut(\Delta)$.
If $G$ contains a strongly regular hyperbolic element and satisfies Condition (Top), then the following are equivalent:
\begin{enumerate}[(i)]
\item
\label{thm:main-thm-i_int}
$G$ acts strongly transitively on $\Delta$;
\item
\label{thm:main-thm-ii_int}
$G$ acts strongly transitively on the twin building at infinity $\partial \Delta$.
\end{enumerate}}
\end{theorem*}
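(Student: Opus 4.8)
The plan is to prove the two implications separately, relying on the already-established machinery.

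\medskip
\textbf{Outline of the proof of Theorem~\ref{thm::main_theorem_intr}.}

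\emph{(i) $\Rightarrow$ (ii).} This direction is exactly Proposition~\ref{5.5} (= Fact~\ref{fact}) and needs no hypothesis on the existence of a strongly regular hyperbolic element or on Condition (Top); so nothing more is required here.

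\emph{(ii) $\Rightarrow$ (i).} This is the substantial direction and follows the strategy of Caprace--Ciobotaru, modified so that local completeness is replaced by Condition (Top). First, by Proposition~\ref{lem:ST:criterion}, it suffices to show that every sector-germ of $\Delta$ is $G$-friendly, i.e.\ that for a sector-germ $\g Q$ in $\Delta$ (equivalently an ideal chamber $c = \bd\g Q$), the pointwise stabilizer $G^0_c$ acts transitively on the set $\sha_c$ of apartments having $c$ at infinity. Equivalently, by (MA3)--(MA4) and the usual twin-building/retraction dictionary, it suffices to show that for each pair of opposite ideal chambers $c, c'$, the group $G^0_c$ acts transitively on the set of apartments $A$ with $c, c' \in \Ch(\bd A)$; since $G$ is strongly transitive on $\bd\Delta$ it acts transitively on pairs of opposite ideal chambers, so only one such pair $(c,c')$ matters, and by transitivity of $G$ on $\Ch(\bd\Delta)$ every apartment with $c$ at infinity is handled. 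Fix such $c,c'$ and let $\sha_{c,c'}$ be the (nonempty) set of apartments $A$ with $c, c' \in \Ch(\bd A)$. We must show $G^0_c$ is transitive on $\sha_{c,c'}$. Let $A_0 \in \sha_{c,c'}$ be fixed and let $A \in \sha_{c,c'}$ be arbitrary. The retraction $\qr := \qr_{A_0, c'}$ restricted to $A$ is a Weyl-isomorphism $A \to A_0$ fixing the sector-germ associated with $c'$. By strong transitivity on $\bd\Delta$ there is $g_0 \in G_{c,c'}$ inducing on $A_0$ the translation inverse to the one induced by $\qr|_A$ composed with an element of $\Stab_G(A_0)$; more precisely, using the homomorphism $\qb_c$ of Lemma~\ref{lem:BusemanRetraction}, one reduces the problem to finding an element of $G^0_c$ carrying $A$ to $A_0$, and the obstruction to doing so lies precisely in the orbit of some chamber $d \in c^{op}$ (namely $d := \bd A$ opposite $c$, viewed as a point of $c^{op}$) under $G^0_c$.

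\medskip
\textbf{Key steps, in order.}
\begin{enumerate}[(1)]
\item Reduce ``$G$ strongly transitive on $\Delta$'' to ``for one pair of opposite ideal chambers $(c,c')$, $G^0_c$ is transitive on $\sha_{c,c'}$'', using Proposition~\ref{lem:ST:criterion}, strong transitivity of $G$ on $\bd\Delta$, and the structure results of Section~\ref{1.5} (Propositions~\ref{prop1:Iphi}, \ref{prop2:Iphi}, Theorem~\ref{theo:Iphi}).
\item Encode an apartment $A \in \sha_{c,c'}$ by the chamber $d = \bd A \in c^{op}$ opposite $c$; show that two apartments $A, A' \in \sha_{c,c'}$ are in the same $G^0_c$-orbit if and only if the corresponding chambers $d, d'$ are in the same $G^0_c$-orbit on $c^{op}$ (using that $G^0_c$ fixes pointwise the sector-germs of $c$ and $c'$ inside any such apartment, and Lemma~\ref{3.6}).
\item Let $\gamma \in G$ be the strongly regular hyperbolic element, with translation apartment $A_\gamma$ and attracting/repelling ideal chambers $c_+, c_-$. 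Using transitivity of $G$ on pairs of opposite ideal chambers, conjugate so that $c_+ = c$, $c_- = c'$. By Proposition~\ref{prop::dynamics_str_reg}, for every $d \in c^{op}$ the sequence $\gamma^n(d)$ converges in the cone topology to $\qr_{A_\gamma, c'}(d) = d_0 := \bd A_\gamma$; moreover $\gamma^n$ lies in $G_{c,c'}$, and one checks $\gamma$ may be taken in $G^0_c$ after multiplying by a suitable element (or: the orbit $G^0_c \cdot d$ contains points arbitrarily close to $d_0$, because $\gamma \in G_c$ and $\gamma$'s action on $c^{op}$ differs from an element of $G^0_c$ only by the central translation part, which acts trivially on $c^{op}$).
\item Invoke Condition (Top): the orbit $G^0_c \cdot d$ is closed in $c^{op}$. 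Combined with step (3), which shows $d_0 \in \overline{G^0_c \cdot d}$, we get $d_0 \in G^0_c \cdot d$, i.e.\ every $d \in c^{op}$ is in the $G^0_c$-orbit of the single chamber $d_0$. Hence $G^0_c$ is transitive on $c^{op}$.
\item By step (2), $G^0_c$ is transitive on $\sha_{c,c'}$; by step (1), $G$ is strongly transitive on $\Delta$. \qed
\end{enumerate}

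\medskip
\textbf{Main obstacle.} The delicate point is step (3): making precise that the dynamics of $\gamma$ (Proposition~\ref{prop::dynamics_str_reg}) genuinely produces, inside a \emph{single} $G^0_c$-orbit on $c^{op}$, a net converging to the fixed chamber $d_0$. The subtlety is that $\gamma$ itself lies in $G_c$ but not a priori in $G^0_c$; one must use that $\gamma$ has a fixed apartment (hence $\gamma^n$ and elements of $G^0_c$ have the same action on $c^{op}$ up to the kernel of $\qb_c$, Lemma~\ref{lem:BusemanRetraction}), together with Lemma~\ref{lem:Levi} which says $G^0_c$-orbits on $c^{op}$ are $G_{c,c'}$-invariant. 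Concretely, for $d \in c^{op}$ one writes $\gamma^n(d) \in G_{c,c'} \cdot d = G^0_c \cdot d$ (the last equality by Lemma~\ref{lem:Levi}), so all the points $\gamma^n(d)$ lie in the one orbit $G^0_c \cdot d$; since they converge to $d_0$ and the orbit is closed (Top), $d_0$ lies in it. Everything else is a matter of bookkeeping with retractions and the already-proven continuity/closedness properties of the cone topology from Section~\ref{sec::cone_top_hovel}.
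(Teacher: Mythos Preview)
Your approach is essentially the paper's own proof: reduce via Proposition~\ref{lem:ST:criterion} to transitivity of $G^0_c$ on $c^{op}$, then use the dynamics of a strongly regular hyperbolic $\gamma\in G_{c,c'}$ (Proposition~\ref{prop::dynamics_str_reg}) together with Lemma~\ref{lem:Levi} and Condition~(Top) to force a single $G^0_c$-orbit. The ``main obstacle'' paragraph is exactly the key insight the paper uses.

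Two notational slips to fix. First, your set $\sha_{c,c'}$ of apartments containing \emph{both} $c$ and $c'$ at infinity is a singleton (two opposite ideal chambers determine a unique apartment), so steps~(1)--(2) as written are vacuous; what you actually want---and what the paper does---is simply the bijection between apartments containing $c$ and chambers in $c^{op}$, so that $G^0_c$-friendliness of the sector-germ $c$ is equivalent to transitivity of $G^0_c$ on $c^{op}$. Drop $\sha_{c,c'}$ entirely. Second, in the last paragraph you write $\gamma^n(d)\in G_{c,c'}\cdot d = G^0_c\cdot d$; Lemma~\ref{lem:Levi} only gives the inclusion $G_{c,c'}\cdot d\subset G^0_c\cdot d$ (since $G^0_c$-orbits are $G_{c,c'}$-invariant), not equality. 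The inclusion is all you need, so the argument survives, but correct the symbol. Similarly, your ``$d_0:=\bd A_\gamma$'' should read $d_0=c'$, the chamber in $\bd A_\gamma$ opposite $c$; this is the accumulation point the paper identifies.
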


\begin{proof}(Proof of Theorem~\ref{thm::main_theorem_intr})
The implication $(i)\implies (ii)$ is Proposition \ref{5.5}.
 Let us now prove $(ii)\implies(i)$.
Let $c \in \Ch(\partial \Delta)$, $c' \in c^{op}$ and denote by $A$ the unique apartment in $\Delta$ whose boundary contains $c$ and $c'$.

%\sout{As $G$ is strongly transitive on $\partial \Delta$, by Theorem~\ref{thm:ExistenceStronglyReg}}
By hypothesis we have that $G$ contains a strongly regular hyperbolic element. As $G$ is strongly transitive on $\partial \Delta$, $G$ acts transitively on the set $\sha$ of all apartments of $\Delta$, and we conclude that every apartment in $\Delta$ admits a strongly regular hyperbolic element in $G$. Moreover, there exists a strongly regular hyperbolic element $\gamma$ of $ G_{c,c'} \leq G$ such that $c$, respectively $c'$, is the unique chamber at infinity that contains in its interior the repelling, respectively the attracting, point of $\gamma$.

Applying Proposition~\ref{prop::dynamics_str_reg} to the strongly regular element $\gamma\in G_{c,c'}$ with its unique translation apartment $A$ we obtain that $c'$ is an accumulation point for every $G^0_{c} G_{c, c'}$--orbit in $c^{op}$, with respect to the cone topology on $\Ch(\partial \Delta)$.
By our hypothesis, every $G^{0}_c$--orbit in $c^{op}$ is closed in the cone topology on $\Ch(\partial \Delta)$, and so is every $G^0_{c} G_{c, c'}$--orbit in $c^{op}$, by Lemma~\ref{lem:Levi}.
Therefore, there is only one $G^0_{c} G_{c, c'}$--orbit in $c^{op}$. We conclude that $G_{c}=G^0_{c} G_{c, c'}$ and that $G^0_{c} $ is transitive on $c^{op}$. The desired conclusion follows from the criterion of strong transitivity given by Proposition~\ref{lem:ST:criterion}.
\end{proof}

\begin{corollary}
\label{cor::main_theorem}
Let $(\Delta, \sha)$ be a thick masure. Let $G$ be a vectorially Weyl subgroup of $\Aut(\Delta)$. If $\QD$ has some factors of affine type, we ask moreover that $G$ satisfies  the condition (AGT) of Remark~\ref{rem::AGT}. Then the following are equivalent:
\begin{enumerate}[(i)]
\item
\label{cor:main-thm-i}
$G$ acts strongly transitively on $\Delta$;
\item
\label{cor:main-thm-ii}
$G$ acts strongly transitively on the twin building at infinity $\partial \Delta$ {and $G$ satisfies Condition (LST) and Condition (Top) of Definition \ref{1.6}}.
\end{enumerate}
\end{corollary}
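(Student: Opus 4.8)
The plan is to obtain both implications by assembling results already established; essentially no new argument is needed, only a careful matching of hypotheses. For $(i)\Rightarrow(ii)$, assuming $G$ acts strongly transitively on $\Delta$, I would invoke three facts: Proposition~\ref{5.5} gives that the induced action on $\partial\Delta$ is strongly transitive; Lemma~\ref{5.7} gives Condition (LST); and Remark~\ref{rem::semi-discretness} shows that $G_c^0$ is transitive on $c^{op}$ for every ideal chamber $c$, so the unique $G_c^0$-orbit in $c^{op}$ is trivially closed and Condition (Top) holds. This direction uses neither strong regularity nor the standing hypothesis (AGT).

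For $(ii)\Rightarrow(i)$ the strategy is to feed the data into Theorem~\ref{thm::main_theorem_intr}, so I must first exhibit a strongly regular hyperbolic element of $G$. The intermediate point is that strong transitivity of $G$ on $\partial\Delta$ forces $G$ to act transitively on the apartment system $\sha$: strong transitivity on the twin building at infinity makes $G$ transitive on ordered pairs of opposite ideal chambers, and by Proposition~\ref{prop2:Iphi} (see also the proof of Theorem~\ref{theo:Iphi}) each apartment $A\in\sha$ is recovered uniquely from its boundary twin apartment $\partial A$, which is in turn spanned by any such pair; hence transitivity on pairs of opposite ideal chambers transports to transitivity on $\sha$. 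Once this is in place, Theorem~\ref{thm:ExistenceStronglyReg} together with Condition (LST) produces a strongly regular hyperbolic element of $G$ when $\A^v$ has no irreducible factor of affine type, while in the presence of affine factors the same conclusion follows from the variant of that theorem recorded in Remark~\ref{rem::AGT} under the standing hypothesis (AGT). Now $G$ contains a strongly regular hyperbolic element and satisfies Condition (Top), so Theorem~\ref{thm::main_theorem_intr} yields that $G$ acts strongly transitively on $\Delta$ if and only if it does so on $\partial\Delta$; since the latter is part of hypothesis (ii), we conclude (i).

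The only step that is more than bookkeeping is the reduction from strong transitivity at infinity to transitivity on $\sha$, which rests on the fact that an apartment of a masure is determined by its ideal boundary. I would make sure the cited form of this fact (around Proposition~\ref{prop2:Iphi} and Theorem~\ref{theo:Iphi}) applies in the generality needed; if not, it is re-proved directly by applying axioms (MA3) and (MA4) to a pair of opposite sector-germs, whose enclosure fills the whole apartment.
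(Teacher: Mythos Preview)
Your proposal is correct and follows essentially the same route as the paper, which simply cites Theorem~\ref{thm:ExistenceStronglyReg}, Remark~\ref{rem::AGT}, Theorem~\ref{thm::main_theorem_intr}, Lemma~\ref{5.7} and Remark~\ref{rem::semi-discretness}. You actually make explicit one step the paper glosses over, namely that strong transitivity on $\partial\Delta$ yields transitivity on $\sha$ (needed as a hypothesis of Theorem~\ref{thm:ExistenceStronglyReg}); the paper uses this without comment inside the proof of Theorem~\ref{thm::main_theorem_intr}, and your justification via the bijection between apartments and twin apartments (or directly via (MA3), (MA4) applied to opposite sector-germs) is the right one.
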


\begin{proof}
It follows by applying Theorem~\ref{thm:ExistenceStronglyReg}, Remark~\ref{rem::AGT}, Theorem \ref{thm::main_theorem_intr}, Lemma~\ref{5.7} and Remark~\ref{rem::semi-discretness}.
\end{proof}

Notice that Corollary~\ref{cornoaffinedirectfactors} is a special case of Corollary~\ref{cor::main_theorem}.

\section{Non-validity of the axiom (TTB3) \\ Appendix by  Auguste H{\'e}bert}

Let $(\Delta, \sha)$ be a thick masure and  $X:= \Ch(\bd\QD)$ be its twin building at infinity.
For $\epsilon=+$ or $-$, one denotes by $X_\epsilon$ the set of elements of $\Ch(\bd\QD)$ whose sign is $\epsilon$. Recall that if $c\in \Ch(\bd\QD)$ and $w\in W^v$, one has $E_{\leq w}(c)=\{d\in X_\epsilon|\ \delta(c,d)\leq w\}$, where $\epsilon$ is the sign of $c$.
In Section~\ref{sec::cone_top_hovel}, Ciobotaru, Mühlherr and Rousseau prove that the cone topology satisfies most of the axioms of topological twin buildings of \cite{K02} and \cite{HKM}. We refer to Subsection~\ref{ss:compar} for a summary of the results obtained
in this context.  The aim of the present section is to prove
the following proposition which complements these results:

\begin{proposition}\label{propTTB3 non satisfait}
Let $\Delta$ be a thick masure such that $W^v$ is infinite. We equip $\Ch(\bd\QD)$ with the cone topology. Let $c\in \Ch(\bd\QD)$ and $\epsilon$ be its sign. Then  one has topologically $X_\epsilon\neq \lim\limits_{\rightarrow} E_{\leq w}(c)$. 

In particular, the cone topology on $X_{\epsilon}$ does not
satisfy Axiom (TTB3) of~\cite{HKM} and therefore $\Ch(\partial \Delta)$
is not a topological twin building in the sense of~\cite{HKM}. 
\end{proposition}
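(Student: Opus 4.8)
The plan is to show that the cone topology on $X_{\epsilon}$ is \emph{strictly} coarser than the direct limit topology $\lim_{\rightarrow}E_{\leq w}(c)$. Since every inclusion $E_{\leq w}(c)\hookrightarrow X_{\epsilon}$ carries the subspace topology and is therefore continuous for the cone topology, the cone topology is coarser than the direct limit one, and I only need to exhibit a subset $F=\{d_{m}:m\geq 1\}\subseteq X_{\epsilon}$ which is closed in $\lim_{\rightarrow}E_{\leq w}(c)$ but not in the cone topology. Concretely I want $F$ to satisfy: (a) $\ell(\delta(c,d_{m}))\to\infty$, so that $F\cap E_{\leq w}(c)$ is finite — hence closed, since the cone topology is Hausdorff (Lemma~\ref{TTB1}) — for every $w\in W^{v}$; and (b) $d_{m}\neq c$ for all $m$ but $d_{m}\to c$ in the cone topology, so that $c\in\overline{F}\setminus F$. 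Property (a) is exactly where the hypothesis $|W^{v}|=\infty$ is used (if $W^{v}$ were finite one would have $E_{\leq w_{0}}(c)=X_{\epsilon}$ for $w_{0}$ the longest element, and the statement would be false). The statement being invariant under renaming, I fix an apartment $A$ with $c\in\Ch(\partial A_{+})$ (say $\epsilon=+$), identify $(\vect A,\vect c)$ with $(V,C^{v}_{f})$, fix $x\in A$, and write $[x,\xi_{c})=x+\mathbf{R}_{\geq 0}\xi$ with $\xi=\xi_{c}\in C^{v}_{f}$ regular; note that $\{U_{x,\rho,c}:\rho\in[x,\xi_{c})\}$ is a neighbourhood basis of $c$.

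To build the $d_{m}$: since $W^{v}$ is infinite it has infinitely many reflections, and since a Coxeter group with finite generating set has only finitely many elements of each length, these reflections have unbounded length; so I can choose $\beta_{m}\in\Phi^{+}$ with $\ell(r_{\beta_{m}})\to\infty$, where $r_{\beta_{m}}$ is the reflection attached to the root $\beta_{m}$. As $\beta_{m}(\xi)>0$, the ray $[x,\xi_{c})$ meets the wall $M(\beta_{m},k)$ at the parameter $-(\beta_{m}(x)+k)/\beta_{m}(\xi)$, which tends to $+\infty$ as $k\to-\infty$; I pick $k_{m}\in\mathbf{Z}$ so that this crossing point $y_{m}$ lies beyond the first $m$ units of $[x,\xi_{c})$, set $h_{m}:=M(\beta_{m},k_{m})$, and let $H^{x}_{m}$ (resp. $H^{c}_{m}$) be the half-apartment of $A$ bounded by $h_{m}$ containing $x$ (resp. $c$ at infinity). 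By thickness of $\Delta$ (\cite[Prop. 2.9]{Rou11}) there is a third half-apartment $H'_{m}$ with $H^{x}_{m}\cap H'_{m}=H^{c}_{m}\cap H'_{m}=h_{m}$, so that $A'_{m}:=H^{x}_{m}\cup H'_{m}\in\sha$. I then let $d_{m}\in\Ch(\partial A'_{m,+})$ be the ideal chamber of $A'_{m}$ whose sector from $x$ is the wedge $x+C^{v}_{f}$ computed in the affine structure of $A'_{m}$ — legitimate because $y_{m}-x\in C^{v}_{f}$ is regular, so this wedge contains $[x,y_{m}]$, and past $h_{m}$ it runs into $H'_{m}$ rather than $H^{c}_{m}$. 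Since $A'_{m}$ and $A$ share $H^{x}_{m}$ with matching structures, $Q_{x,d_{m}}$ and $Q_{x,c}$ agree on $H^{x}_{m}$, hence $[x,\xi_{d_{m}})$ and $[x,\xi_{c})$ agree on $[x,y_{m}]$; so $d_{m}\in U_{x,\rho,c}$ as soon as $\rho$ lies in that range, which gives the convergence in (b), while $d_{m}\neq c$ because the two sectors diverge at $h_{m}$ (they enter the distinct half-apartments $H'_{m}$ and $H^{c}_{m}$, and an ideal chamber together with a base point determines a unique sector).

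For (a) I would bound $\delta(c,d_{m})$ using the retraction $\rho:=\rho_{A,c}$ of $\Delta$ onto $A$ centred at $c$. The point is that $A'_{m}$ shares the half-apartment $H^{x}_{m}$ with $A$ and $c$ lies at infinity of the complementary half $H^{c}_{m}$, so $\rho$ restricts on $A'_{m}$ to the Weyl-isomorphism $A'_{m}\to A$ fixing $H^{x}_{m}$ pointwise and unfolding $H'_{m}$ onto $H^{c}_{m}$; being a Weyl-isomorphism that fixes the wall $h_{m}$ pointwise, its linear part on $H'_{m}$ is forced to be the reflection $r_{\beta_{m}}$. Hence $\rho(Q_{x,d_{m}})$ is the folded sector that coincides with $Q_{x,c}$ on $H^{x}_{m}$ and, past $h_{m}$, is a wedge of direction $r_{\beta_{m}}C^{v}_{f}$; therefore $\rho(d_{m})$ is the chamber of $\partial A_{+}$ of direction $r_{\beta_{m}}\vect c$, which under the identification $\Ch(\partial A_{+})\cong W^{v}$ (with $c\leftrightarrow 1$) is $r_{\beta_{m}}$, so $\delta(c,\rho(d_{m}))=r_{\beta_{m}}$. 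On the other hand $\rho$ induces on the building $\partial\Delta_{+}$ a map sending adjacent ideal chambers to adjacent-or-equal ideal chambers (as for the retraction at infinity of an affine building), so applying it to a minimal gallery from $c$ to $d_{m}$ gives $\ell(\delta(c,\rho(d_{m})))\leq\ell(\delta(c,d_{m}))$. Combining, $\ell(\delta(c,d_{m}))\geq\ell(r_{\beta_{m}})\to\infty$, which is (a). (Alternatively this follows from Proposition~\ref{prop::hecke_path} applied to the preordered ray $[x,\xi_{d_{m}})$ and to $\rho$: its retraction is a Hecke path with a single break, at which $w_{+}$ jumps to $r_{\beta_{m}}$, and the monotonicity $w_{-}(t)\leq w_{+}(t)\leq w_{-}(t')\leq w_{+}(t')$ yields the same bound.) With (a) and (b), $F$ is closed in $\lim_{\rightarrow}E_{\leq w}(c)$ but not in the cone topology, so the two topologies differ; as $c$ was arbitrary, Axiom (TTB3) fails and $\Ch(\partial\Delta)$ is not a topological twin building in the sense of \cite{HKM}.

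The main obstacle is arranging (a) and (b) simultaneously: cone-convergence $d_{m}\to c$ pulls $d_{m}$ geometrically towards $c$, whereas $\ell(\delta(c,d_{m}))\to\infty$ pushes it combinatorially away. The idea unblocking this is that cone-closeness only constrains the shared initial portion of the sectors, so one may let $d_{m}$ ``fold away'' from $c$ at a wall $h_{m}$ pushed arbitrarily far out along $[x,\xi_{c})$, in a direction $r_{\beta_{m}}$ of large Coxeter length; the technical heart is then the retraction computation of the previous paragraph — establishing that $\rho_{A,c}$ does not increase (indeed preserves) the combinatorial distance to $c$ at infinity and sends $d_{m}$ precisely to the ideal chamber of direction $r_{\beta_{m}}\vect c$.
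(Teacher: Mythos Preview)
Your overall strategy is correct and in fact cleaner than the paper's: once you have a sequence $d_m\to c$ in the cone topology with $\ell(\delta(c,d_m))\to\infty$, the set $F=\{d_m\}$ meets each $E_{\leq w}(c)$ in a finite (hence closed, by Hausdorffness) set, so $F$ is closed in the direct limit while $c\in\overline F\setminus F$ in the cone topology. The paper constructs essentially the same sequence $(d_w)$ but then builds an explicit set $U\ni c$ with $U\cap\mathcal D=\emptyset$ and $U\cap E_{\leq w}=U_{r_w,c}\cap E_{\leq w}$; your ``closed set'' version makes that second construction unnecessary.

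There is, however, a genuine error in your computation of $\delta(c,d_m)$. The retraction $\rho=\rho_{A,c}$ does \emph{not} restrict to the Weyl--isomorphism $\phi_1:A'_m\to A$ fixing $H^x_m$: the sector--germ $c$ lies in $H^c_m$, which is \emph{not} contained in $A'_m$, so $\rho\vert_{A'_m}$ is not a single isomorphism at all --- on $H^x_m$ it is the identity, while on $H'_m$ it is computed via the third apartment $A''_m:=H^c_m\cup H'_m$ (which does contain germ$(c)$) and hence equals the Weyl--isomorphism $\phi_2:A''_m\to A$ fixing $H^c_m$. In particular $\rho(H'_m)=H^x_m$, not $H^c_m$; the map folds rather than unfolds. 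Consequently your sentence ``being a Weyl--isomorphism that fixes $h_m$, its linear part on $H'_m$ is forced to be $r_{\beta_m}$'' does not make sense as stated: a Weyl--isomorphism fixing $H^x_m$ pointwise has identity linear part.

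The conclusion $\delta(c,d_m)=r_{\beta_m}$ is nonetheless correct, and you can get it without any retraction. Both $c$ and $d_m$ lie at infinity of the apartment $A''_m=H^c_m\cup H'_m$: the germ of $c$ sits in $H^c_m$ and the germ of $d_m$ in $H'_m$. Writing $\phi_1:A'_m\to A$, $\phi_2:A''_m\to A$, $\phi_3:A'_m\to A''_m$ for the Weyl--isomorphisms fixing respectively $H^x_m$, $H^c_m$, $H'_m$, one checks (as in the paper's Lemma~\ref{lemCalcul des distances}) that $\phi_2\circ\phi_3\circ\phi_1^{-1}=r_{h_m}$ on $A$. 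Since $\phi_1(d_m)=c$ and $\phi_3$ fixes germ$(d_m)$, this gives $\phi_2(d_m)=r_{h_m}(c)$, i.e.\ in $\partial A''_m$ one has $d_m=s(c)$ for the reflection $s$ with wall $h_m$; hence $\delta(c,d_m)=r_{\beta_m}$, as you wanted. (Equivalently: the map that $\rho$ induces on $\Ch(\partial\Delta_+)$ is the \emph{building} retraction onto $\partial A_+$ centred at $c$, which \emph{preserves} $\delta(c,\cdot)$, so your inequality is in fact an equality --- but you must compute $\rho(d_m)$ via $A''_m$, not $A'_m$.) With this correction, your argument goes through.
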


In order to prove this, we construct for each chamber $c$ in $\Ch(\bd\QD)$ a set $c \in U\subset\Ch(\bd\QD)$ such that:
\begin{itemize}
\item[-] $U\cap E_{\leq w}(c)$ is open in $E_{\leq w} (c)$ for all $w\in W^v$, with respect to the topology induced by the cone topology on $X_\epsilon$ ,

\item[-] $U$ is not open, with respect to the cone topology on $X_\epsilon$.
\end{itemize}  
\medskip

Let us fix some notation. Let $(\Delta, \sha)$ be a thick masure with $W^v$ infinite. Let $c\in \mathrm{Ch}(\partial \Delta)$ and $\epsilon$ be its sign. We fix an apartment $B \in (\Delta, \sha)$ such that  $c \in \Ch(\partial B)$. We identify B with the model apartment $\A$ of $(\Delta,\mathcal A)$ via a fixed Weyl isomorphism. Then the special point $0\in B$ will be our fixed base point. By Lemma~\ref{lem:3.3} this choice has no influence
%\sout{We fix a base point $\magenta{x_0}\in B$ that for the rest of the appendix will be identified with the origin $0 \in \A$, where $\A$ stands for the model apartment of $(\Delta, \sha)$.  By Lemma~\ref{lem:3.3} this  identification has no influence}
 on the definition of the cone topology.  Recall the definition of $U_{x,r,d}$ from Definition~\ref{def::standard_open_neigh_cone_top}.  For $d\in \mathrm{Ch}(\partial \Delta)$ and $r\in [0,\xi_d)$, we just write $U_{r,d}$ instead of $U_{0,r,d}$.

If $w\in W^v$, one sets $E_{\leq w}:=E_{\leq w}(c)$ and  $F_w:=\{ d \in X_{\epsilon} \mid \ell(\delta(c,d)) \leq
\ell(w) \}(=\bigcup_{v\in W^v|\ \ell(v)\leq \ell(w)}E_{\leq v}).$

\medskip
Recall (see Section~\ref{subsubsec::building_infinity}) that if $d\in \Ch(\bd\QD)$ and $x\in \Delta$, then $Q_{x,d}$ is the (unique) sector based at $x$ and whose chamber at infinity is $d$. As we identify $B$ and $\A$, we consider the set of roots $\Phi$ as a set of maps from $B$ to $\R$. Let $\alpha\in \Phi$ be such that $D(\alpha,k)\nsupseteq Q_{0,c}$ for all $k\in \R$, where $D(\alpha,k)=\{x\in B \ |\  \alpha(x)+k\geq 0\}$. Let $k_\alpha\in \Lambda_\alpha\cap \R^*_+$ be such that $k_\alpha\Z  \subset \Lambda_\alpha$. We now fix an $\alpha\in \Phi$ with the above properties.

%\blue{(but we can have $D(\alpha,k)\cap Q_{0,c} \neq \emptyset$)} 
If $w\in W^v$, one chooses an apartment $A_w$ such that $B{}\cap A_w=D\big(\alpha,k_\alpha\ell(w)\big)$, which is possible by  \cite[Prop~2.9]{Rou11}. By \cite[2.6]{Rou11}, there exists a unique Weyl isomorphism $\phi_w:B{} \rightarrow A_w$ fixing $B{}\cap A_w$ pointwise. One identifies $\R_+$ and $[0,\xi_c)$. Let $r_w\in \R_+$ be such that $D\big(\alpha,k_\alpha\ell(w)\big)\cap [0,\xi_c)=[0,r_w]$. The choice of the apartment $A_w$ is not important for what we do, provided that it satisfies the property above. One can assume moreover that $A_w=A_{w'}$ for all $w,w'\in W^v$ such that $\ell(w)=\ell(w')$, but this is not necessary.

If $A$ is an apartment such that $A\cap B{}$ contains $0$ in its interior, there exists a unique chamber $d$ of $\partial A$ such that $[0,\xi_d)\cap [0,\xi_c) \neq \{0\}$ and we denote it by $c_A$. Indeed, let $d$ be the ideal chamber associated with the enclosure  of the ray of $A$ based at $0$ and containing the germ of $[0,\xi_c)$ in $0$. Then $d$ satisfies $[0,\xi_d)\cap [0,\xi_c)\neq 0$. The uniqueness is a consequence of the fact that two vectorial faces of $\A$ are either equal or disjoint. In particular if $\phi:B{}\rightarrow A$ is an isomorphism fixing a neighborhood of $0$, then $c_A=\phi(c)$.

Let $d\in X_\epsilon$,  $A$ be an apartment of $(\Delta, \sha)$ containing $Q_{0,d}$ and $\phi:B{}\rightarrow A$ be a Weyl isomorphism sending $Q_{0,c}$ on $Q_{0,d}$. Such an isomorphism exists. Indeed, by (MA2), there exists a Weyl isomorphism $\psi: B{}\rightarrow A$ fixing $0$. By definition, there exists some ideal chamber $c'$ of $\partial B{}$ such that $d$ is associated with $\psi(Q_{0,c'})$. By definition, there exists $w\in W^v$
 such that $c'=w.c$. One sets $\phi=\psi\circ w$ and then $\phi(Q_{0,c})=Q_{0,\phi(c)}=Q_{0,\psi(c')}=Q_{0,d}$. If $r\in \R_+$, one sets $[0,r]_d=\phi([0,r])$ and $U_{r,d}=U_{\phi(r),d}$.

\subsection{Construction of a sequence $(d_w)_{w\in W^v}$}

The aim of this subsection is to construct a sequence $(d_w)_{w\in W^v}$ of chambers of sign $\epsilon$ such that for all $w\in W^v$, $d_w\notin F_w$  and $[0,\xi_{d_w})\cap [0,\xi_c)=[0,r_w]$.

An element $s\in W^v$ is called a \textbf{reflection of} $\A$ if it is of the shape $w.r_i.w^{-1}$ for some $w\in W^v$ and $i\in I$. Using Weyl isomorphisms of apartments, we extend this notion to each apartment of $\Delta$. Let $M$ be a wall of $\A$ containing $0$. One writes $M=(w.\alpha_i)^{-1}(\{0\})$ for some $i\in I$ and $w\in W^v$. Then $w.r_i.w^{-1}$ is a reflection fixing $M$ and thus the number of  reflections of $\A$ fixing $0$ is infinite.

\begin{lemma}\label{lemCalcul des distances}
Let $A$ be an apartment such that $A\cap B$ contains $0$ in its interior. Let $A_1$  be an apartment such that $A_1\cap B$ contains $0$ in its interior and such that $A\cap A_1$ is a half-apartment which does not contain $c_A$. Let $M$ be the wall of $A\cap A_1$ and  $A_2=M\cup( A_1\backslash A)\cup (A\backslash A_1)$. Let $s$ be the reflection of $A_2$ fixing $M$. Then

\begin{enumerate}
\item\label{itDescritption des chambres} One has $c_{A_1}=s(c_A)$ in $A_2$.

\item\label{itCalcul de la distance} Let $s':A\rightarrow A$ be a reflection fixing $M$ and $f:A\rightarrow B$ be a Weyl isomorphism sending $c_A$ on $c$. Let $s''\in W^v$ be the vectorial part of  $f\circ s'\circ f^{-1}\in W^v$. Then $\delta(c_A,c_{A_1})=s''$. 
%\yellow{Guy's remark:  the Weyl isomorphism $f: A \to B$ is not unique, it may be changed by translations. Maybe you want $f$ to fix $0$.}
\end{enumerate}
\end{lemma}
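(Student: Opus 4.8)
The plan is to establish assertion~(1) first and deduce~(2) from it. Write $H:=A\cap A_1$ (a half-apartment, by hypothesis) and $M=\partial H$, and let $H'$ (resp.\ $H_1$) be the half-apartment of $A$ (resp.\ of $A_1$) complementary to $H$, so that $A=H\cup H'$ and $A_1=H\cup H_1$. A routine set-theoretic check gives $A_2=H'\cup H_1$ with $H'\cap H_1=M$, $A\cap A_2=H'$ and $A_1\cap A_2=H_1$; in particular $A_2$ is an apartment meeting each of $A,A_1$ in a half-apartment, so by \cite[2.6]{Rou11} there are unique Weyl isomorphisms $\psi\colon A\to A_2$ fixing $H'$ pointwise, $\psi_1\colon A_1\to A_2$ fixing $H_1$ pointwise, and $\chi\colon A\to A_1$ fixing $H$ pointwise. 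Since $A\cap B$ and $A_1\cap B$ each contain a neighbourhood of $0$ in $B$ and $[0,\xi_c)\subseteq B$, the germ $\delta_0$ of $[0,\xi_c)$ at $0$ lies in $H$; as $c_A$ (resp.\ $c_{A_1}$) is the unique chamber of $\partial A$ (resp.\ of $\partial A_1$) extending $\delta_0$, the map $\chi$, which fixes $\delta_0$, satisfies $\chi(c_A)=c_{A_1}$. The hypothesis that $H$ does not contain $c_A$ forces the ray $[0,\xi_{c_A})$ to enter $\mathrm{int}(H')$ eventually, so $\psi$ fixes a cofinal subray and $\psi(c_A)=c_A$; and since $[0,\xi_{c_{A_1}})$ agrees with $[0,\xi_c)$ near $0$ it crosses $M$ at the same point as $[0,\xi_{c_A})$ and then enters $\mathrm{int}(H_1)$, whence $\psi_1(c_{A_1})=c_{A_1}$.

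For assertion~(1): let $s'\colon A\to A$ be the reflection across $M$. Tracking the three half-apartments one checks that $\psi\circ s'$ and $\psi_1\circ\chi$ are both affine isomorphisms $A\to A_2$ fixing $M$ pointwise and sending $H\mapsto H'$; since an affine isomorphism between apartments is determined by its restriction to a wall together with the image of one of the two sides, $\psi\circ s'=\psi_1\circ\chi$. Likewise $\psi\circ s'\circ\psi^{-1}$ is an automorphism of $A_2$ fixing $M$ pointwise with nontrivial linear part, hence equals the reflection $s$. Combining these with $\psi(c_A)=c_A$, $\chi(c_A)=c_{A_1}$ and $\psi_1(c_{A_1})=c_{A_1}$ gives $s(c_A)=\psi s'\psi^{-1}(c_A)=\psi s'(c_A)=\psi_1\chi(c_A)=\psi_1(c_{A_1})=c_{A_1}$, which is~(1).

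For assertion~(2): by~(1) the chambers $c_A$ and $c_{A_1}=s(c_A)$ have the same sign and both lie in $\partial A_2$, so $\delta(c_A,c_{A_1})$ is the $W^v$-distance computed inside the Coxeter complex $\partial A_2$. Using $s=\psi s'\psi^{-1}$, $\psi(c_A)=c_A$ and the fact that the Weyl isomorphism $\psi$ preserves $\delta$ on the boundaries, we get $\delta(c_A,c_{A_1})=\delta_{\partial A}(c_A,s'(c_A))$. Now transport along the Weyl isomorphism $f\colon A\to B$ with $f(c_A)=c$ (such an $f$ exists, e.g.\ the \cite[2.6]{Rou11}-isomorphism fixing $A\cap B$): since $f$ preserves $\delta$ and $f(s'(c_A))=(f s' f^{-1})(c)$, we obtain $\delta(c_A,c_{A_1})=\delta_{\partial B}\big(c,(f s' f^{-1})(c)\big)$. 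Finally, $f s' f^{-1}$ is an affine reflection of $B$, so its action at infinity factors through its linear part, and reading this off with $c$ as base chamber of $\partial B$ (consistently with the identification $B\cong\A$) gives precisely $\delta(c_A,c_{A_1})=s''$.

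The main obstacle is assertion~(1): the whole argument reduces to the rigidity of affine isomorphisms of apartments that fix a wall pointwise, but applying it correctly requires keeping careful track of which of the three half-apartments $H,H',H_1$, and which of the defining rays of $c_A$ and $c_{A_1}$, lies on which side of $M$ — this is exactly where the hypotheses that $0$ is interior to $A\cap B$ and to $A_1\cap B$, and that $H$ does not contain $c_A$, are used. One should also recall at the outset that $A_2$ is indeed an apartment of $(\Delta,\sha)$.
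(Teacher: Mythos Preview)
Your proof is correct and follows essentially the same route as the paper: both arguments introduce the three Weyl isomorphisms fixing the pairwise intersections (your $\psi,\psi_1,\chi$ are the paper's $\phi_1,\phi_0,\phi_2$), establish the identity $s\circ\psi=\psi_1\circ\chi$ (equivalently your $\psi\circ s'=\psi_1\circ\chi$ together with $s=\psi s'\psi^{-1}$), and combine it with $\chi(c_A)=c_{A_1}$, $\psi(c_A)=c_A$, $\psi_1(c_{A_1})=c_{A_1}$ to conclude~(1). The only substantive difference is that the paper quotes \cite[Lemma~3.4]{H16} for the commutative diagram, whereas you prove it directly from the rigidity principle that a Weyl isomorphism between apartments is determined by its restriction to a wall together with the image of one side; your version is self-contained, and your treatment of~(2) is also more explicit than the paper's one-line ``(2) is a consequence of (1)''.
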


\begin{proof}
Let $\phi_0:A_1\rightarrow A_2$ fixing $A_1\cap A_2$ pointwise,  $\phi_1:A \rightarrow A_2$  fixing $A\cap A_2$ pointwise
and $\phi_2:A\rightarrow A_1$ fixing $A\cap A_1$ pointwise. Then by \cite[Lemma~3.4]{H16}, the following diagram is commutative:  \[\xymatrix{ A\ar[d]^{\phi_1}\ar[r]^{\phi_2} & A_1\ar[d]^{\phi_0}\\ A_2\ar[r]^{s}&A_2.}\]
 
 One has $c_{A_1}=\phi_2(c_A)=\phi_0\circ \phi_2 (c_A)$ and $c_{A}=\phi_1(c_A)$, which proves~\ref{itDescritption des chambres}, and~\ref{itCalcul de la distance} is a consequence of~\ref{itDescritption des chambres}.
\end{proof}

\begin{lemma} 
\label{lem::constant_s}
Let $s_\alpha\in W^v$ be the reflection of $B$ with respect to $M(\alpha,0):=\{x\in B|\ \alpha(x)=0\}$. Then for all $w\in W^v$, $\delta(c,c_{A_w})=s_\alpha$.

\end{lemma}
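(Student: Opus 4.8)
\textbf{Proof of Lemma~\ref{lem::constant_s}.} The plan is to read the statement off Lemma~\ref{lemCalcul des distances}(\ref{itCalcul de la distance}) applied with $A=B$ and $A_1=A_w$. Recall that $A_w$ was chosen so that $B\cap A_w=D\big(\alpha,k_\alpha\ell(w)\big)$; since $k_\alpha\Z\subset\Lambda_\alpha$, the number $k_\alpha\ell(w)$ lies in $\Lambda_\alpha$, so this is a genuine (proper) half-apartment, with wall $M:=M\big(\alpha,k_\alpha\ell(w)\big)$, and in particular $B\neq A_w$. Thus in the notation of Lemma~\ref{lemCalcul des distances} one has $A\cap A_1=B\cap A_w=D\big(\alpha,k_\alpha\ell(w)\big)$ and its wall is $M$.

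The first thing to check is that the hypotheses of Lemma~\ref{lemCalcul des distances} hold. The point $0$ lies in the interior of $A\cap B=B$ trivially, and, because $k_\alpha>0$, it lies in the interior of $D\big(\alpha,k_\alpha\ell(w)\big)=A_1\cap B$ as soon as $\ell(w)\geq 1$; for $w=1$ the point $0$ is on the wall $M(\alpha,0)$, a degenerate case which I would treat separately (or simply note it plays no role below). Next, since $A=B$, the chamber $c_A=c_B$ equals $c$ by the uniqueness property recalled just before this lemma. Finally one must check that $A\cap A_1=D\big(\alpha,k_\alpha\ell(w)\big)$ does not have $c$ at infinity: this is where the choice of $\alpha$ enters. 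By construction $Q_{0,c}\not\subset D(\alpha,k)$ for every $k\in\R$, which means $\alpha$ is not bounded below on $\vect c$, i.e. $\alpha$ is not $\geq 0$ on $\overline{\vect c}$; since a translate $Q_{x,c}$ is contained in $D(\alpha,k)$ if and only if $\alpha\geq 0$ on $\overline{\vect c}$, no chamber at infinity of $D\big(\alpha,k_\alpha\ell(w)\big)$ can be $c$. Hence $c_{A_w}$ is well defined and Lemma~\ref{lemCalcul des distances}(\ref{itCalcul de la distance}) applies.

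It then remains to identify the element $s''$ produced by that lemma. Take for $s'$ the affine reflection $r_M=r_{\alpha,k_\alpha\ell(w)}$ of $B$ fixing $M$ pointwise, and take $f=\id_B\colon B\to B$, which is a Weyl isomorphism sending $c_B=c$ to $c$. Then $f\circ s'\circ f^{-1}=r_{\alpha,k_\alpha\ell(w)}$, whose vectorial part is the linear reflection $r_\alpha$; and $r_\alpha$ is exactly the reflection $s_\alpha$ of $B$ with respect to $M(\alpha,0)$. Lemma~\ref{lemCalcul des distances}(\ref{itCalcul de la distance}) therefore gives $\delta(c,c_{A_w})=\delta(c_B,c_{A_w})=s_\alpha$ for every $w\in W^v$, as desired. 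The only genuinely substantive point is the verification that $c$ is not at infinity of $B\cap A_w$; everything else is bookkeeping about the charts, and the computation of the vectorial part is immediate because the affine reflections $r_{\alpha,k}$ all have the same vectorial part $r_\alpha=s_\alpha$ independently of $k$, so that the dependence on $w$ disappears. \qed
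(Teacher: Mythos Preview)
Your proof is correct and follows exactly the same route as the paper: apply Lemma~\ref{lemCalcul des distances} with $A=B$, $A_1=A_w$, $f=\id_B$, and $s'$ the affine reflection of $B$ along the wall $M_w$ of $B\cap A_w$; then the vectorial part of $s'$ is $r_\alpha=s_\alpha$ independently of $w$. You are simply more explicit than the paper in verifying the hypotheses of Lemma~\ref{lemCalcul des distances}, and you correctly flag the edge case $w=1$ (where $0$ lies on the wall $M(\alpha,0)$ rather than in the interior of $D(\alpha,0)$), a point the paper does not mention; this case is indeed immaterial for the use of the lemma in the remainder of the appendix.
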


\begin{proof}
Let $w\in W^v$. We apply Lemma~\ref{lemCalcul des distances} with $A=B$ and $A_1=A_w$. Then $f$ is the identity. Let $M_w$ be the wall of $B\cap A_w$ and $s_w$ be the reflection of $B$ fixing $M_w$. Then by Lemma~\ref{lemCalcul des distances}, $\delta(c,c_{A_w})$ is the vectorial part of $s_w$, and the lemma follows.
\end{proof}

%\yellow{
%\begin{lemma}
%The set $\{\delta(c_{A_w},c)|\ w\in W^v\}$ is finite. \red{I have the impression that $\delta(c_{A_w},c)$ 
%should be a constant function. Is it true?}
%\end{lemma}

%\begin{proof}
%\red{Again, it is not correct to write $c'$ be a chamber of $\A$ and $c'\subset D(\alpha,k)$, as $c'$
 %is an ideal chamber! Do you want to say $D(\alpha,k)$ for all $k  \in k_\alpha\Z$?} 
%Let $c'$ be a\magenta{n ideal} chamber of $\magenta{\partial B{}}$ such that \magenta{there exists $x\in B{}$}
 %and $k\in \R$ such that $\magenta{Q_{x,c'}}\subset D(\alpha,k)$ \sout{for all $k\in \Z$}. Let $w\in W^v$.  One has  $c_{A_w}=\phi_w(c)$. One has $\delta(c',c)=\delta(\phi_w(c'),\phi_w(c))=\delta(c',c_{A_{w}})$ and the lemma follows.
%\end{proof}}

\begin{lemma}
Let $w\in W^v$.  Then there exists $d_w\in X_\epsilon \subset \mathrm{Ch}(\partial \Delta)$ such that $d_w\notin F_w$ and $[0,\xi_{d_w}) \cap [0,\xi_c)=[0,r_w]$.
\end{lemma}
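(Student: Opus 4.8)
The plan is to construct $d_w$ by \emph{folding apartments away from $c$}: first fold $B$ across the wall $M_w:=M\big(\alpha,k_\alpha\ell(w)\big)$, which passes through $r_w$ and thereby pins the point at which the barycentre ray of the new chamber leaves $[0,\xi_c)$; then fold $\ell(w)$ more times, each time across a wall that meets $[0,\xi_c)$ strictly beyond $r_w$ and is chosen so as to push the gallery distance to $c$ up by one, without disturbing the part of the ray that matters for $c$. The degenerate case $w=1$ (where $r_w=0$) I would treat separately at the end.

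First I would identify $B$ with $\A$ so that $c$ corresponds to the chamber of $C^v_f$. The hypothesis $D(\alpha,k)\nsupseteq Q_{0,c}$ forces $\alpha$ to be negative on the vectorial chamber of $c$ (a root is of constant nonzero sign on a chamber, and a positive sign would give $Q_{0,c}\subset D(\alpha,0)$), so $c\notin\partial D(\alpha,k)$ for all $k$ and the generic ray $[0,\xi_c)$ leaves $D\big(\alpha,k_\alpha\ell(w)\big)$ exactly at $r_w\in M_w$. Set $A^{(1)}:=A_w$ and $c^{(1)}:=c_{A_w}$. Since $0$ is interior to $A_w\cap B=D\big(\alpha,k_\alpha\ell(w)\big)$, we have $c^{(1)}=\phi_w(c)$, hence $[0,\xi_{c^{(1)}})=\phi_w\big([0,\xi_c)\big)$ coincides with $[0,\xi_c)$ on $[0,r_w]$ and then runs into the interior of $A_w\setminus B$; and $\delta(c,c^{(1)})=s_\alpha\neq 1$ by Lemma~\ref{lem::constant_s}.

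Then I would iterate. Given $A^{(i)}$ and $c^{(i)}:=c_{A^{(i)}}$, identify $A^{(i)}$ with $\A$ so that $\xi_{c^{(i)}}$ is the barycentre of $C^v_f$; pick a simple reflection $r_{j_i}$ that is an ascent of $\delta(c,c^{(i)})$ (possible since $W^v$, being infinite, has no longest element) and a wall $M_i$ of $A^{(i)}$ of direction $\ker\alpha_{j_i}$ meeting $[0,\xi_{c^{(i)}})$ at a point $p_i>r_w$ (possible because a generic ray crosses arbitrarily far out walls of each simple direction). By thickness of $M_i$ (\cite[Prop.~2.9]{Rou11}) there is an apartment $A^{(i+1)}$ with $A^{(i)}\cap A^{(i+1)}$ equal to the half of $M_i$ containing $[0,p_i]$; put $c^{(i+1)}:=c_{A^{(i+1)}}$. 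Then $[0,\xi_{c^{(i+1)}})$ agrees with $[0,\xi_{c^{(i)}})$ on $[0,p_i]\supseteq[0,r_w]$, and, since walls of a fixed direction have the same trace at infinity, Lemma~\ref{lemCalcul des distances} gives $\delta(c^{(i)},c^{(i+1)})=r_{j_i}$; because $r_{j_i}$ is an ascent of $\delta(c,c^{(i)})$, every $r_{j_i}$-neighbour of $c^{(i)}$ is at distance $\delta(c,c^{(i)})\,r_{j_i}$ from $c$, so $\ell(\delta(c,c^{(i+1)}))=\ell(\delta(c,c^{(i)}))+1$. After $N:=\ell(w)+1$ steps I would set $d_w:=c^{(N)}\in X_\epsilon$; then $\ell(\delta(c,d_w))=\ell(s_\alpha)+(N-1)\ge N>\ell(w)$, i.e. $d_w\notin F_w$.

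Finally, $[0,\xi_{d_w})\cap[0,\xi_c)\supseteq[0,r_w]$ holds by construction, and equality follows because if the two generic rays shared a point past $r_w$ then their sub-rays from $r_w$ would share a segment, so by Lemma~\ref{lem::rays_line} they would lie on a common line and hence be equal, forcing $d_w=c$, contradicting $\delta(c,d_w)\neq 1$. For $w=1$ one has $r_w=0$: take $d_1:=r_j(c)\in\Ch(\partial B_\epsilon)$ for any simple $r_j$; then $\delta(c,d_1)=r_j$, so $d_1\notin F_1$, and $[0,\xi_{d_1})\cap[0,\xi_c)=\{0\}$ since these are rays from $0$ in the distinct directions $r_j\xi_c\neq\xi_c$. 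The main obstacle, I expect, is the bookkeeping of the iteration — checking that each $A^{(i+1)}$ exists with $0$ interior to the retained half, that $c^{(i+1)}$ genuinely inherits the ray behaviour on $[0,p_i]$, and that the ascent argument yields exactly $\delta(c,c^{(i)})\,r_{j_i}$; a more robust alternative for the distance estimate would be to retract $[0,\xi_{d_w})$ onto $B$ centred at $c$ as in the proof of Proposition~\ref{TTB1+}, observing via Proposition~\ref{prop::hecke_path} that the resulting Hecke path is straight (direction $\xi_c$) on $[0,r_w]$ and then folds genuinely at each of the $N$ bending points, so that the Hecke condition $w_-(t)\le w_+(t)$ forces its eventual Weyl element $\delta(c,d_w)$ to have length at least $N$.
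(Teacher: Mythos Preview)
Your approach is correct in spirit but differs substantially from the paper's, and there is one small imprecision worth flagging.

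\medskip
\textbf{Comparison with the paper.} The paper performs only \emph{one} additional fold after passing to $A_w$: it picks a single reflection $s''\in W^v$ with $\ell(s'')>\ell(w)+1+\ell(s_\alpha)$, transports its wall to $A_w$ via $\phi_w$, and folds $A_w$ across a parallel wall chosen so that the retained half contains $[0,r_w]$ in its interior but no sector $Q_{x,c_{A_w}}$. Setting $d_w=c_{A_1}$, Lemma~\ref{lemCalcul des distances} gives $\delta(c_{A_w},d_w)=s''$, and the triangle inequality $\ell(\delta(c,d_w))\ge\ell(s'')-\ell(s_\alpha)>\ell(w)$ finishes the length estimate in one stroke. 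Your construction instead iterates $\ell(w)$ simple-reflection folds, each chosen as an ascent so the length increments by exactly one; this works, but requires tracking that $0$ stays in the interior of $A^{(i)}\cap B$ at every stage and that the ascent axiom of buildings applies at each step. The paper's single-fold argument avoids all this bookkeeping, at the modest cost of invoking the existence of reflections of arbitrarily large length (which holds since $W^v$ is infinite). Your separate treatment of $w=1$ is fine but not needed in the paper's approach.

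\medskip
\textbf{A minor gap.} Your justification of $[0,\xi_{d_w})\cap[0,\xi_c)\subseteq[0,r_w]$ via Lemma~\ref{lem::rays_line} does not fit that lemma's hypotheses: Lemma~\ref{lem::rays_line} requires two preordered rays with \emph{distinct} origins $x_1\neq x_2$, each contained in the other ray, whereas your ``sub-rays from $r_w$'' share the same origin. The correct argument (essentially the paper's last paragraph, adapted to your setting) is: by (MAO) the intersection of two generic rays from $0$ is an interval $[0,m]$; if $m>r_w$, pick $z'\in(r_w,\min(m,p^*))$ where $p^*=\min_i p_i>r_w$. On $[0,p^*]$ your ray $[0,\xi_{d_w})$ coincides with $[0,\xi_{c^{(1)}})\subset A_w$, so $z'\in A_w$; but also $z'\in[0,\xi_c)\subset B$, whence $z'\in A_w\cap B\cap[0,\xi_c)=[0,r_w]$, contradicting $z'>r_w$. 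Your Hecke-path alternative would also need more care: the eventual Weyl element of the retracted path is not \textit{a priori} $\delta(c,d_w)$ without further argument.
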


\begin{proof}
Let $s''\in W^v$ be a reflection  such that $\ell(s'')>\ell(w)+1+\ell(s_\alpha)$, where $s_\alpha$ is as in Lemma \ref{lem::constant_s}. Let $M''$ be the fixed wall of $s''$ and $M'=\phi_w(M'')$. Let us prove the existence of an apartment $A_1$ satisfying the following conditions: 
\begin{itemize}
\item[-] $A_w\cap A_1$ is a half-apartment containing $[0,r_w]$ in its interior,

\item [-] the wall of $A_w\cap A_1$ is parallel to $M'$,

\item [-] for all $x\in A_w$, $A_w\cap A_1$ does not contain $Q_{x,c_{A_w}}$.
\end{itemize}

Indeed, let $D$ be a half-apartment of $A_w$ satisfying the following conditions: 
\begin{itemize}
\item[-] $D$ contains $[0,r_w]$ in its interior,

\item [-] the wall of $D$ is parallel to $M'$,

\item [-] for all $x\in D$,  $D$ does not contain $Q_{x,c_{A_w}}$.
\end{itemize}
It then suffices to choose an apartment $A_1$ such that $A_1\cap A_w=D$, which is possible by \cite[Prop 2.9]{Rou11}.

Let $d_w=c_{A_1}$. Then $\ell\big(\delta(d_w,c)\big)\geq \ell\big(\delta(d_w,c_{A_w})\big)-\ell\big(\delta(c_{A_w},c)\big)= \ell (s'')-\ell(s_\alpha)\geq \ell(w)+1$. 

By construction, $[0,\xi_{d_w})\cap [0,\xi_c)$ contains $[0,r_w]$. 

Suppose there exists $z\in ([0,\xi_{d_w})\cap [0,\xi_c))\backslash [0,r_w]$. Then $z > r_w$. By \cite[Prop.5.4]{Rou11} $[r_w,z]_{A_1}=[r_w,z]_{B{}}$. Moreover, for $z'\in (r_w,z)_{A_1}$ close enough to $r_w$, we have $z'\in A_w$ (as $[0,r_w]$ is in the interior of $A_w\cap A_1$) and consequently $z'\in A_w\cap{B{}}$. This is absurd because $A_w\cap {B{}}\cap [0,\xi_c)=[0,r_w]$. Therefore $[0,\xi_{d_w}) \cap [0,\xi_c)=[0,r_w]$ and the lemma is proved.
\end{proof}

\subsection{Construction of $U$}

 Let $(d_w)_{w\in W^v}$ be such that for all $w\in W^v$, $d_w\notin F_w$ and $[0,\xi_{d_w})\cap [0,\xi_c)=[0,r_w]$, where $[0,r_w]=[0,\xi_c)\cap D\big(\alpha,k_\alpha\ell(w)\big)$. Let $\shd=\{d_w \ | \ w\in W^v\}$ and $\overline{\shd}=\shd\cup \{c\}$. We now construct a set $U$ containing $c$, such that $U\cap \shd=\emptyset$ and such that $U\cap E_{\leq w}$ is open in $E_{\leq w}$ for all $w\in W^v$, with respect to the topology induced by the cone topology on $X_\epsilon$.

\begin{lemma}\label{lemDescription des ouverts}
Let $d \in \Ch(\partial \Delta)$ and let $ d \in V \subset \Ch(\partial \Delta)$ be an nonempty open subset. Then there exists $r\in \R^*_+$ such that $V\supset U_{r,d}$. 
\end{lemma}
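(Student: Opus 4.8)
The plan is to prove the lemma by showing that the family $\{\,U_{r,d}\mid r\in\R^{*}_{+}\,\}$ is a neighborhood basis of $d$ for the cone topology; the statement is then immediate. First I would unwind the definition of the cone topology (Definition~\ref{def::cone_top_on_chambers}): since $V$ is open and $d\in V$, there are finitely many standard open neighborhoods $U_{x_{1},r_{1},c_{1}},\dots,U_{x_{n},r_{n},c_{n}}$ (with arbitrary base points $x_{i}\in\Delta$) such that
$$d\in W:=\bigcap_{i=1}^{n}U_{x_{i},r_{i},c_{i}}\subseteq V .$$
It therefore suffices to produce, for each $i$, a real $\rho_{i}>0$ with $U_{\rho_{i},d}\subseteq U_{x_{i},r_{i},c_{i}}$, and then to choose $r\in\R^{*}_{+}$ with $r\geq\rho_{i}$ for all $i$. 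Indeed, a deeper gate yields a smaller standard neighborhood: if $r\geq\rho_{i}$ then $[0,\rho_{i}]_{d}\subseteq[0,r]_{d}$, so any $c'$ with $[0,r]_{d}\subseteq[0,\xi_{c'})\cap[0,\xi_{d})$ also satisfies $[0,\rho_{i}]_{d}\subseteq[0,\xi_{c'})\cap[0,\xi_{d})$; hence $U_{r,d}\subseteq U_{\rho_{i},d}$ for every $i$, and consequently $U_{r,d}\subseteq W\subseteq V$.

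To produce each $\rho_{i}$ I would proceed in two steps. The first step replaces the neighborhood $U_{x_{i},r_{i},c_{i}}$ of $c_{i}$ by a neighborhood of $d$: since $d\in U_{x_{i},r_{i},c_{i}}$, Definition~\ref{def::standard_open_neigh_cone_top} gives $[x_{i},r_{i}]\subseteq[x_{i},\xi_{d})\cap[x_{i},\xi_{c_{i}})$, so for any $c'\in U_{x_{i},r_{i},d}$ one has $[x_{i},r_{i}]\subseteq[x_{i},\xi_{c'})$, and combining with $[x_{i},r_{i}]\subseteq[x_{i},\xi_{c_{i}})$ yields $c'\in U_{x_{i},r_{i},c_{i}}$; thus $U_{x_{i},r_{i},d}\subseteq U_{x_{i},r_{i},c_{i}}$. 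The second step changes the base point from $x_{i}$ to $0$: this is exactly the content of the proof of Proposition~\ref{prop::independence_base_point} (carried out by passing through a point in the interior of $Q_{x_{i},d}\cap Q_{0,d}$ and applying Lemma~\ref{lem::independence_base_point} twice), which shows that for the standard neighborhood $U_{x_{i},r_{i},d}$ of $d$ with base point $x_{i}$ there is a standard neighborhood $U_{0,\rho_{i},d}=U_{\rho_{i},d}$ of $d$ with base point $0$ such that $U_{\rho_{i},d}\subseteq U_{x_{i},r_{i},d}$. Chaining the two inclusions gives $U_{\rho_{i},d}\subseteq U_{x_{i},r_{i},c_{i}}$, as wanted, and the proof concludes as explained above.

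There is no serious obstacle here: all the geometric input is already encapsulated in Lemma~\ref{lem::independence_base_point} and Proposition~\ref{prop::independence_base_point}. The only points requiring some care are organizational: (i) reducing a general open set to a finite intersection of standard neighborhoods; (ii) the observation that one must first shrink each $U_{x_{i},r_{i},c_{i}}$ to a neighborhood $U_{x_{i},r_{i},d}$ of $d$ \emph{before} invoking the base-point change, since the latter only compares standard neighborhoods of one fixed chamber; and (iii) the monotonicity of $U_{r,d}$ in $r$, which lets the finitely many gates $\rho_{1},\dots,\rho_{n}$ be dominated by a single $r$.
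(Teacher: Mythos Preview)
Your argument is correct, but it takes a longer route than the paper's two-line proof. The paper exploits two observations that you partly rediscover in a roundabout way. First, the appendix has already fixed the base point to be $0$ (so $U_{r,d'}$ means $U_{0,r,d'}$); there is no need to allow variable base points $x_i$ and then invoke Proposition~\ref{prop::independence_base_point} to move them back to $0$. Second, and more importantly, if $d\in U_{r,d'}$ then in fact $U_{r,d}=U_{r,d'}$ (not merely $\subseteq$): both sets equal $\{c'\mid [0,r]\subset[0,\xi_{c'})\}$, since $[0,r]$ is already contained in both $[0,\xi_{d})$ and $[0,\xi_{d'})$. This equality immediately shows that the family $\{U_{r,d'}\}$ is closed under pairwise intersection (the smaller of two such sets containing $d$ is contained in the larger), hence forms a \emph{basis}, not just a subbasis. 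So one may write $V=\bigcup_{d'\in J}U_{r_{d'},d'}$, pick one term containing $d$, and apply the equality once; no finite intersections, no change of base point, no maximum of gates.

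Your proof buys nothing extra here, but your care about the subbasis-versus-basis issue is not misplaced: the phrase ``generated by'' in Definition~\ref{def::cone_top_on_chambers} is ambiguous, and the paper's proof silently uses that the standard neighborhoods form a basis. The clean way to see this is exactly the equality $U_{r,d}=U_{r,d'}$ above, which you stated only as an inclusion.
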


\begin{proof}
By definition, there exist $J\subset \Ch(\bd\QD)$ and $(r_{d'})_{d'\in J}\in (\R^*_+)^{J}$ such that $V=\bigcup_{d'\in J} U_{r_{d'},d'}$. Let $d'\in J$ be such that $d\in U_{r_{d'},d'}$. Then $U_{r_{d'},d}=U_{r_{d'},d'}$, thus $U_{r_{d'},d}\subset  V$, which proves the lemma.
\end{proof}

\begin{lemma}\label{lemIntersection des Urd}
Let $d\in \Ch(\bd\QD)$. Then $\bigcap_{r\in \R^*_+} U_{r,d}=\{d\}$.
\end{lemma}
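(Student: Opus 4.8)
The plan is to show both inclusions $\bigcap_{r\in\R^*_+}U_{r,d}\supseteq\{d\}$ and $\subseteq\{d\}$. The first inclusion is immediate from the definition: for every $r\in\R_+$ one has $d\in U_{r,d}$, since $[0,r]_d\subset[0,\xi_d)$ trivially, so $c'=d$ satisfies the defining condition $[0,r]_d\subset[0,\xi_{c'})\cap[0,\xi_d)$. Hence $d\in\bigcap_{r}U_{r,d}$.

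For the reverse inclusion, suppose $d'\in\bigcap_{r\in\R^*_+}U_{r,d}$ with $d'\neq d$; I will derive a contradiction. Pick an apartment $A$ containing $Q_{0,d}$ and a Weyl isomorphism $\phi:B\to A$ with $\phi(Q_{0,c})=Q_{0,d}$, as in the paragraph preceding this subsection, so that $[0,r]_d=\phi([0,r])$. The hypothesis says that for every $r>0$ we have $[0,r]_d\subset[0,\xi_{d'})$; equivalently the ray $[0,\xi_d)$ is entirely contained in $[0,\xi_{d'})$, so $[0,\xi_d)=[0,\xi_{d'})$ as rays based at $0$ (a ray contained in a ray with the same origin coincides with it). The key point is then that an ideal chamber in $\Ch(\partial\Delta)$ is determined by the ray $[0,\xi_{d'})$: indeed the barycenter $\xi_{d'}$ is recovered as the parallel class of this ray, and $d'$ is the unique chamber whose barycenter is $\xi_{d'}$ (the barycenters were chosen so that distinct chambers have distinct barycenters — two opposite chambers have opposite barycenters, and more generally the assignment $c'\mapsto\xi_{c'}$ is injective because $\xi_{c'}$ lies in the interior of $c'$, so $c'$ is the unique ideal chamber containing the ideal point $\xi_{c'}$). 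Therefore $\xi_d=\xi_{d'}$ forces $d=d'$, contradicting $d'\neq d$.

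Putting the two inclusions together yields $\bigcap_{r\in\R^*_+}U_{r,d}=\{d\}$, which proves the lemma. I expect the only mildly delicate point to be the justification that an ideal chamber is determined by the germ at $0$ of the corresponding generic ray toward its barycenter; this follows from the choice of barycenters made at the start of Section~\ref{subsec::d} (each $\xi_{c'}$ is an ideal point lying in the interior of $c'$, and distinct interiors of ideal chambers are disjoint, so $c'$ is recovered from $\xi_{c'}$), together with the elementary fact that two rays based at the same point, one contained in the other, are equal.
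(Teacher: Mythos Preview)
Your proof is correct and follows essentially the same approach as the paper's own proof, which simply observes that $c'\in\bigcap_{r\in\R^*_+}U_{r,d}$ forces $[0,\xi_d)\subset[0,\xi_{c'})$ and hence $c'=d$. You have just spelled out in more detail the step that the barycenter $\xi_{c'}$, being an interior ideal point of $c'$, determines $c'$ uniquely.
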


\begin{proof} By definition, if $c'\in \bigcap_{r\in \R^*_+} U_{r,d}$, then $[0,\xi_{d}) \subset [0,\xi_{c'})$, hence $c'=d$ and the Lemma is proved.
\end{proof}

\begin{lemma}
\label{lem::7.7}
Let $d\in \mathrm{Ch}(\partial \Delta)\backslash \overline{\shd}$. Then there exists $a_d\in \R_+$ such that $U_{a_d,d}\cap \overline{\shd}=\emptyset$. 
\end{lemma}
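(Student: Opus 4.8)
The plan is to show that $\overline{\shd}$ is closed in the cone topology by producing, for each $d \notin \overline{\shd}$, a parameter $a_d \in \R_+$ with $U_{a_d,d} \cap \overline{\shd} = \emptyset$. First I would record an elementary consequence of axiom (MAO): for any two preordered rays $R, R'$ of $\Delta$ issuing from the common origin $0$, the intersection $R \cap R'$ is an initial segment $[0,b]$ of both, with $b = \infty$ precisely when $R = R'$. Indeed, if $z \in R \cap R'$ then the segment $[0,z]$ is apartment-independent by (MAO), hence contained in both rays, which gives downward-closure; and an unbounded intersection forces $R = R'$, hence equality of the corresponding barycenters and therefore of the corresponding chambers, as distinct chambers of $\partial\Delta$ have disjoint interiors. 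Using this, rewrite
$$U_{r,d} \cap \overline{\shd} = \{ e \in \overline{\shd} \mid [0,r]_d \subseteq [0,\xi_e) \},$$
since $[0,r]_d \subseteq [0,\xi_d)$ always. This family is non-increasing in $r \in \R_+$ and, by Lemma~\ref{lemIntersection des Urd} together with $d \notin \overline{\shd}$, $\bigcap_{r \in \R^*_+} \big(U_{r,d} \cap \overline{\shd}\big) \subseteq \{d\} \cap \overline{\shd} = \emptyset$. Hence it suffices to produce a single $r_0$ with $U_{r_0,d} \cap \overline{\shd}$ finite: for each $e$ in that finite set one has $e \neq d$, so Lemma~\ref{lemIntersection des Urd} yields $a_e$ with $e \notin U_{a_e,d}$, and $a_d := \max\big(r_0, \max_e a_e\big)$ then works because $U_{a_d,d} \subseteq U_{r_0,d}$.

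To produce such an $r_0$, observe that since $d \neq c$ (as $c \in \overline{\shd}$) the rays $[0,\xi_c)$ and $[0,\xi_d)$ agree only on a bounded segment $[0,z_0]$; let $t_0$ be the parameter of $z_0$ along $[0,\xi_c)$ and let $m_0$ be its parameter along $[0,\xi_d)$, and set $r_0 := m_0 + 1$, so that the endpoint $p := [0,r_0]_d$ of $[0,r_0]_d$ lies strictly beyond $z_0$ on $[0,\xi_d)$. I claim $U_{r_0,d} \cap \overline{\shd} \subseteq \{ d_w \mid r_w \leq t_0 \}$. First, $c \notin U_{r_0,d}$, for otherwise $[0,r_0]_d \subseteq [0,\xi_c) \cap [0,\xi_d) = [0,z_0]$, contradicting $p \notin [0,z_0]$. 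Next, suppose $d_w \in U_{r_0,d}$, so $p \in [0,\xi_{d_w}) \cap [0,\xi_d)$. By construction of $d_w$, $[0,\xi_{d_w}) \cap [0,\xi_c)$ is the segment of $[0,\xi_c)$ of parameter $r_w$; call its endpoint $p_w$. If $p$ lay in $[0,p_w]$ then $p \in [0,\xi_c)$, hence $p \in [0,\xi_c) \cap [0,\xi_d) = [0,z_0]$, again a contradiction; so $p$ lies strictly beyond $p_w$ on $[0,\xi_{d_w})$, whence $p_w \in [0,p]_d = [0,r_0]_d \subseteq [0,\xi_d)$ by (MAO), so $p_w \in [0,\xi_c) \cap [0,\xi_d) = [0,z_0]$ and therefore $r_w \leq t_0$. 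This proves the claim.

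It remains to check that $\{ w \in W^v \mid r_w \leq t_0 \}$ is finite. By definition $[0,r_w] = [0,\xi_c) \cap D\big(\alpha, k_\alpha \ell(w)\big)$ depends only on $\ell(w)$ and is non-decreasing in $\ell(w)$, since the half-apartments $D(\alpha,k)$ grow with $k$; moreover $r_w \to \infty$ as $\ell(w) \to \infty$, because $\bigcup_k D(\alpha,k) = B$ while the ray $[0,\xi_c)$ is unbounded. As $W^v$ is a Coxeter group, $\{ w \mid \ell(w) \leq n \}$ is finite for every $n$, so $\{ w \mid r_w \leq t_0 \} \subseteq \{ w \mid \ell(w) \leq n_0 \}$ for a suitable $n_0$ and is finite. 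Hence $U_{r_0,d} \cap \overline{\shd}$ is finite, which together with the reduction in the first paragraph completes the proof.

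The main obstacle I anticipate is the very first step: making fully precise, via (MAO), that intersections of preordered rays from $0$ are honest initial segments and that the segments $[0,z]$ used throughout are unambiguous, and then the small piece of bookkeeping in the second step that the single point $p = [0,r_0]_d$ simultaneously sits on $[0,\xi_d)$ and on $[0,\xi_{d_w})$ but not on $[0,\xi_c)$. Once these points are settled, the remainder of the argument is purely formal.
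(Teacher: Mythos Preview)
Your proof is correct. Both your argument and the paper's reduce to the same two-step scheme: first find some $r_0$ with $U_{r_0,d}\cap\overline{\shd}$ finite, then invoke Lemma~\ref{lemIntersection des Urd} to shrink further and eliminate the finitely many survivors. The difference lies only in how the first step is carried out. The paper argues abstractly: since the cone topology is Hausdorff (Lemma~\ref{TTB1}) one separates $d$ from $c$ by disjoint open sets $V_d,V_c$; since $d_w\to c$ (because $r_w\to\infty$), all but finitely many $d_w$ lie in $V_c$, hence outside $V_d$; then Lemma~\ref{lemDescription des ouverts} lets one replace $V_d$ by a basic $U_{r,d}$. You instead unpack this by hand: you pick $r_0$ just beyond the point where $[0,\xi_d)$ and $[0,\xi_c)$ diverge, and then your ray-intersection bookkeeping shows directly that any $d_w\in U_{r_0,d}$ must satisfy $r_w\le t_0$, which is a finite condition. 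Your route is more self-contained (it does not quote the Hausdorff lemma) but longer; the paper's is shorter precisely because that lemma and the convergence $d_w\to c$ are already available. The ``obstacle'' you flag about (MAO) and initial segments is genuine but minor: your downward-closure argument for $R\cap R'$ is exactly right, and the point $p$ sitting on $[0,\xi_d)$ and $[0,\xi_{d_w})$ but not on $[0,\xi_c)$ is cleanly handled by your two cases.
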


\begin{proof}
 As $\Ch(\bd\QD)$ with the cone topology is Hausdorff, there exist open sets $V_d\ni d$ and $V_c\ni c$ such that $V_c\cap V_d=\emptyset$. One has $\lim\limits_{\ell(w)\rightarrow +\infty} d_w=c$ and thus for $\ell(w)$ large enough, $d_w\in V_c$. Therefore $V_d\cap \overline{\shd}$ is finite. By Lemma~\ref{lemDescription des ouverts}, one can suppose, reducing $V_d$ if necessary, that $V_d=U_{r,d}$ for some $r\in \R_+$.  Thus Lemma~\ref{lemIntersection des Urd} finishes the proof.
\end{proof}

\begin{lemma}
\label{lem::7.8}
Let $w\in W^v$. Then $(U_{r_w,c}\cap E_{\leq w})\cap \shd$ is empty.
\end{lemma}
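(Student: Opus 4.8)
The plan is to argue by contradiction: assume that $d_{w'}\in\shd$ lies in $U_{r_w,c}\cap E_{\leq w}$ for some $w'\in W^v$, and derive a contradiction with the defining property $d_{w'}\notin F_{w'}$ of the sequence constructed above. First I would read off what the two memberships give. From $d_{w'}\in E_{\leq w}=E_{\leq w}(c)$ one has $\delta(c,d_{w'})\leq w$ for the Bruhat--Chevalley order, hence $\ell\big(\delta(c,d_{w'})\big)\leq\ell(w)$. From $d_{w'}\in U_{r_w,c}=U_{0,r_w,c}$ and the definition of the standard open neighbourhoods, $[0,r_w]\subset[0,\xi_{d_{w'}})\cap[0,\xi_c)$; but the construction of $d_{w'}$ gives $[0,\xi_{d_{w'}})\cap[0,\xi_c)=[0,r_{w'}]$, so $[0,r_w]\subset[0,r_{w'}]$, that is, $r_w\leq r_{w'}$.

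The crux of the argument is to turn $r_w\leq r_{w'}$ into $\ell(w)\leq\ell(w')$. Here I would use that $[0,r_v]=[0,\xi_c)\cap D\big(\alpha,k_\alpha\ell(v)\big)$ for $v\in\{w,w'\}$, that $k_\alpha>0$, and that $\alpha$ is strictly decreasing along the generic ray $[0,\xi_c)$. This last point is exactly where the choice of $\alpha$ made at the beginning of this section is used: parametrising $[0,\xi_c)$ linearly from $0$, the linear form $\alpha$ vanishes at $0$, so $0\in D(\alpha,k)$ for every $k\geq 0$; if $\alpha$ were non-negative along the ray, then $D\big(\alpha,k_\alpha\ell(w)\big)$ would contain all of $[0,\xi_c)$ rather than the bounded segment $[0,r_w]$. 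In fact the hypothesis $D(\alpha,k)\nsupseteq Q_{0,c}$ for all $k$ forces $\alpha<0$ on the open sector $Q_{0,c}$, which contains $[0,\xi_c)\setminus\{0\}$. Consequently the endpoint of $[0,\xi_c)\cap D(\alpha,k)$ moves strictly outward as $k$ increases through $\R_{\geq 0}$, and applying this to $k=k_\alpha\ell(w)$ and $k=k_\alpha\ell(w')$ yields $r_w\leq r_{w'}\iff\ell(w)\leq\ell(w')$.

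Putting the pieces together, $\ell\big(\delta(c,d_{w'})\big)\leq\ell(w)\leq\ell(w')$, so $d_{w'}\in F_{w'}$, contradicting $d_{w'}\notin F_{w'}$. Hence no such $d_{w'}$ exists and $(U_{r_w,c}\cap E_{\leq w})\cap\shd=\emptyset$. I do not expect a genuine obstacle: the whole argument is a short comparison of lengths, and the only step requiring care is the strict monotonicity in $k$ of the endpoint of $[0,\xi_c)\cap D(\alpha,k)$, which is precisely guaranteed by the defining property of the root $\alpha$.
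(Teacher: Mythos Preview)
Your argument is correct and is essentially the contrapositive of the paper's proof. The paper splits directly into two cases: if $\ell(v)<\ell(w)$ then $r_v<r_w$, so $[0,\xi_{d_v})\cap[0,\xi_c)=[0,r_v]\subsetneq[0,r_w]$ and $d_v\notin U_{r_w,c}$; if $\ell(v)\geq\ell(w)$ then $E_{\leq w}\subset F_v$ and $d_v\notin F_v$ gives $d_v\notin E_{\leq w}$. You combine these into a single contradiction and spell out more carefully the strict monotonicity of $r_w$ in $\ell(w)$, which the paper leaves implicit; the content is the same.
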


\begin{proof}
Let $v\in W^v$. If $\ell(v)<\ell (w)$, then $[0,\xi_{d_v})\cap [0,\xi_c)=[0,r_v]\subsetneq [0,r_w]$ and thus $d_v\notin U_{r_w,c}$. 

If $\ell(w)\leq \ell (v)$, then $d_v\notin F_v$ by construction. As $F_v\supset E_{\leq w}$, the lemma follows.
\end{proof}

Let $w\in W^v$. Using Lemma~\ref{lem::7.8}  one sets  \[U_w:=\bigcup_{d\in (U_{r_w,c}\cap E_{\leq w})\backslash \{c\}} U_{r_w+a_d,d},\] where the  $a_d$'s are as in Lemma~\ref{lem::7.7}.
%\red{What do you mean well-defined by the lemma above? You do not need any lemma to define that:) Do you mean Lemmas \ref{lem::7.7} and \ref{lem::7.8}?} 
Let $U:=\bigcup_{w\in W^v} U_w\cup\{c \}$. By construction $U\cap \shd=\emptyset$.

\begin{lemma}\label{lemInclusion de Uw dans Urwc}
Let $w\in W^v$. Then $U_w\subset U_{r_w,c}$.
\end{lemma}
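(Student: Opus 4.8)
The plan is to establish the slightly stronger-looking statement that $U_{r_w+a_d,d}\subset U_{r_w,c}$ for each chamber $d$ occurring in the union that defines $U_w$, that is, for each $d\in(U_{r_w,c}\cap E_{\leq w})\setminus\{c\}$; taking the union over all such $d$ then yields $U_w\subset U_{r_w,c}$ at once.

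So fix such a $d$. The key observation is that the membership $d\in U_{r_w,c}$ forces $[0,r_w]_d=[0,r_w]$. Indeed, by definition of the standard open neighbourhood $U_{r_w,c}=U_{0,r_w,c}$ one has $[0,r_w]\subset[0,\xi_d)\cap[0,\xi_c)$, so $[0,r_w]$ is an initial sub-segment of the ray $[0,\xi_d)$ emanating from $0$. On the other hand $[0,r_w]_d=\phi([0,r_w])$, where $\phi\colon B\to A$ is a Weyl isomorphism with $\phi(0)=0$ and $\phi(Q_{0,c})=Q_{0,d}$; since $\phi$ is an isomorphism of apartments it maps $[0,\xi_c)$ onto $[0,\xi_d)$ preserving the affine parametrisation, so $[0,r_w]_d$ is again the initial sub-segment of $[0,\xi_d)$ of the same ``length'' as $[0,r_w]$. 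As a ray has a unique initial segment of prescribed length, $[0,r_w]_d=[0,r_w]$.

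Now let $c'\in U_{r_w+a_d,d}$; by definition $[0,r_w+a_d]_d\subset[0,\xi_{c'})\cap[0,\xi_d)$. Since $a_d\in\R_+$ by Lemma~\ref{lem::7.7}, we have $[0,r_w]_d\subset[0,r_w+a_d]_d$, and therefore $[0,r_w]=[0,r_w]_d\subset[0,\xi_{c'})$. Combining this with $[0,r_w]\subset[0,\xi_c)$, which holds because $[0,r_w]=[0,\xi_c)\cap D\big(\alpha,k_\alpha\ell(w)\big)$ by the very definition of $r_w$, we obtain $[0,r_w]\subset[0,\xi_{c'})\cap[0,\xi_c)$, i.e. $c'\in U_{r_w,c}$. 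This proves $U_{r_w+a_d,d}\subset U_{r_w,c}$, and hence the lemma.

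The only point requiring a little care is the identification $[0,r_w]_d=[0,r_w]$; it is precisely here that the restriction of the union to chambers $d$ lying inside $U_{r_w,c}$ is exploited (for a general $d$ there is no reason for the initial segment of $[0,\xi_d)$ to coincide with that of $[0,\xi_c)$), and it is the same kind of ``recentring'' argument already used in the proof of Lemma~\ref{lemDescription des ouverts}, where one passes from $U_{r,d'}$ to $U_{r,d}$ for $d\in U_{r,d'}$. Everything else is just the obvious monotonicity of the neighbourhoods $U_{r,d}$ in the radius parameter $r$.
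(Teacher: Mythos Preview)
Your proof is correct and follows essentially the same approach as the paper's: both fix $d\in U_{r_w,c}$, take an arbitrary element of $U_{r_w+a_d,d}$, and use the chain $[0,r_w+a_d]_d\supset[0,r_w]_d=[0,r_w]_c$ to land in $U_{r_w,c}$. You are simply more explicit than the paper about why the identification $[0,r_w]_d=[0,r_w]$ holds, which is helpful but not a different argument.
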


\begin{proof}
Let $d\in U_{r_w,c}$ and $d'\in U_{r_w+a_d,d}$. Then \[ [0,\xi_{d'})\cap [0,\xi_d)\supset [0,r_w+a_d]_{d}\supset [0,r_w]_d=[0,r_w]_c,\] thus $d'\in U_{r_w,c}$ and the lemma follows.
\end{proof}

\begin{lemma}
Let $w\in W^v$. Then $(U_w\cup \{c\})\cap E_{\leq w}=U_{r_w,c}\cap E_{\leq w}$.
\end{lemma}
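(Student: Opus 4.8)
The plan is to prove the equality by double inclusion, and both inclusions reduce to bookkeeping with the three preceding lemmas.

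For the inclusion $U_{r_w,c}\cap E_{\leq w}\subseteq (U_w\cup\{c\})\cap E_{\leq w}$, I would fix $d\in U_{r_w,c}\cap E_{\leq w}$. Since $d\in E_{\leq w}$ it suffices to show $d\in U_w\cup\{c\}$. If $d=c$ we are done, so suppose $d\neq c$; then $d$ belongs to the index set $(U_{r_w,c}\cap E_{\leq w})\setminus\{c\}$ over which the union defining $U_w$ is taken. By Lemma~\ref{lem::7.8} this index set is disjoint from $\shd$, hence $d\notin\overline{\shd}$, so the radius $a_d$ produced in Lemma~\ref{lem::7.7} is available and $U_{r_w+a_d,d}$ genuinely occurs in the union. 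Because every chamber lies in each of its own standard open neighbourhoods (the defining condition $[0,r_w+a_d]_d\subseteq[0,\xi_d)\cap[0,\xi_d)$ is automatic), we get $d\in U_{r_w+a_d,d}\subseteq U_w$, and therefore $d\in (U_w\cup\{c\})\cap E_{\leq w}$.

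For the reverse inclusion $(U_w\cup\{c\})\cap E_{\leq w}\subseteq U_{r_w,c}\cap E_{\leq w}$, one has $c\in U_{r_w,c}$ since $r_w\in[0,\xi_c)$, and $c\in E_{\leq w}$ because $\delta(c,c)=1\leq w$ in the Bruhat--Chevalley order; and Lemma~\ref{lemInclusion de Uw dans Urwc} gives $U_w\subseteq U_{r_w,c}$. Intersecting with $E_{\leq w}$ and taking the union over $U_w$ and $\{c\}$ finishes this direction. I do not expect any real obstacle here: the substantive content was already isolated in Lemmas~\ref{lemInclusion de Uw dans Urwc}, \ref{lem::7.7} and~\ref{lem::7.8}, and the only point that needs a moment's care — that $a_d$ is well defined for every $d$ entering the definition of $U_w$ — is exactly what Lemma~\ref{lem::7.8} secures.
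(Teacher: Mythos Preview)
Your proof is correct and follows essentially the same approach as the paper's own proof: both directions are handled by the same observations, namely that each index $d\in(U_{r_w,c}\cap E_{\leq w})\setminus\{c\}$ satisfies $d\in U_{r_w+a_d,d}\subset U_w$, and that Lemma~\ref{lemInclusion de Uw dans Urwc} gives the reverse inclusion. Your extra sentence explaining why $a_d$ is well defined (via Lemma~\ref{lem::7.8} and $d\neq c$) just makes explicit what the paper already recorded immediately before the definition of $U_w$.
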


\begin{proof}
Let $d\in (U_{r_w,c}\cap E_{\leq w})\backslash \{c\}$. Then $d\in U_{r_w+a_d,d}$ and thus \[d\in \bigcup_{d'\in (U_{r_w,c}\cap E_{\leq w})\backslash \{c\}} U_{r_w+a_{d'},d'}=U_w.\] Therefore $U_{r_w,c}\cap E_{\leq w} \subset (U_w\cup \{c\})\cap E_{\leq w}$.

 By Lemma~\ref{lemInclusion de Uw dans Urwc}, $(U_w\cup \{c\})\cap E_{\leq w}\subset U_{r_w,c}\cap E_{\leq w}$ and the lemma follows.
\end{proof}

The following lemma implies Proposition~\ref{propTTB3 non satisfait}.

\begin{lemma}
The set $U$ is not open with respect to the cone topology on $X_\epsilon$, but for all $w\in W^v$, $U\cap E_{\leq w}$ is open in $E_{\leq w}$, with respect to the topology induced by the cone topology on $X_\epsilon$.

\end{lemma}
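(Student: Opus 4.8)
The plan is to establish the two halves of the statement separately, using the combinatorial data $(d_w)_{w\in W^v}$, the sets $U_w$, and the comparison lemmas already proved.

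First I would show that $U\cap E_{\leq w}$ is open in $E_{\leq w}$ for every $w\in W^v$. Since $E_{\leq w}\subset F_w$ and the sequence was built so that $d_v\notin F_v$ for $\ell(v)\geq \ell(w)$, only finitely many chambers of $\shd$ meet $E_{\leq w}$, and in fact by Lemma~\ref{lem::7.8} none of them lie in $U_{r_w,c}\cap E_{\leq w}$. The previous lemma identifies $(U_w\cup\{c\})\cap E_{\leq w}$ with $U_{r_w,c}\cap E_{\leq w}$, which is open in $E_{\leq w}$ as the trace of the standard open set $U_{r_w,c}$. Since $U\supset U_w\cup\{c\}$, we get $U\cap E_{\leq w}\supset U_{r_w,c}\cap E_{\leq w}$. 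For the reverse inclusion I would note that any $d\in U\cap E_{\leq w}$, $d\neq c$, lies in some $U_{r_v+a_{d'},d'}$ with $d'\in (U_{r_v,c}\cap E_{\leq v})\setminus\{c\}$; if $\ell(v)\leq \ell(w)$ then by Lemma~\ref{lemInclusion de Uw dans Urwc} one has $U_v\subset U_{r_v,c}\subset U_{r_w,c}$ (using $[0,r_v]\subset[0,r_w]$), and if $\ell(v)>\ell(w)$ one argues directly that $U_{r_v+a_{d'},d'}\cap E_{\leq w}$ is still contained in $U_{r_w,c}$ because the chambers it contains share at least $[0,r_w]_c$ with $c$. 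Hence $U\cap E_{\leq w}=U_{r_w,c}\cap E_{\leq w}$ is open in $E_{\leq w}$.

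Next I would show $U$ is \emph{not} open for the cone topology on $X_\epsilon$. Suppose it were. Since $c\in U$, by Lemma~\ref{lemDescription des ouverts} there is $r>0$ with $U_{r,c}\subset U$. But $\lim_{\ell(w)\to\infty} d_w=c$ in the cone topology (the intersections $[0,\xi_{d_w})\cap[0,\xi_c)=[0,r_w]$ exhaust $[0,\xi_c)$ since $r_w\to\infty$), so for $\ell(w)$ large enough $d_w\in U_{r,c}\subset U$. This contradicts the construction, which guarantees $U\cap\shd=\emptyset$. Therefore $U$ is not open, completing the proof.

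The main obstacle is the reverse inclusion $U\cap E_{\leq w}\subset U_{r_w,c}\cap E_{\leq w}$: one must check that chambers appearing in the pieces $U_v$ with $\ell(v)>\ell(w)$ do not accidentally contribute new points to $U\cap E_{\leq w}$ outside $U_{r_w,c}$. This requires tracking, via the relation $[0,\xi_{d'})\cap[0,\xi_c)\supset[0,r_v]\supset[0,r_w]$ and the nesting of the half-apartments $D(\alpha,k_\alpha\ell(\cdot))$, that membership in $U_{r_v+a_{d'},d'}$ still forces a large shared initial segment with $c$; the monotonicity of $w\mapsto r_w$ and Lemma~\ref{lemInclusion de Uw dans Urwc} are exactly what make this bookkeeping go through.
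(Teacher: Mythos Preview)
Your argument that $U$ is not open is essentially the paper's argument and is fine.

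For the other half, however, you are trying to prove something stronger than needed, and the argument contains a genuine error. You claim that $U\cap E_{\leq w}=U_{r_w,c}\cap E_{\leq w}$, and in the step ``if $\ell(v)\leq \ell(w)$ then $U_v\subset U_{r_v,c}\subset U_{r_w,c}$'' you have the inclusion backwards: from $\ell(v)\leq\ell(w)$ one gets $r_v\leq r_w$, hence $U_{r_w,c}\subset U_{r_v,c}$, not the reverse (sharing $[0,r_w]$ with $c$ implies sharing $[0,r_v]$, not conversely). So the pieces $U_v$ with $\ell(v)<\ell(w)$ may well contribute chambers of $E_{\leq w}$ outside $U_{r_w,c}$, and your claimed equality need not hold.

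The paper avoids this entirely by not attempting to identify $U\cap E_{\leq w}$ with $U_{r_w,c}\cap E_{\leq w}$. It simply observes that each $U_v$ is, by definition, a union of standard open neighbourhoods $U_{r_v+a_d,d}$ and hence is open in $X_\epsilon$; the only part of $U$ that is not manifestly open is the singleton $\{c\}$. The previous lemma says $(U_w\cup\{c\})\cap E_{\leq w}=U_{r_w,c}\cap E_{\leq w}$, so one writes
\[
U\cap E_{\leq w}=\Bigl(U_{r_w,c}\cup\bigcup_{v\neq w}U_v\Bigr)\cap E_{\leq w},
\]
which is the trace on $E_{\leq w}$ of an open subset of $X_\epsilon$. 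This is all that is required; no reverse inclusion and no bookkeeping on $\ell(v)$ versus $\ell(w)$ are needed.
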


\begin{proof}
When $\ell(w)\to +\infty$, $r_w\to +\infty$ and thus by the choice of $d_w$, $d_w\to c$. One has  $U\ni c$ but $U\cap \shd =\emptyset$  and thus $U$ is not open with respect to the cone topology on $X_\epsilon$.

Let $w\in W^v$. Then \[U\cap E_{\leq w}=\big( (U_w\cup \{c\}) \cap E_{\leq w}\big)\cup \big(\bigcup_{v\in W^v\backslash \{w\}} (U_v \cap E_{\leq w} )\big)=(U_{r_w,c}\cup \bigcup_{v\in W^v\setminus\{w\}} U_v)\cap E_{\leq w}\] is open in $E_{\leq w}$, with respect to the topology induced by the cone topology on $X_\epsilon$.

\end{proof}

\begin{bibdiv}
\begin{biblist}

\bib{AB}{book}{
   author={Abramenko, Peter},
   author={Brown, Kenneth S.},
   title={Buildings},
   series={Graduate Texts in Mathematics},
   volume={248},
   note={Theory and applications},
   publisher={Springer},
   place={New York},
   date={2008},
}

%\bib{Ba96}{article}{
%author={Bardy-Panse, N.},
% title={Syst\`emes de racines infinis},
% journal={M\'emoire Soc. Math. France (N.S.)},
%volume= {65},
%date={1996},
% }

\bib{BPGR14}{article}{
author={Bardy-Panse, N.},
author={Gaussent, S.},
author={Rousseau, G.},
 title={Iwahori--Hecke algebras for Kac--Moody groups over local fields},
journal={Pacific J. Math},
 volume={285},
 date={2016},
 pages={1-61},
 %note={To appear in ``Pacific J. Math.'' arXiv:1412.7503v3},
 }

 \bib{BH99}{book}{
 author={Bridson, M.},
  author={Haefliger, A.},
title={Metric spaces of non positive curvature},
publisher={Springer Verlag, Berlin},
date={1999},
}

\bib{BrT72}{article}{
author={Bruhat, Fran\c cois},
author={Tits, Jacques},
title={Groupes r\'eductifs sur un corps local I, Donn\'ees radicielles valu\'ees},
journal={Publ. Math. Inst. Hautes \'Etudes Sci.},
 volume={41},
 date={1972},
 pages={5--251},
 }

\bib{CaCi}{article}{
  author={Caprace, P-E.},
   author={Ciobotaru, C.},
   title={Gelfand pairs and strong transitivity for Euclidean buildings},
    journal={Ergodic Theory and Dynamical Systems},
 volume={35},
 number={4},
 pages={1056-1078},
 date={2015},
   doi={},}

\bib{Cha}{book}{
 author={Charignon, Cyril},
title={Immeubles affines et groupes de Kac--Moody, masures bord\'ees},
publisher={\'Editions universitaires europ\'eennes, Sarrebruck},
date={2011},
note={Th\`ese Nancy, 2 juillet 2010, http://tel.archives-ouvertes.fr/docs/00/49/79/61/PDF/these.pdf}
}

\bib{CR15}{unpublished}{
   author={Ciobotaru, C.},
   author={Rousseau, G.},
   title={Strongly transitive actions on affine ordered hovels},
note={arXiv:1504.00526v2},
   doi={},}

%\bib{CMR}{unpublished}{
  % author={Ciobotaru, C.},
   %author={M\"{u}hlherr, B.},
   %author={Rousseau, G.},
   %title={Strongly transitive actions on masures},
%note={In preparation},
   %doi={},}

 \bib{FHHK}{unpublished}{
   author={Freyn, Walter},
   author={Hartnick, Tobias},
    author={Horn, Max},
   author={K{\"o}hl, Ralf},
   title={{Kac-Moody symmetric spaces}},
   note={arXiv: 1702.08426v2}
}

 \bib{GR08}{article}{
author={Gaussent, S.},
author={Rousseau, G.},
 title={Kac--Moody groups, hovels and Littelmann paths},
 journal={Annales Inst. Fourier},
 volume={58},
 pages={2605--2657},
 date={2008},
 }

\bib{GR14}{article}{
author={Gaussent, S.},
author={Rousseau, G.},
 title={Spherical Hecke algebras for Kac--Moody groups over local fields},
 journal={Ann. of Math.},
 volume={180, Issue 3},
 pages={1051--1087},
 date={2014},
 }

 \bib{HKM}{article}{
   author={Hartnick, Tobias},
   author={K{\"o}hl, Ralf},
   author={Mars, Andreas},
   title={On topological twin buildings and topological split Kac-Moody
   groups},
   journal={Innov. Incidence Geom.},
   volume={13},
   date={2013},
   pages={1--71},
   issn={1781-6475},
   review={\MR{3173010}},
   note={arXiv: 1201.3562v2}
}
 \bib{H16}{unpublished}{
 	author={H\'ebert, Auguste},
 	title={Distances on a masure},
 	journal={to appear in Transformation Groups},
 	note={arXiv:1611.06105}
 	}

\bib{Heb17}{unpublished}{
   author={H\'ebert, Auguste},
   title={{Convexity in a masure}},
note={arXiv: 1710.09272v2},
   doi={},
   }

%\bib{Heb18}{unpublished}{
%   author={H\'ebert, Auguste},
 %  title={{The cone topology at infinity of a masure does not satisfy (TTB3)}},
%note={Note sent to the authors in April 2018, 4 pages},
%   doi={},
%   }

\bib{K90}{book}{
 author={Kac, Victor G.},
title={Infinite dimensional Lie algebras},
publisher={Cambridge University Press, Cambridge, third edition},
date={1990},
}

\bib{K02}{article}{
  author={Kramer, Linus},
  title={Loop Groups and Twin Buildings},
  journal={Geom. Dedicata},
   volume={92},
   date={2002},
   pages={145--178},
}

\bib{KS}{article}{
author={Kramer, Linus},
   author={Schillewaert, Jeroen},
   title={Strongly transitive actions on Euclidean buildings},
    journal={{Israel J. Math.}},
   volume={219},
   date={2017},
   pages={163--170},
% note={To appear in `Israel J. Math.', arXiv:1506.03594},
   doi={},
}

\bib{Parr00}{article}{
  author={Parreau, Anne},
   title={Immeubles affines: construction par les normes et \'etude
des isom\'etries},
 journal={in Crystallographic groups and their generalizations, Kortrijk (1999), Contemporary Math. (Amer. Math. Soc., Providence)},
volume={262},
   pages={263--302},
  date={2000},
}

\bib{Rou11}{article}{
  author={Rousseau, Guy},
   title={Masures affines},
   journal={Pure Appl. Math. Quarterly (in honor of J. Tits)},
   volume={7},
   number={3},
   pages={859--921},
 date={2011},
   doi={},
}

\bib{Rou12}{article}{
  author={Rousseau, Guy},
   title={Groupes de Kac--Moody d\'eploy\'es sur un corps local, 2 Masures
ordonn\'ees},
  journal={Bull. Soc. Math. France},
   volume={144},
  % number={3},
   pages={613-692},
 date={2016},
%note={To appear in ``Bull. Soc. Math. France'', ArXiv [math.GR]
%1009.0135v2}
 }

\bib{Rou13}{article}{
  author={Rousseau, Guy},
   title={Almost split Kac--Moody groups over ultrametric fields},
     journal={{Groups, Geom. Dyn.}},
   volume={11},
  % number={3},
   pages={891-975},
 date={2017},
%note={preprint Nancy, February 2012, ArXiv [math.GR]
%1202.6232v2}
  }

\end{biblist}
\end{bibdiv}

\end{document}